\newcommand{\bZ}{\mathbb{Z}}
\newcommand{\bN}{\mathbb{N}}
\newcommand{\bC}{\mathbb{C}}
\newcommand{\bR}{\mathbb{R}}
\newcommand{\bQ}{\mathbb{Q}}
\newcommand{\mr}{\mathrm}
\newcommand{\ra}{\rightarrow}
\newcommand{\xra}{\xrightarrow}
\newcommand{\dd}{\partial}
\newcommand{\be}{\begin{equation}}
\newcommand{\ee}{\end{equation}}
\newcommand{\bt}{\bullet}
\newcommand{\tr}{\mathrm{tr}\;}
\newcommand{\hra}{\hookrightarrow}
\newcommand{\hla}{\hookleftarrow}
\newcommand{\mleft}{\left(\begin{array}}
\newcommand{\mright}{\end{array}\right)}
\newcommand{\Det}{\mathrm{Det}}
\newcommand{\bT}{\mathbb{T}}
\newcommand{\Dens}{\mathrm{Dens}}
\newcommand{\sk}{\mathrm{sk}}
\newcommand{\mc}{\mathcal}
\newcommand{\lec}[1]{\marginpar{\bf \Large #1}}
\newtheorem{thm}{Theorem}[section]
\newtheorem{prop}[thm]{Proposition}
\newtheorem{lemma}[thm]{Lemma}
\newtheorem{corollary}[thm]{Corollary}
\newtheorem{definition}[thm]{Definition}
\newtheorem{hypothesis}[thm]{Hypothesis}
\theoremstyle{remark}
\newtheorem{rem}[thm]{Remark}
\newtheorem{example}[thm]{Example}
\begin{document}
\title{
Lecture notes on torsions}
\author{Pavel Mnev}
\address{Institut f\"ur Mathematik,
Universit\"at Z\"urich,
Winterthurerstrasse 190,
CH-8057, Z\"urich, Switzerland}

\email{pmnev@pdmi.ras.ru}

\maketitle

\begin{abstract}
These are the lecture notes for the introductory course on
Whitehead, Reidemeister and Ray-Singer torsions,
given by the author at the University of Zurich in Spring semester 2014.
\end{abstract}

\tableofcontents

\section*{Preface}

Torsion is a far-reaching generalization of the notion of a (super-)determinant, associated to a chain complex with certain additional structure (e.g. a distinguished class of bases or an inner product). In the setting of topological spaces, a torsion-type invariant constructed by Reidemeister can (sometimes) distinguish between homotopically equivalent spaces. As the first application, this invariant was 
used by Reidemeister and Franz to classify lens spaces.

A conceptual explanation of Reidemeister torsion was found in Whitehead's theory of simple homotopy type. Simple homotopy equivalence for cell complexes is an equivalence relation generated by elementary expansions and collapses. Whitehead torsion is a (complete) obstruction for a homotopy equivalence between cell complexes to be homotopic to a simple homotopy equivalence.
Reidemeister torsion is related to Whitehead torsion via a simple ``ring-changing'' procedure.

Ray-Singer analytic torsion is constructed for a Riemannian manifold and is defined as a product of certain powers of zeta-regularized determinants of Hodge-de Rham Laplacians acting on $p$-forms on the manifold with coefficients in a local system (a fixed flat orthogonal bundle). Ray-Singer torsion can be seen as a ``de Rham counterpart'' of the Reidemeister torsion (cf. de Rham cohomology vs. cellular cohomology). The celebrated Cheeger-M\"uller theorem establishes the equality of these two torsions.

From the standpoint of quantum field theory, Ray-Singer torsion naturally appears as a Gaussian functional integral -- one-loop part of the perturbative partition function in topological field theories (e.g. Chern-Simons theory, $BF$ theory).

What follows is the lecture notes for the course titled ``Torsions'' given by the author at the University of Zurich in Spring semester 2014. The idea was to give a survey of the three incarnations of torsions, mentioned above (Reidemeister, Whitehead, analytic) and discuss some of the applications. Some parts of the course were necessarily sketchy due to time restrictions. In particular, proofs of the difficult 
theorems were omitted (notably, Chapman's theorem on topological invariance of Whitehead torsion and Cheeger-M\"uller theorem on equality of Reidemeister and analytic torsions). 
For the most part the material is standard, with main references being Milnor \cite{Milnor66}, Cohen \cite{Cohen}, Turaev \cite{Turaev01}, Ray-Singer \cite{Ray-Singer}.

\lec{Lecture 1, 20.02.2014}

\section{Introduction}\label{sec: intro}
In this course we will be talking (mostly) about Reidemeister, Whitehead and analytic (Ray-Singer) torsions.
\subsection{Classification of 3-dimensional lens spaces}
Reidemeister torsion appeared first (1935) in the context of classification of 3-dimensional lens spaces.
\begin{definition}[Tietze, 1908]
For $p,q\in \mathbb{N}$ coprime, $q$ defined modulo $p$, the {\it lens space} $L(p,q)$ is defined as the quotient of the 3-sphere by a $q$-dependent free action of $\bZ_p$:
$$L(p,q)=\frac{\{(z_1,z_2)\in \bC^2\;|\; |z_1|^2+|z_2|^2=1\}}{ (z_1,z_2)\sim (\zeta z_1, \zeta^q z_2)} $$
where $\zeta=e^{\frac{2\pi i}{p}}$ is a root of unity.
\end{definition}

Example: $L(2,1)=\bR P^3$ -- the real projective 3-space.

Lens spaces $L(p,q)$ are smooth orientable compact 3-manifolds. For $p$ fixed and $q$ varying, all spaces $L(p,q)$ have the same homology
$$H_0= \bZ,\; H_1 = \bZ_p,\; H_2=0,\; H_3=\bZ$$ same fundamental group $\pi_1=\bZ_p$ and same higher homotopy groups (same as their universal covering space $S^3$). However, they do depend on $q$.

\begin{example}
$L(5,1)$ and $L(5,2)$ are not homotopy equivalent (Alexander, 1919).
\end{example}

\begin{thm}[Classification theorem, Reidemeister, 1935]
\begin{enumerate}
\item \label{lens classification homotopy} $L(p,q_1)$ and $L(p,q_2)$ are homotopy equivalent iff $q_1 q_2\equiv \pm n^2 \mod p$ for some $n\in\bZ$.
\item \label{lens classification homeo} $L(p,q_1)$ and $L(p,q_2)$ are homeomorphic\footnote{Reidemeister's classification was up to PL equivalence; it was proven to be in fact a classification up to homeomorphism later by Brody, 1960.} iff either $q_1 q_2\equiv \pm 1\mod p$ or $q_1\equiv \pm q_2\mod p$.
\end{enumerate}
\end{thm}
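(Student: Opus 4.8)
The plan is to prove the two parts separately, because they are governed by invariants of genuinely different strength: part~\ref{lens classification homotopy} is detected by a homotopy invariant, while part~\ref{lens classification homeo} needs the finer Reidemeister torsion. In each part one implication is an explicit geometric construction and the other is an obstruction computation. For the ``only if'' direction of part~\ref{lens classification homotopy} I would use the linking pairing $\lambda\colon H_1(L(p,q))\times H_1(L(p,q))\to\bQ/\bZ$, a homotopy invariant of a closed oriented $3$-manifold up to an overall sign (which records the orientation); a direct computation on the standard CW structure --- or via Poincar\'e duality and the Bockstein in $H^{*}(L(p,q);\bZ_p)$ --- gives $\lambda(g,g)\equiv q/p$ for a suitable generator $g$ of $H_1=\bZ_p$. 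An automorphism of $\bZ_p$ is multiplication by a unit $n$ and rescales $\lambda$ by $n^{2}$, while reversing orientation changes its sign; hence $L(p,q_1)\simeq L(p,q_2)$ forces $q_1\equiv\pm n^{2}q_2\pmod p$, and multiplying by $q_2$ this is the asserted $q_1q_2\equiv\pm m^{2}\pmod p$ with $m=nq_2$. For the ``if'' direction I would build the equivalence by hand: using the CW structure of $L(p,q)$ with one cell in each dimension $0,1,2,3$ and attaching maps recorded by $q$, one writes down a cellular map $L(p,q_1)\to L(p,q_2)$ of degree $\pm1$ on the top cell, the congruence $q_1q_2\equiv\pm m^{2}$ being exactly what is needed for the $3$-cell attaching maps to agree up to homotopy (after a self-equivalence of the common $2$-skeleton); Whitehead's theorem --- the map is an isomorphism on $\pi_1$ and on the homology of the universal covers $S^{3}$ --- then shows it is a homotopy equivalence.

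\textbf{Homeomorphism classification.} The ``if'' direction of part~\ref{lens classification homeo} is obtained from symmetries of the defining $\bZ_p$-action on $S^{3}$: swapping the coordinates $(z_1,z_2)\mapsto(z_2,z_1)$ and accordingly replacing the acting root of unity $\zeta$ by $\zeta^{q}$ identifies the $L(p,q)$-action with the $L(p,q^{-1})$-action, giving a PL (indeed smooth) homeomorphism $L(p,q)\cong L(p,q^{-1})$; complex conjugation $(z_1,z_2)\mapsto(z_1,\bar z_2)$ gives $L(p,q)\cong L(p,-q)$; together with $L(p,q)\cong L(p,q)$ these realize all pairs with $q_1q_2\equiv\pm1$ or $q_1\equiv\pm q_2\pmod p$. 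The ``only if'' direction is the substance of the theorem and is where torsion enters. From the equivariant cellular chain complex of $S^{3}$ over $\bZ[\bZ_p]$, specialized through a primitive character $t\mapsto\zeta=\zeta_p$, one finds the complex acyclic with Reidemeister torsion $\tau(L(p,q))\doteq(\zeta-1)(\zeta^{r}-1)$ where $qr\equiv1\pmod p$ (up to conventions and inversion), an element of $\bQ(\zeta_p)^{\times}$ well-defined modulo the ambiguity $\pm\zeta^{k}$. Granting that Reidemeister torsion is a combinatorial invariant --- hence a PL-homeomorphism invariant, and by Chapman's theorem a topological invariant --- a homeomorphism $L(p,q_1)\cong L(p,q_2)$ yields $(\zeta-1)(\zeta^{r_1}-1)\doteq(\zeta^{n}-1)(\zeta^{nr_2}-1)$ for some unit $n$, the extra factor $n$ absorbing the change of generator of $\pi_1$. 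A short argument with cyclotomic units --- such an equality up to $\pm\zeta^{k}$ forces the exponent multisets to agree up to independent sign changes --- gives $\{1,r_1\}\equiv\{\pm n,\pm n r_2\}\pmod p$, which splits into the case $n\equiv\pm1$ (whence $r_1\equiv\pm r_2$, i.e.\ $q_1\equiv\pm q_2$) and the case $n\equiv\pm q_2$ (whence $r_1r_2\equiv\pm1$, i.e.\ $q_1q_2\equiv\pm1\pmod p$) --- exactly the two alternatives of part~\ref{lens classification homeo}.

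\textbf{Main obstacle.} The hard part is the ``only if'' half of part~\ref{lens classification homeo}. No homotopy-theoretic argument can suffice: for instance $L(7,1)$ and $L(7,2)$ are homotopy equivalent (as $1\cdot 2\equiv 3^{2}\pmod 7$) yet not homeomorphic. Everything therefore depends on having the torsion invariant and, above all, on its invariance under homeomorphism --- precisely the machinery (simple homotopy type, Whitehead and Reidemeister torsion, Chapman's theorem) that the rest of these notes develops. The residual difficulty is then bookkeeping: pinning down the exact indeterminacy of $\tau$ and running the cyclotomic-unit comparison; the explicit constructions and the linking-form computation are routine by comparison.
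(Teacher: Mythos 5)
Your proposal is essentially correct, and for part (\ref{lens classification homeo}) it follows the same strategy the notes use when they prove the general $(2n-1)$-dimensional classification in Lectures 10--11: the ``if'' direction via the explicit symmetries (your coordinate swap and complex conjugation are exactly the maps $\Phi^{I},\Phi^{II},\Phi^{III}$ of Section \ref{sec: lens spaces, trivial maps}), and the ``only if'' direction via the Reidemeister torsion $\tau_\eta(L)\doteq\prod_i(\eta^{r_i}-1)^{-1}$ computed from the equivariant cell structure, its subdivision invariance plus Chapman's theorem, and a comparison of the resulting cyclotomic expressions. Where you genuinely diverge is part (\ref{lens classification homotopy}): you detect the homotopy type by the torsion linking form on $H_1$ (which is what the introduction alludes to), whereas the notes never use the linking form in the proof; they instead classify maps between lens spaces by obstruction theory on the equivariant cell decomposition (Lemmas \ref{Lm: lens spaces homotopy Lm1}--\ref{Lm: lens spaces homotopy Lm3}: degree is determined mod $p$ by the action on $\pi_1$, every admissible degree $m^n r_1\cdots r_n q'_1\cdots q'_n + pN$ is realized by correcting the power map $(z_1,z_2)\mapsto(z_1^{mr_1q'_1},z_2^{mr_2q'_2})$ on a small ball, and degree $\pm1$ maps are homotopy equivalences). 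Your linking-form argument buys a shorter, more invariant ``only if'', but you still need the paper's type of explicit construction (or your cellular one, which is the same idea less explicitly carried out) for the ``if'' direction, so the two routes end up comparable in length; the paper's route has the advantage of working uniformly in all dimensions and of producing the degree bookkeeping that is reused later (e.g.\ in computing $\tau(f)$ for $L(7,1)\to L(7,2)$).

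One caveat: the step you describe as ``a short argument with cyclotomic units'' and later as routine bookkeeping is precisely Franz's independence lemma (Lemma \ref{Lm: Franz lemma} in the notes), and it is the genuinely nontrivial number-theoretic input of the whole classification. Passing from $\left|(\zeta-1)(\zeta^{r_1}-1)\right|=\left|(\zeta^{n}-1)(\zeta^{nr_2}-1)\right|$ for all primitive $p$-th roots $\zeta$ to equality of the exponent multisets is not formal: the torsion equality is only available at characters $\eta\neq 1$ with the bundle acyclic, so for composite $p$ a naive Fourier/group-ring comparison does not close, and the standard proof of Franz's lemma uses Dirichlet characters. The notes do not prove it either (they state it as a lemma), so citing it is perfectly acceptable --- but your write-up should name it as an external input rather than fold it into ``bookkeeping''.
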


\begin{example}
$L(7,1)$ and $L(7,2)$ are homotopy equivalent but not homeomorphic.
\end{example}

Classification up to homotopy (\ref{lens classification homotopy}) is given by the torsion linking form
$\mr{tor}(H_1)\otimes \mr{tor}(H_1)\ra \bQ/\bZ$. The finer classification up to homeomorphism is given by a different sort of invariant, the Reidemeister torsion.

Classification result above was extended to higher-dimensional lens spaces by Franz (1935) and de Rham (1936).

\subsection{R-torsion, the idea}\label{sec: intro, R-torsion}\footnote{``R-torsion'' is a shorthand for ``Reidemeister-Franz-de Rham real representation torsion'', coined by Milnor \cite{Milnor66}.}
\subsubsection{Algebraic torsion} Let
$$C_n\xra{\dd}C_{n-1}\xra{\dd}\cdots\xra{\dd}C_1\xra{\dd}C_0$$
be an acyclic chain complex of finite-dimensional $\bR$-vector spaces.

Morally: $\mr{torsion(C_\bullet)}=``\det (\dd)"$.
This expression has to be made sense of: $\dd$ is not an endomorphism of a vector space, rather it shoots between different vector spaces, also it has a big kernel.

More precisely, denote the image of $\dd$ in $C_k$ by $C_k^{ex}$ (``exact'') and choose some linear complement\footnote{The notation is inspired by the Hodge decomposition theorem for differential forms, although here we use it somewhat abusively for cellular chains.} $C_k^{coex}$ of $C_k^{ex}$ in $C_k$. Thus we have $C_k=C_k^{ex}\oplus C_k^{coex}$ and the boundary map is an isomorphism
\be \dd: C_k^{coex}\xra{\sim}C_{k-1}^{ex} \label{d: coex to ex iso}\ee
Then
\be \mr{torsion}=\prod_{k=1}^n \left({\det}_{C_k^{coex}\ra C_{k-1}^{ex}}\dd\right)^{(-1)^k} \label{torsion = sdet}\ee
where we are calculating determinants of {\it matrices} of isomorphisms (\ref{d: coex to ex iso}) with respect to some chosen bases in $C_\bt^{ex,coex}$. Expression (\ref{torsion = sdet}) generally depends on the chosen bases.

For example, if spaces $C_k$ are equipped with positive-definite inner products $C_k\otimes C_k\ra \bR$, then we may choose $C_k^{coex}$ to be the orthogonal complement of $C_k^{ex}$ and choose any orthonormal basis in every $C_k^{ex,coex}$. Then the torsion (\ref{torsion = sdet}) is well-defined modulo sign (sign indeterminacy is due to the possibility to change the ordering of basis vectors of an orthonormal basis; well-definedness -- since changes between o/n bases are given by orthogonal matrices, which in turn have determinant $\pm 1$).

\begin{rem} Alternating product of determinants in (\ref{torsion = sdet}) should be compared to the alternating sum of traces in the Lefschetz fixed point theorem: for $f:X\ra X$ a diffeomorphism of a manifold $X$ with isolated fixed points, one has
$$\sum_k (-1)^k \tr_{H_k(X,\bR)}\;f_*= \sum_{x\in \{\mbox{fixed points of }f\}} \mr{ind}_x(f)$$
where $f_*$ is the pushforward of $f$ to real homology and on the r.h.s. one has the sum of indices of critical points of $f$.
\end{rem}

\subsubsection{R-torsion in topological setting}
Let $X$ be a finite simplicial (or CW-) complex and $\rho: \pi_1(X)\ra O(m)$ an orthogonal representation of the fundamental group. Let $\tilde{X}$ be the universal covering of $X$. Then set
\be C_\bt(X,\rho)=\bR^m\otimes_{\bZ[\pi_1(X)]}C_\bt(\widetilde{X},\bZ)\label{twisted chains}\ee
where $C_\bt(\widetilde{X},\bZ)$ are the cellular chains of $\widetilde{X}$; $\pi_1(X)$ acts on $\widetilde{X}$ by covering transformations and on $\bR^m$ via representation $\rho$; these actions are extended to actions of the group ring $\bZ[\pi_1(X)]$ by linearity.
Assuming that $\rho$ is such that the twisted chain complex (\ref{twisted chains}) is acyclic, one defines the R-torsion $T(X,\rho)$ as the torsion of the complex $C_\bt(X,\rho)$ with 
the inner product
induced from the cellular basis in $C_\bt(\widetilde{X})$ and the standard (or any orthonormal) basis in $\bR^m$.

Example: for the lens space $L(p,q)$, choose $\rho: \underbrace{\pi_1}_{\simeq \bZ_p}\ra O(2)\simeq U(1)$ sending $1\in \bZ_p$ to a root of unity $\eta=e^{2\pi i s/p}$ different from $1$ (since for $\eta=1$ the twisted chain complex is not acyclic). Then the R-torsion is
\be T(L(p,q),\eta)=|(1-\eta)(1-\eta^r)|^{-2}\in \bR_+\label{T for L(p,q)}\ee
where $r\in \bZ_p$ is the reciprocal of $q$, i.e. $q\cdot r\equiv 1\mod p$.
Thus the R-torsion in this case is a collection of positive numbers (one for each admissible $\rho$).

\subsection{Some properties of R-torsion}
\label{sec: properties of T}

\begin{enumerate}[(i)]
\item For a CW-complex represented as a (non-disjoint) union of two subcomplexes $X$ and $Y$, one has the Mayer-Vietoris-type gluing formula
    \be T(X\cup Y)=T(X)\cdot T(Y)\cdot T(X\cap Y)^{-1} \label{Mayer-Vietoris for torsions}\ee
    assuming everything is well-defined. (Instead of representations of $\pi_1$ in this case it is better to think of putting a flat Euclidean vector bundle over $X\cup Y$ and then restricting it to $X$, $Y$, $X\cap Y$).
    Note that (\ref{Mayer-Vietoris for torsions}) means that $\log T$ satisfies the same relation as the Euler characteristic,
    $$\chi(X\cup Y)=\chi(X)+\chi(Y)-\chi(X\cap Y)$$
    Formula (\ref{Mayer-Vietoris for torsions}) suggests that the torsion counts some local objects (like $\chi$ counts cells with signs); there is indeed an interpretation of this kind, due to David Fried \cite{Fried87}. We will return to this interpretation later.
\item \label{R-torsion product property} For $Y$ simply connected,
$$T(X\times Y)=T(X)^{\chi(Y)}$$
\item \label{R-torsion even-dim property} $X$ a cell decomposition of an even-dimensional compact orientable manifold,
$$T(X)=1$$
(Essentially a manifestation of Poincar\'e duality.)
\item Torsion is invariant under subdivisions of cell complexes. (It is this statement that justifies a posteriori the seemingly ad hoc definition of the R-torsion, with its puzzling alternating determinants and the preferred basis induced from the cellular basis).
\item A stronger result holds in fact: torsion is invariant under homeomorphisms (Chapman's theorem).
\item\label{sh invariance of T} R-torsion is invariant under simple homotopy equivalence of CW-complexes (Whitehead). We will elaborate on this point right away.
\end{enumerate}

\subsection{Simple homotopy equivalence and Whitehead torsion}
Whitehead's original idea was to describe homotopy equivalences of CW-complexes as sequences of elementary local moves (cf. Reidemeister's moves for knot diagrams, Tietze moves for group presentations, Pachner's moves for triangulations of manifolds). The suggested moves are: elementary collapses (for simplicial complexes, having a simplex $\sigma$ with a free face $\sigma'$, one can make a retraction to the horn comprised be the boundary of $\sigma$ minus $\sigma'$, not touching the rest of the complex), and the inverse moves -- elementary expansions (filling horns, in the simplicial setting). It turned out that these moves do not generate all homotopy equivalences, rather they generate a more refined relation -- the simple homotopy equivalence.
\begin{center}
  \includegraphics[scale=0.5]{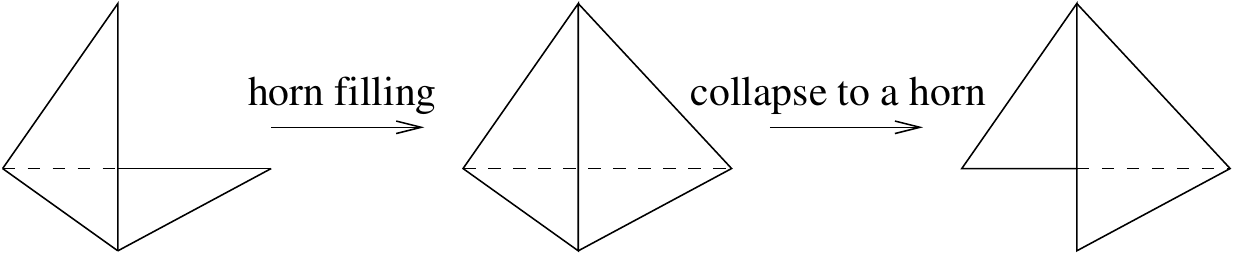}
\end{center}

For $f:X\ra Y$ a homotopy equivalence of CW-complexes, 
consider the
complex of relative chains $C_\bt(\widetilde{M}_f,\widetilde X)$, with $M_f$ the mapping cylinder for $f$ and tilde denoting the universal covering, as a complex of free $\bZ[\pi_1]$-modules. It can be prescribed a torsion $\tau(f)$, taking values in the Whitehead group of the fundamental group, $Wh(\pi_1)$, defined in terms of the algebraic $K_1$ functor evaluated on the group ring $\bZ[\pi_1]$. $\tau(f)$ is called the Whitehead torsion of the homotopy equivalence $f$. 
\begin{thm}[Whitehead]
$\tau(f)=1\in Wh(\pi_1)$ if and only if $f$ is homotopic to a simple homotopy equivalence.
\end{thm}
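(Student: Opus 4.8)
The plan is to prove this in two directions, with the ``if'' direction being essentially formal and the ``only if'' direction containing the real content. First I would set up the key properties of the Whitehead torsion $\tau(f) \in Wh(\pi_1)$ as an invariant of homotopy equivalences, mimicking the algebraic properties of R-torsion but now at the level of $K_1(\bZ[\pi_1])$ modulo the subgroup generated by $\pm g$ for $g\in\pi_1$ (which is exactly what $Wh(\pi_1)$ is). The two structural facts I need are: (a) \emph{composition formula}: for $X \xra{f} Y \xra{g} Z$ homotopy equivalences, $\tau(g\circ f) = \tau(g) + f_*\tau(f)$, where $f_*$ is the induced map on Whitehead groups; and (b) \emph{homotopy invariance}: $\tau(f)$ depends only on the homotopy class of $f$. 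Both follow from algebraic properties of torsion for based chain complexes over $\bZ[\pi_1]$ and the behavior of mapping cylinders under composition — I would prove these as lemmas, since they are the backbone of everything.

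For the ``if'' direction, I would show that an elementary expansion $X \hra X'$ (attaching a cell $e^n$ along with a free face filling, i.e. attaching an $(n{-}1)$-cell and an $n$-cell in cancelling position) has $\tau = 0$. The point is that the relative chain complex $C_\bt(\widetilde{X'}, \widetilde X)$ is, up to the based chain-level, the elementary acyclic complex $\bZ[\pi_1] \xra{\pm g} \bZ[\pi_1]$ concentrated in two adjacent degrees, whose torsion is $[\pm g] = 0$ in $Wh(\pi_1)$ by definition of the Whitehead group. An elementary collapse is the inverse, contributing $-0 = 0$. Then a simple homotopy equivalence, being a composite of such moves (and identities, and homotopies), has $\tau = 0$ by the composition formula (a). Since $\tau$ is a homotopy invariant by (b), any $f$ homotopic to a simple homotopy equivalence also has $\tau(f) = 0$.

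The ``only if'' direction is the hard part, and it is a realization statement: given $f:X\ra Y$ with $\tau(f) = 0 \in Wh(\pi_1)$, I must manufacture an actual sequence of expansions and collapses from $X$ to $Y$ (up to homotopy). The strategy is: first replace $f$ by an inclusion into its mapping cylinder $M_f$, so that $X \hra M_f$ is a deformation retract and I am reduced to the case where $(M_f, X)$ is a pair with $M_f \searrow X$ homotopically and with vanishing torsion; the claim becomes that $M_f$ collapses to $X$ after finitely many expansions and collapses. I would handle this by induction on the dimension of the relative cells, using that a based chain complex over $\bZ[\pi_1]$ with zero torsion can be transformed to the trivial complex by elementary expansions/contractions at the \emph{algebraic} level (add and cancel elementary pieces $\bZ[\pi_1]\xra{1}\bZ[\pi_1]$, change bases by elementary matrices and permutations — this uses that $E(\bZ[\pi_1])$ together with $\pm\pi_1$ generates the kernel of $K_1 \to Wh$, i.e. $\tau = 0$ means the boundary matrices are a product of elementaries and trivial units). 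The essential geometric input — and the main obstacle — is realizing each such algebraic move geometrically: handle additions are realized by elementary expansions, handle slides (adding a multiple of one basis element to another, the elementary matrices) are realized by homotoping attaching maps across cells of the next dimension down, and handle cancellations require a Whitehead-type lemma that a geometrically cancellable pair of cells can be removed by an expansion followed by a collapse. Making the dictionary between ``row/column operations on boundary matrices'' and ``moves on the CW structure'' precise, especially the cancellation of a trivial $\bZ[\pi_1]\xra{1}\bZ[\pi_1]$ summand geometrically, is where all the work lies; I would follow the handle-theoretic bookkeeping in Cohen \cite{Cohen} or Milnor \cite{Milnor66}.
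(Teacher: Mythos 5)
Your split into an easy direction and a hard realization-type direction is the right logical structure, and your easy direction is essentially the argument the notes give: homotopy invariance and the composition property of $\tau$ (Proposition \ref{prop: tau of maps properties}), plus triviality of the torsion of an elementary expansion and the observation $\tau(p)\tau(\iota)=\tau(\mr{id})=1$ for the associated collapse. The only difference is that you compute the two-term relative complex $\bZ[\pi_1]\xra{\pm g}\bZ[\pi_1]$ directly, whereas the notes get the same conclusion from Lemma \ref{Lm: torsion of a pair with X-Y simply connected} (connected, simply connected $X-Y$ forces the torsion into $\bar K_1(\bZ)=\{1\}$); both are fine. One small slip: with the convention $\tau(f)=\tau(M_f,X)\in Wh(\pi_1(Y))$ the composition formula reads $\tau(g\circ f)=\tau(g)+g_*\tau(f)$, with the pushforward along $g$, not $f$; the notes avoid the issue by identifying all the Whitehead groups along the homotopy equivalences (harmless, since inner automorphisms act trivially on $Wh$), but if you insert a pushforward it must be $g_*$.

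The substantive point is the hard direction, and there the comparison is asymmetric: the notes do not prove it at all. They prove only the easy implication of Theorems \ref{thm: simple homotopy pairs} and \ref{thm: simple homotopy maps} and the separate realization statement (Theorem \ref{thm: simple homotopy pairs}(\ref{thm: simple homotopy pairs ii}), via Turaev's wedge-of-spheres construction), leaving ``$\tau(f)=1\Rightarrow f$ homotopic to a simple homotopy equivalence'' unproved, consistent with the lecture-notes format. Your outline is the standard Cohen--Milnor route (pass to $(M_f,X)$; use that $\tau=1$ means the stabilized boundary data is a product of elementary matrices and trivial units $\pm g$; realize these algebraic moves geometrically), but as written it is a program, not a proof: the three steps you yourself flag --- trading cells upward so that $C_\bt(\widetilde{M_f},\widetilde X)$ becomes concentrated in two adjacent dimensions after stabilizing by elementary expansions, realizing an elementary row operation by sliding an attaching map across a cell, and geometrically cancelling a complementary pair whose incidence number is a trivial unit --- are exactly where the theorem lives, need dimension and $\pi_1$-control hypotheses you do not state (which is why one trades cells into middle dimensions first), and are not carried out. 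So what you have established is the easy implication, in the same way as the paper; for the converse you have correctly named the standard strategy but deferred its content to the references.
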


\subsubsection{Relation between Whitehead torsion and R-torsion}
For $f:X\ra Y$ a homotopy equivalence, $\rho:\pi_1\ra O(m)$ an orthogonal representation, one has
$$T(Y)=\rho_*(\tau(f))\cdot T(X)$$
where the map $Wh(\pi_1)\ra \bR_+$
comes from the induced map on reduced $K_1$-groups $\rho_*: \bar K_1(\bZ[\pi_1])\ra \bar K_1(M_m(\bR))\simeq \bR_+$
with $M_m(\bR)$ standing for the ring of matrices $m\times m$ over reals.

In particular, for $f$ a simple homotopy equivalence, $\tau(f)=1$ and hence $T(X)=T(Y)$ (thus the property (\ref{sh invariance of T}) of Section \ref{sec: properties of T} follows from Whitehead's theorem).

Example: for a homotopy equivalence between lens spaces $f: L(7,1)\ra L(7,2)$, the Whitehead torsion is nontrivial, since $T(L(7,1))\neq T(L(7,2))$ by the explicit formula (\ref{T for L(p,q)}).

\subsection{Some applications of torsions}

\begin{itemize}
\item Classification of lens spaces (Reidemeister 1935, Franz 1936).
\item The s-cobordism theorem (Mazur-Stallings-Barden). Smale's h-cobordism theorem \cite{Smale60} states than an $h$-cobordism (i.e. one where both inclusions $M_\mr{in}\hra N\hookleftarrow M_\mr{out}$ are homotopy equivalences) between simply connected manifolds of dimension $\geq 5$ is homeomorphic to a cylinder $N\simeq M_\mr{in}\times [0,1]$. An immediate consequence is the Poincar\'e conjecture in dimension $\geq 5$. The s-cobordism theorem states that an h-cobordism between not necessarily simply connected manifolds of dimension $\geq 5$ is homeomorphic to a cylinder if and only if the Whitehead torsion of the inclusion is trivial $\tau(M_\mr{in}\hra N)=1$. A refinement of this is the statement that homeomorphism classes of h-cobordisms form a torsor over the Whitehead group $Wh(\pi_1)$.
\item Milnor's counter-example to Hauptvermutung -- a conjecture by Steinitz and Tietze that a pair of homeomorphic simplicial complexes should have a common subdivision. Milnor's counter-example is a pair of complexes of dimension $\geq 6$; the proof of non-existence of common subdivision relies on the R-torsion \cite{MilnorHaupt}.\footnote{In the case of manifolds, Hauptvermutung is true in dimension $\leq 3$, i.e. every topological manifold admits a unique PL structure, and is false in higher dimensions. In dimension $\geq 5$, the complete obstruction for a topological manifold to have a PL structure is the Kirby-Siebenmann obstruction in $H^4(M,\bZ_2)$. If it vanishes, the PL structures form a torsor over $H^3(M,\bZ_2)$. In dimension 4, a topological compact simply-connected manifold can have infinitely many PL structures (examples due to Donaldson), or none (the $E_8$ manifold, Freedman).}
\item Relation of the R-torsion of the complement of a knot in $S^3$ to Alexander's polynomial of the knot.
\item Calculation of the symplectic volume of the moduli space of flat $G$-bundles over a surface $\Sigma$ (Witten, 1991 \cite{Witten91}).
\item R-torsion appears in certain topological quantum field theories as a (1-loop) part of the perturbative partition function (cf. e.g. \cite{Schwarz79,Witten89}), we will comment on this below.
\end{itemize}

\lec{Lecture 2, 27.02.2014}

\subsection{Analytic torsion}\label{sec: intro, RS torsion}
Let $(M,g)$ be a compact oriented Riemannian $n$-manifold without boundary and $E$ a rank $m$ Euclidean vector bundle over $M$ with a flat $O(m)$-connection $\nabla$. The connection defines the twisted de Rham differential $d_\nabla: \Omega^{\bt}(M,E)\ra \Omega^{\bt+1}(M,E)$.
\begin{definition}[Ray-Singer \cite{Ray-Singer}] Assuming that the twisted cohomology $H^\bt_{d_\nabla}(M,E)$ vanishes, one defines the analytic (Ray-Singer) torsion of $M$ with coefficients in the flat bundle $(E,\nabla)$ as
\be T_a(M,E)=\prod_{k=1}^n\left({\det}_{\Omega^k(M,E)}\Delta_{(k)}\right)^{-\frac{(-1)^k\cdot k}{2}} \label{RS torsion}\ee
where $\Delta_{(k)}=d_\nabla d^*_\nabla+d^*_\nabla d_\nabla: \Omega^k(M,E)\ra \Omega^k(M,E)$ is the Hodge-de Rham Laplacian twisted by the flat connection.
\end{definition}
Determinants of Laplacians in (\ref{RS torsion}) are understood in the sense of zeta-regularization: operator $\Delta_{(k)}$ has discrete positive eigenvalue spectrum $\{\lambda_j\}$ with spectral density behaving asymptotically as $\rho(\lambda)\sim\lambda^{\frac{n}{2}-1}$ at $\lambda\ra\infty$. One defines the zeta function
$\zeta_{\Delta_{(k)}}(s)=\sum_{j}\lambda_j^{-s}$ by analytic continuation from domain $\mr{Re}(s)>s_0=\frac{n}{2}$ where the sum on the right converges. The continuation is a meromorphic function regular at $s=0$. One defines the zeta-regularized determinants as
$$\det\Delta_{(k)}=e^{-\zeta'_{\Delta_{(k)}}(0)}$$

Powers of determinants in (\ref{RS torsion}) can be explained as follows. One has the Hodge decomposition $\Omega^k(M,E)=\Omega^k_{ex}\oplus \Omega^k_{coex}$ of $E$-valued $k$-forms on $M$ into $d_\nabla$-exact and $d^*_\nabla$-exact (=coexact). Spectrum of $\Delta$ on $\Omega^k$ splits into the spectrum on $\Omega^k_{ex}$ and the spectrum on $\Omega^k_{coex}$: $\{\lambda_j^{(k)}\}=\{\lambda_j^{(k),ex}\}\cup \{\lambda_j^{(k),coex}\}$. Moreover, $\{\lambda_j^{(k),coex}\}=\{\lambda_j^{(k+1),ex}\}$ since one can apply $d_\nabla$ to a coexact $k$-eigenform to obtain an exact $(k+1)$-eigenform with the same eigenvalue. Thus in (\ref{RS torsion}) every eigenvalue $\lambda$ appears twice: first time as a contribution to $\det\Delta_{(k)}$, coming from the coexact part, then as a contribution to $\det\Delta_{(k+1)}$, coming from the exact part. Therefore one can rearrange (\ref{RS torsion}) as
\be T_a(M,E)=\prod_k \left({\det}_{\Omega_{coex}^k(M,E)}\Delta_{(k)}\right)^{\frac{(-1)^k}{2}} \label{RS torsion in terms of coex forms}\ee
where the powers are $\frac{(-1)^k}{2}=\left(-\frac{(-1)^k\cdot k}{2}\right)+\left(-\frac{(-1)^{k+1}\cdot (k+1)}{2}\right)$. Finally, on coexact forms $\Delta_{(k)}=d^*_\nabla d_\nabla$ and thus one can view (\ref{RS torsion in terms of coex forms}) as a (way to make sense of) determinant of $d_\nabla: \Omega^\bt_{coex}\ra \Omega^{\bt+1}_{ex}$, as in (\ref{torsion = sdet}).

Ray and Singer proved in particular the following properties of $T_a$.
\begin{itemize}
\item $T_a(M,E)$ does not depend on the choice of Riemannian metric $g$ (cf. invariance of R-torsion w.r.t. subdivision).
\item For even-dimensional $M$, the analytic torsion $T_a=1$, when defined.
\item For $N$ simply connected, $T_a(M\times N,p^*E)=T_a(M)^{\chi(N)}$, where $p: M\times N\ra M$ is the projection to the first factor and $E$ is an acyclic flat $O(m)$-bundle over $M$.
\end{itemize}
The last two properties are direct analogues of properties (\ref{R-torsion even-dim property}), (\ref{R-torsion product property}) of R-torsion (Section \ref{sec: properties of T}).

Ray and Singer conjectured (and gave evidence for), and Cheeger and M\"uller proved (independently and by different methods) the following result.
\begin{thm}[Cheeger-M\"uller]
Analytic torsion coincides with R-torsion, i.e. for $M$ a compact oriented Riemannian manifold without boundary endowed with a cell decomposition $X$ and a flat $O(m)$-bundle $(E,\nabla)$ over $M$, one has
$$T_a(M,E)=T^c(X,\rho)$$
where the representation $\rho: \pi_1(M)\ra O(m)$ is given by the holonomy of $\nabla$.\footnote{One chooses the base point $x_0$ to be a vertex of $X$ and trivializes the fiber of $E$ over $x_0$ (fixes the isomorphism $E_{x_0}\simeq \bR^m$). Then for a closed path $\gamma\subset M$ starting and ending at $x_0$, one has the holonomy $\rho(\gamma):=\mr{hol}_\gamma(\nabla)\in \mr{Aut}(E_{x_0})\simeq O(m)$ depending only on the class of $\gamma$ in the fundamental group.}
\end{thm}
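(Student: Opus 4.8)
The plan is to sketch the strategy of Müller's original proof (closest in spirit to these notes), which proceeds by approximating the de Rham complex by simplicial cochain complexes associated to finer and finer subdivisions of $X$. The starting point is de Rham's theorem in its quantitative form: integration of $E$-valued forms over simplices gives a cochain map $\Omega^\bt(M,E)\ra C^\bt(X,\rho)$ (a Whitney-type map), which is a quasi-isomorphism. Equip $C^\bt(X,\rho)$ with the $L^2$-inner product coming from the standard basis on $\bR^m$ and $\Omega^\bt(M,E)$ with the Hodge inner product; then both complexes are finite (respectively, finite after passing to the image of the smoothing projection) acyclic complexes with inner products, and each has a well-defined torsion in the sense of the Preface/Introduction. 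The theorem is the assertion that these two numbers agree.

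First I would reduce, using the metric-independence of $T_a$ (stated above) and the subdivision-invariance of $T^c$ (property (iv) of Section~\ref{sec: properties of T}), to comparing the two torsions for a conveniently chosen metric and a conveniently fine triangulation. The key analytic device is the Dodziuk–Patodi comparison: the Whitney map $W\colon C^\bt(X,\rho)\ra \Omega^\bt(M,E)$ (the right inverse to integration) is an approximate isometry and approximately intertwines the combinatorial Laplacian on $C^\bt$ with the Hodge Laplacian on $\Omega^\bt$, with errors controlled by the mesh $\eta$ of the triangulation: the low-lying eigenvalues of the combinatorial Laplacian converge to those of $\Delta_{(k)}$ as $\eta\to 0$. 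This shows that the \emph{small-eigenvalue} part of the combinatorial torsion converges to the full analytic torsion $T_a(M,E)$. What remains is to show that the \emph{large-eigenvalue} contribution — the part of the combinatorial torsion coming from eigenvalues that run off to infinity as one subdivides — also converges, and converges to $1$. This is exactly the content of Müller's hard estimate, and it is where the argument genuinely needs the orthogonality of the representation: one controls the large-eigenvalue asymptotics of the combinatorial Laplacian via a local (simplex-by-simplex) analysis, using that the heat-kernel-type expansions of $\Delta_{(k)}$ have vanishing constant term in odd dimensions (the even-dimensional case being trivial by property (iii)), so that $\sum_k (-1)^k k\,\zeta_{\Delta_{(k)}}(0)$ vanishes and no anomalous multiplicative constant survives.

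The main obstacle is precisely this last step: showing that the ratio
\[
\frac{T^c(X,\rho)}{T_a(M,E)}
\]
stays bounded and tends to $1$ under subdivision requires uniform (in the subdivision) control of the spectral density of the combinatorial Laplacians in the range where they diverge, which is not provided by the soft Dodziuk–Patodi convergence and must be extracted by a careful Weyl-law-type estimate combined with the structure of the local model complexes on a single simplex. An alternative route, following Cheeger, replaces this by a surgery/deformation argument: one proves an anomaly formula for how $\log T_a$ changes under gluing (Mayer–Vietoris, cf. (\ref{Mayer-Vietoris for torsions})) and under collapsing a handle, matches it with the corresponding combinatorial variation, and reduces to model pieces (balls, cylinders) where both torsions can be computed by hand. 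Either way, the genuinely difficult analytic input is the uniform large-eigenvalue estimate; the de Rham quasi-isomorphism, the reductions via metric- and subdivision-invariance, and the bookkeeping of the powers in (\ref{RS torsion}) versus (\ref{torsion = sdet}) are all routine once that estimate is in hand. Since the excerpt itself flags this as one of the ``difficult theorems'' whose proof is omitted, I would in these notes present only this strategy and refer to \cite{Ray-Singer} for the conjectural evidence and to the original Cheeger and Müller papers for the two independent proofs.
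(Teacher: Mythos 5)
The notes never actually prove this theorem --- the preface explicitly lists Cheeger--M\"uller among the difficult results whose proofs are omitted, and Lecture 14 only surveys the known proofs --- and your sketch reproduces exactly the strategies surveyed there: M\"uller's combinatorial approximation with the Dodziuk--Patodi eigenvalue convergence \cite{Dodziuk-Patodi} as the key input, and Cheeger's surgery/gluing alternative. So your proposal is consistent with the paper's treatment, but it remains a strategy outline rather than a proof: the decisive uniform control of the divergent (large-eigenvalue) part of the combinatorial spectrum under subdivision, which you correctly identify as the crux, is precisely what is deferred to \cite{Muller78} and \cite{Cheeger79}.
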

The torsion on the r.h.s. is the {\it cochain} version of the $R$-torsion, i.e. constructed from cellular cochains instead of chains. It is related to the chain $R$-torsion discussed above as $T^c(X,\rho)=T(X,\rho)^{-1}$.

Morally, one has two models for the torsion: combinatorial model (Reidemeister), based on cellular chains, and analytic model (Ray-Singer), based on differential forms. The situation is very similar to cellular vs. de Rham model for cohomology of a manifold.

\subsection{Torsion as a Gaussian integral}
For an acyclic chain complex $(C_\bt,\dd)$, one can write the torsion as a Gaussian integral over a vector superspace
\be T(C_\bt)=\alpha\int_{L\subset F}\mu\; e^{i S(A,B)} \label{torsion as BV integral}\ee
Where the even and odd parts of $F$ are:
\be F^{even}=(\oplus_k C_{2k}) \oplus (\oplus_k C_{2k+1}^*),\quad F^{odd}=(\oplus_k C_{2k+1}) \oplus (\oplus_k C_{2k}^*)  \label{torsion as BV integral space of fields}\ee
A vector $(A,B)\in F$ has a component $A=\sum_k A_{(k)}\in C_\bt$ (a non-homogeneous chain with components of alternating parity) and component $B=\sum_k B_{(k)}\in C_\bt^*$. The expression in the exponential in (\ref{torsion as BV integral}) is:
$$S=\langle B, \dd A \rangle=\sum_{k} \langle B_{(k-1)}, \dd A_{(k)}\rangle $$
where $\langle,\rangle$ stands for the pairing between the chain space $C_k$ and its dual.

The subspace $L\subset F$ over which the integral is taken in (\ref{torsion as BV integral}) is defined as $C_\bt^{coex}\oplus (C_\bt^{ex})^*$, with parities prescribed as in (\ref{torsion as BV integral space of fields}). The integration measure $\mu$ is 
the coordinate Berezin measure for the basis on $L$ induced from the preferred bases on $C_\bt^{ex,coex}$.

The proportionality coefficient in (\ref{torsion as BV integral}) is $\alpha=(2\pi)^{-\dim C^{coex}_{odd}}\cdot i^{-\dim C^{coex}_{even}}$.

Substituting in (\ref{torsion as BV integral}) the chain complex of a CW-complex twisted by a representation of $\pi_1$, we get an integral formula for the R-torsion.

The analog of (\ref{torsion as BV integral}) for the analytic torsion is:
\be T_a(M,E)=\int_{d_\nabla^*A=0,\; d_\nabla^*B=0} e^{i\int_M (B\stackrel{\wedge}{,}d_\nabla A)}\;\mathcal{D}A\;\mathcal{D}B \label{RS torsion as BV integral}\ee
The right hand side is a Gaussian functional integral over a subspace in $F=\Omega^\bt(M,E)\oplus \Omega^\bt (M,E^*)$ cut out by the Lorentz gauge condition $d_\nabla^* A=d_\nabla^* B=0$. Parity of the component $A_{(k)}$ in $A=\sum_k A_{(k)}\in \Omega^\bt(M,E)$ is opposite to parity of $k$ and the parity of $B_{(k)}$ in $B=\sum_{k} B_{(k)}\in \Omega^\bt(M,E^*)$ is same as parity of $n-k$.

The right hand side of (\ref{RS torsion as BV integral}) is the partition function of the abelian $BF$ theory on $M$, twisted by the flat bundle $E$. Proper way to understand this functional integral leads to the definition of the Ray-Singer torsion on the left hand side \cite{Schwarz79}.

By analogy, the right hand side of (\ref{torsion as BV integral}) in the case of R-torsion can be viewed as the partition function of a discrete abelian $BF$ theory living on (chains of) a CW-complex, twisted by a
local system.

\subsubsection{In Chern-Simons theory}
For $M$ a rational homology 3-sphere and $G$ a compact simply-connected simple Lie group, the semi-classical contribution of an acyclic flat connection $A_0$ to the Chern-Simons partition function
\begin{multline} Z_{CS}(M,G,\hbar)=\\
=\int_{G-\mbox{connections on }M}\mathcal{D}A\quad \exp
\frac{i}{\hbar}\underbrace{\int_M\tr\left(\frac{1}{2} A\wedge dA+\frac{1}{3}A\wedge A\wedge A\right)}_{S_{CS}(A)}
\label{CS path integral}
\end{multline}
is given by
\be [Z_{CS}(M,G,\hbar)]_{A_0} =e^{\frac{i}{\hbar}S_{CS}(A_{0})}\cdot T_a(M,A_0)^{\frac12}\cdot e^{i\phi}\cdot (1+\underbrace{O(\hbar)}_{\mbox{higher-loop corrections}}) \label{CS semi-classical}\ee
where the square root of the analytic torsion\footnote{$T_a(M,A_0)$ is evaluated for the adjoint bundle of the trivial $G$-bundle over $M$ with the connection $A_0$ in adjoint representation.} appears from the Gaussian part of (\ref{CS path integral}) (with integrand evaluated in the neighborhood of $A_0$). The phase $e^{i\phi}$ is complicated and depends on a choice of additional geometric structure -- the {\it framing} of $M$, see \cite{Witten89} for details.

\subsection{``Torsion counts circles''}
Recall the Poincar\'e-Hopf theorem that states that for a manifold $M$ and a vector field $v$ on it with isolated zeroes, the Euler characteristic counts zeroes of $v$:
\be \chi(M)=\sum_{\mbox{zeroes of }v}\mr{ind}_x(v) \label{Poincare-Hopf}\ee

D. Fried \cite{Fried87} proved that for certain classes of manifolds $M$ endowed with a local system $E$, there is an open subset in the space of flows $\phi$ on $M$ for which the following ``Lefschetz formula'' holds:
\be T(M,E)=\left|\prod_\gamma \det (\mr{Id}- \Delta(\gamma)\,\rho_E(\gamma))^{-(-1)^{u(\gamma)}} \right| \label{Lefschetz formula for T}\ee
where the product runs over closed orbits $\gamma$ of the flow $\phi$, $\rho_E(\gamma)$ is the holonomy of $E$ around $\gamma$, $u(\gamma)$ is the dimension of the unstable bundle over $\gamma$ (spanned by eigenvectors of the return map $T_x M/T_x \gamma\ra T_x M/T_x \gamma$ around $\gamma$ with eigenvalues of norm $>1$); $\Delta(\gamma)=\pm 1$ depending on whether or not the unstable bundle is orientable.

Formula (\ref{Lefschetz formula for T}) is not always true, however it was proven in \cite{Fried87} in a set of cases, including:
\begin{itemize}
\item Fiber bundles over $S^1$.
\item $S^1$-bundles.
\item Complement of a link in $S^3$.
\item Constant curvature spaces.
\item Thurston's geometric 3-manifolds.
\end{itemize}

Morally, (\ref{Lefschetz formula for T}) is very similar to the Poincar\'e-Hopf formula for the Euler characteristic (\ref{Poincare-Hopf}), but in case of torsion, the local contributions come from closed orbits of a generic vector field, instead of zeroes.

\lec{Lecture 3, 06.03.2014}

\section{Whitehead group}
(Reference: \cite{Milnor66}, \S\S2,6.)
For a unital ring $A$, and each $n\geq 1$, one has the group of invertible (=non-singular) matrices $n\times n$, $GL(n,A)$ and an inclusion $GL(n,A)\hra GL(n+1,A)$ by
$$M\mapsto \left(\begin{array}{ll}M & 0\\0& 1 \end{array}\right)$$
Direct limit (union) of the sequence of inclusions
$$GL(1,A)\subset GL(2,A)\subset\cdots$$
is called the infinite general linear group $GL(A)$.

A matrix is called {\it elementary} if it coincides with the identity matrix  except for one off-diagonal element.
Notation: $E_{ij}$ is the matrix with  entry $1$ in place $(i,j)$ and zeroes elsewhere.

\begin{lemma}[J. H. C. Whitehead]
The subgroup $E(A)\subset GL(A)$ generated by all elementary matrices coincides with the commutator subgroup of $GL(A)$.
\end{lemma}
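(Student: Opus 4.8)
The plan is to prove the two inclusions $E(A) \subseteq [GL(A), GL(A)]$ and $[GL(A), GL(A)] \subseteq E(A)$ separately. The engine driving both directions is the \emph{Whitehead lemma} on commutators, encapsulated in the identity, valid in $GL(n+k)$ when the blocks have compatible sizes,
\[
\begin{pmatrix} M & 0 \\ 0 & M^{-1} \end{pmatrix}
= \begin{pmatrix} I & M \\ 0 & I \end{pmatrix}
  \begin{pmatrix} I & 0 \\ -M^{-1} & I \end{pmatrix}
  \begin{pmatrix} I & M \\ 0 & I \end{pmatrix}
  \begin{pmatrix} 0 & -I \\ I & 0 \end{pmatrix},
\]
together with the observation that each $2\times 2$ block triangular matrix $\left(\begin{smallmatrix} I & X \\ 0 & I\end{smallmatrix}\right)$ and $\left(\begin{smallmatrix} I & 0 \\ Y & I\end{smallmatrix}\right)$ is a product of elementary matrices (expand $X$, resp. $Y$, entrywise and use that $I + x E_{ij}$ is elementary), and that the permutation-type block $\left(\begin{smallmatrix} 0 & -I \\ I & 0\end{smallmatrix}\right)$ also lies in $E(A)$ (it is a product of the three block-triangular factors above with $M = I$).

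For the inclusion $E(A) \subseteq [GL(A), GL(A)]$, I would show each generator $I + a E_{ij}$ (with $i \ne j$) is a commutator in $GL(A)$. The standard trick: pick a third index $k \notin \{i,j\}$ (available after one stabilization if necessary) and use the Steinberg-type relation
\[
[\,I + a E_{ik},\; I + E_{kj}\,] = I + a E_{ij},
\]
which is a direct matrix computation since $E_{ik}E_{kj} = E_{ij}$ while the cross terms $E_{kj}E_{ik}$, $E_{ik}E_{ik}$, $E_{kj}E_{kj}$ all vanish. Hence every generator of $E(A)$ lies in the commutator subgroup, so $E(A) \subseteq [GL(A), GL(A)]$.

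For the reverse inclusion $[GL(A), GL(A)] \subseteq E(A)$, since $E(A)$ is a subgroup it suffices to show the commutator $ghg^{-1}h^{-1}$ of any $g, h \in GL(n,A)$ lies in $E(A)$. Viewing $g,h$ inside $GL(3n, A)$ via the stabilization, write
\[
\begin{pmatrix} ghg^{-1}h^{-1} & & \\ & I & \\ & & I \end{pmatrix}
= \begin{pmatrix} g & & \\ & g^{-1} & \\ & & I \end{pmatrix}
  \begin{pmatrix} h & & \\ & I & \\ & & h^{-1} \end{pmatrix}
  \begin{pmatrix} (hg)^{-1} & & \\ & g & \\ & & h \end{pmatrix},
\]
and observe that each of the three right-hand factors is of the form $\mathrm{diag}(M, M^{-1}, I)$ up to reordering the blocks (the reordering is itself a block permutation, hence in $E(A)$ by the remark above), so each factor lies in $E(A)$ by the Whitehead identity. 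Therefore the commutator lies in $E(A)$.

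The main obstacle — really the only subtlety — is bookkeeping the stabilizations: one must make sure that all the identities above are invoked inside a single large $GL(N,A)$ (so that a third index $k$ is available for the Steinberg relation, and so that the three diagonal blocks of equal size $n$ fit), and then recall that the inclusions $GL(n,A) \hookrightarrow GL(n+1,A)$ send elementary matrices to elementary matrices and commutators to commutators, so passing to the direct limit $GL(A)$ preserves both $E(A)$ and $[GL(A),GL(A)]$. Once this is set up, everything reduces to the three displayed matrix identities, each checked by a one-line multiplication.
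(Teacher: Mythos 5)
Your overall architecture is the same as the paper's: the inclusion $E(A)\subseteq[GL(A),GL(A)]$ via the commutator identity $[\,I+aE_{ik},\,I+E_{kj}\,]=I+aE_{ij}$ is exactly the identity used there, and the reverse inclusion is again reduced to the fact that matrices of the form $\mathrm{diag}(M,M^{-1})$ lie in $E(A)$, proved by a Whitehead-type factorization (you use three block-unipotent factors and a signed block permutation, the paper uses four purely block-triangular factors; both are correct, and your observation that the signed swap $\bigl(\begin{smallmatrix}0&-I\\ I&0\end{smallmatrix}\bigr)$ lies in $E(A)$ is fine).

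There is, however, a concrete error in your $3n\times 3n$ step: the third factor $\mathrm{diag}\bigl((hg)^{-1},g,h\bigr)$ is \emph{not} of the form $\mathrm{diag}(M,M^{-1},I)$ up to reordering the blocks. Its blocks are $g^{-1}h^{-1}$, $g$, $h$; generically none of these is the identity and no two of them are mutually inverse, so the claim you invoke to place this factor in $E(A)$ fails as stated, and the proof breaks at that point. The repair is easy: factor it further as $\mathrm{diag}\bigl((hg)^{-1},g,h\bigr)=\mathrm{diag}(g^{-1},g,I)\cdot\mathrm{diag}(h^{-1},I,h)$, each factor now being of the required form up to a block permutation (which you have already shown lies in $E(A)$, and conjugation by it merely permutes the blocks). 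Alternatively, avoid the issue entirely by working in $GL(2n,A)$ as the paper does, using $\mathrm{diag}(ghg^{-1}h^{-1},I)=\mathrm{diag}(g,g^{-1})\,\mathrm{diag}(h,h^{-1})\,\mathrm{diag}\bigl((hg)^{-1},hg\bigr)$, where all three factors are literally of the form $\mathrm{diag}(M,M^{-1})$ and no block reordering is needed. Your bookkeeping of stabilizations is otherwise correct.
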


\begin{proof}
(i) For every triple of pairwise distinct $i,j,k$, one has the identity
$$(I+aE_{ij})(I+E_{jk})(1-aE_{ij})(I-E_{jk})=(I+aE_{ik})$$
Therefore each elementary matrix in $GL(n,A)$ is a commutator if $n\geq 3$.

(ii) The following identitites show that each commutator $XYX^{-1}Y^{-1}$ in $GL(n,A)$ can be expressed as a product of elementary matrices in $GL(2n,A)$.
\be \left(\begin{array}{ll} XYX^{-1}Y^{-1} & 0 \\ 0 & 1\end{array}\right)= \left(\begin{array}{ll} X & 0 \\ 0 & X^{-1}\end{array}\right) \left(\begin{array}{ll} Y & 0 \\ 0 & Y^{-1}\end{array}\right) \left(\begin{array}{ll} (YX)^{-1} & 0 \\ 0 & YX\end{array}\right) \ee
\be
\left(\begin{array}{ll} X & 0 \\ 0 & X^{-1}\end{array}\right)=\left(\begin{array}{ll} I & X \\ 0 & I\end{array}\right) \left(\begin{array}{ll} I & 0 \\ I-X^{-1} & I\end{array}\right) \left(\begin{array}{ll} I & -I \\ 0 & I\end{array}\right) \left(\begin{array}{ll} I & 0 \\ I-X & I\end{array}\right) \label{Whitehead lemma diag(X, X_inverse)}\ee
\be \left(\begin{array}{ll} I & X \\ 0 & I\end{array}\right)=\prod_{i=1}^n\prod_{j=n+1}^{2n}(1+x_{ij}E_{ij})
\ee
\end{proof}

It follows that $E(A)\subset GL(A)$ is a normal subgroup.
\begin{definition}
The quotient
$$K_1(A)=GL(A)/E(A)$$
is called the Whitehead group of the ring $A$.
\end{definition}

$K_1$ is a covariant functor: a ring homomorphism $\phi:A\ra A'$ induces a homomorphism of abelian groups $\phi_*: K_1(A)\ra K_1(A')$.

For $A$ a  {\it commutative} ring, the determinant is a group homomorphism $\det: GL(A)\ra GL(1,A)=A^*$ with $A^*$ the group of units (=group of invertible elements) of $A$. This induces the splitting of $GL(A)\simeq A^*\times SL(A)$ where $SL(A)\subset GL(A)$ is the subgroup of matrices of determinant $1$. The quotient $SK_1(A)=SL(A)/E(A)$ is called the {\it special Whitehead group} and one has the splitting
$$K_1(A)=A^*\oplus SK_1(A)$$
For many important classes of rings, $SK_1$ is trivial, i.e. every matrix of determinant $1$ can be reduced to the identity matrix by elementary row operations (=left multiplication by elementary matrices).

Some examples:
\begin{example}For $A=F$ a field, $SK_1(F)=\{1\}$ and $K_1(F)=F^*=F-\{0\}$ the multiplicative group of $F$.\end{example}

\begin{example}For $A=\bZ$ the integers, $SK_1(\bZ)=\{1\}$, so $K_1(\bZ)=\bZ^*=\{\pm 1\}$.
More generally, if $A$ is a commutative ring with euclidean algorithm, $SK_1(A)=\{1\}$.\end{example}

\begin{example}[\cite{Cohen}, p.45] For $A=\bZ[\bZ_p]$, for $p$ an integer, $SK_1=\{1\}$.\end{example}

Some examples with $SK_1\neq \{1\}$:
\begin{example}[\cite{Milnor66}] $A=\bR[x,y]/(x^2+y^2-1)$, here the matrix
\be \mleft{rl}x & y\\-y & x\mright\in SL(A) \label{Milnor's 2x2 matrix}\ee represents a nontrivial element of $SK_1(A)$.\footnote{Proof (Milnor \cite{Milnor66}, p. 422):
matrix (\ref{Milnor's 2x2 matrix}) defines a map $m:S^1\ra SL(2,\bR)\subset SL(\bR)$ (which is homotopic to $O(\bR)$); elementary row operations on a matrix induce homotopy equivalences of such maps. On the other hand, $\pi_1 SL(\bR)\simeq \bZ_2$ and $m$ represents the generator of $\pi_1$ -- a non-contractible loop.
} More precisely, $SK_1(A)\simeq \bZ_2$.\footnote{\textbf{Exercise:} show that the square of the matrix (\ref{Milnor's 2x2 matrix}) represents the unit in $SK_1(A)$ by explicitly factorizing it into a product of elementary matrices in $GL(A)$.}
\end{example}
\begin{example}[\cite{Milnor66}]
$A=\bZ[\bZ\times \bZ_{23}]$. $SK_1 (A)=\bZ_3\oplus (?)$. The second term is an unknown (as of \cite{Milnor66}) $23$-primary group.\end{example}
\begin{example}[\cite{Cohen}, after R. C. Alperin, R. K. Dennis and M. R. Stein] Even for a finite abelian $G$, $SK_1(\bZ[G])$ is typically nonzero. E.g. for $A=\bZ[\bZ_2\oplus (\bZ_3)^3]$, $SK_1\simeq (\bZ_3)^6$.\end{example}

Examples with $A$ noncommutative:
\begin{example}
For $A=F$ a skew-field (= a division ring) and $U=F^*$ its group of units, $K_1(F)$ is the abelianization $U/[U,U]$. The natural homomorphism $GL(n,F)\ra K_1(F)$ is the ``noncommutative determinant'' of Dieudonn\'e.\end{example}
\begin{example}
For $M_n(A)$ the ring of all $n\times n$ matrices over a ring $A$,
\be K_1(M_n(A))=K_1(A) \label{K_1(M_n(A))}\ee
Indeed, since a $k\times k$ matrix with entries in $M_n(A)$ can be viewed as a $kn\times kn$ matrix with entries in $A$, we have $GL(k,M_n(A))\simeq GL(kn, A)$. Taking the limit $n\ra\infty$ and abelianizing, we get (\ref{K_1(M_n(A))}).
\end{example}

\begin{definition}
$K_1(A)$ contains a $\bZ_2$-subgroup as the image of $\{\pm 1\}\in GL(1,A)$. One calls $\bar K_1(A)=K_1(A)/\{\pm 1\}$ the {\it reduced} Whitehead group of $A$.
\end{definition}

\begin{example}$\bar K_1(\bZ)=\{1\}$.\end{example}

\begin{example}$\bar K_1(\bR)\simeq \bR_+$ with explicit isomorphism given by
\be (a_{ij})\mapsto |\det(a_{ij})|\label{bar K_1(R)}\ee
\end{example}

Motivation for the reduced Whitehead group: two matrices that differ by a permutation of rows are mapped to the same element of $\bar K_1(A)$.

\begin{definition}
For $A=\bZ[G]$ the group ring of group $G$, the Whitehead group of $G$ is defined as
$$Wh(G)=\bar K_1(\bZ[G])/\mr{im}(G)$$
where we are taking the quotient over the image of $G\subset GL(1,\bZ[G])$ in $\bar K_1$, i.e. we have an exact sequence
$$1\ra G/[G,G]\ra \bar K_1(\bZ[G])\ra Wh(G)\ra 1$$
\end{definition}

$Wh$ is a covariant functor from groups to abelian groups. The image of an inner automorphism $\phi:G\ra G$ is $\phi_*: Wh(G)\ra Wh(G)$ (since $\phi: g\mapsto h g h^{-1}$ induces a conjugation on $GL(n,\bZ[G])$ by the matrix $\mr{diag}(h,h,\ldots)$ which induces the identity automorphism on the abelianization).

Examples:
\begin{example}$Wh(\{1\})=\{1\}$ (since $K_1(\bZ)=\{\pm 1\}$).\end{example}
\begin{example}$Wh(\bZ)=\{1\}$ (Higman; see \cite{Cohen} pp. 42--43 for the proof).\end{example}
\begin{example} $Wh(\bZ\oplus\cdots \oplus \bZ)=\{1\}$ (Bass-Heller-Swan, a rather difficult theorem).\end{example}
\begin{example}
If $G$ is an abelian group which contains an element $x$ of order $q\not\in\{1,2,3,4,6\}$, then $Wh(G)\neq \{1\}$ (\cite{Cohen}, pp. 44--45). E.g. for $G=\bZ_5$ and $x$ the generator, $u=x+x^{-1}-1\in \bZ[\bZ_5]$ is a nontrivial unit (since $(x+x^{-1}-1)(x^2+x^{-2}-1)=1$).\end{example}
\begin{example}
$Wh(\bZ_p)$ is a free abelian group of rank $[p/2]+1-\delta(p)$ where $\delta(p)$ is the number of divisors of $p$. In particular, $Wh(\bZ_p)=\{1\}$ iff $p\in\{1,2,3,4,6\}$.
\item $Wh$ of any free group is trivial (Stallings, Gersten).
\end{example}

$Wh$ does not behave well w.r.t. direct products, e.g. $Wh(\bZ_3)=Wh(\bZ_4)=\{1\}$, but $Wh(\bZ_3\times \bZ_4)\simeq \bZ$.
However, for a free product one has
$$Wh(G\ast G')=Wh(G)\oplus Wh(G')$$
(Stallings).

\subsection{Aside}
\subsubsection{On $K_0$}
Related to $K_1$ is the functor $K_0$: for a ring $A$, $K_0(A)$ is the abelian group with one generator $\langle P\rangle$ for each finitely generated projective module $P$ over $A$ (considered modulo isomorphisms) and one relation $\langle P\oplus Q \rangle= \langle P \rangle + \langle Q \rangle$ for each pair of f. g. projective modules. (In other words, $K_0(A)$ is the Grothendieck group of the monoid of isomorphism classes of f. g. projective modules over $A$). For $h: A\ra A'$ a homomorphism of rings, one has a homomorphism of abelian groups $h_*:K_0(A)\ra K_0(A')$ sending
$$h_*:\quad \langle P\rangle\mapsto \langle A'\otimes_A P\rangle$$
In case when $A$ is commutative, $K_0(A)$ becomes a commutative ring, with
$$\langle P\rangle \langle Q\rangle= \langle P\otimes_A Q\rangle$$

For $A$ commutative, $K_1(A)$ is a module over $K_0(A)$, i.e. there is a product operation
$$K_0(A)\otimes K_1(A)\ra K_1(A)$$
sending $$\langle P\rangle \otimes \langle \underbrace{\alpha}_{\in \mr{Aut}(M)}\rangle \mapsto \langle \underbrace{\mr{id}_P\otimes \alpha}_{\in \mr{Aut}(P\otimes M)}\rangle$$
with $M$ a free module over $A$.

\subsubsection{Relation to topological K-theory}
For $X$ a compact space and $A=C^0_\bC(X)$ the ring of continuous complex-valued functions on $X$,
a f. g. projective module is isomorphic to the space of sections of a unique complex vector bundle over $X$. Hence
$$K_0(C^0_\bC(X))\simeq K^0(X)$$
-- the Atiyah-Hirzebruch group of virtual complex vector bundles over $X$.

Similarly, the special Whitehead group
$$SK_1(C^0_\bC(X))=[X,SL(\bC)]$$
-- the group of homotopy classes of maps from $X$ to $SL(\bC)$, which in turn is the group\footnote{Product of (equivalence classes of) matrices in $SK_1$ corresponds to the Whitney sum of vector bundles, because by relation (\ref{Whitehead lemma diag(X, X_inverse)}), modulo elementary matrices one has the equivalence $\mleft{cl}AB & 0 \\ 0 & 1\mright \sim \mleft{cl}AB & 0 \\ 0 & 1\mright\cdot \mleft{cl}B^{-1} & 0 \\ 0 & B\mright= \mleft{cl}A & 0 \\ 0 & B\mright$. On the r.h.s. we have the transition function for the Whitney sum of vector bundles defined by $A$ and $B$.} of complex vector bundles with structure group $SL(\bC)$ over the suspension $SX$ modulo stable equivalence of vector bundles. This group is closely related to $K^{-1}(X)$.

Similar remarks hold for real-valued functions $C^0_\bR(X)$ and the real topological $K$-theory $KO^0(X)$, $KO^{-1}(X)$.

\lec{Lecture 4, 13.03.2014}

\subsection{Conjugation}
In a group ring $\bZ[G]$ one has an involutive anti-automorphism\footnote{I.e. $\overline{x+y}=\bar{x}+\bar{y}$ and $\overline{xy}=\bar{y}\bar{x}$.}
$$x=\underbrace{\sum n_i g_i}_{\in\bZ[G]}\quad \mapsto \bar{x}=\underbrace{\sum n_i g_i^{-1}}_{\in\bZ[G]}$$
It induces an anti-automorphism on matrices
$$A=(a_{ij})\in GL(\bZ[G])\quad \mapsto A^\dagger=(\bar{a}_{ji})\in GL(\bZ[G])$$
which in turn induces involutive automorphisms on the abelianization $K_1(\bZ[G])$ and on the Whitehead group $Wh(G)$. One calls this involution on $Wh(G)$, sending $\omega\mapsto \bar\omega$, the {\it conjugation}.

\section{Torsion of a chain complex}
\subsection{Milnor's definition of torsion}
Let $A$ be a unital ring.\footnote{A mild technical assumption: free modules of ranks $r$ and $s$ over $A$ are not isomorphic for $r\neq s$. This is true e.g. if there is a ring homomorphism $A\ra A'$ for some commutative ring $A'$. An example of a ring where this assumption fails: take $A$ to be the ring of matrices over $\bR$ of infinite (countable) size with finitely many nonzero entries in every column. Then $A\oplus A\simeq A$ as left $A$-modules, where the isomorphism is given by interlacing the columns.}
By an $A$-module we mean a finitely generated left $A$-module.

For $F$ a free module over $A$ and $b=(b_1,\ldots,b_k)$ and $c=(c_1,\ldots,c_k)$ two bases in $F$, one has $c_i=\sum_j a_{ij} b_j$, $(a_{ij})\in GL(k,A)$ the transition matrix. Denote $[c/b]$ the image of $(a_{ij})$ in $\bar K_1(A)$.

\begin{definition}
We call bases $b$ and $c$ in a free module {\it equivalent} if $[c/b]=1$.
\end{definition}

Example: bases that differ by the ordering of basis vectors are equivalent.\footnote{It is important that we define the equivalence via the image of $(a_{ij})$ in the {\it reduced} Whitehead group.}

Example: bases that differ by a triangular change (i.e. the transition matrix $(a_{ij})$ is upper triangular with units on the diagonal) are equivalent.

Given a short exact sequence of free modules
$$0\ra E\ra F\ra G\ra 0$$
and bases $e=(e_1,\ldots, e_k)$, $g=(g_1,\ldots, g_l)$ in $E$ and $G$, one can construct a basis $``eg''$ in $F$ as
\be eg=(e_1,\ldots,e_k,g'_1,\ldots, g'_l) \label{product basis}\ee
for some lifts $g'_i$ of $g_i$ from $G$ to $F$. The class of $eg$ is well-defined (independent of the lifts).

If $\bar e, \bar g$ - another pair of bases in $E$ and $G$, one has
$$[\bar e \bar g/eg]=[\bar e/e]\cdot [\bar g/g]$$

Given a inclusions of free modules $F_0\subset F_1\subset F_2$ and bases $b_1$ for $F_1/F_0$ and $b_2$ for $F_2/F_1$, we obtain the basis $b_1 b_2$ for $F_2/F_0$ by applying the construction above to the short exact sequence
$$0\ra F_1/F_0\ra F_2/F_0\ra F_2/F_1 \ra 0$$

More generally, for a sequence of inclusions o free modules $F_0\subset F_1\subset \cdots\subset F_k$ and given bases $b_i$ for $F_i/F_{i-1}$, there is a basis $b_1 b_2\cdots b_k$ for $F_k/F_0$, well defined up to equivalence.

Let $$C_n\xra{\dd} C_{n-1}\xra{\dd}\cdots\xra{\dd} C_1\xra{\dd} C_0$$
be a chain complex of $A$-modules, such that each $C_i$ is free with a preferred basis $c_i$, and each homology group $H_i(C)$ is free with a preferred basis $h_i$.\footnote{We may have $H_i=0$, then by convention the zero module comes with a unique basis.} Denote $B_i,Z_i\subset C_i$ the image of $\dd: C_{i+1}\ra C_i$ and the kernel of $\dd: C_{i}\ra C_{i-1}$ respectively (``boundaries'' and ``cycles'').

\begin{hypothesis}\footnote{We will show (Section \ref{sec: stably free modules} below) that this hypothesis can in fact be dropped.}\label{hypothesis: B_i are free} Each $B_i$ is a free module.
\end{hypothesis}

Choose some bases $b_i$ for $B_i$. We have inclusions of free modules $0\subset B_i\subset Z_i\subset C_i$ with bases $b_i$ for $B_i/0$, $h_i$ for $Z_i/B_i=H_i$, $b_{i-1}$ for $\dd: C_i/Z_i\stackrel{\sim}{\ra} B_{i-1}$. This yields a combined basis $b_i h_i b_{i-1}$ on $C_i$.

\begin{definition}
We define the torsion\footnote{This is the inverse of Milnor's convention for torsion and agrees with 
Turaev \cite{Turaev01}. Our convention is consistent with (\ref{torsion = sdet}). 
} as
\be \tau(C)=\prod_i [c_i/(b_i h_i b_{i-1})]^{(-1)^i}\quad \in \bar K_1(A)\label{def: torsion}\ee
\end{definition}

For a different choice of bases $\bar b_i$ for $B_i$, the torsion changes by
$$\prod_i ([\bar b_i/b_i]\cdot [\bar b_{i-1}/ b_{i-1}])^{-(-1)^i}=1$$
(a telescopic product), hence $\tau(C)$ does not depend on the choice of bases $b_i$. On the other hand, it does depend on $c_i$ and $h_i$. E.g. if one makes a change $c_i\mapsto \bar c_i$ and $h_i\mapsto \bar h_i$, the torsion changes as
$$\tau\mapsto \tau\cdot \frac{\prod_i [\bar c_i/c_i]^{(-1)^i}}{\prod_i [\bar h_i/h_i]^{(-1)^i}}$$

\subsection{Stably free modules}
\label{sec: stably free modules}
\begin{definition}
An  $A$-module $M$ is called {\it stably free} if for some (finitely generated) free module $F$, $M\oplus F= F'$ is free.
\footnote{
Example: take $A=C^0(S^2)$ to be the ring of continuous real-valued functions on the $2$-sphere, which we view as the unit sphere in $\bR^3$. Sections of the tangent bundle $TS^2\ra S^2$ comprise a non-free, but stably free module, since $TS^2$ is a non-trivial bundle, but its sum with the normal bundle (which is trivial) is a trivial rank $3$ bundle. On the other hand, the module of sections of the $O(1)$ bundle over $\bC P^1$, realized as a rank $2$ real vector bundle $O(1)_\bR$ over $S^2$, is projective but not stably free (since the 2nd Stiefel-Whitney class of $O(1)_\bR$ is nonzero).
} The rank of $M$ is then defined as $rk(M)=rk(F')-rk(F)$.
\end{definition}

\begin{lemma}\label{Lm: stably free SES} Let $0\ra X\ra Y\ra Z\ra 0$ be a short exact sequence of $A$-modules. If $Y$ and $Z$ are stably free, then $X$ is also stably free.
\end{lemma}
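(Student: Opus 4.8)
The plan is to reduce to the split case and then run a short bookkeeping argument with free complements.

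First I would recall that any stably free module is projective: if $Z\oplus G$ is free for some finitely generated free module $G$, then $Z$ is a direct summand of a free module, hence projective. Consequently the given short exact sequence $0\ra X\ra Y\ra Z\ra 0$ splits, so we obtain an isomorphism $Y\cong X\oplus Z$. (This also shows $X$ is finitely generated, being a direct summand of the finitely generated module $Y$, so it is legitimately an $A$-module in the sense used here.)

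Next I would unwind the two stable-freeness hypotheses. Since $Y$ is stably free, there is a finitely generated free module $F$ with $Y\oplus F$ free; combining with the splitting, $X\oplus Z\oplus F\cong Y\oplus F$ is free. Since $Z$ is stably free, there is a finitely generated free module $G$ with $Z\oplus G$ free. Now add $G$ to the previous relation:
$$X\oplus (Z\oplus G)\oplus F\ \cong\ (X\oplus Z\oplus F)\oplus G$$
is a direct sum of two free modules, hence free. Reading the left-hand side as $X\oplus\big((Z\oplus G)\oplus F\big)$ exhibits $X$, summed with the finitely generated free module $(Z\oplus G)\oplus F$, as a free module. Therefore $X$ is stably free.

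There is no genuinely hard step; the only point requiring care is keeping every auxiliary free module finitely generated, which holds automatically since each is a finite direct sum of the finitely generated free modules furnished by the hypotheses, matching the definition of stably free. If one also wants the rank statement, it follows by comparing ranks in $X\oplus Z\oplus F\cong Y\oplus F$ using the convention that ranks of free modules are well defined, giving $rk(X)=rk(Y)-rk(Z)$.
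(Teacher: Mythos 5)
Your proof is correct and follows essentially the same route as the paper: observe that $Z$ is projective so the sequence splits as $Y\cong X\oplus Z$, then add the free complements of $Z$ and $Y$ to exhibit $X$ plus a free module as free. The only cosmetic difference is that the paper uses a single auxiliary free module $F$ for both stabilizations, while you keep two ($F$ and $G$) and sum them, which changes nothing.
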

\begin{proof} $Z$ is projective, hence the sequence splits, so that $Y\simeq X\oplus Z$. By assumption, there exist free modules $F, F', F''$ such that $Z\oplus F= F'$, $Y\oplus F= F''$. Hence, $X\oplus F'\simeq F''$.
\end{proof}

\begin{lemma} \label{Lm: B_i and Z_i are stably free}
Let $C_n\ra C_{n-1}\ra\cdots \ra C_1\ra C_0$ be a chain complex of free $A$-modules and assume that homology modules $H_i$ are free. Then all the boundary and cycle modules $B_i$, $Z_i$ are stably free.
\end{lemma}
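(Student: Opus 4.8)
The plan is to prove the statement by downward induction on the degree $i$, peeling off one short exact sequence at a time and invoking Lemma~\ref{Lm: stably free SES} at each step. The point is that the chain complex breaks into short exact sequences
$$0\ra Z_i\ra C_i\xra{\dd} B_{i-1}\ra 0$$
(since $B_{i-1}=\mr{im}(\dd: C_i\ra C_{i-1})\simeq C_i/Z_i$) and
$$0\ra B_i\ra Z_i\ra H_i\ra 0,$$
and the modules appearing on the right of these sequences are, in the order we process them, already known to be stably free.

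First I would start at the top of the complex: $B_n=0$ is free, hence stably free. Then $Z_n=\ker(\dd:C_n\ra C_{n-1})$ sits in $0\ra B_n\ra Z_n\ra H_n\ra 0$ with $B_n$ and $H_n$ stably free, but actually it is cleaner to run the induction the other way and use the first sequence first. So the induction hypothesis at stage $i$ is: $Z_j$ and $B_j$ are stably free for all $j>i$. To get $B_{i}$: we know $B_{i}\subset Z_{i}\subset C_{i}$, but more directly $B_i = \mr{im}(\dd: C_{i+1}\ra C_i)\simeq C_{i+1}/Z_{i+1}$, so applying Lemma~\ref{Lm: stably free SES} to $0\ra Z_{i+1}\ra C_{i+1}\ra B_i\ra 0$ is the wrong direction — I instead need $C_{i+1}$ free (stably free) and $Z_{i+1}$ stably free by induction, which by the splitting $C_{i+1}\simeq Z_{i+1}\oplus B_i$ shows $B_i$ is a direct summand of a stably free module, hence stably free (this requires the small observation that a direct summand of a stably free module need not be stably free in general — but here $Z_{i+1}$ stably free and $Z_{i+1}\oplus B_i$ free forces, after adding a free module to make $Z_{i+1}$ free, that $B_i$ is stably free; concretely if $Z_{i+1}\oplus F\simeq F'$ then $B_i\oplus F'\simeq B_i\oplus Z_{i+1}\oplus F\simeq C_{i+1}\oplus F$, free). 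Then $Z_i$ fits into $0\ra Z_i\ra C_i\ra B_{i-1}\ra 0$... again a direction issue, so instead use $0\ra B_i\ra Z_i\ra H_i\ra 0$: here $Z_i$ is the middle term, which is not what Lemma~\ref{Lm: stably free SES} controls. So for $Z_i$ I would use $0\ra Z_i\ra C_i\ra B_{i-1}\ra 0$ only once I know $B_{i-1}$ is stably free, which is one step later.

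Rethinking the bookkeeping, the clean order is: (1) $B_n=0$ stably free; (2) for $i=n,n-1,\dots,1,0$ in turn, deduce $Z_i$ stably free from $Z_i\simeq$ kernel of $C_i\twoheadrightarrow B_{i-1}$ — wait, that needs $B_{i-1}$ first. The genuinely clean induction is \emph{upward} on a combined quantity: assume $B_{i-1}$ stably free (base case $B_{-1}=0$); then $0\ra Z_i\ra C_i\ra B_{i-1}\ra 0$ has $C_i$ free and $B_{i-1}$ stably free, so by Lemma~\ref{Lm: stably free SES}, $Z_i$ is stably free; then $0\ra B_i\ra Z_i\ra H_i\ra 0$ has $Z_i$ stably free and $H_i$ free, so by Lemma~\ref{Lm: stably free SES} again, $B_i$ is stably free, completing the induction step. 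This handles every $B_i$ and $Z_i$.

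The main obstacle is purely organizational: getting the two families of short exact sequences oriented so that Lemma~\ref{Lm: stably free SES} applies (it concludes stable freeness of the \emph{sub}-module, given the middle and quotient), and choosing the induction variable ($i$ increasing, tracking $B_{i-1}$) so the base case $B_{-1}=0$ is trivial and each inductive step consumes exactly the two sequences $0\ra Z_i\ra C_i\ra B_{i-1}\ra 0$ and $0\ra B_i\ra Z_i\ra H_i\ra 0$. There is no hard analysis here; once the sequences and the induction are set up correctly, each step is an immediate application of Lemma~\ref{Lm: stably free SES} together with the hypotheses that $C_i$ is free and $H_i$ is free.
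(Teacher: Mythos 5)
Your final formulation --- upward induction with base case $B_{-1}=0$, using $0\ra Z_i\ra C_i\ra B_{i-1}\ra 0$ to conclude $Z_i$ is stably free and then $0\ra B_i\ra Z_i\ra H_i\ra 0$ to conclude $B_i$ is stably free, each time via Lemma \ref{Lm: stably free SES} --- is correct and is exactly the induction the paper's (terse) proof intends. The only caveat is in your discarded downward attempt: the splitting $C_{i+1}\simeq Z_{i+1}\oplus B_i$ is not available a priori (it requires $B_i$ projective, which is essentially what you are trying to prove), but since your final argument never uses it, the proof stands as written.
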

\begin{proof}
Follows by induction from Lemma \ref{Lm: stably free SES} and the exact sequences
$$
\begin{array}{ccccccccc}
0&\ra& Z_i& \ra& C_i& \xra{\dd}& B_{i-1}&\ra& 0 \\
0&\ra& B_i&\ra& Z_i& \ra& H_i& \ra& 0
\end{array}
$$
\end{proof}

Let $F_i$ be a standard free module of rank $i$ with standard basis $f_1,\ldots, f_i$. One has the inclusion $F_i\subset F_{i+1}$ as a submodule generated by the first $i$ basis elements.

\begin{definition}
An {\it $s$-basis} $b$ for a stably free module $M$ is a basis $(b_1,\ldots,b_{r+t})$ for some $M\oplus F_t$, where $t\in \bN$.
\end{definition}

Given two $s$-bases $b=(b_1,\ldots,b_{r+t})$ and $c=(c_1,\ldots, c_{r+u})$ for $M$, the symbol
$$[c/b]\in \bar K_1(A)$$
is defined as follows. Choose an integer $v\geq \mr{max}(t,u)$ and extend $b$ and $c$ to bases
in $M\oplus F_v$ by setting $b_{r+i}=0\oplus f_i$ for $i>t$ and $c_{r+j}=0\oplus f_j$ for $j>u$.
Denoting the extended bases in $M\oplus F_v\simeq F_{r+v}$ by $\tilde b$, $\tilde c$, we set
$$[c/b]:=[\tilde c/\tilde b]\quad \in \bar K_1(A)$$
-- the class of the transition matrix from $\tilde b$ to $\tilde c$.\footnote{In other words: an $s$-basis is a choice of the isomorphism  $\alpha: M\oplus F_t\stackrel{\sim}{\ra} F_{r+t}$ to a free module with standard basis. Given two such isomorphisms $\alpha$ and $\beta$ and using the standard inclusions $F_{r+t}\subset F_N\supset F_{r+u}$, $[c/b]$ is the class of $\alpha\beta^{-1}\in \mr{Aut}(F_N)$ in $\bar K_1(A)$, for $N$ sufficiently large.}

Given a short exact sequence of modules
\be 0\ra X\ra Y\ra Z\ra 0 \label{SES of stable free modules}\ee
with $X$ and $Z$ stably free and equipped with $s$-bases $x$ (a basis in $X\oplus F_t$) and $z$ (a basis in $Z\oplus F_u$), one can define an $s$-basis $xz$ in $Y$, by applying the construction (\ref{product basis}) to the sequence
$$0\ra X\oplus F_t\ra Y\oplus F_{t+u}\ra Z\oplus F_u\ra 0$$
-- the sum of (\ref{SES of stable free modules}) and the sequence $0\ra F_t\ra F_{t+u}\ra F_u\ra 0$ with the standard inclusion and projection to the submodule generated by the last $u$ basis elements.

Using this construction, the Hypothesis \ref{hypothesis: B_i are free} can be dropped in the definition of torsion (\ref{def: torsion}): modules $B_i$ are stably free by Lemma \ref{Lm: B_i and Z_i are stably free}. In this case, we are using arbitrary $s$-bases $b_i$ for modules $B_i$ in the formula for torsion (\ref{def: torsion}).

\subsection{Alternative definition via a chain contraction (acyclic case)}
Assume that the chain complex $(C_\bt,\dd)$ is acyclic and let $\kappa: C_\bt\ra C_{\bt+1}$ be a map satisfying
$$\dd\kappa+\kappa \dd = \mr{id}:\quad C_\bt\ra C_\bt$$
and
\be\kappa^2=0\label{kappa^2=0}\ee
(the {\it chain contraction}). Note that choosing $\kappa$ is equivalent to choosing a right splitting of the short exact sequence
$$B_i\hra C_i \xra{\dd}
B_{i-1}$$
as $C_i=B_i\oplus \kappa(B_{i-1})$.
\footnote{A splitting $\kappa$ exists since $B_{i-1}$ is projective.}

Then we have mutually inverse maps
\begin{eqnarray}
\dd+\kappa:& C_{even}\ra C_{odd}, \\  \dd+\kappa:& C_{odd}\ra C_{even} \label{d+kappa odd to even}
\end{eqnarray}
where $C_{even}=\oplus_{k}C_{2k}$, $C_{odd}=\oplus_k C_{2k+1}$.

The torsion of $(C_\bt,\dd)$ with respect to the preferred bases $c_\bt$ can then be defined as
\be \tau(C)=\left[\frac{(\dd+\kappa)_{even\ra odd}c_{even}}{c_{odd}}\right]\quad\in \bar K_1(A) \label{def: torsion via chain contraction}\ee

\begin{rem} In the case of a chain complex of real vector spaces, $A=\bR$, with the isomorphism (\ref{bar K_1(R)}), the formula for the torsion becomes
\be \tau=|\det\;(\dd+\kappa)_{even\ra odd}| \label{torsion for real vector spaces via chain contraction}\ee
where we are taking the determinant of the matrix of the map $(\dd+\kappa)_{even\ra odd}$ with in the chosen bases in $C_{even}$, $C_{odd}$.
\end{rem}

\begin{lemma}
\begin{enumerate}[(i)]
\item \label{Lm: torsion via kappa, 1}In the case of an acyclic chain complex $C_\bt$, definition (\ref{def: torsion via chain contraction}) is equivalent to (\ref{def: torsion}).
\item \label{Lm: torsion via kappa, 2} In the case of an acyclic complex of vector spaces, $A=\bR$,  formula (\ref{torsion for real vector spaces via chain contraction}) is equivalent to the formula (\ref{torsion = sdet}) of Section \ref{sec: intro, R-torsion}.\footnote{Up to taking the absolute value, which we omitted in the initial exposition.}
\end{enumerate}
\label{Lm: torsion via kappa}
\end{lemma}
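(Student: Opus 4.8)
\medskip

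The plan is to treat the two parts together, since part (ii) follows from part (i) by applying the isomorphism $\bar K_1(\bR)\simeq \bR_+$ of (\ref{bar K_1(R)}) and observing that (\ref{torsion = sdet}) is exactly the $A=\bR$ specialization of (\ref{def: torsion}) with all $H_i=0$ (the bases $h_i$ disappear). So the real content is part (i): showing that the chain-contraction formula (\ref{def: torsion via chain contraction}) agrees with the boundary-filtration formula (\ref{def: torsion}) in the acyclic case.

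\medskip

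First I would fix the chain contraction $\kappa$ and use the remark already recorded in the excerpt: choosing $\kappa$ with $\kappa^2=0$ is the same as choosing, for each $i$, a splitting $C_i = B_i \oplus \kappa(B_{i-1})$ of the short exact sequence $0\to B_i \to C_i \xrightarrow{\dd} B_{i-1}\to 0$. In the acyclic case $Z_i = B_i$, so the combined basis $b_i h_i b_{i-1}$ of (\ref{def: torsion}) reduces to $b_i b_{i-1}$, where $b_{i-1}$ is interpreted via the isomorphism $\kappa: B_{i-1}\xrightarrow{\sim}\kappa(B_{i-1})\subset C_i$. Thus, choosing an ($s$-)basis $b_i$ for each $B_i$, the class $[c_i/(b_i b_{i-1})]$ compares the given basis $c_i$ with the basis of $C_i = B_i \oplus \kappa(B_{i-1})$ obtained by juxtaposing $b_i$ and $\kappa(b_{i-1})$; call the latter basis $c_i'$. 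So $\tau(C) = \prod_i [c_i/c_i']^{(-1)^i} = \left(\prod_i [c_i/c_i']^{(-1)^i}\right)$, and it remains to identify this with $\left[(\dd+\kappa)_{even\to odd}\, c_{even} / c_{odd}\right]$.

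\medskip

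The key computation is to examine the map $\dd+\kappa: C_{even}\to C_{odd}$ in the $B$-adapted bases. Under the decomposition $C_i = B_i \oplus \kappa(B_{i-1})$, the map $\dd$ sends $\kappa(B_{i-1})$ isomorphically onto $B_{i-1}\subset C_{i-1}$ (and kills $B_i$), while $\kappa$ sends $B_i$ isomorphically onto $\kappa(B_i)\subset C_{i+1}$ (and kills $\kappa(B_{i-1})$, by $\kappa^2=0$). Hence, writing $C_{even} = \bigoplus_k \big(B_{2k}\oplus \kappa(B_{2k-1})\big)$ and $C_{odd} = \bigoplus_k \big(B_{2k+1}\oplus \kappa(B_{2k})\big)$ and ordering the summands compatibly with the $b_i$'s, the matrix of $(\dd+\kappa)_{even\to odd}$ is block-triangular with diagonal blocks the identifications $B_i \xrightarrow{\kappa} \kappa(B_i)$ and $\kappa(B_i)\xrightarrow{\dd} B_i$ — that is, it is conjugate, modulo such triangular blocks, to a permutation that matches up $c_{even}'$ (the juxtaposition of all the $B$-bases, appropriately) with $c_{odd}'$. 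Therefore $\left[(\dd+\kappa)_{even\to odd}\, c_{even}'/c_{odd}'\right] = 1$ in $\bar K_1(A)$, since triangular changes and permutations are trivial there (this is exactly the role of passing to the \emph{reduced} Whitehead group, as emphasized in the excerpt). Finally, changing from the adapted bases $c_i'$ back to the given bases $c_i$ multiplies the class by $\prod_i [c_i/c_i']^{(\pm 1)}$, with the sign $(-1)^i$ coming from whether $C_i$ sits on the source ($i$ even) or target ($i$ odd) side of $(\dd+\kappa)_{even\to odd}$, using the multiplicativity $[\bar e\bar g/eg]=[\bar e/e][\bar g/g]$ for product bases. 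This yields precisely $\prod_i [c_i/c_i']^{(-1)^i} = \tau(C)$ as in (\ref{def: torsion}).

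\medskip

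The main obstacle I anticipate is purely bookkeeping: getting the signs and the direction of the transition classes right, i.e. making sure the $(-1)^i$ in (\ref{def: torsion}) matches the even/odd split in (\ref{def: torsion via chain contraction}), and handling the stably-free case where the $b_i$ are only $s$-bases — one must thread the auxiliary free modules $F_t$ through the isomorphisms $\dd+\kappa$ without disturbing the class, which works because adding identity blocks on free summands is invisible in $\bar K_1(A)$. A secondary point to verify is independence of the answer from the choice of $\kappa$: different contractions differ by a change of splitting $C_i = B_i\oplus\kappa(B_{i-1})$, which is again a triangular (unipotent) modification of the adapted bases and hence does not change any of the classes involved; alternatively, independence is automatic once the equivalence with (\ref{def: torsion}) is established, since the latter manifestly does not involve $\kappa$.
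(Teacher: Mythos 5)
Your proof is correct and takes essentially the same route as the paper: choose bases $b_i$ of the boundary modules, lift them via $\kappa$ to get the adapted bases $b_i\cup\kappa(b_{i-1})$ of $C_i$, observe that $(\dd+\kappa)_{even\to odd}$ sends these basis vectors to basis vectors (so its class in $\bar K_1(A)$ is $1$, a permutation matrix rather than merely triangular-modulo-permutation), and then the base-change bookkeeping with $[x/z]=[x/y][y/z]$ reproduces $\prod_i[c_i/(b_i b_{i-1})]^{(-1)^i}$ as in (\ref{def: torsion}). The only step you assert rather than check is in part (ii): that (\ref{torsion = sdet}) is the $A=\bR$ specialization of (\ref{def: torsion}) is exactly the paper's short computation that the transition matrix from $b_i b_{i-1}$ to $c_i=b_i f_i$ is block $\mathrm{diag}(I,D_i)$ with $D_i$ the matrix of $\dd:C_i^{coex}\to B_{i-1}$, giving $\prod_i|\det D_i|^{(-1)^i}$ under (\ref{bar K_1(R)}) — a one-line verification worth writing out.
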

\begin{proof}
(\ref{Lm: torsion via kappa, 1}) Assume for simplicity that Hypothesis \ref{hypothesis: B_i are free} holds, i.e. modules $B_i$ are free (generalization to stably free case is straightforward). Choose some basis $b_i$ for each $B_i$. Then each chain space $C_i$ has a basis
$$b_i\cdot b_{i-1}=(b_{i,1},\ldots,b_{i,n_i},\kappa(b_{i-1,1}),\ldots, \kappa(b_{i-1,n_{i-1}}))$$
where we chose to lift the basis  $b_{i-1}$ to $C_i$ using the chain contraction $\kappa$ and $n_i$ is the rank of $B_i$. In this basis, operator $\dd+\kappa$ sends basis elements to basis elements. Thus the matrix of $(\dd+\kappa)_{even\ra odd}$ in this basis is a permutation matrix (since we know it is invertible). Hence,
$$\left[\frac{(\dd+\kappa)_{even\ra odd}\prod_{i\;even}b_i b_{i-1}}{\prod_{i\;odd}b_i b_{i-1}}\right]=1\quad \in \bar K_1(A)$$
Using this, we can calculate the r.h.s. of (\ref{def: torsion via chain contraction}):
\begin{multline*}\left[\frac{(\dd+\kappa)_{even\ra odd}c_{even}}{c_{odd}}\right] = \\
=\left[\frac{\prod_{i\;odd}b_i b_{i-1}}{c_{odd}}\right]\cdot
\underbrace{\left[\frac{(\dd+\kappa)_{even\ra odd}\prod_{i\;even}b_i b_{i-1}}{\prod_{i\;odd}b_i b_{i-1}}\right]}_{=1}\cdot
\left[\frac{c_{even}}{\prod_{i\;even}b_i b_{i-1}}\right] =\\
=\prod_i [c_i/(b_i b_{i-1})]^{(-1)^i}
\end{multline*}
Which got us back to the definition (\ref{def: torsion}) specialized to the acyclic case. Here we exploited the obvious property $[x/z]=[x/y]\cdot [y/z]$ for a triple of bases $x,y,z$ in a free module $F$.

(\ref{Lm: torsion via kappa, 2}) To have the setting of formula (\ref{torsion = sdet}), fix a splitting $C_i=B_i\oplus C^{coex}_i$ for each $i$, and fix bases $b_i$ in $B_i$ and $f_i$ in $C^{coex}_i$. Transition matrix from the basis $b_i b_{i-1}$ in $C_i$ to the basis $c_i=b_i f_{i}$ has the form
$$\mleft{cc} I & 0 \\ 0& D_i \mright$$
for $D_i$ the transpose of the matrix of the boundary operator $\dd: C^{coex}_i\ra B_{i-1}$ with respect to the bases $f_i$, $b_{i-1}$. Thus, using the definition (\ref{def: torsion}) of the torsion and the isomorphism (\ref{bar K_1(R)}), we have
$$\tau(C)=\prod_i  [c_i/(b_i b_{i-1})]^{(-1)^i}=\prod_i |\det D_i|^{(-1)^i}\quad \in \bR_+$$
Which is indeed the r.h.s. of (\ref{torsion = sdet}).
\end{proof}

\begin{rem}
Note that (\ref{Lm: torsion via kappa, 1}) of Lemma \ref{Lm: torsion via kappa} implies that the right hand side of (\ref{def: torsion via chain contraction}) is independent of the choice of chain contraction $\kappa$. In fact, it is possible to show that (\ref{def: torsion via chain contraction}) is true even without the assumption that $\kappa^2=0$.\footnote{In this case the inverse to $(\dd+\kappa)_{even\ra odd}$ is given by $(\mr{id}+\kappa^2)^{-1}(\dd+\kappa):\; C_{odd}\ra C_{even}$ instead of (\ref{d+kappa odd to even}), with $(\mr{id}+\kappa^2)^{-1}=\mr{id}-\kappa^2+\kappa^4-\cdots $ (only finitely many terms are nonzero, since $C_\bt$ has finite degree).}\footnote{
Proof: Let $\kappa$ be a chain contraction with $\kappa^2=0$ and $\tilde\kappa$ another one, possibly with $\tilde\kappa^2\neq 0$. Then $(\dd+\tilde\kappa)_{even\ra odd}=(\dd+\kappa)_{even\ra odd}\cdot M$ where
$M=\mr{id}+(\dd+\kappa)^{-1}\cdot (\tilde\kappa-\kappa)=\mr{id}+(\dd+\kappa)\cdot (\tilde\kappa-\kappa)=\mr{id}+\dd\cdot (\tilde\kappa-\kappa)+\kappa\tilde\kappa:\; C_{even}\ra C_{even}$.
Here the last term on the r.h.s. shifts the degree by $+2$ and the middle term vanishes on $B_i$ and maps $\mr{im}(\kappa)_i$ to $B_i$. Hence the matrix of $M$ is block lower-triangular with units on the diagonal with respect to the decomposition $C_{even}=B_0\oplus\mr{im}(\kappa)_2\oplus B_2\oplus \mr{im}(\kappa)_4\oplus B_4\oplus\cdots$. Therefore the class of $M$ in $\bar K_1(A)$ is $1$ and thus changing $\kappa\mapsto \tilde\kappa$ does not change the value of the r.h.s. in the definition of torsion (\ref{def: torsion via chain contraction}).
}
\end{rem}

\lec{Lecture 5, 20.03.2014}

\subsection{Multiplicativity with respect to short exact sequences}
Let
\be 0\ra C'_\bt \xra{\iota} C_\bt \xra{p} C''_\bt\ra 0 \label{SES of chain complexes}\ee
be a short exact sequence of chain complexes of free $A$-modules (maps $\iota$, $p$ are assumed to be chain maps). Let $c'_i$, $c''_i$ be some bases in $C'_i$, $C''_i$ and $c'_i c''_i$ the product basis in $C_i$.

\begin{lemma}\label{Lm: multiplicativity of torsions}
Assuming that complexes $C'_\bt$, $C_\bt$, $C''_\bt$ are acyclic, for the torsions with respect to bases $c'$, $c''$, $c'c''$ one has
$$\tau(C_\bt)=\tau(C'_\bt)\cdot \tau(C''_\bt)\quad\in \bar{K}_1(A)$$
\end{lemma}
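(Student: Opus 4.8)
The plan is to reduce the statement to an elementary fact about block-triangular matrices over $A$, by computing all three torsions from the chain-contraction formula (\ref{def: torsion via chain contraction}) of Lemma~\ref{Lm: torsion via kappa} rather than from the original definition (\ref{def: torsion}); this has the pleasant side effect of avoiding any choice of bases for boundary submodules, so Hypothesis~\ref{hypothesis: B_i are free} never enters. First I would split the extension degreewise: since each $C''_i$ is free, hence projective, the sequence $0\to C'_i\xra{\iota}C_i\xra{p}C''_i\to 0$ splits, and a choice of splittings $s_i\colon C''_i\to C_i$ identifies $C_i\cong C'_i\oplus C''_i$ so that the basis $(\iota(c'_i),s_i(c''_i))$ represents the class $[c'_ic''_i]$ and the differential becomes upper triangular, $\partial(x',x'')=(\partial'x'+\delta x'',\,\partial''x'')$, with $\delta\colon C''_\bullet\to C'_{\bullet-1}$ of degree $-1$ satisfying $\partial'\delta+\delta\partial''=0$ (this last being exactly $\partial^2=0$).

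Next I would glue chain contractions. Using acyclicity, pick contractions $\kappa'$ of $C'_\bullet$ and $\kappa''$ of $C''_\bullet$, which I may take to satisfy $(\kappa')^2=(\kappa'')^2=0$. The claim is that, setting $g:=-(\delta\kappa''+\kappa'\delta)$ and $\eta:=\kappa'g=-\kappa'\delta\kappa''\colon C''_\bullet\to C'_{\bullet+1}$, the block-upper-triangular map $\kappa(x',x''):=(\kappa'x'+\eta x'',\,\kappa''x'')$ is a chain contraction of $C_\bullet$ with $\kappa^2=0$. The diagonal blocks of $\partial\kappa+\kappa\partial$ are $\mathrm{id}$ by hypothesis; the off-diagonal block is $\partial'\eta+\eta\partial''+\delta\kappa''+\kappa'\delta$, which I would show vanishes because $g$ is a chain map $C''_\bullet\to C'_\bullet$ (a short check using the two contraction identities and $\partial'\delta=-\delta\partial''$) and $\eta=\kappa'g$, so that $\partial'\eta+\eta\partial''=g$; the relation $\kappa^2=0$ then follows from $(\kappa')^2=(\kappa'')^2=0$ and $\kappa'\eta=\eta\kappa''=0$.

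With this adapted contraction in hand, I would compute $\tau(C_\bullet)$ via (\ref{def: torsion via chain contraction}). With respect to the ordered splittings $C_{even}=C'_{even}\oplus C''_{even}$, $C_{odd}=C'_{odd}\oplus C''_{odd}$ and the product bases, the isomorphism $(\partial+\kappa)_{even\to odd}$ has matrix $\left(\begin{smallmatrix}P&Q\\0&R\end{smallmatrix}\right)$, where $P$ is the matrix of $(\partial'+\kappa')_{even\to odd}$ in the bases $c'$ and $R$ that of $(\partial''+\kappa'')_{even\to odd}$ in the bases $c''$: the lower-left block vanishes because nothing in $\partial+\kappa$ maps $C'$ into $C''$, the $P$-block appears because $\partial,\kappa$ restrict to $C'_\bullet$, and the $R$-block because the projection $p$ intertwines $\partial+\kappa$ with $\partial''+\kappa''$. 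Factoring $\left(\begin{smallmatrix}P&Q\\0&R\end{smallmatrix}\right)=\left(\begin{smallmatrix}I&QR^{-1}\\0&I\end{smallmatrix}\right)\left(\begin{smallmatrix}P&0\\0&R\end{smallmatrix}\right)$ and invoking the elementary-matrix identities from Whitehead's Lemma (which put the first factor in $E(A)$), the class in $\bar K_1(A)$ of the whole matrix is $[P]\cdot[R]$; and $[P]=\tau(C'_\bullet)$, $[R]=\tau(C''_\bullet)$ by (\ref{def: torsion via chain contraction}) applied to the two subquotient complexes with $\kappa'$ and $\kappa''$. This yields $\tau(C_\bullet)=\tau(C'_\bullet)\cdot\tau(C''_\bullet)$.

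The one step I expect to require real care is the middle one: producing the correction term $\eta$ and verifying that $\kappa$ is genuinely a (square-zero) contraction of the total complex — conceptually this is the statement that a chain contraction of $C'_\bullet$ can be pushed through the extension. Everything else is bookkeeping together with the elementary-matrix identities already available. I would close by remarking that the result is independent of all auxiliary choices (the splittings $s_i$ and the contractions $\kappa',\kappa''$): on the right-hand side this is manifest, and on the left it follows from the remark after Lemma~\ref{Lm: torsion via kappa} that (\ref{def: torsion via chain contraction}) does not depend on the chosen $\kappa$.
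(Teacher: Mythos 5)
Your argument is correct, but it takes a genuinely different route from the paper's. The paper proves the lemma directly from Milnor's definition (\ref{def: torsion}): it chooses bases $b'_i$, $b''_i$ of the boundary modules, shows (using acyclicity of $C'_\bt$) that $0\ra B'_i\ra B_i\ra B''_i\ra 0$ is exact, carefully lifts $b''_i$ to honest boundaries in $B_i$ to get a product basis $b'_ib''_i$, and then factors the block-triangular transition matrices $[c'_ic''_i/b'_ib''_ib'_{i-1}b''_{i-1}]$. You instead work entirely with the contraction formula (\ref{def: torsion via chain contraction}): split the extension degreewise (legitimate, since each $C''_i$ is projective), glue $\kappa',\kappa''$ into a square-zero contraction $\kappa$ of $C_\bt$ via the correction $\eta=-\kappa'\delta\kappa''$ (your identities do check out: $g=-(\delta\kappa''+\kappa'\delta)$ is a chain map, $\partial'\eta+\eta\partial''=(\partial'\kappa'+\kappa'\partial')g=g$, and $\kappa'\eta=\eta\kappa''=0$), and then read off the block-triangular shape of $(\partial+\kappa)_{even\ra odd}$, so that the Whitehead-lemma identities give $[P]\cdot[R]$ in $\bar K_1(A)$. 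What your route buys: no bookkeeping with boundary modules inside this proof (though the equivalence Lemma \ref{Lm: torsion via kappa}(\ref{Lm: torsion via kappa, 1}), which you need to identify your answer with the torsions of the statement, was itself proved using such bases), and a clean homological-perturbation picture of "pushing a contraction through an extension." What the paper's route buys: its boundary-basis bookkeeping generalizes immediately to the non-acyclic case (Lemma \ref{Lm: multiplicativity of torsions, non-acyclic case}) by inserting homology bases and the long exact sequence, whereas your contraction-based argument is confined to the acyclic situation. In both proofs the ultimate $K_1$ input is the same: a block-triangular invertible matrix represents the product of the classes of its diagonal blocks.
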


\begin{proof}
Choose some bases $b'_i$, $b''_i$ in boundary modules $B'_i\subset C'_i$, $B''_i\subset C''_i$.
One can lift the basis $b''_i$ to a collection of linearly independent elements $p^{-1}(b''_i)\subset B_i$.\footnote{Arbitrary lifting $p^{-1}(x)$ of a boundary in $C''_i$ is not guaranteed to be a boundary in $C_i$.
However by assumption $x=\dd y$ for some $y\in C''_{i+1}$. Choose an arbitrary lifting $p^{-1}(y)$ and set $p^{-1}(x):=\dd p^{-1}(y)\in B_i$. 
}
Restricting the sequence (\ref{SES of chain complexes}) to the boundaries in degree  $i$, one gets a short exact\footnote{\label{footnote: exactness of SES of boundaries}Exactness is trivial in the left term and follows from existence of the lifting $p^{-1}(b''_i)$ for the right term. For the middle term, note that for $x\in \ker(p)\cap B_i$, one has $x=\iota(y)$ for some $y\in C'_i$ and $0=\dd\iota(y)=\iota(\dd y)$, hence $y$ is a cycle and, by acyclicity of $C'_\bt$, boundary. This proves exactness of (\ref{SES of boundaries}).
}
sequence
\be 0\ra B'_i\xra{\iota} B_i \xra{p} B''_i\ra 0 \label{SES of boundaries}\ee
The basis for $B_i$ is constructed as $b_i=\iota(b'_i)\cup p^{-1}(b''_i)=:b'_i b''_i$. Now we can calculate the torsion of $C_\bt$:
\begin{multline*}
\tau(C_\bt)=\prod_i\left[\frac{c_i}{b_i b_{i-1}}\right]_{C_i}^{(-1)^i}=\prod_i \left[\frac{c'_i c''_i}{b'_i b''_i b'_{i-1}b''_{i-1}}\right]_{C_i}^{(-1)^i}=\\
=\prod_i \left(\left[\frac{c'_i}{b'_i b'_{i-1}}\right]_{C'_i}\cdot \left[\frac{c''_i}{b''_i b''_{i-1}}\right]_{C''_i}\right)^{(-1)^i}=
\tau(C'_\bt)\cdot \tau(C''_\bt)
\end{multline*}
\end{proof}

Dropping the assumption that complexes $C'_\bt$, $C_\bt$, $C''_\bt$ are acyclic, one has a long exact sequence in homology
\be \cdots\ra H'_\bt \xra{\iota_*} H_\bt \xra{p_*} H''_{\bt}\xra{\delta} H'_{\bt-1}\ra\cdots \label{LES in homology}\ee
Let us denote this sequence $\chi$.
\begin{lemma}\label{Lm: multiplicativity of torsions, non-acyclic case}
One has
$$\tau(C_\bt)=\tau(C'_\bt)\cdot \tau(C''_\bt)\cdot \tau(\chi)$$
where the torsions are evaluated with respect to bases $c'$, $c' c''$, $c''$ in chains and arbitrarily chosen bases $h'$, $h$, $h''$ in homology.
\end{lemma}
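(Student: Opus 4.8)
The plan is to reduce the non-acyclic statement to the acyclic Lemma~\ref{Lm: multiplicativity of torsions} by a standard mapping-cone / algebraic-mapping-cylinder argument, which converts the short exact sequence of complexes together with its long exact sequence in homology into a single short exact sequence of \emph{acyclic} complexes. Concretely, recall that the long exact sequence $\chi$ of (\ref{LES in homology}), being itself a chain complex (with differential built from $\iota_*$, $p_*$, $\delta$) that is acyclic, has a well-defined torsion $\tau(\chi)$ computed in the chosen homology bases $h'$, $h$, $h''$; by the previous Remark this is independent of auxiliary choices. I would first dispose of the trivial reductions: by Lemma~\ref{Lm: B_i and Z_i are stably free} all boundary and cycle modules here are stably free, so the $s$-basis machinery of Section~\ref{sec: stably free modules} applies verbatim and Hypothesis~\ref{hypothesis: B_i are free} need not be assumed.

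The key step is the following device. For each of $C'_\bt, C_\bt, C''_\bt$, choose $s$-bases $b_i$ for the boundaries $B_i$ compatibly, i.e.\ so that the chosen lift $b_i = \iota(b'_i)\, p^{-1}(b''_i)$ as in the proof of Lemma~\ref{Lm: multiplicativity of torsions} (this uses the short exact sequence (\ref{SES of boundaries}), whose exactness was verified in footnote~\ref{footnote: exactness of SES of boundaries}). Also choose, for each degree $i$ and each of the three complexes, an $s$-basis $z_i$ of cycles $Z_i$ extending $b_i$ by lifts of the chosen homology bases, again compatibly across $\iota$ and $p$. One then has, for each $C_\bt$ among the three, inclusions $0\subset B_i\subset Z_i\subset C_i$ yielding the combined basis $b_i\, h_i\, b_{i-1}$, and by the defining formula (\ref{def: torsion}),
\be
\tau(C_\bt)\;=\;\prod_i \left[\frac{c_i}{b_i\,h_i\,b_{i-1}}\right]^{(-1)^i}.
\ee
Now multiplicativity of the symbol $[\,\cdot\,/\,\cdot\,]$ over the short exact sequences $0\to C'_i\to C_i\to C''_i\to 0$, $0\to B'_i\to B_i\to B''_i\to 0$, and $0\to Z'_i\to Z_i\to Z''_i\to 0$ — exactly the property $[\bar e\bar g/eg]=[\bar e/e][\bar g/g]$ stated earlier — lets me factor $[c_i/(b_i h_i b_{i-1})]$ as a product of the corresponding $C'$- and $C''$-symbols times a ``defect'' term coming from the fact that the chosen homology bases $h_i$ need \emph{not} respect the short exact sequence $0\to Z'_i/B'_i\to Z_i/B_i\to Z''_i/B''_i\to 0$. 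Assembling all degrees, the product of these defect terms is precisely $\tau(\chi)$: one checks that the transition matrix relating the ``product'' homology basis $h'_i h''_i$ (built from the SES of $H_i$'s) to the chosen $h_i$, alternating over $i$, is exactly the transition matrix whose class computes the torsion of the long exact sequence $\chi$ with respect to $h'$, $h$, $h''$. This is where one must bookkeep signs and the $(-1)^i$ carefully.

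I expect the main obstacle to be precisely that last identification — showing that the accumulated discrepancy between $h'_ih''_i$ and $h_i$ across all degrees equals $\tau(\chi)$ on the nose, including the correct sign conventions (which in this paper are the Turaev, not Milnor, conventions, cf.\ the footnote on (\ref{def: torsion})). The cleanest way around the sign bookkeeping is to instead invoke Lemma~\ref{Lm: multiplicativity of torsions} for the \emph{mapping cone}: form the algebraic mapping cylinder $\mathrm{Cyl}(p)$ of $p: C_\bt\to C''_\bt$, which is chain-homotopy equivalent to $C''_\bt$ and fits in a short exact sequence $0\to C_\bt\to \mathrm{Cyl}(p)\to \mathrm{Cone}(\mathrm{id}_{C''})[\,\cdot\,]\to 0$ with acyclic quotient, and separately the cone of $\iota$; the long exact sequence $\chi$ reappears as (the homology of) a genuine acyclic complex, to which Lemma~\ref{Lm: multiplicativity of torsions} applies directly. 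Either route works; I would present the direct basis-chasing argument as above since all the needed lemmas on $s$-bases and on multiplicativity of $[\,\cdot\,/\,\cdot\,]$ are already in hand, and only flag the cone construction as the conceptual reason the formula holds.
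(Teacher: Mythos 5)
The central step of your direct argument breaks down precisely because the acyclicity hypothesis has been dropped. You take $b_i=\iota(b'_i)\,p^{-1}(b''_i)$ as a basis of $B_i$, citing the exactness of the sequence of boundaries (\ref{SES of boundaries}) as verified in footnote \ref{footnote: exactness of SES of boundaries}. But that verification of exactness in the middle term uses acyclicity of $C'_\bt$ explicitly; in the non-acyclic case (\ref{SES of boundaries}) is \emph{not} exact -- its middle homology is $\ker(p|_{B_i})/\iota(B'_i)\cong\ker(\iota_*:H'_i\ra H_i)=\mr{im}(\delta)$, which is exactly the content of (\ref{failure of exactness in sequence of boundaries}). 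Consequently $\iota(b'_i)\cup p^{-1}(b''_i)$ does not in general span $B_i$; likewise there is no short exact sequence $0\ra Z'_i\ra Z_i\ra Z''_i\ra 0$ (the map $Z_i\ra Z''_i$ fails to be onto whenever $\delta\neq 0$) and no short exact sequence $0\ra H'_i\ra H_i\ra H''_i\ra 0$, hence no ``product homology basis $h'_ih''_i$'' in $H_i$ whose degreewise discrepancy with $h_i$ you could collect. The defect you want to identify with $\tau(\chi)$ is not a change-of-basis phenomenon at fixed degree; it is a structural failure of exactness, and the basis-chasing as you set it up cannot even get started.

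The paper's proof repairs exactly this point: one chooses bases $\beta'_i,\beta_i,\beta''_i$ of $\mr{im}(\delta)\subset H'_i$, $\mr{im}(\iota_*)\subset H_i$, $\mr{im}(p_*)\subset H''_i$, enlarges the boundary basis to $b_i=b'_i\,\beta'_i\,b''_i$ (using (\ref{failure of exactness in sequence of boundaries}) to lift $\beta'_i$ into $B_i$), and takes the product bases $\beta'_i\beta_i$, $\beta_i\beta''_i$, $\beta''_i\beta'_{i-1}$ of $H'_i$, $H_i$, $H''_i$ dictated by the long exact sequence (\ref{LES in homology}); the change-of-basis symbols $[h'_i/\beta'_i\beta_i]$, $[\beta_i\beta''_i/h_i]$, $[h''_i/\beta''_i\beta'_{i-1}]$ then assemble, with the alternating exponents $(-1)^i$, into $\tau(\chi)$, while the remaining factors give $\tau(C'_\bt)\tau(C''_\bt)$. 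Your alternative suggestion via mapping cylinders/cones could in principle be developed into a proof, but as stated it is only a pointer: you neither construct the acyclic based complex replacing $\chi$ nor exhibit the short exact sequences of based complexes to which Lemma \ref{Lm: multiplicativity of torsions} would be applied, so it does not fill the gap in the main argument.
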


\begin{proof}
In the non-acyclic case, the sequence (\ref{SES of boundaries}) fails to be exact in the middle term and attains homology
\be \frac{ \ker (p:\; B_i\ra B''_{i})}{\iota(B'_{i})}= \ker(i_*:\; H'_i\ra H_i) \label{failure of exactness in sequence of boundaries}\ee
[Explanation: $\ker (p:\; B_i\ra B''_i)=\iota(C'_i)\cap B_i= \iota(Z'_i)\cap B_i$ (for the last equality, $p(x)=0$ and $x=\dd y$ implies $x=\iota(z)$ with $\dd z=0$, cf. footnote \ref{footnote: exactness of SES of boundaries}). Thus we have a sequence
$$0\ra \frac{ \ker (p:\; B_i\ra B''_{i})}{\iota(B'_{i})} \ra H'_i \xra{i_*} H_i\ra\cdots $$
where the first map sends $x\mapsto [z]\in H'_i$. The sequence is obviously exact in the first place, and also exact in the second place by definition of $\iota_*$. This implies (\ref{failure of exactness in sequence of boundaries}).]

Choose some bases $\beta'_i$, $\beta_i$, $\beta''_i$ in the ``boundaries'' of the long exact sequence in homology (\ref{LES in homology}), i.e. in $\mr{im}(\delta)\subset H'_i$, $\mr{im}(\iota_*)\subset H_i$, $\mr{im}(p_*)\subset H''_i$ respectively. Then we have the product bases $\beta'_i \beta_i$, $\beta_i \beta''_i$, $\beta''_i \beta'_{i-1}$ in the entire homology modules $H'_i$, $H_i$, $H''_i$.

Using (\ref{failure of exactness in sequence of boundaries}), we construct the basis $b_i=b'_i \beta'_i b''_i$ in $B_i$.

Now we can calculate the torsion of $C_\bt$:
\begin{multline*}
\tau(C_\bt)=\prod_i \left[\frac{c_i}{b_i h_i b_{i-1}}\right]_{C_i}^{(-1)^i}=\prod_i \left[\frac{c'_i c''_i}{(b'_i\beta'_i b''_i) h_i (b'_{i-1}\beta'_{i-1} b''_{i-1})}\right]_{C_i}^{(-1)^i}=\\
=\prod_i \left(\left[\frac{c'_i c''_i}{(b'_i\beta'_i b''_i) \beta_i \beta''_i (b'_{i-1}\beta'_{i-1} b''_{i-1})}\right]_{C_i}\cdot \left[\frac{\beta_i\beta''_i}{h_i}\right]_{H_i}\right)^{(-1)^i}=\\
=\prod_i \left(
\left[\frac{c'_i}{b'_i b'_{i-1}\beta'_{i}\beta_i}\right]_{C'_i}\cdot
\left[\frac{c''_i}{b''_i b''_{i-1}\beta''_{i}\beta'_{i-1}}\right]_{C''_i}\cdot
\left[\frac{\beta_i\beta''_i}{h_i}\right]_{H_i}
\right)^{(-1)^i}=\\
=\prod_i \left(
\left[\frac{c'_i}{b'_i b'_{i-1}h'_i}\right]_{C'_i}\cdot
\left[\frac{c''_i}{b''_i b''_{i-1}h''_i}\right]_{C''_i}\cdot
\left[\frac{h'_i}{\beta'_i \beta_i}\right]_{H'_i}\cdot
\left[\frac{\beta_i\beta''_i}{h_i}\right]_{H_i}\cdot
\left[\frac{h''_i}{\beta''_i \beta'_{i-1}}\right]_{H''_i}
\right)^{(-1)^i}=\\
=\tau(C'_\bt)\cdot\tau(C''_\bt)\cdot \tau(\chi)
\end{multline*}
\end{proof}

\subsection{Determinant lines}\label{sec: det lines}
Let us specialize to the case of finite-dimensional vector spaces over $A=\bR$. For $V$ a vector space denote
$$\Det\,V:=\wedge^{\dim V}V$$
For the dual vector space, we have
$$\Det\,V^*\cong (\Det\,V)^*$$
where the isomorphism comes from the pairing
$$
\begin{array}{ccccc}
\Det\,V^*&\otimes  &\Det\,V&\ra& \bR \\
(\alpha_{\dim V}\wedge\cdots\wedge \alpha_{1})&\otimes & (v_1\wedge\cdots \wedge v_{\dim V})&\mapsto &\det \langle\alpha_i,v_j\rangle
\end{array}
$$
We will denote $(\Det\,V)^*=:(\Det\,V)^{-1}$.\footnote{The motivation for this notation is that for a 1-dimensional vector space $L$, to a vector $v\in L$, one can canonically associate a vector $v^{-1}\in L^*$ characterized by the property $\langle v^{-1},v \rangle =1$.}

For an {\it isomorphism} of vector spaces $f:V\xra{\sim} W$, there is an associated isomorphism of the determinant lines:
\begin{eqnarray*}
\Det\,f: &\Det\,V  &\xra{\sim}  \Det\,W\\
& v_1\wedge\cdots\wedge v_{\dim V} &\mapsto  f(v_1)\wedge \cdots\wedge f(v_{\dim V})
\end{eqnarray*}
If $f:V\xra{\sim} V$ is an automorphism, then
$$\Det\,f=\det\,f\quad \in \bR-0$$
-- the usual determinant of a matrix, by the canonical identification $\mr{Aut}(\Det\,V)\cong \bR-0$.

For a short exact sequence of vector spaces
$$0\ra U\xra{\iota} V\xra{p} W\ra 0$$
one has
\be \Det\,V\cong \Det\,U\otimes \Det\,W \label{Det for a SES of vector spaces}\ee
with the isomorphism given by
\begin{multline}
\underbrace{(u_1\wedge\cdots\wedge u_{\dim U})}_{\in\;\Det\,U}\otimes \underbrace{(w_1\wedge\cdots\wedge w_{\dim W})}_{\in\; \Det\,W}\mapsto \\
\mapsto \underbrace{\iota(u_1)\wedge\cdots\wedge\iota(u_{\dim U})\wedge p^{-1}(w_1)\wedge\cdots\wedge p^{-1}(w_{\dim W})}_{\in\; \Det\,V}
\label{Det(V) = Det(U) otimes Det(W) isomorphism}
\end{multline}
for some liftings $p^{-1}(w_i)\in V$ of $w_i\in W$. The expression on the right does not depend on the choice of liftings.

For a chain complex of finite-dimensional vector spaces
$$C_n\ra C_{n-1}\ra\cdots \ra C_1\ra C_0$$
we define the determinant line as
$$\Det\,C_\bt=\bigotimes_{i=0}^n (\Det\,C_i)^{(-1)^i}$$

\begin{lemma}
For a short exact sequence of chain complexes of vector spaces
$$0\ra C'_\bt\ra C_\bt\ra C''_\bt\ra 0$$
one has
\be \Det\,C_\bt\cong \Det\,C'_\bt\otimes \Det\,C''_\bt \label{Det for a SES of complexes}\ee
\end{lemma}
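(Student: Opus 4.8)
The plan is to build the isomorphism degree by degree and then reorganize the tensor factors. For each $i$, the short exact sequence $0\to C'_i\to C_i\to C''_i\to 0$ gives, by the isomorphism \eqref{Det for a SES of vector spaces}, a canonical isomorphism $\Det\,C_i\cong \Det\,C'_i\otimes\Det\,C''_i$. Raising to the power $(-1)^i$ (i.e. dualizing when $i$ is odd, which is harmless since $(\Det(U\otimes W))^{-1}\cong (\Det U)^{-1}\otimes(\Det W)^{-1}$ canonically) and tensoring over all $i$, I get
$$
\Det\,C_\bt=\bigotimes_i(\Det\,C_i)^{(-1)^i}\cong\bigotimes_i\bigl((\Det\,C'_i)^{(-1)^i}\otimes(\Det\,C''_i)^{(-1)^i}\bigr).
$$
Then I reorder the factors in the tensor product — using the canonical symmetry isomorphisms for the tensor product of one-dimensional vector spaces — to collect all the $C'$-factors on the left and all the $C''$-factors on the right, obtaining $\Det\,C'_\bt\otimes\Det\,C''_\bt$. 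This is the desired \eqref{Det for a SES of complexes}.

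The steps, in order: first, record that for one-dimensional spaces the operations $L\mapsto L^{-1}$ and $(L,L')\mapsto L\otimes L'$ are functorial and mutually compatible, so applying $(-1)^i$ to the degreewise isomorphism is legitimate; second, apply \eqref{Det for a SES of vector spaces} in each degree; third, take the (finite) tensor product over $i$; fourth, permute the factors to separate primes from double-primes. One should remark that the resulting isomorphism is canonical — it depends only on the maps $\iota$, $p$ in the short exact sequence of complexes, not on any choice of basis or splitting — since each ingredient (the degreewise isomorphism \eqref{Det(V) = Det(U) otimes Det(W) isomorphism}, dualization, the tensor symmetry) is canonical.

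The only point requiring a little care — and the closest thing to an obstacle — is bookkeeping of signs and orderings: when one permutes one-dimensional factors past each other the symmetry isomorphism of a tensor product of lines introduces no sign (unlike the graded/super case), so in fact there is nothing subtle here, but one must be explicit that we are taking ordinary tensor products of vector spaces and that the chosen ordering of factors in $\Det\,C_\bt$, $\Det\,C'_\bt$, $\Det\,C''_\bt$ (say, by increasing degree) is respected. If instead one wishes the isomorphism to be compatible with the torsion/Euler-characteristic sign conventions used elsewhere, one fixes the ordering once and for all and checks the two sides match; this is routine and I would only indicate it rather than carry it out. Thus the proof is essentially a formal consequence of \eqref{Det for a SES of vector spaces} applied in each degree together with the elementary algebra of determinant lines developed in Section~\ref{sec: det lines}.
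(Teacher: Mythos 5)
Your proof is correct and is essentially the paper's argument: the paper proves this lemma simply by applying the degreewise isomorphism \eqref{Det for a SES of vector spaces} to each $0\ra C'_i\ra C_i\ra C''_i\ra 0$ and tensoring over $i$, exactly as you do (the paper then records in a separate remark the Koszul-sign bookkeeping you mention, which it ultimately sidesteps by working with determinant lines modulo sign).
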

\begin{proof}Follows immediately from (\ref{Det for a SES of vector spaces}).
\end{proof}

\begin{rem} Explicit isomorphism (\ref{Det(V) = Det(U) otimes Det(W) isomorphism}) implies that the isomorphism $$\Det\,U\otimes \Det\,W\cong \Det\,W\otimes \Det\,U$$ contains a ``Koszul sign'' $(-1)^{\dim U\cdot \dim W}$, i.e.
$$(\bigwedge_1^{\dim U} u_i)\otimes  (\bigwedge_1^{\dim W} w_j)\quad \mapsto\quad (-1)^{\dim U\cdot \dim W}\cdot(\bigwedge_1^{\dim W} w_j)\otimes (\bigwedge_1^{\dim U} u_i)$$
Since in (\ref{Det for a SES of complexes}) one has to reshuffle $\Det\,C'_i$ and $\Det\,C''_j$ multiple times, there is a complicated Koszul sign.
\end{rem}

Note that up to this moment we did not use the boundary maps.

\begin{lemma}\label{Lm: Det(C)=Det(H)}
Determinant line of a chain complex of vector spaces is canonically isomorphic to the determinant line of homology:
\be \Det\,C_\bt\cong \Det\,H_\bt \label{Det(C)=Det(H)}\ee
\end{lemma}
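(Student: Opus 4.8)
The plan is to filter the chain complex by its cycle subspaces and apply the two structural isomorphisms already established: the multiplicativity of $\Det$ with respect to short exact sequences of complexes (and of vector spaces), and the fact that for an \emph{acyclic} complex $\Det$ is canonically trivialized. First I would introduce for each degree $i$ the subspace $Z_i=\ker(\dd\colon C_i\to C_{i-1})$ and the subcomplex $Z_\bt\subset C_\bt$ with zero differential; since $\dd$ vanishes on $Z_\bt$ we have tautologically $\Det\,Z_\bt\cong\bigotimes_i(\Det\,Z_i)^{(-1)^i}$. The quotient complex $C_\bt/Z_\bt$ has, in degree $i$, the space $C_i/Z_i\cong B_{i-1}$, and its differential is injective in each degree, i.e. $C_\bt/Z_\bt$ is \emph{acyclic}. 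By the short exact sequence of complexes $0\to Z_\bt\to C_\bt\to C_\bt/Z_\bt\to 0$ and the lemma on $\Det$ of a short exact sequence of complexes, $\Det\,C_\bt\cong\Det\,Z_\bt\otimes\Det(C_\bt/Z_\bt)$.

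The next step is to show $\Det(C_\bt/Z_\bt)$ is canonically $\cong\bR$ (canonically trivial). The complex $C_\bt/Z_\bt$ is $\cdots\to B_{i}\xrightarrow{\cong} B_{i-1}\to\cdots$ with each arrow an isomorphism induced by $\dd$; equivalently it is a direct sum over $i$ of two-term acyclic complexes $B_i\xrightarrow{\cong}B_i$ placed in degrees $i+1,i$, up to the usual reindexing. For any isomorphism $g\colon V\xrightarrow{\sim}W$ the induced $\Det\,g\colon\Det\,V\xrightarrow{\sim}\Det\,W$ gives a canonical element of $(\Det\,V)^{-1}\otimes\Det\,W$; pairing the two factors of $(\Det\,B_i)^{(-1)^{i+1}}\otimes(\Det\,B_i)^{(-1)^i}$ via this canonical isomorphism trivializes the contribution of each summand, so $\Det(C_\bt/Z_\bt)\cong\bR$ canonically. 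Alternatively one can invoke the torsion of an acyclic complex together with the interpretation of $\Det$, but the cleanest route is the direct pairing just described. Either way we obtain a canonical isomorphism $\Det\,C_\bt\cong\Det\,Z_\bt=\bigotimes_i(\Det\,Z_i)^{(-1)^i}$.

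Finally I would bring in homology. For each $i$ the short exact sequence of vector spaces $0\to B_i\to Z_i\to H_i\to 0$ gives, by (\ref{Det for a SES of vector spaces}), a canonical isomorphism $\Det\,Z_i\cong\Det\,B_i\otimes\Det\,H_i$. Substituting into $\bigotimes_i(\Det\,Z_i)^{(-1)^i}$ and using $(\Det\,V)^{(-1)^i}$ for $V$ fixed to mean $\Det\,V$ or $(\Det\,V)^{-1}$ according to parity, the $\Det\,B_i$ factors appear with \emph{both} signs: $\Det\,B_i$ enters $\Det\,Z_i$ in degree $i$ (sign $(-1)^i$) and, because $B_i$ is the image of $\dd$ out of degree $i+1$, it has already been used up in the trivialization of $C_\bt/Z_\bt$ with the opposite sign — so in the combined expression all $\Det\,B_i$ cancel in pairs, leaving exactly $\bigotimes_i(\Det\,H_i)^{(-1)^i}=\Det\,H_\bt$. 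Chasing the signs and the Koszul reshufflings so that the $\Det\,B_i$ contributions genuinely cancel canonically (not merely up to a scalar) is the main obstacle; the bookkeeping is the heart of the argument, and I would organize it by processing the complex one degree at a time, peeling off $B_i$ against $B_i$, rather than trying to write a single global isomorphism.
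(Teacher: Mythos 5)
Your overall strategy (peel off the cycles, then compare with homology) is the same as the paper's, but the middle step contains a genuine error. The differential induced on the quotient complex $C_\bt/Z_\bt$ is zero, not injective: for $x\in C_i$ one has $\dd x\in B_{i-1}\subset Z_{i-1}$, so $[\dd x]=0$ in $C_{i-1}/Z_{i-1}$. Hence $C_\bt/Z_\bt$ is not acyclic (with zero differential its homology is the whole quotient), and it does not decompose into two-term pieces $B_i\xra{\cong}B_i$: each $B_i$ occurs exactly once, namely as the degree-$(i+1)$ component via the isomorphism of vector spaces $\dd:\;C_{i+1}/Z_{i+1}\xra{\sim}B_i$ (this is an isomorphism onto a subspace of $C_i$, not a differential of the quotient complex). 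Consequently $\Det(C_\bt/Z_\bt)$ is canonically $(\Det\,B_\bt)^{-1}$ after the degree shift — a nontrivial line — and is not canonically $\bR$. The claimed trivialization also contradicts your own final bookkeeping: if $\Det\,C_\bt\cong\Det\,Z_\bt$ held, then substituting $\Det\,Z_i\cong\Det\,B_i\otimes\Det\,H_i$ would leave an uncancelled factor $\bigotimes_i(\Det\,B_i)^{(-1)^i}$, so you would end up with $\Det\,H_\bt\otimes\bigotimes_i(\Det\,B_i)^{(-1)^i}$ rather than $\Det\,H_\bt$; the pairwise cancellation you invoke at the end is only available if the quotient's determinant line is retained as $(\Det\,B_\bt)^{-1}$ rather than discarded.

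The repair is exactly the paper's argument: apply the determinant-of-a-short-exact-sequence lemma to $0\ra Z_\bt\ra C_\bt\xra{\dd}B_{\bt-1}\ra 0$ (the differential on the quotient is irrelevant here, since $\Det$ only sees the graded vector spaces), which gives $\Det\,C_\bt\cong\Det\,Z_\bt\otimes(\Det\,B_\bt)^{-1}$, and then to $0\ra B_\bt\ra Z_\bt\ra H_\bt\ra 0$, which gives $\Det\,Z_\bt\cong\Det\,B_\bt\otimes\Det\,H_\bt$; the two $\Det\,B_\bt$ factors cancel canonically and no further trivialization or sign-chasing is needed.
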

\begin{proof} Applying (\ref{Det for a SES of complexes}) to the short exact sequence of complexes
$$0\ra Z_\bt\ra C_\bt\xra{\dd} B_{\bt-1}\ra 0$$
we get
\be \Det\,C_\bt\cong \Det\,Z_\bt\otimes (\Det\,B_\bt)^{-1} \label{Lm: Det(C)=Det(H) eq1}\ee
Next, using the short exact sequence
$$ 0\ra B_\bt\ra Z_\bt\ra H_\bt\ra 0 $$
we get
\be \Det\,Z_\bt\cong \Det\,B_\bt\otimes \Det\,H_\bt \label{Lm: Det(C)=Det(H) eq2}\ee
Combining this with (\ref{Lm: Det(C)=Det(H) eq1}), we obtain
$$\Det\,C_\bt\cong \Det\,H_\bt\otimes \Det\,B_\bt\otimes (\Det\,B_\bt)^{-1}\cong \Det\,H_\bt$$
\end{proof}

Let us introduce the notation $\bT: \Det\,C_\bt\ra \Det\,H_\bt$ for the isomorphism (\ref{Det(C)=Det(H)}).

\begin{rem}
Note that 
rescaling $\dd\mapsto \alpha\dd$ by a factor $\alpha\in\bR-0$ does not change the homology, but results in rescaling $\bT$ by a factor $\alpha^{-\sum (-1)^i\dim B_i}$.
\end{rem}

We want to consider determinant lines modulo signs, so that we do not need to remember complicated Koszul signs in (\ref{Det for a SES of complexes}), (\ref{Det(C)=Det(H)}).
\begin{definition} \label{def: density}
A {\it density} on a vector space $V$ is a map $F$ from the set of bases $\{v_i\}$ in $V$ to $\bR$, such that for an automorphism $m\in GL(V)$, one has
$$F(m v)=|\det m|\cdot F(v)$$
\end{definition}

Denote the space of densities on $V$ by $\mr{Dens}(V)$. One has
$$\mr{Dens}(V)\cong (\Det\,V^*)/\{\pm 1\}
$$

\subsection{Torsion via determinant lines}

Given a complex $C_\bt$ of finite-dimensional vector spaces and an element $$\mu\in \Det\,C_\bt/\{\pm 1\}$$
one may call the torsion its image
$$\bT(\mu)\in \Det\,H_\bt/\{\pm 1\}$$

If $C_\bt$ is acyclic, $\bT(\mu)\in \Det\,H_\bt/\{\pm1\}\simeq \bR_+$ is a positive real number.

If $C_\bt$ is equipped with a basis $c_\bt$, one can construct the element $\mu$ as
\be\mu=\left(\bigotimes_{i\;even} c_{i,1}\wedge\cdots\wedge c_{i,\dim C_i}\right)\otimes \left(\bigotimes_{i\;odd} c^*_{i,1}\wedge\cdots\wedge c^*_{i,\dim C_i}\right) \quad \in \Det\,C_\bt/\{\pm 1\}\label{mu from basis}\ee
with $c_i^*$ the basis in $C_i^*$ dual to $c_i$.
If furthermore $H_\bt$ is equipped with a basis $h_\bt$, one similarly constructs an element $\mu_{H}\in \Det\,H_\bt/\{\pm 1\}$.
\begin{lemma} \label{Lm: torsion via det line = Milnor's}
The ratio $$\tau(C_\bt)=\bT(\mu)/\mu_H\quad \in\bR_+$$
with $\mu$ and $\mu_H$ constructed from bases $c_i$, $h_i$ as above, coincides with the torsion, as in Milnor's definition (\ref{def: torsion}).
\end{lemma}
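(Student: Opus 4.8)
The plan is to show that the abstract isomorphism $\bT\colon \Det\,C_\bt \to \Det\,H_\bt$ of Lemma \ref{Lm: Det(C)=Det(H)}, when written out in terms of the given bases, reproduces exactly the alternating product of transition-matrix determinants appearing in Milnor's definition \eqref{def: torsion}. The cleanest way to do this is to run through the same short exact sequences used to build $\bT$, but this time keeping track of the chosen bases $b_i$ for the boundary modules $B_i$, the homology bases $h_i$, and the chain bases $c_i$, and to compare the induced elements of the (one-dimensional) determinant lines step by step.

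\textbf{Step 1.} Choose bases $b_i$ for each $B_i$ (using Hypothesis \ref{hypothesis: B_i are free}, or $s$-bases in the stably free case — the argument is identical up to bookkeeping, as noted in Section \ref{sec: stably free modules}). As in Milnor's construction, the inclusions $0\subset B_i\subset Z_i\subset C_i$ together with the bases $b_i$ for $B_i$, $h_i$ for $Z_i/B_i=H_i$, and $b_{i-1}$ for $C_i/Z_i\xrightarrow{\sim}B_{i-1}$ give a combined basis $b_i h_i b_{i-1}$ for $C_i$. I will track how each of the isomorphisms \eqref{Lm: Det(C)=Det(H) eq1}, \eqref{Lm: Det(C)=Det(H) eq2} acts on determinant lines when all spaces carry these adapted bases.

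\textbf{Step 2.} Under the isomorphism $\Det\,C_\bt\cong\Det\,Z_\bt\otimes(\Det\,B_\bt)^{-1}$ of \eqref{Lm: Det(C)=Det(H) eq1}, coming from $0\to Z_\bt\to C_\bt\xrightarrow{\dd}B_{\bt-1}\to 0$, the element $\mu$ built from the bases $c_i$ via \eqref{mu from basis} is carried to the element built from the bases $b_i h_i$ on $Z_i$ and $b_i$ on $B_i$, up to a factor of $\prod_i[c_i/(b_i h_i b_{i-1})]^{(\pm 1)^i}$ — this is precisely where the transition-matrix classes enter, by \eqref{Det(V) = Det(U) otimes Det(W) isomorphism} and the definition of $\Det\,C_\bt$ as $\bigotimes(\Det\,C_i)^{(-1)^i}$ (the sign in the exponent accounts for the fact that odd-degree spaces contribute their dual determinant). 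Next, under $\Det\,Z_\bt\cong\Det\,B_\bt\otimes\Det\,H_\bt$ from \eqref{Lm: Det(C)=Det(H) eq2}, the basis $b_i h_i$ on $Z_i$ maps exactly to $b_i$ on $B_i$ tensor $h_i$ on $H_i$ with \emph{no} extra factor, since $b_i h_i$ is by construction the product basis for that short exact sequence. Finally the $\Det\,B_\bt\otimes(\Det\,B_\bt)^{-1}$ factors cancel canonically (this is where the Koszul-sign ambiguity is absorbed, which is why we work modulo $\pm 1$), leaving $\bT(\mu)$ equal to $\mu_H$ times $\prod_i[c_i/(b_i h_i b_{i-1})]^{(-1)^i}$, i.e. $\bT(\mu)/\mu_H=\tau(C_\bt)$ in the sense of \eqref{def: torsion}. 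One also checks independence of the auxiliary bases $b_i$: a change $b_i\mapsto\bar b_i$ contributes $[\bar b_i/b_i]$ once through $\Det\,Z_\bt$ and once (inversely) through $(\Det\,B_\bt)^{-1}$, a telescoping cancellation matching the one already observed after \eqref{def: torsion}.

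\textbf{The main obstacle} is purely organizational rather than conceptual: getting the signs and the placement of duals consistent across the tensor factors, since $\Det\,C_\bt$ mixes $\Det\,C_i$ (even $i$) with $(\Det\,C_i)^{-1}$ (odd $i$), and the short exact sequence \eqref{Lm: Det(C)=Det(H) eq1} shifts the grading of $B_{\bt-1}$. Working modulo $\pm 1$ throughout — i.e. in $\Det(-)/\{\pm1\}$ as in Definition \ref{def: density} — kills the Koszul signs entirely, so the only content is that the \emph{absolute values} of the transition determinants multiply correctly with the exponents $(-1)^i$; this is exactly the identity \eqref{def: torsion} after applying the isomorphism $\bar K_1(\bR)\cong\bR_+$ of \eqref{bar K_1(R)}. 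I would present Step 2 as a single display tracing $\mu\mapsto\bT(\mu)$ through the two short exact sequences, citing Lemma \ref{Lm: Det(C)=Det(H)} for the canonical cancellation of the $\Det\,B_\bt$ factors, and then remark that the telescoping independence argument is the same one already given for \eqref{def: torsion}.
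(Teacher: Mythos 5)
Your argument is correct. Note that the paper states Lemma \ref{Lm: torsion via det line = Milnor's} without proof, so there is nothing to diverge from: your route --- choosing auxiliary bases $b_i$, writing $\mu$ as $\prod_i[c_i/(b_ih_ib_{i-1})]^{(-1)^i}$ times the element built from the adapted bases $b_ih_ib_{i-1}$, and then checking that the two short exact sequences used to define $\bT$ in Lemma \ref{Lm: Det(C)=Det(H)} send that adapted element to $\mu_H$ after the canonical cancellation of the $\Det\,B_\bt\otimes(\Det\,B_\bt)^{-1}$ factors (all modulo $\pm1$, which disposes of the Koszul signs) --- is exactly the intended computation, in the same style as the paper's proofs of Lemma \ref{Lm: torsion via kappa} and Lemma \ref{Lm: multiplicativity of torsions}. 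The only cosmetic points: over $A=\bR$ every module is free, so the appeal to $s$-bases is unnecessary, and the independence of the choice of $b_i$ is automatic since neither $\bT(\mu)/\mu_H$ nor (by the telescoping argument already given after (\ref{def: torsion})) the product of transition classes depends on it.
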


\lec{Lecture 6, 27.03.2014}

\subsubsection{Multiplicativity of torsions (Lemma \ref{Lm: multiplicativity of torsions, non-acyclic case}) in terms of determinant lines}
\label{sec: multiplicativity via det lines}
Given a short exact sequence of complexes of vector spaces
$$0\ra C'_\bt\ra C_\bt\ra C''_\bt\ra 0$$
the statement of Lemma \ref{Lm: multiplicativity of torsions, non-acyclic case} can be reinterpreted as commutativity of the following diagram
\be
\begin{CD} \Det\, C_\bt @<\cong<< \Det\, C'_\bt\otimes \Det\, C''_\bt\\
@V\bT VV @VV\bT'\otimes \bT'' V\\
\Det\, H_\bt @<\cong<< \Det\, H'_\bt\otimes \Det\, H''_\bt
\end{CD}
\label{Det square}\ee
where the map in the upper row is (\ref{Det for a SES of complexes}). The isomorphism in the lower row is obtained by applying (\ref{Det(C)=Det(H)}) to the long exact sequence in homology (\ref{LES in homology}):
$$\underbrace{\Det\,\chi}_{\cong \Det\,0=\bR}\cong \Det\, H''_\bt\otimes(\Det\, H_\bt)^{-1}\otimes \Det\, H'_\bt
\quad \Rightarrow \quad \Det\, H_\bt\cong \Det\, H'_\bt\otimes \Det\, H''_\bt$$
More specifically, to bases $c',c'',c=c'c'',h',h'',h$ in chains and homology, one associates elements in corresponding determinant lines, $\mu',\mu'',\mu,\mu_{H'},\mu_{H''},\mu_H$. Then we have a square of elements of (\ref{Det square}):
$$
\begin{CD} \mu @<<< \mu'\otimes \mu''\\
@V\bT VV @VV\bT'\otimes \bT'' V\\
\underbrace{\tau(C')\tau(C'')\tau(\chi)}_{=\tau(C)}\mu_H @<<< (\tau(C')\mu_{H'})\otimes (\tau(C'')\mu_{H''})
\end{CD}
$$

\subsubsection{``Euclidean complexes''}
If $C_\bt$ comes with a positive definite inner product on chain spaces, $(,): C_i\otimes C_i\ra \bR$,
one can construct $\mu\in \Det\,C_\bt/\{\pm1\}$ by formula (\ref{mu from basis}) for any collection of orthonormal bases $\{c_i\}$.\footnote{
The choice of bases does not matter, since transition to another set of orthonormal bases induces a change of $\mu$ by a product of determinants of transition matrices, which are in turn orthogonal, and have determinants $\pm 1$.
} In this situation one can define the induced inner product on the determinant line
\be (,)_{\Det\, C}:\;\Det\,C_\bt\otimes \Det\,C_\bt\ra \bR \label{norm on Det(C)}\ee
uniquely characterized by the property $(\mu,\mu)_{\Det\,C}=1$.

Let $\dd^*:C_\bt\ra C_{\bt+1}$ be the adjoint  of the boundary operator with respect to the inner product on $C_\bt$. Then one can define ``Laplacians''
$$\Delta_i=\dd \dd^*+\dd^*\dd:\quad C_i\ra C_i$$
by standard Hodge theory argument, one has the following decomposition of chain spaces
\be C_i=\underbrace{\underbrace{\ker\Delta_i}_{\simeq H_i}\oplus B_i}_{Z_i}\oplus \mr{im}\,\dd^* \label{Hodge decomp of Euclidean chain complex}\ee

Thus one gets two inner products on the determinant line of homology $\Det\, H_\bt$:
\begin{itemize}
\item The pushforward of $(,)_{\Det\,C}$ by $\bT$ yields the inner product
$$(\bt,\bt)_{\Det\, H}=(\bT^{-1}\bt,\bT^{-1}\bt)_{\Det\, C}:\,\Det\, H_\bt\otimes \Det\, H_\bt\ra \bR$$
\item Inner product on chains restricted to harmonic chains yields an inner product on homology
$(,)_H:H_i\otimes H_i \ra \bR$. Applying the construction (\ref{norm on Det(C)}) to homology with $(,)_H$, we get the inner product $(,)_{Harm}: \Det\,H_\bt\otimes \Det \, H_\bt\ra \bR$.
\end{itemize}

Denote $\nu\in \Det\,H_\bt/\{\pm1\}$ a vector of unit length with respect to $(,)_{Harm}$.
\begin{lemma}
\be \bT(\mu)/\nu=\prod_i ({\det}' \Delta_i)^{\frac{(-1)^i}{2}i}\quad\in\bR_+ \label{Lm: alg RS formula eq}\ee
where ${\det'}$ stands for the product of non-zero eigenvalues.
\end{lemma}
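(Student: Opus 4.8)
The plan is to compute the ratio $\bT(\mu)/\nu$ by splitting the chain complex via the Hodge decomposition (\ref{Hodge decomp of Euclidean chain complex}) into elementary pieces and tracking how $\bT$ and the two inner products on $\Det\,H_\bt$ behave under this splitting. First I would use the orthogonal decomposition $C_i = \ker\Delta_i \oplus B_i \oplus \mr{im}\,\dd^*$ and observe that $\dd$ restricts to an isomorphism $\mr{im}\,\dd^*|_{C_i} \xra{\sim} B_{i-1}$, and that on $\mr{im}\,\dd^*$ one has $\Delta_i = \dd^*\dd$. Because the decomposition is orthogonal, the vector $\mu \in \Det\,C_\bt/\{\pm1\}$ factorizes (up to sign) as a tensor product over $i$ of contributions from $\ker\Delta_i$, from $B_i$, and from $\mr{im}\,\dd^*$, each built from orthonormal bases of the respective subspace.

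Next I would compute $\bT$ on this factorized form. Under $\bT$, the harmonic part $\ker\Delta_i$ maps isomorphically onto $H_i$ and, by construction of the isomorphism $\bT$ through the sequences $0\to Z_\bt\to C_\bt\to B_{\bt-1}\to 0$ and $0\to B_\bt\to Z_\bt\to H_\bt\to 0$, contributes exactly the harmonic volume element — i.e. a unit vector for $(,)_{Harm}$, which is $\nu$. The remaining factors involve the pairing of $B_i$ (appearing once in $Z_i$ with positive sign and once as $B_{i-1}$ with alternating sign through the $\dd$-isomorphism) against $\mr{im}\,\dd^*|_{C_i}$. Concretely, if $\{e_j\}$ is an orthonormal basis of $\mr{im}\,\dd^*|_{C_i}$ consisting of eigenvectors of $\Delta_i$ with eigenvalues $\lambda_j>0$, then $\dd e_j$ is an eigenvector in $B_{i-1}$ of norm $\sqrt{\lambda_j}$, so expressing $\dd e_j$ in terms of an orthonormal basis of $B_{i-1}$ introduces a factor $\prod_j \sqrt{\lambda_j} = ({\det}'_{\mr{im}\,\dd^*|_{C_i}}\Delta_i)^{1/2}$. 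Assembling these factors with the signs dictated by the determinant-line grading $(\Det\,C_i)^{(-1)^i}$ and the homological bookkeeping in $\bT$, one gets $\bT(\mu)/\nu = \prod_i ({\det}'_{\mr{im}\,\dd^*|_{C_i}}\Delta_i)^{e_i}$ for suitable exponents $e_i$.

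Finally I would reconcile the exponents with the stated formula by using the spectral coincidence $\{\lambda^{(i),\mr{im}\,\dd^*}_j\} = \{\lambda^{(i-1),B}_j\}$ (since $\dd$ intertwines the two Laplacian restrictions), exactly as in the rearrangement from (\ref{RS torsion}) to (\ref{RS torsion in terms of coex forms}) in the analytic case. That is, each nonzero eigenvalue of $\Delta_i$ appears in ${\det}'\Delta_i$ both from $\mr{im}\,\dd^*|_{C_i}$ and from $B_i = \mr{im}\,\dd|_{C_{i+1}}$, and telescoping the contributions yields the weight $\tfrac{(-1)^i}{2}\,i$ on ${\det}'\Delta_i$. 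Alternatively, and perhaps more cleanly, I would invoke Lemma \ref{Lm: torsion via kappa} (the chain-contraction formula) with $\kappa = \dd^*\Delta^{-1}$ on the image of $\dd$: then $(\dd+\kappa)_{even\to odd}$ restricted to the non-harmonic part has determinant whose absolute value is directly a product of the relevant $\sqrt{\lambda_j}$'s, and $\tau(C_\bt) = \bT(\mu)/\mu_H$ combined with comparing $\mu_H$ (from orthonormal homology bases) against $\nu$ (unit for $(,)_{Harm}$) — which agree up to sign since both are built from orthonormal bases of $H_\bt$ with the induced inner product — gives the result.

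I expect the main obstacle to be the sign- and exponent-bookkeeping: keeping careful track of which copy of each $B_i$ carries which sign in the alternating tensor product $\Det\,C_\bt = \bigotimes_i(\Det\,C_i)^{(-1)^i}$, and verifying that the harmonic factor is precisely $\nu$ (not merely proportional to it) requires unwinding the definition of $\bT$ through both short exact sequences in Lemma \ref{Lm: Det(C)=Det(H)}. Everything else is a direct spectral computation on each orthogonal summand.
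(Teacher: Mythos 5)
Your argument is correct; note that the notes state this lemma without proof, so there is no in-paper argument to compare against -- only the surrounding machinery, which you use in the intended way. Your first route is the natural one: split $C_\bt$ orthogonally by (\ref{Hodge decomp of Euclidean chain complex}); the harmonic summand is a subcomplex with zero differential whose contribution to $\bT(\mu)$ is precisely $\nu$ (harmonic representatives are admissible lifts in $0\to B_i\to Z_i\to H_i\to 0$, and the induced basis of $H_i$ is orthonormal for $(,)_H$ by definition), while the only nontrivial factors come from $\dd:\mr{im}\,\dd^*\cap C_i\xra{\sim}B_{i-1}$, whose matrix in orthonormal bases has $|\det|=\bigl({\det}'\,\dd^*\dd|_{\mr{im}\,\dd^*\cap C_i}\bigr)^{1/2}$ since $\|\dd e_j\|=\sqrt{\lambda_j}$ and the $\dd e_j$ stay orthogonal; the telescoping you cite (the same rearrangement as (\ref{RS torsion}) versus (\ref{RS torsion in terms of coex forms}), with opposite sign because one works with chains here) then produces exactly the weight $\frac{(-1)^i}{2}\,i$, in agreement with Lemma \ref{Lm: torsion via det line = Milnor's} and the computation of part (\ref{Lm: torsion via kappa, 2}) of Lemma \ref{Lm: torsion via kappa} carried over to the non-acyclic case. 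The exponent bookkeeping you defer does close up: each $B_i$ enters $\bT$ twice with opposite exponents and cancels when orthonormal bases of $B_i$ are used, leaving $\prod_i|\det(\dd:\mr{im}\,\dd^*\cap C_i\to B_{i-1})|^{(-1)^i}$, which is the stated right-hand side. One caution about your ``cleaner'' alternative: Lemma \ref{Lm: torsion via kappa} and formula (\ref{def: torsion via chain contraction}) are stated for acyclic complexes, and $\kappa=\dd^*\Delta^{-1}$ on $\mr{im}\,\dd$ satisfies $\dd\kappa+\kappa\dd=\mr{id}$ minus the orthogonal projection onto $\ker\Delta$, not $\mr{id}$; so before invoking that lemma you must first split off the harmonic subcomplex (and use multiplicativity, or the $(\iota,p,\kappa)$ retraction data of the Batalin--Vilkovisky subsection) -- which is in effect what your first route already does.
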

(Cf. the formula for Ray-Singer torsion (\ref{RS torsion}) in Section \ref{sec: intro, RS torsion} and explanations thereafter; different sign in the exponent is related to the fact that now we are dealing with chains instead of cochains).

Note that by Lemma \ref{Lm: torsion via det line = Milnor's} the l.h.s. of (\ref{Lm: alg RS formula eq}) is the torsion of the complex $C_\bt$ calculated with respect to any orthonormal w.r.t. $(,)$ basis in chains and any orthonormal w.r.t. $(,)_H$ basis in homology.

\subsubsection{Torsion as a pushforward of a density (the fiber Batalin-Vilkovisky integral)}
For a chain complex of finite-dimensional vector spaces $C_\bt$, the isomorphism
$$\bT: \Det\,C_\bt/\{\pm1\}\ra \Det\, H_\bt/\{\pm 1\}$$
can be understood as a pushforward of densities (fiber integral) $\bT=p_*$ with respect to a projection $p:C_\bt\ra H_\bt$ by the following construction.

Define a $\bZ$-graded vector space
\be F=C_\bt[1]\oplus C^\bt[-2]\label{F=chains+cochains}\ee
where $C^i:=(C_{-i})^*$ are the dual spaces to chains (=``cochains''). Convention for the degree shifts is: for $V_\bt$ a graded vector space, $(V_\bt[k])_i=V_{i+k}$. In particular, the component of $F$ in degree $i$ is $F_i=C_{i+1}\oplus C^{i-2}=C_{i+1}\oplus (C_{2-i})^*$. Coordinates on $F_i$ by convention have degree $-i$. $F$ is equipped with a canonical degree $-1$ (constant) symplectic form $\omega$, induced by the canonical pairing  $\langle,\rangle: \;C_i\otimes C^{-i}\ra \bR$.\footnote{
If $c_i$ and $c^*_i$ are chosen bases in $C_i$ and dual bases in $C^{-i}$, denote the associated coordinates $x^{(i)}_\alpha$, $y^{(i)\alpha}$ on $C_i[1]$ and $C^{-i}[-2]$ (as parts of $F$) respectively. The degrees are: $\deg(x^{(i)}_\alpha)=1-i$, $\deg(y^{(i)\alpha})=i-2$. The symplectic form is then written in coordinates as
$\omega=\sum_i \sum_\alpha (-1)^i\delta x^{(i)}_\alpha\wedge \delta y^{(i)\alpha}$. The (internal)
degree of $\omega$ is $-1$. In terms of the ``superfields'' (\ref{superfields A,B}), the symplectic form is simply $\omega = \langle \delta A , \delta B \rangle$. The corresponding map $\omega^\#: F\xra{\sim} F^*[-1]$ is the identity.
}

One introduces the {\it action}:
\be S=\langle B, \dd A \rangle \quad \in \mr{Fun}(F)=\left(S^\bt (F_{even})^*\right) \otimes \left(\wedge^\bt (F_{odd})^*\right) \label{superfields A,B}\ee
where the {\it superfields} $$A:\; C_\bt[1]\ra C_\bt,\quad B:\; C^\bt[-2]\ra C^\bt $$ are just shifted identity maps on chains/cochains, extended to the whole $F$ via decomposition (\ref{F=chains+cochains}). The action $S$ is a degree $0$ quadratic function on $F$.

Assume that we have a chain inclusion $\iota: H_\bt \ra C_\bt$, a chain projection $p: C_\bt \ra H_\bt$ which agrees with the canonical projection of cycles to homology: $p|_Z: Z_\bt \ra H_\bt$ (sending a cycle to its class in homology) and satisfies $p\circ \iota=\mr{id}_{H_\bt}$. Assume also that $\kappa: C_\bt\ra C_{\bt+1}$ is a chain homotopy satisfying
$$\dd\kappa+\kappa\dd=\mr{id}_{C_\bt}-\iota\circ p,\quad \kappa^2=0,\quad \kappa\circ\iota=0, \quad p\circ \kappa=0$$

The triple of maps $(\iota,p,\kappa)$ determines a ``Hodge decomposition'' of chains:
\be C_\bt=\rlap{$\overbrace{\phantom{\iota(H_\bt)\oplus B_\bt}}^{Z_\bt}$}\iota(H_\bt)\oplus \underbrace{B_\bt\oplus \mr{im}(\kappa)}_{\ker p} \label{Hodge decomp from ind data}\ee

If the chain spaces are endowed with positive definite inner product $(,)$, one can induce the triple $(\iota,p,\kappa)$ from the Hodge decomposition (\ref{Hodge decomp of Euclidean chain complex}). One sets $\iota$ to be the realization of homology classes by their harmonic representatives, $p$ subtracts from a chain its $\dd^*$-exact part and takes the homology class of the resulting cycle. The chain homotopy is constructed as
$$\kappa=\dd^*\circ (\Delta+\lambda\cdot\iota\circ p)^{-1}$$
where $\lambda\in\bR-0$ is an arbitrary nonzero number. 

Decomposition (\ref{Hodge decomp from ind data}) induces a decomposition of $F$ into a direct sum of symplectic subspaces
$$F=F'\oplus F''$$
where
$$F'=\iota(H_\bt)[1]\oplus \iota(H_\bt)^*[-2],\quad F''=(\ker p)[1]\oplus (\ker p)^*[-2]$$
and a Lagrangian subspace\footnote{Lagrangian in $F''$, not in the whole $F$.}
$$L''=\mr{im}(\kappa)[1]\oplus (B_\bt)^*[-2]\quad \subset F''$$

Now we would like to say that $\bT(\mu)$ is given by the following fiber Batalin-Vilkovisky integral:
\be \bT(\mu)=\beta\int_{L''\subset F''}\mu\cdot e^{i S}\label{fiber BV int}\ee
with $\beta=(2\pi)^{-\dim B_{even}}i^{-\dim B_{odd}}$.
Let us explain how the right hand side of (\ref{fiber BV int}) is understood.

By analogy with Definition \ref{def: density}, for a vector space $V$ and $\alpha\in\bR$ a number, one can introduce $\alpha$-densities on $V$ as functions $f$ from the set of bases $\{v\}$ on $V$ to $\bR_+$ such that
$$f(m\cdot v)=|{\det}(m)|^\alpha\cdot f(v)$$
for any $m\in GL(V)$. Denote $\mr{Dens}^\alpha(V)$ the set of $\alpha$-densities on $V$.

For $V_\bt$ a graded vector space,
an $\alpha$-density $f$ on $V_\bt$ associates to a choice of basis $v_i$ in each $V_i$ a number $f(\{v_i\})\in\bR$ in such a way that for a collection of automorphisms $m_i\in GL(V_i)$, one has $f(\{m_i\cdot v_i\})=\prod_i |\det m_i|^{(-1)^i\alpha}\cdot f(\{v_i\})$.

Note that $\mr{Dens}^1(V_\bt)\cong \Det(V_\bt[1])/\{\pm1\}$ can be viewed as the set of affine Berezin integration measures on $V$ (regarded as a superspace with parity prescribed according to the $\bZ$-grading reduced modulo $2$). For 1-densities we will suppress the superscript in $\Dens^1(V)=\Dens(V)$.

Fact \cite{Severa}: for $V\supset L$ a pair of an odd-symplectic graded vector space and a Lagrangian subspace, one has a canonical isomorphism (depending on the odd-symplectic structure on $V$)
$$\Dens^{1/2}(V)\cong \Dens(L)$$
(which comes from the symplectomorphism $V\simeq L\oplus L^*[-1]$ and the relation of determinant lines
$\Det(L\oplus L^*[-1])\cong \Det(L)^{\otimes 2}$).

Now, the r.h.s. of (\ref{fiber BV int}) is to be understood as the image of $\mu$ with respect to the following diagram:\footnote{Alternatively, one can altogether avoid using half-densities and work with determinant lines only, using the fact that for $L\subset V$ a Lagrangian subspace of an odd-symplectic graded vector space, there is a canonical quadratic map $\Det(L)\xra{\sim}\Det(V)\cong \Det(L)^{\otimes 2}$, sending $\nu\mapsto \nu^{\otimes 2}$. Then we have $\mu\in\Det(C_\bt)\cong \Det(C_\bt[1])^*\xra{\bt^{\otimes 2}} \Det(F)^*\simeq \Det(F')^*\otimes \Det(F'')^*\xra{\bt|_{H[1]}^{\otimes\frac{1}{2}}\otimes \bt|_L^{\otimes\frac{1}{2}}} \Det(H_\bt[1])^*\otimes \Det(L)^*\xra{\mr{id}\otimes\int e^{iS}\cdot \bt} \Det(H_\bt)\otimes\bC\ni \beta^{-1}\bT(\mu)$.}
\begin{multline*}
\Dens(C_\bt[1])\simeq \Dens^{1/2}(F)\simeq \Dens^{1/2}(F')\otimes \Dens^{1/2}(F'')\simeq \\ \simeq
\Dens(H_\bt[1])\otimes \Dens(L'')
\xra{\mr{id}\otimes\int_{L''}e^{iS}\cdot\bt} \Dens(H_\bt[1])\otimes\bC
\end{multline*}
Here in the last arrow we calculate the Berezin integral over the odd part of $L''$ and the ordinary (Lebesgue) integral over the even part of $L''$.

Consistency of (\ref{fiber BV int}) with the construction of $\bT$ given in the proof of Lemma \ref{Lm: Det(C)=Det(H)} can be checked by a direct calculation of the integral in the r.h.s. of (\ref{fiber BV int}) in coordinates on chain spaces adapted to the Hodge decomposition (\ref{Hodge decomp from ind data}).

\lec{Lecture 7, 03.04.2014}

\section{Whitehead torsion for CW-complexes}

\subsection{CW-complexes: reminder}
(Reference: \cite{Cohen}).
\subsubsection{Homotopy, a reminder of basic definitions}
A pair of maps\footnote{``Maps'' will always mean ``continuous maps''.} $f,g:X\ra Y$ between topological spaces are {\it homotopic} if there exists a map $F: X\times [0,1]\ra Y$ such that $F(x,0)=f(x)$ and $F(x,1)=g(x)$ for any $x\in X$.

A map $f:X\ra Y$ is called a {\it homotopy equivalence} of spaces $X$ and $Y$ if there exists a map ({\it homotopy inverse}) $g: Y\ra X$, such that $g\circ f: X\ra X$ and $f\circ g: Y\ra Y$ are homotopic to identity on $X$ and $Y$ respectively.

A particularly nice kind of homotopy equivalence is a deformation retraction.

If $Y\subset X$, then $D: X\ra Y$ is a {\it strong deformation retraction} of $X$ onto $Y$ if there exists a map $F:X\times [0,1]\ra X$ such that
$$F(x,0)=x, \quad F(x,1)=D(x), \quad F(y,t)=y$$
for any $x\in X$, $y\in Y$, $t\in [0,1]$. In this case, $Y$ is called a {\it deformation retract} of $X$.

\subsubsection{CW-complexes}
\begin{definition}
A {\it CW-complex} (={\it cell complex}) is a Hausdorff topological space $X$ and a collection of 
subsets $e_\alpha\subset X$ ({\it cells}) with the following properties.
\begin{enumerate}[(i)]
\item \label{CW complex def (i)} $X=\cup_\alpha e_\alpha$ and $e_\alpha\cap e_\beta=\varnothing$ for $\alpha\neq \beta$.
\item \label{CW complex def (ii)} Each cell $e_\alpha$ of is equipped with a {\it characteristic map} $\phi_\alpha: B^k\ra X$ where $B_k$ is a closed ball of dimension $k\geq 0$ -- the {\it dimension} of the cell, such that
    \begin{itemize}
    \item on the interior of the ball, $\phi_\alpha|_{\mr{int}(B^k)}$ is a homeomorphism onto $e_\alpha\subset X$,
    \item the boundary of the ball is sent to $(k-1)$-skeleton of $X$: $\phi_\alpha(\dd B^k)\subset \sk_{k-1}X$
    where one defines {\it skeleta} as $\sk_k X=\bigcup_{\alpha:\, \dim(e_\alpha)\leq k} e_\alpha$.
    \end{itemize}
\item \label{CW complex def (iii)} Each closure $\bar e_\alpha$ is contained in the union of finitely many cells.
\item \label{CW complex def (iv)} A set $Y\subset X$ is closed in $X$ iff $Y\cap \bar e_\alpha$ is closed in $\bar e_\alpha$ for all $e_\alpha$.
\end{enumerate}
The map $\phi_\alpha|_{\dd B^k}$ is called the {\it attaching map} for the cell $e_\alpha$.
\end{definition}

Skeleta form an increasing filtration of $X$,
$$\sk_0 X\subset \sk_1 X\subset \sk_2 X \cdots\subset X,\qquad X=\bigcup_k \sk_k X$$

Note that if $X$ has finitely many cells, properties (\ref{CW complex def (iii)},\ref{CW complex def (iv)}) are satisfied automatically.

When we need to distinguish the underlying topological space of a CW-complex from the CW-complex $X$ itself, we will denote the former $|X|$.

Union of a subcollection of cells $Y=\cup_\beta e_\beta$, $\{\beta\}\subset \{\alpha\}$ is called a ``subcomplex'' of $X$ if $\bar e_\beta\subset Y$ for every $e_\beta$.

A map $f:X\ra Y$ between CW-complexes is called {\it cellular} if $f(\sk_k(X))\subset \sk_k(Y)$ for every $k$.

If $f,g: X\ra Y$ are two homotopic maps and $g$ is cellular, then $g$ is called a {\it cellular approximation} to $f$.
\begin{thm}[Cellular approximation theorem]
Any map between CW-complexes $f: X\ra Y$ is homotopic to a cellular map.
\end{thm}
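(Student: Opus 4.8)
\emph{Proof proposal.} The plan is to deform $f$ one skeleton at a time, by induction on dimension. The inductive claim I would prove is: if $f$ is already cellular on $\sk_{k-1}X$, then $f$ is homotopic, by a homotopy that is stationary on $\sk_{k-1}X$, to a map that is cellular on $\sk_k X$. The case $k=0$ starts from the vacuous hypothesis ``cellular on $\sk_{-1}X=\varnothing$''. Granting the claim for every $k$ and then concatenating the resulting countable family of homotopies (see the final paragraph), the theorem follows.

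For the inductive step I would work with all $k$-cells of $X$ at once. Let $e_\alpha^k$ be a $k$-cell with characteristic map $\phi_\alpha:B^k\ra X$, and set $g:=f\circ\phi_\alpha:B^k\ra Y$, so that $g(\dd B^k)\subset f(\sk_{k-1}X)\subset\sk_{k-1}Y$ by the inductive hypothesis. Since $g(B^k)$ is compact it lies in a finite subcomplex of $Y$ (the finiteness property (iii) of CW-complexes), hence meets only finitely many open cells of $Y$. I would homotope $g$ rel $\dd B^k$ so that its image lies in $\sk_k Y$. Because these homotopies are constant on the $\dd B^k$'s, they glue together (the open $k$-cells being disjoint) to a homotopy of $f|_{\sk_k X}$ that is stationary on $\sk_{k-1}X$; extending it over the rest of $X$ by the homotopy extension property for the CW pair $(X,\sk_k X)$ yields the required homotopy of $f$.

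The heart of the matter is the geometric lemma: \emph{if $g(B^k)$ meets an open cell $e_\beta^m$ of $Y$ with $m>k$, then $g$ is homotopic rel $\dd B^k$ to a map whose image misses some point $q\in e_\beta^m$.} To see this, choose (in the coordinates given by the characteristic map of $e_\beta^m$) a slightly shrunken pair of open balls $U'\subset \overline{U'}\subset U\subset e_\beta^m$. Since $g(\dd B^k)\cap e_\beta^m=\varnothing$, the preimage $g^{-1}(U)$ is an open subset of $\mathrm{int}\,B^k$, and a relative smooth (or simplicial) approximation homotopes $g$, rel the complement of $g^{-1}(U)$, to a map that is $C^\infty$ on $g^{-1}(U')$. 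A $C^\infty$ map from an open subset of $\bR^k$ into $\bR^m$ with $k<m$ has image of measure zero by Sard's theorem, so some $q\in U'$ is not in the image of $g$ at all. Finally $Y\setminus\{q\}$ deformation retracts onto the subcomplex $Y\setminus e_\beta^m$ (radial retraction away from $q$ inside the closed cell $\bar e_\beta^m$, and the identity elsewhere; this is continuous by the weak-topology property (iv)), and composing $g$ with this deformation gives a homotopy rel $\dd B^k$ to a map with image in $Y\setminus e_\beta^m$. Applying this to a cell $e_\beta^m$ of maximal dimension $m>k$ met by $g$ strictly decreases the pair (maximal dimension of a cell of dimension $>k$ met by $g$, number of such cells of that maximal dimension) in lexicographic order, so after finitely many steps $g(B^k)\subset\sk_k Y$. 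I expect the smooth/simplicial approximation together with the Sard argument to be the main obstacle; the rest is bookkeeping.

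It remains to concatenate the homotopies. Let $H^{(k)}$ be the homotopy produced above, deforming the current map to one cellular on $\sk_k X$ while fixed on $\sk_{k-1}X$; run $H^{(k)}$ on the time interval $[\,1-2^{-k},\,1-2^{-k-1}\,]$. On $\sk_k X$ nothing moves after time $1-2^{-k-1}$, so the pointwise limit at time $1$ is a well-defined map $g:X\ra Y$ that is cellular, and the total homotopy $X\times[0,1]\ra Y$ is continuous because its restriction to each $\bar e_\alpha\times[0,1]$ is (being eventually constant there) together with the weak topology on $X$. If $\dim X<\infty$ the construction terminates after finitely many steps and this last point is vacuous.
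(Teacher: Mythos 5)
The paper offers no proof of this statement to compare yours against: the cellular approximation theorem is quoted there as standard background in the reminder on CW-complexes, with a pointer to \cite{Cohen}. Your proposal is the standard textbook proof (skeleton-by-skeleton induction, compactness to confine the image of each cell to a finite subcomplex, ``make the map miss a point of a top-dimensional cell and push radially off that cell'', homotopy extension for the pair $(X,\sk_k X)$, and an infinite concatenation of homotopies controlled by the weak topology), and the overall structure is sound. One small remark: the gluing of the per-cell homotopies is justified not by disjointness of the open $k$-cells but by the fact that $\sk_k X$ carries the quotient topology from $\sk_{k-1}X\sqcup\bigsqcup_\alpha B^k_\alpha$; a homotopy of $f\circ\phi_\alpha$ rel $\dd B^k$ for each $\alpha$, together with the constant homotopy on $\sk_{k-1}X$, then descends to a continuous homotopy of $f|_{\sk_k X}$.

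The one genuine gap is in the Sard step as written. You smooth $g$ only on $g^{-1}(U')$, keep it fixed off $g^{-1}(U)$, and conclude that some $q\in U'$ is missed because the smooth part has image of measure zero. But the deformed map $g_1$ need not be smooth on all of $g_1^{-1}(U')$: points of $g^{-1}(U)\setminus g^{-1}(U')$ may be moved into $U'$ by the homotopy, and a merely continuous map of a $k$-dimensional set can have image covering all of $U'$ (Peano-type maps), so measure-zero reasoning does not apply to that part and no missed point is guaranteed. The standard repair is to require the approximation to be uniformly close to $g$ in the cell coordinates, say within $\mathrm{dist}(\overline{U''},\,U\setminus U')$ for a third concentric ball $U''\subset\overline{U''}\subset U'$; then $g_1^{-1}(U'')\subset g^{-1}(U')$, i.e.\ every point that $g_1$ sends into $U''$ lies in the region where $g_1$ is $C^\infty$, and the image of that region has measure zero (by Sard, or directly because a locally Lipschitz image of a $k$-dimensional domain is null in $\bR^m$ for $m>k$), so some $q\in U''$ is missed by $g_1$ entirely. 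With that adjustment your radial push-off, the lexicographic bookkeeping, and the rest of the argument go through.
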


\subsubsection{Cellular homology}
{\it Chain complex} of a CW-complex $X$ is defined as a complex of free abelian groups arising as relative singular homology of pairs of consecutive skeleta:
$$C_k(X) = H_k(\sk_k X, \sk_{k-1}X)$$
The boundary map $C_k(X)\ra C_{k-1}(X)$ is constructed as the connecting homomorphism in the long exact sequence in homology of the triple $\sk_{k-2}\subset \sk_{k-1}\subset \sk_k$:
\begin{multline*}\underbrace{H_k(\sk_{k-1},\sk_{k-2})}_0\ra H_k(\sk_k,\sk_{k-2})\ra \underbrace{H_k(\sk_k,\sk_{k-1})}_{C_k(X)} \xra{\dd} \\
\xra{\dd} \underbrace{H_{k-1}(\sk_{k-1},\sk_{k-2})}_{C_{k-1}(X)}\ra H_{k-1}(\sk_{k},\sk_{k-2})\ra \underbrace{H_{k-1}(\sk_{k},\sk_{k-1})}_0
\end{multline*}

Each chain group $C_k(X)$ is a free abelian group with one generator for every cell of dimension $k$.

\begin{thm}
Homology of the complex $C_k(X)$ ({\it cellular homology} of $X$) is canonically isomorphic to the singular homology of the underlying topological space of $X$.
\end{thm}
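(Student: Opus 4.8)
The plan is to compare the singular homology of $|X|$ with the cellular chain complex $C_\bt(X)$ one skeleton at a time, using the long exact sequences of the pairs $(\sk_k X,\sk_{k-1}X)$ together with the vanishing of $H_n(\sk_k X,\sk_{k-1}X)$ away from degree $k$. \textbf{Step 1 (relative homology of consecutive skeleta).} First I would observe that $(\sk_k X,\sk_{k-1}X)$ is a CW pair and that collapsing $\sk_{k-1}X$ identifies $\sk_k X/\sk_{k-1}X$ with a wedge of $k$-spheres, one for each $k$-cell of $X$ (this is the step where the characteristic maps $\phi_\alpha$ genuinely enter). Hence $H_n(\sk_k X,\sk_{k-1}X)\cong \widetilde H_n\!\left(\bigvee_\alpha S^k\right)$ vanishes for $n\neq k$ and is the free abelian group on the $k$-cells for $n=k$, which is precisely the group $C_k(X)$ of the definition; and the connecting homomorphisms of the triples $\sk_{k-2}X\subset\sk_{k-1}X\subset\sk_k X$ supply its differential $\dd$, so nothing in the definition is circular.

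\textbf{Step 2 (stabilization and passage to the limit).} Next, by induction on $k$ using the long exact sequence of $(\sk_k X,\sk_{k-1}X)$ and Step 1, I would prove that $H_n(\sk_k X)=0$ for $n>k$, and that the inclusion $\sk_{k-1}X\hra\sk_k X$ induces an isomorphism on $H_n$ whenever $k-1>n$. It follows that $H_n(\sk_k X)\cong H_n(\sk_{n+1}X)$ for all $k\geq n+1$. For a finite complex this already finishes the comparison with $H_n(|X|)$, since $X=\sk_N X$ for $N$ large; in general one notes that $|X|=\bigcup_k \sk_k X$ and that every singular simplex has compact image, hence meets only finitely many cells and factors through a finite subcomplex, so through some $\sk_k X$. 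Therefore singular homology commutes with this direct limit and $H_n(|X|)\cong H_n(\sk_{n+1}X)$.

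\textbf{Step 3 (the diagram chase).} Finally I would run the standard chase in degree $n$. From the long exact sequence of $(\sk_n X,\sk_{n-1}X)$ and $H_n(\sk_{n-1}X)=0$ one obtains an injection $H_n(\sk_n X)\hra C_n(X)$ whose image is $\ker\big(\dd\colon C_n(X)\ra H_{n-1}(\sk_{n-1}X)\big)$; composing the target with the injection $H_{n-1}(\sk_{n-1}X)\hra C_{n-1}(X)$ (valid because $H_{n-1}(\sk_{n-2}X)=0$) identifies this connecting map with the cellular boundary, so $H_n(\sk_n X)\cong Z_n:=\ker(\dd\colon C_n\ra C_{n-1})$. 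Then from the long exact sequence of $(\sk_{n+1}X,\sk_n X)$ and $H_n(\sk_{n+1}X,\sk_n X)=0$ one gets $H_n(\sk_{n+1}X)\cong H_n(\sk_n X)/\mr{im}\big(\dd\colon C_{n+1}\ra H_n(\sk_n X)\big)$, and under the identification just made the image in question is $B_n:=\mr{im}(\dd\colon C_{n+1}\ra C_n)\subseteq Z_n$ (once more because the relevant connecting map is the cellular differential), so $H_n(\sk_{n+1}X)\cong Z_n/B_n$. Combining with Step 2 yields $H_n(|X|)\cong Z_n/B_n=H_n(C_\bt(X))$. Since every arrow used is induced by a skeletal inclusion or a connecting homomorphism, the composite isomorphism is natural, which gives the word ``canonical'' in the statement and functoriality with respect to cellular maps $X\ra Y$.

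\textbf{Where the difficulty lies.} The only non-formal ingredient is the limit statement in Step 2 for infinite complexes, which rests on the compact-support property of singular chains; the wedge-of-spheres computation in Step 1 is the single place where the CW structure is actually used, and everything else is bookkeeping with long exact sequences.
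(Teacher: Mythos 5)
Your argument is correct and is the standard skeleton-by-skeleton proof (relative homology of consecutive skeleta concentrated in one degree, stabilization via the long exact sequences, the compact-support/direct-limit step for infinite complexes, and the final diagram chase identifying $H_n(\sk_{n+1}X)$ with $\ker\dd/\mr{im}\,\dd$); note that the paper states this theorem as background in its CW-complex reminder and gives no proof of its own, so there is nothing to contrast with. The only point worth tightening is the limit step: ``a compact set meets only finitely many cells'' is exactly where properties (iii) and (iv) of the CW definition are used (choose a point in each cell met; closure-finiteness and the weak topology make this set closed and discrete, hence finite), after which your conclusion that every singular chain lies in some skeleton, and hence that the comparison maps stabilize to $H_n(|X|)$, goes through as stated.
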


For a pair $(X,Y)$ of a CW-complex $X$ and a subcomplex $Y\subset X$, the {\it chain complex of the pair} is defined via singular homology of pairs as
$$C_k(X,Y)=H_k(\sk_k X\cup |Y|, \sk_{k-1}\cup |Y|)$$
Chains $C_k(X,Y)$ form a free abelian group generated by cells of $X-Y$ of dimension $k$.
For the cellular homology of a pair one again has a canonical isomorphism with the singular homology,
$$H_k(X,Y)\cong H_k(|X|,|Y|)$$

\subsection{Whitehead torsion of a pair}
(Reference: \cite{Milnor66}).
Let $(X,Y)$ be a pair consisting of a finite connected CW-complex $X$ and a subcomplex $Y\subset X$, such that $Y$ is a deformation retract of $X$. Denote $\pi_1:=\pi_1(X)$ (of course, we have $\pi_1(X)\cong \pi_1(Y)$).

Let $\widetilde X \supset \widetilde Y$ be the universal covering complexes for $(X,Y)$. The group $\pi_1$ acts on $\widetilde X, \widetilde Y$ by covering transformations, i.e. an element $\sigma\in \pi_1$ determines a cellular
mapping $\sigma:(\widetilde X, \widetilde Y)\ra (\widetilde X, \widetilde Y)$.

Consider the chain complex of the pair $(\widetilde X,\widetilde Y)$:
$$C_n(\widetilde X,\widetilde Y)\ra C_{n-1}(\widetilde X,\widetilde Y)\ra\cdots\ra C_0(\widetilde X,\widetilde Y)$$
Each $C_i$ is a free abelian group generated by $i$-cells of $\widetilde X-\widetilde Y$. This complex is acyclic since cellular homology of a pair is isomorphic to the singular homology (of the underlying pair of topological spaces), and $H_\bt(|\widetilde X|,|\widetilde Y|)=0$ since $|\widetilde Y|$ is a deformation retract of $|\widetilde X|$.

Since $\pi_1$ acts freely on cells of $\widetilde X-\widetilde Y$ by covering transformations, cellular chains $C_\bt(\widetilde X,\widetilde Y)$ comprise in fact a complex of free $\bZ[\pi_1]$-modules. A basis in $C_i(\widetilde X,\widetilde Y)$, as a free $\bZ[\pi_1]$-module, is constructed by choosing some liftings $\widetilde e^{(i)}\subset \widetilde X-\widetilde Y$ of all $i$-cells $e^{(i)}$ of $X-Y$.

\begin{definition} For a pair of finite $CW$-complexes $(X,Y)$, with $Y$ a deformation retract of $X$, the Whitehead torsion
$$\tau(X,Y)\in Wh(\pi_1)$$
is the image of the torsion $\tau(C_\bt(\widetilde X, \widetilde Y))\in \bar K_1(\bZ[\pi_1])$ in the Whitehead group $Wh(\pi_1)=\bar K_1(\bZ[\pi_1])/\mr{im}(\pi_1)$. The torsion is evaluated with respect to the basis of cells of $X-Y$ lifted to the universal covering, $\{\widetilde e_i\}$.
\end{definition}

Choosing a different lifting for a single cell, $\bar e\mapsto \bar e'=\sigma\cdot \bar e$ for some $\sigma\in\pi_1$, results in the change of the torsion $\tau(C_\bt(\widetilde X,\widetilde Y))\in \bar K_1$ by the class of the matrix $\mr{diag}(1,\ldots,1,\sigma,1,\ldots)$ in $\bar K_1$, i.e. by the image of $\sigma$ in $\bar K_1$. But in projecting to $Wh(\pi_1)$ we are killing exactly such images. Hence, the Whitehead torsion of a pair, valued in $Wh(\pi_1)$, is well-defined independently of the choice of lifting of the cells of $X-Y$.

\begin{rem} Since inner automorphisms of $\pi_1$ induce identity in $Wh(\pi_1)$, the Whitehead torsion is independent of the choice of base point for $\pi_1$.\footnote{More precisely, the isomorphism $s:\pi_1(X,p)\xra{\sim} \pi_1(X,p')$ for $p,p'\in X$ two base points, depends on a choice of path $\gamma\subset X$ from $p$ to $p'$ up to homotopy.
However, on the level of Whitehead groups, $s_*: Wh(\pi_1(X,p))\xra{\sim} Wh(\pi_1(X,p'))$ is a unique isomorphism, independent on $\gamma$. Whitehead torsions of a pair $(X,Y)$ defined using base points $p,p'$ differ by application of $s_*$. 
}
\end{rem}

\begin{definition} Given two CW-complexes $X$ and $X'$ with the same underlying topological space $|X|=|X'|$, one says that $X'$ is a {\it subdivision} of $X$ if every cell of $X'$ is contained in a (possibly, higher dimensional) cell of $X$, so that the identity map $X\ra X'$ is {\it cellular}. Similarly, a pair of CW-complexes $(X',Y')$ is a subdivision of the pair $(X,Y)$ if $X'$ is a subdivision of $X$ and $Y'$ is a subdivision of $Y$.
\end{definition}

\begin{thm}\label{thm: invariance of Whitehead torsion wrt subdivision}
The Whitehead torsion $\tau(X,Y)$ is invariant under subdivision of the pair $(X,Y)$.
\end{thm}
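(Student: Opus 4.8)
The plan is to compare $\tau(X',Y')$ with $\tau(X,Y)$ through the algebraic mapping cone of the subdivision chain map, and to kill that cone's torsion by localizing it to the individual cells of $X$, where the relevant ground ring is $\bZ$ and $\bar K_1(\bZ)=\{1\}$. Since $|X'|=|X|$, the fundamental group, its universal cover, and the group $Wh(\pi_1)$ are unchanged, so the two torsions do lie in the same group, and the cells of $\widetilde{X}'$ subdivide those of $\widetilde{X}$ equivariantly. The identity is a cellular map $X\to X'$, lifts to an equivariant cellular map, and induces the subdivision operator
\[
s\colon\ C_\bt(\widetilde X,\widetilde Y)\longrightarrow C_\bt(\widetilde X',\widetilde Y'),\qquad
\widetilde e\longmapsto \sum_{\widetilde\tau\subset\widetilde e,\ \dim\tau=\dim e}[\widetilde e:\widetilde\tau]\,\widetilde\tau,
\]
a chain map of complexes of free $\bZ[\pi_1]$-modules. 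By the deformation-retract hypothesis both complexes are acyclic, so $s$ is automatically a quasi-isomorphism. I fix lifts $\widetilde e$ of the cells of $X-Y$ and take the sub-cells lying inside the chosen $\widetilde e$ as the lifts of the cells of $X'-Y'$; this makes the two cell bases compatible. Now form the algebraic mapping cone $\mr{Cone}(s)$, the acyclic complex of free $\bZ[\pi_1]$-modules with $n$-th term $C_{n-1}(\widetilde X,\widetilde Y)\oplus C_n(\widetilde X',\widetilde Y')$, based by the concatenation of the two cell bases. Applying Lemma~\ref{Lm: multiplicativity of torsions} to the short exact sequence $0\to C_\bt(\widetilde X',\widetilde Y')\to \mr{Cone}(s)\to C_\bt(\widetilde X,\widetilde Y)[-1]\to 0$ (with the paper's convention $(C[-1])_n=C_{n-1}$), together with the identity $\tau(C[-1])=\tau(C)^{-1}$ that is immediate from (\ref{def: torsion}), gives $\tau(C_\bt(\widetilde X',\widetilde Y'))=\tau(C_\bt(\widetilde X,\widetilde Y))\cdot\tau(\mr{Cone}(s))$ in $\bar K_1(\bZ[\pi_1])$. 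So the whole problem reduces to showing $\tau(\mr{Cone}(s))=1$.

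To do this I filter $\mr{Cone}(s)$ by subcomplexes $F_d$ ($-1\le d\le\dim X$): $F_d$ is spanned by the generators $\widetilde e$ with $\dim e\le d$ together with the sub-cells $\widetilde e'$ contained in some cell of $\widetilde X$ of dimension $\le d$; equivalently, $F_d$ is the mapping cone of the subdivision operator of the subpair $(\sk_d X,\,Y\cap\sk_d X)$. Each $F_d$ is acyclic, because a subdivision operator is always a quasi-isomorphism (it realizes the identity map on the underlying space), so its cone is acyclic; iterating Lemma~\ref{Lm: multiplicativity of torsions} from $F_{-1}=0$ upward gives $\tau(\mr{Cone}(s))=\prod_d\tau(F_d/F_{d-1})$. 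A generator $\widetilde e$ with $\dim e=d$ sits in cone-degree $d+1$ with differential $\widetilde e\mapsto(-\dd\widetilde e,\,s(\widetilde e))$; in $F_d/F_{d-1}$ the term $\dd\widetilde e$ dies (lower skeletal degree) and $s(\widetilde e)$ becomes the fundamental chain of the subdivided closed cell $\overline{\widetilde e}$ rel its frontier $\dd\overline{\widetilde e}$. Consequently $F_d/F_{d-1}$ splits as a direct sum, over the $d$-cells $e$ of $X-Y$, of the local complexes
\[
D(e):=\mr{Cone}\Big(\bZ[\pi_1]\langle\widetilde e\rangle\longrightarrow C_\bt\big(\overline{\widetilde e}{}',\,\dd\overline{\widetilde e}\big)\Big),
\]
the cone of the subdivision map of the single closed cell $\overline{\widetilde e}$ relative to its frontier, with $\bZ[\pi_1]\langle\widetilde e\rangle$ placed in degree $d$ and mapped to the fundamental chain. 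A direct sum is a split short exact sequence, so Lemma~\ref{Lm: multiplicativity of torsions} gives $\tau(F_d/F_{d-1})=\prod_e\tau(D(e))$.

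It remains to check each $\tau(D(e))=1$. Through the characteristic map of $e$, the induced subdivision of $\overline{\widetilde e}$ rel $\dd\overline{\widetilde e}$ has the relative homology of $(B^{\dim e},\dd B^{\dim e})$: namely $\bZ$ concentrated in degree $\dim e$ and generated by the class of $\widetilde e$, which is precisely the class of the fundamental chain. Hence the structure map of $D(e)$ is a quasi-isomorphism and $D(e)$ is acyclic. Moreover, once the single lift $\widetilde e$ is fixed its sub-cells are canonically lifted inside it, with no group elements entering the local differential, so $D(e)=\bZ[\pi_1]\otimes_\bZ D(e)^\bZ$ for an acyclic based complex $D(e)^\bZ$ of finitely generated free $\bZ$-modules; therefore $\tau(D(e))$ is the image of $\tau(D(e)^\bZ)\in\bar K_1(\bZ)$ under $\bZ\hookrightarrow\bZ[\pi_1]$. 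But $\bar K_1(\bZ)=\{1\}$, so $\tau(D(e))=1$. Unwinding, $\tau(F_d/F_{d-1})=1$, hence $\tau(\mr{Cone}(s))=1$, hence $\tau(C_\bt(\widetilde X',\widetilde Y'))=\tau(C_\bt(\widetilde X,\widetilde Y))$ in $\bar K_1(\bZ[\pi_1])$ for these compatible bases, and projecting to $Wh(\pi_1)$ yields $\tau(X',Y')=\tau(X,Y)$.

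The real work — everything else being a formal application of Lemma~\ref{Lm: multiplicativity of torsions} or of $\bar K_1(\bZ)=\{1\}$ — is the bookkeeping around the skeletal filtration of $\mr{Cone}(s)$: checking that each $F_d$ is a subcomplex, that it is acyclic, and above all that $F_d/F_{d-1}$ is literally the direct sum $\bigoplus_e D(e)$ with the differentials claimed (the crux being that the interaction of the cone differential with the subdivision operator $s$ localizes cell by cell). A secondary nuisance is that a ``closed cell'' $\overline e$ is the image of a ball under a characteristic map that need not be an embedding, so the identification of $C_\bt(\overline{\widetilde e}{}',\dd\overline{\widetilde e})$ and of its homology with those of $(B^{\dim e},\dd B^{\dim e})$ must be routed through the characteristic map (via excision/quotient) rather than taken at face value.
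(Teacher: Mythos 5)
Your argument is correct, but it takes a genuinely different route from the one in the notes. The notes reduce to subdivision of a single cell (Lemma \ref{Lm: subdivision of one cell}): one attaches two auxiliary cells $\epsilon^{(n+1)},\epsilon^{(n)}$ to $X'$ to build a complex $X''$ containing both $X'$ and a copy of $X$, observes that both pairs have connected, simply connected difference, so their torsions vanish by Lemma \ref{Lm: torsion of a pair with X-Y simply connected}, and then concatenates single-cell subdivisions using Lemma \ref{Lm: multiplicativity of Whitehead torsions of pairs}. You instead work entirely on the chain level: form the algebraic mapping cone of the subdivision chain map $s$, use Lemma \ref{Lm: multiplicativity of torsions} (plus the harmless shift identity $\tau(C[-1])=\tau(C)^{-1}$) to reduce to $\tau(\mr{Cone}(s))=1$, and kill that torsion by the skeletal filtration and the cell-by-cell splitting, where the local complexes have integral differentials so their torsion lies in the image of $\bar K_1(\bZ)=\{1\}$. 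In the end both proofs rest on the same two ingredients --- multiplicativity and the ``integer coefficients $\Rightarrow$ trivial class'' principle (which in the notes is packaged as Lemma \ref{Lm: torsion of a pair with X-Y simply connected}) --- and your local cone $D(e)$ is essentially the chain-level shadow of the geometric trick with $\epsilon^{(n+1)},\epsilon^{(n)}$. What your route buys: no auxiliary CW-complexes to construct, an arbitrary subdivision is handled in one pass rather than by induction over single-cell subdivisions, and the argument is visibly a statement about based chain complexes over $\bZ[\pi_1]$. What it costs is exactly the bookkeeping you flag: you should make explicit that (a) the quotient $F_d/F_{d-1}$ splits and its differential is integral because $\overline{\widetilde e}\subset \widetilde e\cup \sk_{d-1}\widetilde X$, so a cell of $\widetilde X'$ with nonzero incidence against a subcell of $\widetilde e$ either lies in $\widetilde e$ or dies in the quotient, and no proper translate $\sigma\widetilde e$, $\sigma\neq 1$, can meet $\overline{\widetilde e}$ --- the same disjointness argument as in the proof of Lemma \ref{Lm: torsion of a pair with X-Y simply connected}; and (b) acyclicity of $F_d$, $F_d/F_{d-1}$ and $D(e)$ is cleanest not via the closed cell (which need not be a subcomplex) but by noting that both the source and target of the relevant restricted subdivision map are cellular chain complexes computing the singular homology of the pair $(|\sk_d X|\cup|Y|,\,|\sk_{d-1}X|\cup|Y|)$ (lifted to $\widetilde X$), with $s$ induced by the identity, so each cone is acyclic and the summands $D(e)$ inherit acyclicity. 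With those two points spelled out, the proof is complete.
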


\begin{lemma} \label{Lm: torsion of a pair with X-Y simply connected}
Let $(X,Y)$ be a pair of CW-complexes where $Y\subset X$ is a deformation retract and $X-Y$ is connected and simply connected. Then $\tau(X,Y)=1$.
\end{lemma}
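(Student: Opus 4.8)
The plan is to show that, for a suitable choice of lifts of the cells of $X-Y$, the chain complex $C_\bt(\widetilde X,\widetilde Y)$ of free $\bZ[\pi_1]$-modules is, as a \emph{based} complex, simply the extension of scalars $\bZ[\pi_1]\otimes_\bZ C_\bt(X,Y)$ of the integral cellular chain complex of the pair; the torsion then lies in the image of $\bar K_1(\bZ)$, which is trivial, so it is trivial, and hence so is $\tau(X,Y)\in Wh(\pi_1)$.

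First I would set up the covering. Since $Y$ is a deformation retract of $X$, the map $\pi_1(Y)\to\pi_1$ is an isomorphism, so the preimage $\widetilde Y$ of $Y$ in the universal cover $q\colon\widetilde X\ra X$ is connected, i.e.\ is the universal cover of $Y$. Because $Y$ is closed, $X-Y$ is open, so $q$ restricts to a covering $\widetilde X-\widetilde Y\ra X-Y$. By hypothesis $X-Y$ is connected and simply connected, and it is locally path-connected (an open subset of the locally contractible space $|X|$); hence this covering is trivial, $\widetilde X-\widetilde Y\cong\bigsqcup_{\sigma\in\pi_1}(X-Y)_\sigma$, with $\pi_1$ permuting the sheets by left translation. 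Fix the sheet $(X-Y)_e$ indexed by the identity $e\in\pi_1$.

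Next I would choose the preferred lifts. For each cell $e_\alpha$ of $X-Y$ with characteristic map $\phi_\alpha\colon B^k\ra X$, lift $\phi_\alpha$ to $\widetilde\phi_\alpha\colon B^k\ra\widetilde X$ (possible, $B^k$ being simply connected), normalized so that $\widetilde\phi_\alpha$ carries $\mr{int}(B^k)$ onto the copy $\widetilde e_\alpha$ of $e_\alpha$ inside the sheet $(X-Y)_e$. The key observation is that $V:=\phi_\alpha^{-1}(X-Y)$ is an open subset of $B^k$ containing the dense connected subset $\mr{int}(B^k)$, hence is itself connected, so $\widetilde\phi_\alpha(V)$ is a connected subset of $\widetilde X-\widetilde Y=\bigsqcup_\sigma(X-Y)_\sigma$ meeting the sheet $e$, and therefore lies entirely in it. Consequently every $(k-1)$-cell of $\widetilde X-\widetilde Y$ occurring with nonzero incidence in $\dd\widetilde e_\alpha$ — such cells lie in $\widetilde\phi_\alpha(\dd B^k)\cap(\widetilde X-\widetilde Y)=\widetilde\phi_\alpha(\dd B^k\cap V)\subseteq(X-Y)_e$ — is one of the preferred lifts $\widetilde e_\beta$, and since $q$ is a local homeomorphism the incidence numbers are unchanged; thus $\dd\widetilde e_\alpha=\sum_\beta[e_\alpha\colon e_\beta]\,\widetilde e_\beta$ in $C_\bt(\widetilde X,\widetilde Y)$, the same integer matrix as the boundary in $C_\bt(X,Y)$.

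Finally I would conclude. This last identity says exactly that, under the preferred bases, $C_\bt(\widetilde X,\widetilde Y)=\bZ[\pi_1]\otimes_\bZ C_\bt(X,Y)$ as based acyclic chain complexes; the boundary modules $B_i$ of $C_\bt(X,Y)$ are free over $\bZ$ and base-change to those of $C_\bt(\widetilde X,\widetilde Y)$, and all transition matrices in the definition of $\tau$ are integer matrices base-changed along $\bZ\hra\bZ[\pi_1]$. Hence, by functoriality of $\bar K_1$, $\tau(C_\bt(\widetilde X,\widetilde Y))$ is the image of $\tau(C_\bt(X,Y))\in\bar K_1(\bZ)=\{1\}$, so it equals $1$ in $\bar K_1(\bZ[\pi_1])$, and a fortiori $\tau(X,Y)=1$ in $Wh(\pi_1)$. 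I expect the main obstacle to be precisely the middle step: since closures of cells of $X-Y$ may meet $Y$, these cells do not form a subcomplex, and one has to argue carefully — via connectedness of $\phi_\alpha^{-1}(X-Y)$ and triviality of the covering $\widetilde X-\widetilde Y\ra X-Y$ — that the relative attaching data of each lifted cell stays within a single sheet; once that is established, the reduction to $\bar K_1(\bZ)=\{1\}$ is immediate.
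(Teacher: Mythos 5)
Your proof is correct and follows essentially the same route as the paper's: choose the sheet of $\widetilde X-\widetilde Y$ projecting homeomorphically onto $X-Y$, lift the cells into it, observe that the relative boundary matrices then have integer entries, and conclude that the torsion lies in the image of $\bar K_1(\bZ)=\{1\}$. Your connectedness argument for $\phi_\alpha^{-1}(X-Y)$ merely spells out the detail the paper compresses into the remark that a representative cell cannot be incident to a proper translate.
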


\begin{proof}\footnote{Reference: \cite{Milnor66}, p.379.}
Denote $\Gamma=X-Y$ and choose a representative component $\widetilde \Gamma$ of 
$\widetilde X-\widetilde Y$. Clearly $\widetilde \Gamma$ projects homeomorphically to $\Gamma$. For each cell $e\subset \Gamma$ choose the representative cell $\widetilde e$ as the unique cell of  $\widetilde\Gamma$ that project to $e$. Notice that no representative cell $\widetilde e$ can be incident to a proper translate $\sigma\cdot \widetilde e$, $\sigma\neq 1$ (since $\sigma\cdot \widetilde e$ lies inside $\sigma\cdot\widetilde\Gamma$ which is disjoint from $\widetilde\Gamma$). Thus $\dd \widetilde e$ is a linear combination of representative cells with {\it integer} coefficients (as opposed to coefficients in $\bZ[\pi_1]$). Thus in calculating the torsion of the complex $C_\bt(\widetilde X,\widetilde Y)$ we are working with the subring $\bZ\subset \bZ[\pi_1]$. It follows that the torsion belongs to the subgroup
$$\{1\}=\bar K_1(\bZ)\subset \bar K_1(\bZ[\pi_1])$$
Therefore $\tau(X,Y)=1$.
\end{proof}

\begin{lemma} \label{Lm: multiplicativity of Whitehead torsions of pairs}
If $X\supset Y\supset Z$ is a triple of CW-complexes such that $Y$ is a deformation retract of $X$ and $Z$ is a deformation retract of $Y$, we have the following relation for torsions:
$$\tau(X,Z)=\tau(X,Y)\cdot \tau(Y,Z)$$
\end{lemma}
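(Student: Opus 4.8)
The plan is to express all three Whitehead torsions as images in $Wh(\pi_1)$ of torsions of acyclic chain complexes of free $\bZ[\pi_1]$-modules fitting into one short exact sequence, and then invoke the multiplicativity result of Lemma~\ref{Lm: multiplicativity of torsions}. First I would record the relevant topology. Being a deformation retract is transitive (concatenate the two deformation retracting homotopies), so $Z$ is a deformation retract of $X$; hence $\tau(X,Z)$ is defined, and since $\pi_1(X)\cong\pi_1(Y)\cong\pi_1(Z)=:\pi_1$, all of $\tau(X,Y)$, $\tau(Y,Z)$, $\tau(X,Z)$ live in the same group $Wh(\pi_1)$. Passing to universal covers gives $\widetilde Z\subset\widetilde Y\subset\widetilde X$, each preserved by the $\pi_1$-action by deck transformations, and the relative cellular chain complexes $C_\bt(\widetilde Y,\widetilde Z)$, $C_\bt(\widetilde X,\widetilde Z)$, $C_\bt(\widetilde X,\widetilde Y)$ are complexes of free $\bZ[\pi_1]$-modules, all acyclic because the corresponding relative singular homologies vanish (as in the definition of the Whitehead torsion of a pair, using that $\widetilde Z$, resp.\ $\widetilde Z$, resp.\ $\widetilde Y$, is a deformation retract of $\widetilde Y$, resp.\ $\widetilde X$, resp.\ $\widetilde X$).

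Next I would use the short exact sequence of chain complexes
\[
0 \ra C_\bt(\widetilde Y,\widetilde Z) \xra{\iota} C_\bt(\widetilde X,\widetilde Z) \xra{p} C_\bt(\widetilde X,\widetilde Y) \ra 0,
\]
namely the ``triple'' sequence $0\ra C_\bt(\widetilde Y)/C_\bt(\widetilde Z)\ra C_\bt(\widetilde X)/C_\bt(\widetilde Z)\ra C_\bt(\widetilde X)/C_\bt(\widetilde Y)\ra 0$, whose maps are $\pi_1$-equivariant chain maps and which is exact on the nose because, as sets, the $i$-cells of $\widetilde X-\widetilde Z$ are the disjoint union of those of $\widetilde Y-\widetilde Z$ and those of $\widetilde X-\widetilde Y$. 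The one point requiring care is the compatibility of the preferred bases: choosing liftings of the cells of $Y-Z$ inside $\widetilde Y-\widetilde Z$ and of the cells of $X-Y$ inside $\widetilde X-\widetilde Y$, their union is a lifting of the cells of $X-Z$ inside $\widetilde X-\widetilde Z$ (recall $\widetilde Z\subset\widetilde Y$), so the canonical $\bZ[\pi_1]$-basis of $C_i(\widetilde X,\widetilde Z)$ is, up to reordering (invisible in $\bar K_1$), exactly the product basis $c_i'c_i''$ of (\ref{product basis}) formed from the canonical bases of $C_i(\widetilde Y,\widetilde Z)$ and $C_i(\widetilde X,\widetilde Y)$.

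With these verifications in place, Lemma~\ref{Lm: multiplicativity of torsions} applies and gives $\tau(C_\bt(\widetilde X,\widetilde Z))=\tau(C_\bt(\widetilde Y,\widetilde Z))\cdot\tau(C_\bt(\widetilde X,\widetilde Y))$ in $\bar K_1(\bZ[\pi_1])$; projecting along the surjection $\bar K_1(\bZ[\pi_1])\ra Wh(\pi_1)$ and using that $Wh(\pi_1)$ is abelian yields $\tau(X,Z)=\tau(X,Y)\cdot\tau(Y,Z)$. There is no real obstacle here: the only things needing attention are the transitivity of ``is a deformation retract'' — needed both so that $\tau(X,Z)$ makes sense and so that $C_\bt(\widetilde X,\widetilde Z)$ is acyclic — and the bookkeeping that the cell basis of the large pair is genuinely the product basis; both are routine.
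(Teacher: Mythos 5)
Your proposal is correct and is essentially the paper's own argument: both apply Lemma \ref{Lm: multiplicativity of torsions} to the short exact sequence $0\ra C_\bt(\widetilde Y,\widetilde Z)\ra C_\bt(\widetilde X,\widetilde Z)\ra C_\bt(\widetilde X,\widetilde Y)\ra 0$ of free $\bZ[\pi_1]$-modules. The extra checks you spell out (acyclicity, compatibility of the cellular product bases, transitivity of deformation retraction) are exactly the routine points the paper leaves implicit.
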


\begin{proof} Follows immediately from applying Lemma \ref{Lm: multiplicativity of torsions} to the short exact sequence of complexes of free $\bZ[\pi_1]$-modules
$$0\ra C_\bt(\widetilde Y,\widetilde Z)\ra C_\bt(\widetilde X,\widetilde Z)\ra C_\bt(\widetilde X,\widetilde Y)\ra 0$$
\end{proof}

\begin{lemma}\label{Lm: subdivision of one cell}
Let $(X,Y)$ be a pair of CW-complexes with $Y\subset X$ a deformation retract. Let $X'$ be a subdivision of $X$ where only one cell $e\subset X$ is subdivided (in particular, cells bounding $e$ are not subdivided). Then we have the following.
\begin{enumerate}[(i)]
\item \label{Lm: subdivision of one cell (i)} If $e\subset X-Y$, then
\be \tau(X',Y)=\tau(X,Y) \label{Lm: subdivision of one cell eq1}\ee
\item \label{Lm: subdivision of one cell (ii)} If $e\subset Y$ then
\be \tau(X',Y')=\tau(X,Y) \label{Lm: subdivision of one cell eq2}\ee
where $Y'=|Y|\cap X'$ is the induced subdivision of $Y$.
\end{enumerate}
\end{lemma}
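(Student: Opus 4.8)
The plan is to dispose of part (ii) almost for free and to concentrate the work on part (i).

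\emph{Part (ii).} Since $e\subset Y$, the subdivided cell together with all of the new cells lies in $Y'=|Y|\cap X'$, and $|Y'|=|Y|$ is a deformation retract of $|X'|=|X|$, so $\tau(X',Y')$ is defined. I would observe that the complexes of free $\bZ[\pi_1]$-modules $C_\bt(\widetilde{X'},\widetilde{Y'})$ and $C_\bt(\widetilde X,\widetilde Y)$ are in fact literally identical once matching liftings of cells are chosen: viewed as subsets of $|X|=|X'|$, the $i$-cells of $X'-Y'$ are precisely the $i$-cells of $X-Y$, and subdividing a cell inside $Y$ alters no incidence number between two cells lying outside $Y$. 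Hence their relative cellular boundary operators agree, and \eqref{Lm: subdivision of one cell eq2} follows at once.

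\emph{Part (i).} Write $k=\dim e$; in $X'$ the cell $e$ is replaced by the finite set $N$ of cells of $X'$ contained in the open cell $e$ (all of dimension $\le k$, at least one of dimension exactly $k$), every other cell is untouched, and $Y$ remains a subcomplex of $X'$ with $|Y|$ a deformation retract of $|X'|=|X|$, so $\tau(X',Y)$ is defined. The cellular identity map $X\to X'$ induces a chain map of acyclic complexes of free $\bZ[\pi_1]$-modules
$$\theta\colon\ C_\bt(\widetilde X,\widetilde Y)\ \longrightarrow\ C_\bt(\widetilde{X'},\widetilde Y),$$
which is the identity on the generator of every cell $c\ne e$ and sends the generator of $e$ to $\sum_{\nu\in N,\,\dim\nu=k}\pm\nu$. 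Both complexes being acyclic, $\theta$ is a quasi-isomorphism, so its mapping cone $\mr{Cone}(\theta)$ — a free based complex in the evident cell basis — is acyclic. Applying the multiplicativity Lemma \ref{Lm: multiplicativity of torsions} to the short exact sequence that presents $\mr{Cone}(\theta)$ as an extension of a one-degree shift of $C_\bt(\widetilde X,\widetilde Y)$ by $C_\bt(\widetilde{X'},\widetilde Y)$, and using that a shift by one degree inverts the torsion, one gets
$$\tau\big(\mr{Cone}(\theta)\big)=\tau\big(C_\bt(\widetilde{X'},\widetilde Y)\big)\cdot\tau\big(C_\bt(\widetilde X,\widetilde Y)\big)^{-1}\quad\in\ \bar K_1(\bZ[\pi_1]),$$
so it suffices to prove that $\tau(\mr{Cone}(\theta))=1$.

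For this I would first strip off the ``diagonal'' of the cone. For each cell $c\ne e$ of $X-Y$, write $\hat c$ for the copy of its generator in the $C_\bt(\widetilde X,\widetilde Y)$-summand of $\mr{Cone}(\theta)$; then $\dd\hat c=\pm c+(\text{a combination of the }\hat d\text{ with }\dim d<\dim c)$, so $(\hat c,c)$ spans an acyclic elementary summand. Cancelling all of these in order of increasing dimension — a sequence of triangular base changes together with applications of Lemma \ref{Lm: multiplicativity of torsions} splitting off two-term acyclic subcomplexes, none of which changes the torsion — collapses $\mr{Cone}(\theta)$ onto the small acyclic complex $\bar C_\bt$ spanned by $\hat e$ in degree $k+1$ and by the cells of $N$ in degrees $\le k$, with differential $\dd\hat e=\sum_{\nu\in N}\pm\nu$ together with the mutual incidences among the cells of $N$; one checks that the correction terms generated by these cancellations always stay among the cells of $X-Y$ and never feed back into $\bar C_\bt$. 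Finally — and here is the crux, just as in the proof of Lemma \ref{Lm: torsion of a pair with X-Y simply connected} — since $\bar e$ is the image of a ball and hence simply connected, one may take all the liftings $\widetilde e$ and $\{\widetilde\nu\}_{\nu\in N}$ inside a single component $\widetilde{\bar e}$ of the preimage of $\bar e$; then every structure constant of $\bar C_\bt$ lies in the subring $\bZ\subset\bZ[\pi_1]$, whence $\tau(\bar C_\bt)\in\bar K_1(\bZ)=\{1\}$. Thus $\tau(\mr{Cone}(\theta))=1$ and \eqref{Lm: subdivision of one cell eq1} follows.

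The one genuinely delicate point is the cancellation step in part (i): one has to order the pairs $(\hat c,c)$ along the dimension filtration and keep track of the successive modifications of the differential, so as to be sure that the residual complex $\bar C_\bt$ never acquires a coefficient outside $\bZ$. Once that is under control, the simple-connectivity observation used in Lemma \ref{Lm: torsion of a pair with X-Y simply connected} finishes everything. (A harmless technical nuisance is that $\bar e$ need not literally be a subcomplex of $X$ — automatic in the simplicial setting — and the hypothesis that no cell bounding $e$ is subdivided is exactly what makes this irrelevant.)
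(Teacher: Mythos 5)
Your part (ii) is the paper's argument verbatim: since only a cell of $Y$ is subdivided, $\sk_k X'\cup|Y|=\sk_k X\cup|Y|$ for all $k$, so $C_\bt(\widetilde{X'},\widetilde{Y'})$ and $C_\bt(\widetilde X,\widetilde Y)$ coincide as based complexes. For part (i) you take a genuinely different route. The paper attaches two new cells $\epsilon^{(n+1)},\epsilon^{(n)}$ to $X'$ with $\dd\epsilon^{(n+1)}=\epsilon^{(n)}\cup E$, obtaining a complex $X''$ containing both $X'$ and a copy $\tilde X$ of $X$; both pairs $(X'',\tilde X)$ and $(X'',X')$ have connected, simply connected complements, so Lemma \ref{Lm: torsion of a pair with X-Y simply connected} gives $\tau(X'',\tilde X)=\tau(X'',X')=1$, and Lemma \ref{Lm: multiplicativity of Whitehead torsions of pairs} yields $\tau(X',Y)=\tau(X,Y)$. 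Your mapping-cone argument is an algebraic surrogate for this geometric expansion, with the final appeal to $\bar K_1(\bZ)=\{1\}$ playing the same role as in Lemma \ref{Lm: torsion of a pair with X-Y simply connected}; it can be made to work and avoids constructing $X''$, at the price of the elimination bookkeeping that you flag but do not carry out.

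Two points in the write-up are genuinely off. First, ``$\bar e$ is the image of a ball and hence simply connected'' is false in general: a continuous image of a simply connected space need not be simply connected (the closure of the $1$-cell in the minimal CW structure on $S^1$ is all of $S^1$). What you need, and what is true, is that the \emph{open} cell $e$ is simply connected, so its preimage in $\widetilde X$ is a disjoint union of copies permuted freely by $\pi_1$; choosing $\widetilde e$ and the lifts of all cells of $N$ inside one such copy makes the incidence numbers among them, and the coefficients of $\theta(\widetilde e)$, integral. Second, the verification you propose for the cancellation step is not the right one and, read literally, fails: writing $D$ for the cone differential, when the pair $(\hat c,c)$ with $\dim c=k+1$ and $e\subset\dd_X c$ is cancelled, the correction $\widehat{\dd_X c}$ \emph{does} contain $\hat e$, i.e.\ it does feed into $\bar C_\bt$. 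This happens to be harmless, since it only modifies differentials of elements that are themselves cancelled, but your sketch never isolates the statement that actually matters, namely that the differentials of the \emph{surviving} generators $\hat e$ and $\nu\in N$ acquire no terms outside $\bZ$.

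The clean repair is to eliminate in one step rather than sequentially: the span $S$ of $\{\hat c,\,D\hat c\,:\,c\neq e\}$ is a based acyclic subcomplex of $\mr{Cone}(\theta)$ whose basis differs from the cell basis by a triangular change, so Lemma \ref{Lm: multiplicativity of torsions} applied to $0\to S\to\mr{Cone}(\theta)\to Q\to 0$ gives $\tau(\mr{Cone}(\theta))=\tau(Q)$. In $Q$ every cell $c\subset\dd e$ of $X-Y$ has $\dim c\le k-1$, so $\widehat{\dd_X c}$ contains no $\hat e$ and hence $c=D\hat c+\widehat{\dd_X c}\in S$; likewise $\widehat{\dd_X e}\in S$. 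Therefore $Q$ is exactly your integral complex $\bar C_\bt$, with $D[\hat e]=[\theta(e)]$ and $D[\nu]$ the $N$-part of $\dd_{X'}\nu$. Finally, to conclude $\tau(Q)\in\mr{im}\,(\bar K_1(\bZ))=\{1\}$ you also need $\bar C_\bt$ to be acyclic over $\bZ$; this follows since $\bZ[\pi_1]$ is free, hence faithfully flat, over $\bZ$. With these repairs your proof is correct, though noticeably longer than the paper's.
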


\begin{proof}
Firs assume that $e\subset X-Y$.
Let $n$ be the dimension of the subdivided cell $e$ and denote $E\subset X'$ the subcomplex representing the subdivision of the cell $e\subset X$, $|E|=|e|$. Construct a new CW-complex $X''$ by adding two new cells $\epsilon^{(n+1)}$ and $\epsilon^{(n)}$ to $X'$ in such a way that
$$\dd \epsilon^{(n+1)}=\epsilon^{(n)}\cup E$$
Note that $X''$ contains a subcomplex $\tilde X$ where $e$ is replaced by $\epsilon^{(n)}$ (obviously $\tilde X$ is isomorphic to $X$ as a CW-complex). Note also that pairs $(X'',\tilde X)$ and $(X'',X')$ both satisfy the assumptions of Lemma \ref{Lm: torsion of a pair with X-Y simply connected}. Hence,
$$\tau(X'',\tilde X)=\tau(X'',X')=1$$
Now, using this and Lemma \ref{Lm: multiplicativity of Whitehead torsions of pairs}, we calculate
$$\tau(X,Y)=\tau(\tilde X,Y)=\underbrace{\tau(X'',\tilde X)^{-1}}_{1}\cdot\underbrace{\tau(X'',\tilde X)\cdot \tau(\tilde X,Y)}_{\tau(X'',Y)}=\underbrace{\tau(X'',X')}_{1}\cdot \tau(X',Y)$$
This proves (\ref{Lm: subdivision of one cell (i)}).
If $e\subset Y$, we immediately have (\ref{Lm: subdivision of one cell eq2}) since we have a canonical isomorphism of chains $C_\bt(\widetilde X',\widetilde Y')\cong C_\bt(\widetilde X,\widetilde Y)$ as based chain complexes.
\end{proof}


\begin{proof}[Proof of Theorem \ref{thm: invariance of Whitehead torsion wrt subdivision}]
Let $(X,Y)$ be a pair of CW-complexes with $Y\subset X$ a deformation retract and $(X',Y')$ a subdivision of $(X,Y)$. Choose some ordering $e_1,e_2,\cdot ,c_N$ of cells of $X$ such that $\dim e_i\leq \dim e_{i+1}$. Construct a sequence of complexes
$$X_k=\bigcup_{i\leq k} (|e_i|\cap X')\;\cup \bigcup_{i>k} e_i,\qquad k=0,1,\ldots,N$$
Note that each $X_{k}$ is obtained by subdividing a single cell $e_k$ of $X_{k-1}$.
Denote $Y_k=|Y|\cap X_k$ the induced subdivision of $Y$ (we have $Y_k=Y_{k-1}$ if the cell $e_k$ lies outside $Y$; otherwise $Y_{k}$ is a single cell subdivision of $Y_{k-1}$). Each pair $(X_k,Y_k)$ is a subdivision of a single cell of the pair $(X_{k-1},Y_{k-1})$, thus Lemma \ref{Lm: subdivision of one cell} applies and we obtain $\tau(X_k,Y_k)=\tau(X_{k-1},Y_{k-1})$. Observing that $(X_0,Y_0)=(X,Y)$ is the original pair and $(X_N,Y_N)=(X',Y')$ is the subdivided pair, we obtain
$$\tau(X',Y')=\tau(X,Y)$$
\end{proof}

\lec{Lecture 8, 10.04.2014}

\subsection{Whitehead torsion of a map}
\begin{definition} The {\it mapping cylinder} of a cellular mapping $f:X\ra Y$ is the CW-complex $M_f$ with underlying topological space
$$|M_f|=\frac{|X|\times [0,1]\quad\sqcup |Y|}{(x,1)\sim f(x)}$$
Cell structure on $M_f$ is induced from cell structures on $X$ and $Y$. In particular, $M_f$ contains $X=X\times\{0\}$ and $Y$ as disjoint subcomplexes.
\end{definition}

For any $f:X\ra Y$, $Y$ is a deformation retract of $M_f$ (with the retraction given by ``following the rays''\footnote{The formula for the retraction onto $Y$ is: $D: (x,s)\mapsto f(x),\; y\mapsto y$ and $D_t: (x,s)\mapsto (x,1-(1-s)(1-t)),\; y\mapsto y$ is a homotopy between the identity on $M_f$ and $D: M_f\ra Y$.}).

\begin{lemma}\label{Lm: tau(M_f,Y)}
For any $f:X\ra Y$, we have
$$\tau(M_f,Y)=1$$
\end{lemma}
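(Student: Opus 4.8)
The plan is to write out the cellular chain complex of the pair $(\widetilde{M}_f,\widetilde Y)$ explicitly — it is essentially the algebraic mapping cone of the identity on the chains of $X$, hence carries an obvious chain contraction — and then read off directly from Milnor's definition (\ref{def: torsion}) that its torsion is $1$.

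First I would describe the cells. Write $\pi_1:=\pi_1(M_f)\cong\pi_1(Y)$. For every cell $e$ of $X$ of dimension $k$, the mapping cylinder has exactly two cells outside $Y$: the cell $e\times\{0\}$ of dimension $k$, lying in the subcomplex $X\times\{0\}$, and the \emph{cylinder cell} $e\times(0,1)$ of dimension $k+1$. Picking a lift of each to $\widetilde M_f-\widetilde Y$ (as in the definition of Whitehead torsion) gives a $\bZ[\pi_1]$-basis of each $C_k(\widetilde M_f,\widetilde Y)$; call the lifts $\widetilde e^{(0)}$ and $\widetilde e^{(1)}$. The boundary operator then has the form: $\dd\,\widetilde e^{(0)}$ is the boundary inside the based subcomplex $C_\bt$ of (the induced cover of) $X$, while $\dd\,\widetilde e^{(1)}=\pm\widetilde e^{(0)}+(\text{a }\bZ[\pi_1]\text{-combination of cylinder cells over the cells of }\dd e)$, since the remaining faces of the cylinder are glued into $\widetilde Y$ and so die in the relative complex. (Pinning down exactly which faces of a cylinder cell survive is the only real bookkeeping in the proof, and it is routine.)

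Now set $v_{\widetilde e}:=\dd\,\widetilde e^{(1)}\in C_k(\widetilde M_f,\widetilde Y)$ for $\widetilde e$ a $k$-cell. Since $\widetilde Y$ is a deformation retract of $\widetilde M_f$, the complex $C_\bt(\widetilde M_f,\widetilde Y)$ is acyclic, so $B_k=Z_k$; a rank count together with the ``leading term'' $\pm\widetilde e^{(0)}$ shows that $\{v_{\widetilde e}:\dim e=k\}$ is a $\bZ[\pi_1]$-basis of $B_k$. Apply the definition (\ref{def: torsion}) with this choice: take $b_k:=\{v_{\widetilde e}\}$, and lift $b_{k-1}=\{v_{\widetilde e'}\}$ from $B_{k-1}\subset C_{k-1}$ to $C_k$ via the evident preimages $\widetilde e'^{(1)}$ (indeed $\dd\,\widetilde e'^{(1)}=v_{\widetilde e'}$). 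The combined basis of $C_k$ is then $\{v_{\widetilde e}\}\cup\{\widetilde e'^{(1)}\}$, and the transition matrix from the cellular basis $\{\widetilde e^{(0)}\}\cup\{\widetilde e'^{(1)}\}$ to it is block upper triangular with diagonal blocks a $\pm1$-diagonal matrix and $I$; such a matrix is a product of a $\pm1$-diagonal matrix and a matrix in $E(\bZ[\pi_1])$ (Whitehead's lemma on $\bigl(\begin{smallmatrix}I&X\\0&I\end{smallmatrix}\bigr)$), hence represents $1\in\bar K_1(\bZ[\pi_1])$. Thus $[c_k/(b_k b_{k-1})]=1$ for every $k$, so $\tau\bigl(C_\bt(\widetilde M_f,\widetilde Y)\bigr)=1$, and a fortiori $\tau(M_f,Y)=1$ in $Wh(\pi_1)$.

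Conceptually — and this is an optional alternative ending — the map $\kappa$ given by $\widetilde e^{(0)}\mapsto\widetilde e^{(1)}$, $\widetilde e^{(1)}\mapsto 0$ is a chain contraction with $\kappa^2=0$, and after the harmless unitriangular change of basis above it exhibits $C_\bt(\widetilde M_f,\widetilde Y)$ as a direct sum, over the cells $e$ of $X$, of elementary acyclic complexes $\bZ[\pi_1]\xrightarrow{\ \mathrm{id}\ }\bZ[\pi_1]$, each of torsion $1$; one may then conclude via Lemma \ref{Lm: torsion via kappa} together with multiplicativity (Lemma \ref{Lm: multiplicativity of torsions}). I expect no genuinely hard step here; the whole content is the combinatorial description of the mapping cylinder and its universal cover.
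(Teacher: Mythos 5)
Your proof is correct, but it takes a genuinely different route from the one in the notes. You compute the relative complex $C_\bt(\widetilde M_f,\widetilde Y)$ explicitly as the algebraic mapping cone of the identity on the chains of the induced cover of $X$, exhibit the elements $v_{\widetilde e}=\dd\,\widetilde e^{(1)}$ as a free basis of each $B_k$ (so Hypothesis \ref{hypothesis: B_i are free} holds on the nose and no stably-free machinery is needed), and then read off from Milnor's definition (\ref{def: torsion}) that every transition matrix is block triangular with $\pm$-diagonal blocks, hence trivial in $\bar K_1$. The notes instead filter $M_f$ by the mapping cylinders $M_{f_p}$ of the restrictions of $f$ to skeleta and combine Lemma \ref{Lm: multiplicativity of Whitehead torsions of pairs} with Lemma \ref{Lm: torsion of a pair with X-Y simply connected}, so that no chain-level bookkeeping is needed at all; that argument is shorter given the earlier lemmas and reuses machinery needed elsewhere, while yours is more self-contained and makes the chain-level mechanism (a direct sum of elementary complexes $\bZ[\pi_1]\xra{\mr{id}}\bZ[\pi_1]$ after a unitriangular change of basis) completely explicit. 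One small point of hygiene: the coefficient of $\widetilde e^{(0)}$ in $\dd\,\widetilde e^{(1)}$ is $\pm 1$ only if you choose the lift of the cylinder cell compatibly with the lift of $e\times\{0\}$; with incompatible lifts it is $\pm\sigma$ for some $\sigma\in\pi_1$, so the diagonal block contributes the image of a group element, which is killed only after passing from $\bar K_1(\bZ[\pi_1])$ to $Wh(\pi_1)$ -- either say you normalize the lifts, or note that this ambiguity is exactly what the quotient defining $Wh(\pi_1)$ removes. This is a presentational remark, not a gap.
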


\begin{proof} Let $f_p: \sk_p X\ra Y$ be the restriction of $f$ to the $p$-skeleton of $X$. We have an increasing filtration of mapping cylinders:
$$Y=M_{f_{-1}}\subset M_{f_0}\subset M_{f_1}\subset\cdots\subset M_{f_n}=M_f$$
By repeatedly using Lemma \ref{Lm: multiplicativity of Whitehead torsions of pairs}, we have
$$\tau(M_f,Y)=\prod_p \tau(M_{f_p},M_{f_{p-1}})$$
and by Lemma \ref{Lm: torsion of a pair with X-Y simply connected} each torsion in the product on the right hand side is trivial.
\end{proof}

If $f:X\ra Y$ is a homotopy equivalence, then $X=X\times\{0\}\subset M_f$ is too a deformation retract of $M_f$. \footnote{See e.g. \cite{Hatcher}, Corollary 0.21, p.16.}
However the torsion $\tau(M_f,X)$ is not always zero.

\begin{definition}
One defines the Whitehead torsion of a cellular homotopy equivalence $f: X\ra Y$ as the torsion of the pair
$$\tau(f):=\tau(M_f,X)\quad \in Wh(\pi_1)$$
\end{definition}

Here are some properties of Whitehead torsions of homotopy equivalences.
\begin{prop} \label{prop: tau of maps properties}
\begin{enumerate}[(i)]
\item \label{prop: tau of maps properties i}
For $\iota: X\ra Y$ an inclusion,
\be \tau(\iota)=\tau(Y,X) \label{tau(i)}\ee

\item  \label{prop: tau of maps properties ii}
For $f,g:X\ra Y$ two homotopic maps, one has
\be \tau(f)=\tau(g) \label{tau(f)=tau(g)}\ee

\item \label{prop: tau of maps properties iii}
Given two maps $f:X\ra Y$, $g:Y\ra Z$, one has
\be \tau(g\circ f)=\tau(g)\cdot\tau(f)\label{tau(gf)=tau(g)tau(f)}\ee
\end{enumerate}
(All maps $\iota,f,g$ above are assumed to be cellular homotopy equivalences.)
\end{prop}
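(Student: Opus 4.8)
The plan is to establish the three properties more or less in the order they are stated, deriving each from the tools already built up: the multiplicativity of torsion under short exact sequences (Lemmas~\ref{Lm: multiplicativity of Whitehead torsions of pairs} and~\ref{Lm: multiplicativity of torsions}), the vanishing Lemma~\ref{Lm: tau(M_f,Y)}, and the vanishing-for-simply-connected Lemma~\ref{Lm: torsion of a pair with X-Y simply connected}. The essential geometric input will be standard facts about mapping cylinders: that $M_{\mathrm{id}_X}$ deformation retracts onto $X$ with trivial torsion, that homotopic maps have homeomorphic mapping cylinders rel $X$, and that for a composition $g\circ f$ there is a natural CW-complex (the ``double mapping cylinder'' $M_{f,g}$, i.e.\ the union $M_f\cup_Y M_g$ along $Y$) containing $X$, $Y$, $Z$ and all three cylinders as subcomplexes.

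For part~(\ref{prop: tau of maps properties i}): if $\iota:X\hookrightarrow Y$ is an inclusion which is a homotopy equivalence, there is a deformation retraction of $M_\iota$ onto $Y$ (collapsing the cylinder $X\times[0,1]$ onto $X\subset Y$), and one checks this retraction extends to a homotopy equivalence of pairs $(M_\iota,X)\simeq (Y,X)$ inducing an isomorphism of the relative cellular chain complexes over $\mathbb{Z}[\pi_1]$ respecting the preferred (cell) bases. Hence $\tau(\iota)=\tau(M_\iota,X)=\tau(Y,X)$. For part~(\ref{prop: tau of maps properties ii}): if $f\simeq g$ via a homotopy $H$, then $H$ furnishes a homeomorphism $M_f\cong M_g$ which is the identity on $X$ and cellular (after a cellular approximation of $H$, using invariance of torsion under subdivision, Theorem~\ref{thm: invariance of Whitehead torsion wrt subdivision}, to handle the cell structure); this carries $(M_f,X)$ to $(M_g,X)$ preserving based chain complexes, so $\tau(f)=\tau(g)$.

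For part~(\ref{prop: tau of maps properties iii}), the key idea: form the double mapping cylinder $N=M_f\cup_Y M_g$, a CW-complex containing $X$, $Y$, $Z$, $M_f$, $M_g$ as subcomplexes, and observe that $N$ also deformation retracts onto the subcomplex $M_{g\circ f}$ (push the middle copy of $Y$ along $M_g$ to $Z$), so that $\tau(N,M_{gf})=1$ — this follows from Lemma~\ref{Lm: torsion of a pair with X-Y simply connected} applied cell-by-cell, or directly since the relative chain complex is based-contractible. Now apply the triple multiplicativity (Lemma~\ref{Lm: multiplicativity of Whitehead torsions of pairs}) to the towers $X\subset M_{gf}\subset N$ and $X\subset M_f\subset N$ and $Y\subset M_g\subset N$, together with $\tau(M_f,Y)=\tau(M_g,Z)=\tau(N,M_f)=1$ from Lemma~\ref{Lm: tau(M_f,Y)}, to bookkeep:
\[
\tau(g\circ f)=\tau(M_{gf},X)=\tau(N,X)=\tau(N,M_f)\cdot\tau(M_f,X)=\tau(M_f,X)
\]
is not yet the full story — one must also track $\tau(N,Y)$ through both $M_f$ and $M_g$, giving $\tau(N,Y)=\tau(N,M_g)\cdot\tau(M_g,Y)=\tau(M_g,Y)$ while also $\tau(N,Y)=\tau(N,M_f)\cdot\tau(M_f,Y)\cdot\tau(\,\cdot\,)$ — so the honest computation chains $\tau(N,X)=\tau(N,M_{gf})\tau(M_{gf},X)=\tau(g\circ f)$ on one side and decomposes $\tau(N,X)$ through $M_f$ and $M_g$ and the inclusion $X\hookrightarrow M_f$, $Y\hookrightarrow M_g$ on the other to yield $\tau(g\circ f)=\tau(f)\cdot\tau(g)$ after all the trivial factors cancel.

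\textbf{The main obstacle} I expect is part~(\ref{prop: tau of maps properties iii}): setting up the double mapping cylinder $N$ with a compatible CW-structure containing all the relevant subcomplexes, verifying that $N$ deformation retracts onto $M_{g\circ f}$ (which requires a small homotopy-theoretic argument, essentially that $M_g\simeq Z$ rel $Z$ can be performed ``fiberwise'' over the collapse), and then choosing the intermediate subcomplexes and orienting the applications of Lemma~\ref{Lm: multiplicativity of Whitehead torsions of pairs} so that every spurious factor is one of the known-trivial torsions. The algebra is routine once the geometric picture is fixed; the care is entirely in the bookkeeping of which pairs satisfy the deformation-retract hypothesis. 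Parts~(\ref{prop: tau of maps properties i}) and~(\ref{prop: tau of maps properties ii}) are comparatively formal, modulo the (standard) appeal to cellular approximation and subdivision invariance to ensure the relevant maps can be taken cellular without changing the torsion.
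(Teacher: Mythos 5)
Your overall strategy (multiplicativity of torsion of pairs plus the vanishing lemmas, applied inside one big complex containing everything) is the right one, but two of your three key geometric claims are false as stated, and they are exactly the points the paper's construction is designed to circumvent. In (ii), homotopic maps do \emph{not} in general have homeomorphic mapping cylinders: take $X=\mathrm{pt}$, $Y=[0,1]$, $f\equiv 0$, $g\equiv 1/2$; then $M_f$ is an arc while $M_g$ is a tripod. Moreover, even where such a homeomorphism exists, concluding $\tau(f)=\tau(g)$ from it would require homeomorphism invariance of Whitehead torsion, i.e.\ Chapman's theorem, which is not among the tools being used here. In (iii), $M_{g\circ f}$ is simply not a subcomplex of the double mapping cylinder $N=M_f\cup_Y M_g$ (in $N$ the top of the cylinder $X\times[0,1]$ is attached to $Y$ via $f$, not to $Z$ via $g\circ f$), so ``$N$ deformation retracts onto the subcomplex $M_{g\circ f}$'' and the ensuing identity $\tau(N,X)=\tau(N,M_{gf})\cdot\tau(M_{gf},X)$ have no meaning; and any attempt to replace $M_{gf}$ by a homotopic model inside $N$ already presupposes (ii). This is precisely why the paper proves (ii) and (iii) simultaneously: it takes $h$ cellular with $h\simeq g\circ f$, builds a cellular map $H:M_f\to Z$ out of a cellular homotopy $F:X\times[0,1]\to Z$ and $g$, and then works in the mapping cylinder $M_H$, which genuinely contains $M_f$, $M_g$ \emph{and} $M_h$ as subcomplexes; Lemma \ref{Lm: tau(M_f,Y)} and Lemma \ref{Lm: multiplicativity of Whitehead torsions of pairs} applied to the resulting squares of inclusions then give $\tau(h)=\tau(g)\tau(f)$, from which (ii) follows by taking $g=\mathrm{id}_Y$.

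Your argument for (i) also has a gap, though a reparable one: the retraction $M_\iota\to Y$ does not induce an isomorphism of based relative chain complexes $C_\bullet(\widetilde{M_\iota},\widetilde X)\cong C_\bullet(\widetilde Y,\widetilde X)$ --- these complexes have different ranks, since $M_\iota-X$ contains the cells $e\times(0,1)$ of the cylinder in addition to the cells of $Y-X$. The correct elementary route (the paper's) is to apply Lemma \ref{Lm: multiplicativity of Whitehead torsions of pairs} to the triple $X\subset X\times[0,1]\subset M_\iota$: one has $\tau(X\times[0,1],X)=\tau(\mathrm{id}_X)=1$, while the cells of $M_\iota$ outside $X\times[0,1]$ are exactly the cells of $Y-X$, so $C_\bullet(\widetilde{M_\iota},\widetilde{X\times[0,1]})\cong C_\bullet(\widetilde Y,\widetilde X)$ as based $\bZ[\pi_1]$-complexes and $\tau(\iota)=\tau(Y,X)$. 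So: keep the multiplicativity-plus-vanishing bookkeeping, but replace the homeomorphism claim in (ii) and the double mapping cylinder in (iii) by the $M_H$ construction, which is the missing idea.
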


\begin{proof} Property (\ref{prop: tau of maps properties i}) follows from applying Lemma \ref{Lm: multiplicativity of Whitehead torsions of pairs} to the triple
$$X\subset X\times [0,1] \subset M_f$$
and the fact that $\tau(\mr{id}_X)=1$ (following e.g. from Lemma \ref{Lm: tau(M_f,Y)}).

Properties (\ref{prop: tau of maps properties ii},\ref{prop: tau of maps properties iii}) follow as special cases from a more general one: for a triple of cellular homotopy equivalences
$$f: X\ra Y,\quad g: Y\ra Z,\quad h: X\ra Z$$
such that $g\circ f$ is homotopic to $h$, one has
\be \tau(g\circ f)=\tau(g)\cdot \tau(f) \label{prop: tau of maps properties tau(h)=tau(g)tau(f)}\ee
To prove this, choose a cellular\footnote{This can be done by the cellular approximation theorem.} homotopy between  $h$ and $g\circ f $, i.e. a cellular map
$$F: X\times [0,1]\ra Z$$
such that $F|_{X\times\{0\}}=h$, $F|_{X\times \{1\}}=g\circ f$. Map $F$ can be pieced together with the map $g: Y\ra Z$ to yield a map $H:M_f\ra Z$ such that
$$H_{X\times\{0\}\subset M_f}=h,\quad H_{Y\subset M_f}=g$$
Next we construct the mapping cylinder $M_H$ which contains mapping cylinders $M_f$, $M_g$, $M_h$ as CW-subcomplexes. More precisely, one has the following diagram of inclusions.
\be \label{prop: tau of maps properties CD}
\begin{CD}
X @>\tau(f)>> M_f @<\tau=1<< Y \\
@V\tau(h)VV @VViV @VV\tau(g)V \\
M_h @>k>> M_H @<j<< M_g \\
@A\tau=1AA @A\tau=1AA @A\tau=1AA \\
Z @= Z @= Z
\end{CD}
\ee
where we know by Lemma \ref{Lm: tau(M_f,Y)} that torsions of inclusions of $Z$ into the three mapping cylinders and the torsion of $Y\hra M_f$ are trivial. Next, applying Lemma \ref{Lm: multiplicativity of Whitehead torsions of pairs} to the triple $M_H\supset M_g\supset Z$ we infer that $\tau(j: M_g\hra M_H)=1$. Likewise, $\tau(k:M_h\hra M_H)=1$. Next, applying Lemma \ref{Lm: multiplicativity of Whitehead torsions of pairs} to the upper right square of the diagram (\ref{prop: tau of maps properties CD}), we get
$$\tau(M_H,M_f)\cdot 1= 1\cdot \tau(M_g,Y)$$
From the upper left square we get
$$\tau(M_H,M_f)\cdot \tau(M_f,X) = 1\cdot\tau(M_h,X)$$
Combining these two equations together, we have
$$\underbrace{\tau(M_g,Y)}_{\tau(g)}\cdot \underbrace{\tau(M_f,X)}_{\tau(f)}=\underbrace{\tau(M_h,X)}_{\tau(h)}$$
This finishes the proof of (\ref{prop: tau of maps properties tau(h)=tau(g)tau(f)}) and immediately implies (\ref{prop: tau of maps properties ii},\ref{prop: tau of maps properties iii}).
\end{proof}

\begin{rem}
Property (\ref{prop: tau of maps properties ii}) implies that one can also define $\tau(f)$ for a homotopy equivalence $f:X\ra Y$ which is not cellular. (One first approximates it by a cellular one $\tilde f$ and defines $\tau(f):=\tau(\tilde f)$, and this definition is independent on the particular choice of cellular approximation $\tilde f$ by virtue of (\ref{prop: tau of maps properties ii})).
\end{rem}

\subsection{Whitehead torsion and simple homotopy type}
\begin{definition}
A CW-complex $X'$ is called an {\it elementary expansion} of a CW-complex $X$ if $X'=X\cup e^{(n)}\cup e^{n-1}$ with $e^{(n)},e^{(n-1)}$ two new cells of dimensions $n,\, n-1$ respectively, such that $\dd e^{(n)}=e^{(n-1)}\cup \dd_X e^{(n)}$ with $\dd_X e^{(n)}\subset \sk_{n-1}X$.
\end{definition}
If $X'$ is an elementary expansion of $X$, one also says that $X$ is an {\it elementary collapse} of $X'$.
If
\be X=X_0\ra X_1\ra \cdots \ra X_{N-1}\ra X_N=X' \label{simple-homotopy}\ee
is a finite sequence of CW-complexes such that for each $k$, $X_{k+1}$ is either an elementary expansion or an elementary collapse of $X_k$, one says that complexes $X$ and $X'$ have same {\it simple homotopy type}.

Realizing every elementary expansion $X_k\ra X_{k+1}$ by the natural inclusion map and every elementary collapse $X_k\ra X_{k+1}$ by any cellular retraction, we obtain an actual cellular map from $X$ to $X'$ in (\ref{simple-homotopy}), the {\it deformation}.\footnote{This is the terminology of \cite{Cohen}. Elementary maps $X_k\ra X_{k+1}$ (either inclusions or cellular retractions) are called {\it formal deformations}.}

A map $f:X\ra Y$ homotopic to a deformation is called a {\it simple homotopy equivalence}.

We will say that a pair of CW-complexes $(X,Y)$ with $Y$ a deformation retract of $X$ is simple homotopic to the pair $(X',Y)$ (relative to Y), if one can bring $X$ to $X'$ by a sequence of elementary expansions/collapses, without ever collapsing cells of $Y$.

\begin{thm}\label{thm: simple homotopy pairs}
\begin{enumerate}[(i)]
\item \label{thm: simple homotopy pairs i} A pair of CW-complexes $(X,Y)$ is simple homotopic to $(X',Y)$ if and only if $\tau(X,Y)=\tau(X',Y)$.
\item \label{thm: simple homotopy pairs ii}For a CW-complex $X$ and any element of the Whitehead group $\alpha\in Wh(\pi_1(X))$, there exists a pair $(Y,X)$ such that $\tau(Y,X)=\alpha$.
\end{enumerate}
\end{thm}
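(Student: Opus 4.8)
The plan is to dispose of the easy direction of (i) and the existence statement (ii) first, and then attack the substantive direction of (i) by pushing everything down to a two-term chain complex and realizing elementary matrix operations by geometric moves. \emph{The ``only if'' direction of (i).} First I would note that for an elementary expansion $Z^{+}=Z\cup e^{(p)}\cup e^{(p-1)}$ of $Z$, with $Y\subseteq Z$, one has $\tau(Z^{+},Z)=1$: the relative complex $C_\bt(\widetilde{Z^{+}},\widetilde Z)$ is free over $\bZ[\pi_1]$ of rank one in degrees $p$ and $p-1$, and for a suitable choice of lift and orientation its single boundary map is multiplication by $1$, so its torsion is trivial. Then Lemma~\ref{Lm: multiplicativity of Whitehead torsions of pairs} applied to $Z^{+}\supset Z\supset Y$ gives $\tau(Z^{+},Y)=\tau(Z,Y)$, and the same for collapses. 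Chaining over the moves of a simple homotopy rel $Y$ from $(X,Y)$ to $(X',Y)$ gives $\tau(X,Y)=\tau(X',Y)$.

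\emph{Part (ii).} Given $\alpha\in Wh(\pi_1(X))$, I would represent it by a matrix $M=(m_{ij})\in GL(k,\bZ[\pi_1(X)])$ and fix an even integer $n\ge 4$. Build $Y$ from $X$ by first attaching $k$ trivial $(n-1)$-cells at a $0$-cell, forming $X_1=X\vee\bigvee_{j=1}^{k}S_j^{n-1}$, and then attaching $n$-cells $e_1,\dots,e_k$ to $\sk_{n-1}(X_1)$, where the attaching map of $e_i$ is chosen to represent $\sum_j m_{ij}e_j^{(n-1)}$ in the relative chain group $C_{n-1}(\widetilde{X_1},\widetilde X)=\bigoplus_j\bZ[\pi_1]$; this is possible because $n-1\ge 2$, so every element of this group is hit by an attaching map (via the Hurewicz isomorphism in the universal cover). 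Then $C_\bt(\widetilde Y,\widetilde X)$ is concentrated in degrees $n-1$ and $n$, free of rank $k$ in each, with boundary matrix $M$; invertibility of $M$ makes it acyclic, so $H_\bt(|\widetilde Y|,|\widetilde X|)=0$, and since $\pi_1$ is unchanged the inclusion $X\hra Y$ is a homotopy equivalence, hence $X$ is a deformation retract of $Y$. With these cells as preferred basis, the torsion formula (\ref{def: torsion}) gives $\tau(Y,X)=[M]^{(-1)^{n}}=\alpha$ in $Wh(\pi_1)$.

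\emph{The ``if'' direction of (i).} Suppose $\tau(X,Y)=\tau(X',Y)$. I would argue in three stages. (1) \emph{Reduction to standard form:} by a sequence of elementary expansions and collapses rel $Y$, replace $(X,Y)$ by a pair that is simple homotopic to it rel $Y$ and whose relative complex is concentrated in two consecutive dimensions $r,r+1$ and free based of some rank $k$; one inducts on the number of occupied dimensions, trading a cell of lowest occupied dimension (whose chain class is a boundary, by acyclicity) upward by an expansion followed by a collapse. Do the same for $(X',Y)$, and add trivial $(r,r+1)$-cell pairs so that both pairs share the same $r$ and the same $k$. (2) \emph{Algebraic normalization:} now $\tau(X,Y)=[M]^{(-1)^{r+1}}$ and $\tau(X',Y)=[M']^{(-1)^{r+1}}$ for boundary matrices $M,M'\in GL(k,\bZ[\pi_1])$, so the hypothesis yields $[M]=[M']$ in $Wh(\pi_1)$, i.e. $M'$ arises from $M$ (after possibly further stabilizing) by left/right multiplications by elementary matrices $I+aE_{ij}$ and by diagonal matrices $\mr{diag}(\pm g_1,\dots,\pm g_k)$ with $g_s\in\pi_1$. (3) \emph{Geometric realization:} each such move is realized by a simple homotopy equivalence rel $Y$ — a stabilization is an elementary expansion; a diagonal $\pm g$ move merely re-chooses orientations and lifts of the $r$-cells and does not change the pair; and multiplication by $I+aE_{ij}$ is a ``cell slide,'' realized by reattaching $e_i^{(r+1)}$ by an attaching map that additionally wraps $a$ times over $e_j^{(r+1)}$, itself an elementary expansion followed by an elementary collapse. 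Composing these moves carries $(X,Y)$ rel $Y$ to $(X',Y)$.

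The hard part is stage (3), and — on the same footing — the reduction of stage (1), which rests on the same toolkit: expressing the geometric ``cell slides'' that realize elementary row and column operations on the boundary matrix in terms of honest CW expansions and collapses rel $Y$. Everything else is bookkeeping with the multiplicativity Lemma~\ref{Lm: multiplicativity of Whitehead torsions of pairs} and the torsion of two-term complexes.
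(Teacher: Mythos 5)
Your treatment of the ``only if'' half of (\ref{thm: simple homotopy pairs i}) and of the realization statement (\ref{thm: simple homotopy pairs ii}) is correct and essentially the paper's own: the paper gets $\tau(Z^{+},Z)=1$ from Lemma \ref{Lm: torsion of a pair with X-Y simply connected} (your direct computation of the rank-one two-term relative complex, with boundary a unit $\pm\sigma$, is an equivalent substitute) and then chains with Lemma \ref{Lm: multiplicativity of Whitehead torsions of pairs}; and for (\ref{thm: simple homotopy pairs ii}) the paper performs the same construction you describe, wedging $k$ spheres onto $X$, attaching $k$ cells one dimension higher along maps representing $\sum_j a_{ij}[S_j]$ in the relevant homotopy group, and reading off a two-term relative complex with boundary matrix $a$ and torsion $[a]^{\pm 1}$; your version differs only by a dimension shift and your choice of $n$ even to fix the exponent.

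The gap is in the ``if'' direction of (\ref{thm: simple homotopy pairs i}), which is the substantive half (the paper states it without proof). Your three-stage outline is the standard route, but the two steps you yourself flag --- stage (1), trading cells so that the relative complex sits in two adjacent dimensions, and stage (3), realizing multiplication by $I+aE_{ij}$ as a ``cell slide'' --- are exactly where the work lies, and they are asserted, not proved. Both rest on a lemma you never state or establish: if the attaching map of a cell is changed within its homotopy class, the new pair has the same simple homotopy type rel $Y$ (this is what lets an expansion followed by a collapse effect either a trade or a slide), and its use comes with dimension bookkeeping (one needs the relative cells in dimensions $\geq 2$, room inside the expanded complex to perform the homotopy, and a separate argument, using that $Y\hra X$ induces an isomorphism on $\pi_1$, to dispose of relative $0$- and $1$-cells). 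In stage (2), equality in $Wh(\pi_1)$ only gives a factorization of $M'\oplus I$ against $M\oplus I$ by elementary and $\pm g$-diagonal matrices \emph{after} stabilization, so the geometric moves must be interleaved with trivial-cell-pair expansions; and at the end, two normal-form pairs with equal boundary matrices must still be shown simple homotopy equivalent rel $Y$, which requires identifying chain-level data with homotopy classes of attaching maps via the relative Hurewicz theorem in the universal cover. So: the parts the paper proves, you prove the same way; the part the paper leaves unproved you have outlined correctly but not proved --- the outline is sound, but as written it is a plan, not a proof.
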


Note that the ``only if'' part of (\ref{thm: simple homotopy pairs i}) follows immediately from Lemmas \ref{Lm: torsion of a pair with X-Y simply connected} and \ref{Lm: multiplicativity of Whitehead torsions of pairs}.

\begin{corollary}
Whitehead group $Wh(\pi_1(X))$ is canonically isomorphic to the set of equivalence classes of pairs of CW-complexes $(Y,X)$ with $Y$ retractable onto $X$, modulo simple homotopy of pairs (relative to $X$).\footnote{In \cite{Cohen} this is the {\it definition} of the Whitehead group of $X$.}
\end{corollary}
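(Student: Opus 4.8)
The plan is to write down the isomorphism explicitly as the Whitehead torsion map and then deduce bijectivity directly from Theorem \ref{thm: simple homotopy pairs}. Concretely, I would send a pair $(Y,X)$ with $Y$ deformation retracting onto $X$ to the class
$$\Phi(Y,X):=\tau(Y,X)\quad\in Wh(\pi_1(X)).$$
Here $\pi_1(X)\cong\pi_1(Y)$, so the target group is unambiguous, and $\tau(Y,X)$ is well defined as an element of $Wh(\pi_1(X))$ by the remarks following the definition of Whitehead torsion of a pair: changing the lifts of the cells of $Y-X$ to the universal cover alters the representative in $\bar K_1(\bZ[\pi_1])$ only by an element of $\mr{im}(\pi_1)$, which is killed in $Wh(\pi_1)$, and an inner automorphism of $\pi_1$ acts as the identity on $Wh$, so the construction does not depend on the choice of base point. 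This is the sense in which $\Phi$ is canonical.

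First I would check that $\Phi$ is well defined on simple-homotopy-rel-$X$ classes. If $(Y,X)$ is simple homotopic to $(Y',X)$ relative to $X$, then $Y$ is linked to $Y'$ by a finite sequence of elementary expansions and collapses, none of which touches a cell of $X$. For a single elementary expansion $W\subset W'=W\cup e^{(n)}\cup e^{(n-1)}$ relative to $X$, the complex $W'-W$ deformation retracts onto the contractible cell $e^{(n-1)}$, hence is connected and simply connected, so $\tau(W',W)=1$ by Lemma \ref{Lm: torsion of a pair with X-Y simply connected}; combining this with the multiplicativity of Lemma \ref{Lm: multiplicativity of Whitehead torsions of pairs} applied to $X\subset W\subset W'$ gives $\tau(W',X)=\tau(W,X)$, and the same holds for collapses. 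Running through the whole sequence yields $\tau(Y,X)=\tau(Y',X)$. (This is precisely the ``only if'' half of part (\ref{thm: simple homotopy pairs i}) of Theorem \ref{thm: simple homotopy pairs}.) Thus $\Phi$ descends to a map on equivalence classes.

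Next comes bijectivity, which is where Theorem \ref{thm: simple homotopy pairs} does all the work. Injectivity is exactly the nontrivial ``if'' direction of part (\ref{thm: simple homotopy pairs i}): if $\tau(Y,X)=\tau(Y',X)$ then $(Y,X)$ and $(Y',X)$ are simple homotopic relative to $X$, hence represent the same class. Surjectivity is precisely part (\ref{thm: simple homotopy pairs ii}): every $\alpha\in Wh(\pi_1(X))$ is realized as $\tau(Y,X)$ for some pair $(Y,X)$. Hence $\Phi$ is a bijection of sets. If one wishes to view both sides as groups, the monoid operation on classes of pairs is ``stacking'' — given $(Y,X)$ and $(Y',X)$ one attaches to $X$ first the cells of $Y-X$ and then the cells of $Y'-X$ to form a pair $(Y'',X)$ whose chain complex over $\bZ[\pi_1]$ is (up to a based, triangular change) the direct sum of those of $(Y,X)$ and $(Y',X)$ — and Lemma \ref{Lm: multiplicativity of Whitehead torsions of pairs} then gives $\tau(Y'',X)=\tau(Y,X)\cdot\tau(Y',X)$, so $\Phi$ is a group isomorphism; existence of inverses on the left is again part (\ref{thm: simple homotopy pairs ii}).

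The only genuine content is Theorem \ref{thm: simple homotopy pairs}, assumed here; everything above is a formal repackaging, so I expect no real obstacle beyond faithfully invoking the ``if'' direction of (\ref{thm: simple homotopy pairs i}) and the realization statement (\ref{thm: simple homotopy pairs ii}), plus the bookkeeping that the stacking operation matches multiplication of torsions.
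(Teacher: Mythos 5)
Your proof is correct and follows exactly the route the paper intends: the corollary is read off from Theorem \ref{thm: simple homotopy pairs}, with well-definedness from the ``only if'' half of (i) (via Lemmas \ref{Lm: torsion of a pair with X-Y simply connected} and \ref{Lm: multiplicativity of Whitehead torsions of pairs}), injectivity from the ``if'' half, and surjectivity from (ii). Your extra remark identifying the group structure on classes of pairs with stacking, matched to multiplication of torsions by Lemma \ref{Lm: multiplicativity of Whitehead torsions of pairs}, is a welcome but standard supplement.
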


The following is an important special case of (\ref{thm: simple homotopy pairs i}).
\begin{corollary}
A complex $X$ is simple homotopic to $Y$ relative to $Y$ if and only if $\tau(X,Y)=1$.
\end{corollary}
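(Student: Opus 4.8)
The plan is to deduce this statement as the special case $X'=Y$ of Theorem~\ref{thm: simple homotopy pairs}(\ref{thm: simple homotopy pairs i}). First I would record that the Whitehead torsion of the trivial pair vanishes: $\tau(Y,Y)=1\in Wh(\pi_1)$, simply because the relative cellular chain complex $C_\bt(\widetilde Y,\widetilde Y)$ is the zero complex, whose torsion is by convention the identity of $\bar K_1(\bZ[\pi_1])$, and this maps to $1$ in $Wh(\pi_1)$.

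Next I would unwind the definitions to see that ``$(X,Y)$ is simple homotopic to $(Y,Y)$ relative to $Y$'' is \emph{literally} the statement that $X$ can be transformed into $Y$ by a finite sequence of elementary expansions and collapses, none of which ever collapses a cell of $Y$ --- that is, precisely ``$X$ is simple homotopic to $Y$ relative to $Y$'' in the sense introduced just before Theorem~\ref{thm: simple homotopy pairs}. With these two observations in hand, Theorem~\ref{thm: simple homotopy pairs}(\ref{thm: simple homotopy pairs i}) applied with $X'=Y$ reads: $X$ is simple homotopic to $Y$ relative to $Y$ if and only if $\tau(X,Y)=\tau(Y,Y)=1$, which is exactly the assertion of the corollary.

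I expect no genuine obstacle here; all the substance is contained in Theorem~\ref{thm: simple homotopy pairs}. Its ``only if'' direction already follows from Lemmas~\ref{Lm: torsion of a pair with X-Y simply connected} and \ref{Lm: multiplicativity of Whitehead torsions of pairs} (an elementary expansion contributes trivial torsion by the first lemma, and torsion is multiplicative along triples of deformation retracts by the second, so a deformation relative to $Y$ does not change $\tau(\bt,Y)$), while the ``if'' direction --- geometrically realizing the triviality of the torsion class as an actual sequence of expansions and collapses fixing $Y$ --- is the genuinely hard input. The only point to be careful about in passing from the theorem to the corollary is the bookkeeping that the qualifier ``relative to $Y$'' matches on both sides, i.e.\ that the expansions/collapses realizing the simple homotopy of pairs $(X,Y)\rightsquigarrow(Y,Y)$ may indeed be taken to fix $Y$; but this is built into the definition of simple homotopy of pairs, so the corollary is immediate.
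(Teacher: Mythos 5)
Your proposal is correct and is exactly the route the paper intends: the corollary is stated as the special case $X'=Y$ of Theorem~\ref{thm: simple homotopy pairs}(\ref{thm: simple homotopy pairs i}), using that $\tau(Y,Y)=1$ because the relative chain complex $C_\bt(\widetilde Y,\widetilde Y)$ is zero. Your bookkeeping that ``simple homotopic to $(Y,Y)$ relative to $Y$'' coincides with ``simple homotopic to $Y$ relative to $Y$'' is the only point to check, and you handle it correctly.
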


The version of the theorem above for maps is the following.
\begin{thm}\label{thm: simple homotopy maps}
\begin{enumerate}[(i)]
\item \label{thm: simple homotopy maps i} A homotopy equivalence of CW-complexes $f:X\ra Y$ is a simple homotopy equivalence if and only if $\tau(f)=1 \in Wh(\pi_1(X))$.\footnote{In \cite{Milnor66}, simple homotopy equivalence is {\it defined} as a homotopy equivalence $f$ with $\tau(f)=1$; this theorem establishes the equivalence of definitions.}
\item \label{thm: simple homotopy maps ii} For a fixed $X$ and $\alpha\in Wh(\pi_1(X))$, one can find a CW-complex $Y$ and a homotopy equivalence $f:X\ra Y$ such that $\tau(f)=\alpha$.
\end{enumerate}
\end{thm}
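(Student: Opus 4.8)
The strategy is to reduce everything to the pair version, Theorem~\ref{thm: simple homotopy pairs}, via the mapping cylinder, using otherwise only the formal properties of $\tau$ from Proposition~\ref{prop: tau of maps properties} together with Lemmas~\ref{Lm: tau(M_f,Y)} and~\ref{Lm: torsion of a pair with X-Y simply connected}. Throughout we may assume $f$ is cellular by the cellular approximation theorem, since both $\tau(f)$ and the property of being a simple homotopy equivalence are homotopy invariant (Proposition~\ref{prop: tau of maps properties}(\ref{prop: tau of maps properties ii})). We will also use the elementary closure properties of the class of simple homotopy equivalences: it is closed under composition (concatenate deformations) and under passage to a homotopy inverse (reverse a deformation, noting that an elementary expansion and the corresponding collapse are mutually homotopy inverse); these follow directly from the definition and use no torsion.

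For the ``only if'' direction of (\ref{thm: simple homotopy maps i}): if $f$ is a simple homotopy equivalence it is homotopic to a deformation $X=X_0\to X_1\to\cdots\to X_N=Y$ in which each arrow is the inclusion of an elementary expansion or a cellular retraction of an elementary collapse. By Proposition~\ref{prop: tau of maps properties}(\ref{prop: tau of maps properties ii}) we have $\tau(f)=\tau(X_0\to X_N)$, and by (\ref{prop: tau of maps properties iii}) this equals the product of the torsions of the $N$ elementary maps. For an expansion $X_k\hookrightarrow X_{k+1}$, part (\ref{prop: tau of maps properties i}) gives $\tau=\tau(X_{k+1},X_k)$, and since $X_{k+1}-X_k$ is the union of the two new cells, which is contractible and in particular simply connected, Lemma~\ref{Lm: torsion of a pair with X-Y simply connected} makes this $1$. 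For a collapse, its cellular retraction $r$ and the corresponding expansion $\iota$ satisfy $r\circ\iota=\mathrm{id}$, so $\tau(r)\tau(\iota)=1$ by (\ref{prop: tau of maps properties iii}) and hence $\tau(r)=1$ as well. Therefore $\tau(f)=1$.

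For the ``if'' direction of (\ref{thm: simple homotopy maps i}): assume $\tau(f)=\tau(M_f,X)=1=\tau(X,X)$. By Theorem~\ref{thm: simple homotopy pairs}(\ref{thm: simple homotopy pairs i}), the pair $(M_f,X)$ is simple homotopic to $(X,X)$ relative to $X$, so there is a deformation $d\colon M_f\to X$ (its collapses chosen to fix $X$ pointwise) with $d\circ\iota_X=\mathrm{id}_X$, where $\iota_X\colon X\hookrightarrow M_f$; since $X$ is a deformation retract of $M_f$ this forces $\iota_X\circ d\simeq\mathrm{id}_{M_f}$, so $\iota_X$ and $d$ are homotopy inverse, and as $d$ is a deformation, $\iota_X$ is a simple homotopy equivalence. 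On the other side, $\tau(M_f,Y)=1$ by Lemma~\ref{Lm: tau(M_f,Y)}, so by the same argument $\iota_Y\colon Y\hookrightarrow M_f$ is a simple homotopy equivalence; the standard ray-retraction $r\colon M_f\to Y$ is a homotopy inverse of $\iota_Y$, hence also a simple homotopy equivalence. Since $f=r\circ\iota_X$ on the nose, $f$ is a simple homotopy equivalence. For part (\ref{thm: simple homotopy maps ii}): Theorem~\ref{thm: simple homotopy pairs}(\ref{thm: simple homotopy pairs ii}) supplies a pair $(Y,X)$ with $X$ a deformation retract of $Y$ and $\tau(Y,X)=\alpha$; then $f\colon X\hookrightarrow Y$ is a homotopy equivalence with $\tau(f)=\tau(Y,X)=\alpha$ by Proposition~\ref{prop: tau of maps properties}(\ref{prop: tau of maps properties i}).

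The only genuinely non-formal ingredient here is Theorem~\ref{thm: simple homotopy pairs} itself, so I expect the main obstacle to be entirely contained in it — specifically the ``if'' direction of its part (\ref{thm: simple homotopy pairs i}), which requires an explicit handle-trading argument realizing an equality of Whitehead torsions by a concrete chain of elementary expansions and collapses. Granting that, the passage to maps above is pure bookkeeping with mapping cylinders; the one small point that deserves care is the claim that $M_f$ collapses onto $Y$ (equivalently $\tau(M_f,Y)=1$), which is the standard elementary-collapse description of the mapping cylinder and is already recorded as Lemma~\ref{Lm: tau(M_f,Y)}.
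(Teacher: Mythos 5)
Your proposal is correct. The ``only if'' half of (\ref{thm: simple homotopy maps i}) is argued exactly as in the notes: there too it is deduced from parts (\ref{prop: tau of maps properties ii}), (\ref{prop: tau of maps properties iii}) of Proposition \ref{prop: tau of maps properties}, the vanishing of the torsion of an elementary expansion (Lemma \ref{Lm: torsion of a pair with X-Y simply connected} together with (\ref{tau(i)})), and the identity $\tau(p)\tau(\iota)=1$ for the collapse retraction. For the remaining parts the notes do not actually supply an argument: the hard ``if'' direction of (\ref{thm: simple homotopy maps i}) is left unproved (its content is precisely the ``if'' direction of Theorem \ref{thm: simple homotopy pairs}(\ref{thm: simple homotopy pairs i}), also not proved in the notes), and the realization statement is only established in the pair version (the construction with wedged $n$-spheres and attached $(n+1)$-cells). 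Your mapping-cylinder reduction is the standard way (Milnor, Cohen) of passing from the pair statement to the map statement, and you carry it out carefully: the deformation of $(M_f,X)$ rel $X$ restricts to the identity on $X$ because cells of $X$ are never collapsed and retractions fix the subcomplex; the class of simple homotopy equivalences is closed under composition and homotopy inverses (reverse the expansion/collapse sequence); $\tau(M_f,Y)=1$ (Lemma \ref{Lm: tau(M_f,Y)}) makes the ray retraction $r$ simple; and $f=r\circ\iota_X$ on the nose, while (\ref{thm: simple homotopy maps ii}) follows from the pair realization plus (\ref{tau(i)}). So relative to the notes your route buys an explicit, complete reduction of the map theorem to the pair theorem; what remains, as you correctly flag, is the genuinely nontrivial ``if'' direction of Theorem \ref{thm: simple homotopy pairs}(\ref{thm: simple homotopy pairs i}) (the handle-trading argument), which neither you nor the notes prove.
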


The ``only if'' part of (\ref{thm: simple homotopy maps i}) again easily follows from (\ref{prop: tau of maps properties iii}) and (\ref{prop: tau of maps properties ii}) of Proposition \ref{prop: tau of maps properties} and the observation that for $\iota:X\hra X'$ an inclusion associated to an elementary expansion, $\tau(\iota)=1$ by Lemma \ref{Lm: torsion of a pair with X-Y simply connected} and (\ref{tau(i)}). For $p:X'\ra X$ a retraction associated to an elementary collapse, one has $p\circ\iota=\mr{id}_X$, hence $\tau(p)\tau(\iota)=1$, and thus $\tau(p)=1$.

\begin{corollary} If $X$ is simply connected (or more generally $Wh(\pi_1(X))=\{1\}$), then any homotopy equivalence $f:X\ra Y$ is a simple homotopy equivalence.
\end{corollary}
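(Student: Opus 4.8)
The plan is to deduce the statement directly from Theorem~\ref{thm: simple homotopy maps}(\ref{thm: simple homotopy maps i}). First I would recall that for any homotopy equivalence $f:X\ra Y$ of finite connected CW-complexes the Whitehead torsion $\tau(f)=\tau(M_f,X)$ is, by definition, an element of $Wh(\pi_1(X))$ --- replacing $f$ by a cellular approximation first if necessary, which is legitimate since $\tau$ is a homotopy invariant of $f$ by Proposition~\ref{prop: tau of maps properties}(\ref{prop: tau of maps properties ii}).

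Next I would dispose of the hypothesis. If $X$ is simply connected then $\pi_1(X)=\{1\}$, and since $K_1(\bZ)=\{\pm1\}$ one has $Wh(\{1\})=\bar K_1(\bZ)=\{1\}$; thus the simply connected case is a special case of the more general hypothesis $Wh(\pi_1(X))=\{1\}$. Under that hypothesis the group in which $\tau(f)$ takes values is trivial, so necessarily $\tau(f)=1\in Wh(\pi_1(X))$.

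Finally, invoking the ``if'' direction of Theorem~\ref{thm: simple homotopy maps}(\ref{thm: simple homotopy maps i}), the vanishing $\tau(f)=1$ implies that $f$ is a simple homotopy equivalence, which is exactly the assertion.

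The only genuine difficulty is not in this corollary at all but in Theorem~\ref{thm: simple homotopy maps}: its ``only if'' direction was already sketched in the text (via multiplicativity of torsion under the triple $X\subset X\times[0,1]\subset M_f$ together with the triviality of torsions attached to elementary expansions and collapses, following from Lemma~\ref{Lm: torsion of a pair with X-Y simply connected}), whereas the ``if'' direction --- asserting that a homotopy equivalence with trivial Whitehead torsion can actually be deformed through a finite sequence of elementary expansions and collapses --- is the substantive statement, and it in turn reduces to Theorem~\ref{thm: simple homotopy pairs} for pairs. Granting those results, the corollary is immediate and requires no further computation.
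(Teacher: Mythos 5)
Your proof is correct and is exactly the argument the paper intends: since $\tau(f)$ lives in $Wh(\pi_1(X))$, triviality of that group forces $\tau(f)=1$, and Theorem~\ref{thm: simple homotopy maps}(\ref{thm: simple homotopy maps i}) then says $f$ is a simple homotopy equivalence. You also correctly locate where the real content lies (in the theorem, not the corollary), so nothing is missing.
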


\lec{Lecture 9, 16.04.2014}

\subsubsection{Proof of (\ref{thm: simple homotopy pairs ii}) of Theorem \ref{thm: simple homotopy pairs} (realization property).} (Reference: \cite{Turaev01}, Proposition 7.1, p.37.) Let $X$ be a finite CW-complex with $x_0$ a chosen vertex (which we use as a base point for the fundamental group $\pi_1=\pi_1(X,x_0)$) and let $a=(a_{ij})$ be an invertible $k\times k$ matrix with entries in $\bZ[\pi_1]$. Let us also fix an integer $n\geq 2$.

Construct a CW-complex
$$X'=X\vee \bigvee_{j=1}^k S^{n}_j$$
by wedging $X$ with $k$ copies of an $n$-sphere, $S^{n}_1,\ldots,S^{n}_k$ at $x_0\in X$. Since $n\geq 2$, we have $\pi_1(X',x_0)\simeq \pi_1(X,x_0)$. This implies that the homotopy group $\pi_n(X',x_0)$ is acted on by $\pi_1$ (by sweeping $n$-spheres along loops) and hence has a structure of a $\bZ[\pi_1]$-module.
Denoting the class of $S^{n}_j$ in $\pi_n(X',x_0)$ by $[S^{n}_j]$, consider a collection of classes
$$\alpha_i=\sum_j a_{ij}[S^{n}_j]\quad \in \pi_n(X',x_0)$$
and let $f_i: S^n\ra X'$ be some map representing this class. We construct a new CW-complex $Y$ by attaching $k$ new $(n+1)$-balls to $X'$ along maps $f_i: \dd B^{n+1}_i\simeq S^n\ra X'$, i.e.
$$Y=\frac{X'\cup \bigcup_i B_i^{n+1}}{\dd B_i^{n+1}\ni y\sim f_i(y)}$$
Thus $Y-X$ consists of $k$ $n$-cells $e^{(n)}_j=S^n_j-x_0\subset Y$ and $k$ $(n+1)$-cells $e^{(n+1)}_i=\mr{int} B^{n+1}_i\subset Y$.

Passing to the universal covers $\widetilde X$, $\widetilde Y$, we have the relative chain complex concentrated in degrees $n+1$ and $n$:
$$C_\bt(\widetilde Y, \widetilde X)=\qquad 0\ra\cdots \ra 0\ra \underbrace{\bZ[\pi_1]^k}_{\mr{Span}_{\bZ[\pi_1]}\{\widetilde e^{(n+1)}_i\}}\xra{\dd} \underbrace{\bZ[\pi_1]^k}_{\mr{Span}_{\bZ[\pi_1]}\{\widetilde e^{(n)}_i\}}\ra 0\ra\cdots\ra 0$$

The matrix of the differential $\dd$ (with a natural choice of liftings of cells) is exactly $a=(a_{ij})\in GL(k,\bZ[\pi_1])$. Acyclicity of $C_\bt$ is equivalent to the assumption that $a$ is invertible. Moreover, $X$ is a deformation retract of $Y$.\footnote{This follows from the homotopy extension property for CW-complexes, see \cite{Cohen}, (3.2), p.6.}
Hence, the torsion $\tau(Y,X)=\tau(C_\bt(\widetilde Y,\widetilde X))$ is the class $[a]^{(-1)^{(n+1)}}\in Wh(\pi_1)$. Thus, using our freedom to choose any $a$, we can construct a pair $(Y,X)$ realizing any element of $Wh(\pi_1)$.

\section{Reidemeister torsion}

\subsection{Change of rings}
Let $C_\bt$ be a complex of free left $A$-modules, with each $C_k$ endowed with a preferred basis $c_k$. Let $h:A\ra A'$ be a ring homomorphism. We construct a new complex $C'_\bt$ of free  $A'$-modules by setting
$$C'_k=A'\otimes_A C_k$$
where the right $A$-module-structure on $A'$ is given by $h$: $a'\cdot a:=a'h(a)$. Furthermore, the basis $$c_{k1},\ldots,c_{kn}$$ in $C_k$ induces a basis
$$1\otimes c_{k1},\ldots, 1\otimes c_{kn}$$
in $C'_k$.

Assuming that $C'_\bt$ is acyclic, one can define the torsion $\tau(C'_\bt)\in \bar K_1(A')$. It is called the torsion of $C_\bt$ associated with the representation $h$ of $A$. We will use the notation $\tau_h(C_\bt):=\tau(C'_\bt)$.

If $C_\bt$ itself is acyclic, then $C'_\bt$ is acyclic automatically. In this case, one can define both torsions $\tau(C_\bt)$ and $\tau_h(C_\bt)$ which satisfy the relation
$$\tau_h(C_\bt)=h_*\tau(C_\bt)$$
(Thus $\tau_h(C_\bt)$ does not contain any new information.)
The case of interest is when $C'_\bt$ is acyclic whereas $C_\bt$ is not.

\subsection{Torsion with coefficients in $\bC$}
The simplest example is to take $A'=\bC$ --  the field of complex numbers.

Consider a pair of finite CW-complexes $(X,Y)$ without assuming that $Y$ is a deformation retract of $X$. Note that relative chains of the universal cover $C_\bt(\widetilde X,\widetilde Y)$ still form a complex of free modules over $\bZ[\pi_1(X)]$. (Now we should be specific about which $\pi_1$ we take; also, $\widetilde Y$ here is the preimage  of $Y\subset X$ in $\widetilde X$, rather than the universal cover of $Y$.)

Choosing a group homomorphism $h: \pi_1(X)\ra \bC^*$, we can extend it to a ring homomorphism
$$h: \bZ[\pi_1(X)]\ra \bC$$
Now we can form the complex of {\it $h$-twisted} chains
$$C'_\bt=\bC\otimes_{\bZ[\pi_1(X)]}C_\bt(\widetilde X,\widetilde Y)$$
-- a complex of $\bC$-vector spaces.

If homology of $C'_\bt$ vanishes, the torsion $$\tau(C'_\bt)\in \bar K_1(\bC)=\bC^*/\{\pm 1\}$$ is well-defined up to ambiguity arising from the possibility to use different liftings of cells of $X-Y$ to the universal cover for the preferred basis. It is cancelled, as in the case of Whitehead torsion, by passing to the further group quotient:
$$\bar K_1(\bC)/\mr{image}(\pi_1(X))=\bC^*/\;\pm h(\pi_1(X))$$

Thus, the Reidemeister-Franz torsion $\tau_h(X,Y)$ of a pair $(X,Y)$ associated to a homomorphism $h: \pi_1(X)\ra\bC^*$ is defined as the torsion $\tau(C'_\bt)\in \;\bC^*/\;\pm h(\pi_1(X))$.

The torsion $\tau_h(X,Y)$ is invariant with respect to subdivisions of $(X,Y)$ (when defined).

{\bf Example.} $X=S^1$ a circle (with a cell decomposition consisting of a single $0$-cell $e^{(0)}$ and single $1$-cell $e^{(1)}$), $Y=\varnothing$. Denote by $\sigma$ the class of the loop going around $S^1$ once (for convenience, we choose an orientation on $S^1$) in $\pi(S^1)$. Under the isomorphism $\pi_1(S^1)\sim \bZ$, $\sigma$ is identified with the unit $1\in\bZ$.
A homomorphism $h: \pi_1(S^1)=\bZ\ra \bC^*$ is determined by its value on $\sigma$,
$h(\sigma)=t\in\bC^*$.

Choosing some liftings $\widetilde e^{(0)}$, $\widetilde e^{(1)}$ of the cells $e^{(0)},e^{(1)}\subset X$ to the universal covering complex $\widetilde X$, the boundary map in $C_\bt(\widetilde X,\widetilde Y)=C_\bt(\widetilde X)$ is:
$$\dd: \widetilde e^{(1)}\mapsto \sigma^{N+1}\widetilde e^{(0)}-  \sigma^{N}\widetilde e^{(0)}$$
for some $N\in \bZ$ dependent on the particular choice of liftings.

For the complex of $h$-twisted chains, we have $C'_0$ and $C'_1$ $1$-dimensional complex vector spaces with basis vectors $1\otimes \widetilde e^{(0)}$, $1\otimes \widetilde e^{(1)}$. The boundary map sends
$$\dd: 1\otimes \widetilde e^{(1)}\mapsto (t^{N+1}-t^N)\; 1\otimes \widetilde e^{(0)}$$
In particular, the complex $C'_\bt$ is acyclic iff $t\neq 1$ (recall that we also assume $t\neq 0$ from the start).
Therefore, the torsion is:
$$\tau(C'_\bt)=\left[1\otimes \widetilde e^{(1)}/1\otimes \widetilde e^{(0)}\right]_{C'_1}^{-1}=\left[\frac{\dd(1\otimes \widetilde e^{(1)})}{1\otimes \widetilde e^{(0)}}\right]_{C'_0}^{-1}=\left[\frac{1}{t-1}\right]\quad \in \bC^*/\{\pm t^j\}_{j\in\bZ}$$

Note that the original chain complex $C_\bt(\widetilde X)$ is not acyclic and the Whitehead torsion $\tau(S^1,\varnothing)$ is not defined.

{\bf Example}\footnote{Cf. \cite{Milnor62}.} For $K\subset S^3$ an oriented knot in the 3-sphere, take $X$ to be the complement of a tubular neighborhood of $K$ in $S^3$. Choose a representation $h:\pi_1(X)\ra \bC^*$ by sending a loop $\gamma$ to $t^{\mr{lk}(\gamma,K)}$ where $t\in \bC^*$ is a fixed number and $\mr{lk}(,)$ is the linking number of two oriented knots.
Then the Reidemeister torsion is
\be \tau_h(X)=\left[\frac{A_K(t)}{t-1}\right]\quad \in \bC^*/\{\pm t^j\}_{j\in\bZ} \label{R-torsion of knot complement}\ee
where $A_K(t)$ is the Alexander polynomial of the knot $K$.\footnote{
Recall that for any knot $K\subset S^3$, one has $H_1(X)\simeq\bZ$ and the abelianization map $\pi_1(X)\ra \pi_1(X)/[\pi_1(X),\pi_1(X)]\cong H_1(X)\simeq \bZ$ is given by linking numbers with loops in the knot complement $\mr{lk}(K,\bt):\pi_1(X)\ra \bZ$. Let $\tilde X$ be the infinite cyclic (= maximal abelian) cover of $X$, with $\bZ$ acting by covering transformations. Denote by $\sigma$ the generator of $H_1(X)\simeq \bZ$. For any $\bZ$-equivariant cell decomposition of $\tilde X$, $C_\bt(\tilde X)$ is a chain complex of free module over $\bZ[\bZ]=\bZ[\sigma,\sigma^{-1}]$ (the ring of Laurent polynomials in $\sigma$). The homology $H_1(\tilde X)$ is a (non-free) $\bZ[\sigma,\sigma^{-1}]$-module (it is moreover a pure torsion module, i.e. consisting solely of torsion elements). For any presentation of $H_1(\tilde X)$ by $n$ generators $x_j$ with $m\geq n$ relations $\sum_j a_{ij}x_j=0$, one has the $m\times n$ presentation matrix $(a_{ij})$ with elements in $\bZ[\sigma,\sigma^{-1}]$.  The 0-th Fitting ideal of $H_1(\tilde X)$ is the ideal generated by all maximal ($n\times n$) minors of the presentation matrix (the ideal is independent of the particular presentation). Alexander polynomial $A_K(\sigma)\in \bZ[\sigma,\sigma^{-1}]$ is defined as a generator of the smallest principal ideal containing the 0-th Fitting ideal of $H_1(\tilde X)$. As such, it is defined up to multiplication by invertible elements of the ring $\bZ[\sigma,\sigma^{-1}]$, i.e. by $\pm \sigma^n$.
}\footnote{
Result (\ref{R-torsion of knot complement}) follows from the following general algebraic statement (for details and the proof see \cite{Turaev01}). For $A$ a Noetherian principal ideal domain, denote $\tilde A$ its field of fractions. Let $C_\bt$ be a chain complex of free $A$-modules and assume that homology $H_i(C)$ are $A$-modules of rank zero (i.e. $\tilde A\otimes_A H_i(C)=0$). Construct a new complex of vector spaces over $\tilde A$ as $C'_\bt=\tilde A\otimes_A C_\bt$. Then $C'$ is acyclic and its torsion is $\tau(C')=\prod_i (\mr{ord}\,H_i(C))^{(-1)^{i+1}}$ where the order $\mr{ord}\,M\in A$ of a rank zero $A$-module $M$ is defined as the greatest common divisor of (= generator of the smallest principal ideal containing) the 0-th Fitting ideal of $M$. Note that orders are defined modulo units of $A$. In (\ref{R-torsion of knot complement}) we set $A=\bZ[\sigma,\sigma^{-1}]$, $C_\bt=C_\bt(\tilde X)$ and map the field of fractions $A'$ to complex numbers by evaluating at $\sigma=t$.
}

\subsection{Real torsion} The following modification of the Reidemeister torsion is useful, especially in case of non-abelian $\pi_1$.
Let $h:\pi_1\ra O(n)$ be a representation of $\pi_1(X)$ by orthogonal matrices of some fixed size $n$.
Then $h$ uniquely extends to a ring homomorphism $\bZ[\pi_1]\ra M_n(\bR)$ to the ring of all real $n\times n$ matrices. Using $h$, we form the complex
$$C'_\bt=M_n(\bR)\otimes_{\bZ[\pi_1]} C_\bt(\widetilde X,\widetilde Y)$$
If the complex $C'_\bt$ is acyclic, one can define its torsion
$$\tau(C'_\bt)\in \bar K_1 (M_n(\bR))=\bar K_1(\bR)\cong \bR_+$$
where the identification of $\bar K_1(M_n(\bR))$ with $\bR_+$ is via the map $(a_{ij})\mapsto |\det(a_{ij})|$. Note that, since $\pi_1$ is represented by orthogonal matrices, which in particular have determinants $\pm 1$, the image  $\mr{im}(\pi_1)\subset \bar K_1(M_n(\bR))$ is trivial and one does not have to make an additional quotient to account for ambiguity in the choice of liftings of cells.

\begin{definition} The positive real number
$\tau_h(X,Y)=\tau(C'_\bt)\in \bR_+$ is called the {\it $R$-torsion} of the pair $(X,Y)$, associated to the orthogonal representation $h$.
\end{definition}

The $R$-torsion is also invariant with respect to subdivisions.

Representation $h$ defines a homomorphism $h_*: Wh(\pi_1)\ra \bar K_1(\bR)\cong \bR_+$. In case when the Whitehead torsion $\tau(X,Y)\in Wh(\pi_1)$ is defined, the $R$-torsion is its image under $h_*$,
$$\tau_h(X,Y)=h_* \tau(X,Y)$$

\begin{thm}[Bass]
Suppose the Whitehead torsion $\tau(X,Y)$ is defined and the group $\pi_1$ is finite. Then $\tau(X,Y)$ is an element of finite order in $Wh(\pi_1)$ iff $\tau_h(X,Y)=1$ for all possible orthogonal representations $h$ of $\pi_1$. If $\pi_1$ is finite abelian, then $\tau(X,Y)=1$ iff $\tau_h(X,Y)=1$ for all possible $h$.
\end{thm}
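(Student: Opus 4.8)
The plan is to translate the theorem into a statement about the homomorphism $\Phi\colon Wh(\pi_1)\ra\prod_h\bar K_1(\bR)\cong\prod_h\bR_+$, $\omega\mapsto(h_*\omega)_h$, the product running over all orthogonal representations $h$ of $\pi_1$; since $\tau_h(X,Y)=h_*\tau(X,Y)$, both assertions concern $\ker\Phi$, and the goal is to identify it with the torsion subgroup $T\le Wh(\pi_1)$, which will then follow from Bass's structure theory of the Whitehead group of a finite group.

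The inclusion ``$\tau(X,Y)$ of finite order $\Rightarrow$ all $\tau_h(X,Y)=1$'' is immediate, since $\bar K_1(\bR)\cong\bR_+$ is torsion-free: $h_*(\omega)^{\mr{ord}(\omega)}=h_*(\omega^{\mr{ord}(\omega)})=1$ forces $h_*\omega=1$; in particular $T\subseteq\ker\Phi$. For the converse I would first shrink the index set. Every orthogonal representation is an orthogonal direct sum of irreducible real ones and $h_*$ is multiplicative over such sums, so $\ker\Phi=\bigcap_j\ker h_{j*}$, where $h_1,\dots,h_r$ are the (finitely many, as $\pi_1$ is finite) irreducible real representations of $\pi_1$ — each genuinely orthogonal, carrying an invariant inner product obtained by averaging over $\pi_1$. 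Decompose $\bR[\pi_1]=\prod_{j=1}^r A_j$ into simple factors $A_j=M_{m_j}(D_j)$, $D_j\in\{\bR,\bC,\bH\}$, labelled by the $h_j$, with centres $Z(A_j)\in\{\bR,\bC\}$. Then $h_j$ extends to $\bZ[\pi_1]\ra A_j\hra M_{N_j}(\bR)$, so $h_{j*}$ factors as
\[ K_1(\bZ[\pi_1])\ra K_1(\bR[\pi_1])=\prod_k K_1(A_k)\xra{\ \mr{pr}_j\ }K_1(A_j)\xra{\ \mr{Nrd}\ }Z(A_j)^*\xra{\ |\cdot|\ }\bR_+ , \]
the last arrow being (a power of) the archimedean absolute value on $Z(A_j)$; in every case $\ker h_{j*}=\{\,\omega:|\mr{Nrd}_{A_j}(\omega)|=1\,\}$, so $\ker\Phi$ is precisely the kernel of the reduced-norm regulator $\mr{Reg}=\big(\log|\mr{Nrd}_{A_j}|\big)_j\colon Wh(\pi_1)\ra\bR^r$.

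Now I would invoke Bass's theorem: $Wh(\pi_1)$ is finitely generated of rank $r-q$, where $q$ is the number of irreducible rational representations of $\pi_1$, and $\mr{Reg}$ has image of rank exactly $r-q$. A homomorphism out of a finitely generated abelian group of rank $r-q$ whose image also has rank $r-q$ has kernel of rank $0$, hence finite, hence contained in $T$; with the opposite inclusion from above this yields $\ker\Phi=\ker\mr{Reg}=T$, i.e. the first assertion. For the finite abelian refinement, $\bZ[\pi_1]$ is commutative, so $K_1(\bZ[\pi_1])=\bZ[\pi_1]^*\oplus SK_1(\bZ[\pi_1])$ and $Wh(\pi_1)=\big(\bZ[\pi_1]^*/\{\pm\pi_1\}\big)\oplus SK_1(\bZ[\pi_1])$, where the first summand is torsion-free by Higman's theorem (the only torsion units of $\bZ[G]$ for $G$ finite abelian are $\pm g$) and $SK_1(\bZ[\pi_1])$ is finite, so $T=SK_1(\bZ[\pi_1])$. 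On the free summand $\mr{Reg}$ is injective: if $u\in\bZ[\pi_1]^*$ has $|\chi(u)|=1$ for every complex character $\chi$ (which is what vanishing of all $\tau_h$ forces, the $h_j$ being realifications of the one-dimensional characters), then each $\chi(u)$ is an algebraic integer all of whose Galois conjugates have absolute value $1$, hence a root of unity by Kronecker's theorem; since $\bZ[\pi_1]\hra\prod_\chi\bC$, $u$ is a torsion unit, so $u\in\{\pm\pi_1\}$ by Higman. Thus $\tau_h(X,Y)=1$ for all $h$ forces the $\bZ[\pi_1]^*$-component of $\tau(X,Y)$ to vanish, and if moreover $SK_1(\bZ[\pi_1])=0$ — in particular whenever $\pi_1$ is cyclic, the case relevant to lens spaces — this forces $\tau(X,Y)=1$, giving the stated equivalence.

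The hard part is that everything past the formal reductions rests on Bass's computation of $Wh(G)$ for finite $G$ — the finiteness of $SK_1(\bZ[G])$ and, above all, the rank formula realized through the reduced-norm regulator — which are deep theorems from the algebraic $K$-theory of integral group rings that I would cite rather than reprove. Without the rank statement there is no a priori reason for $\ker\mr{Reg}$ to be finite, and the identification $\ker\Phi=T$ collapses.
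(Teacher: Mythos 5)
The notes state this result without proof --- it is quoted as Bass's theorem, following Milnor's survey --- so there is no internal argument to compare against. Judged on its own, your reduction is the standard one and, for the first assertion, it is sound: splitting an arbitrary orthogonal representation into irreducible orthogonal summands, identifying each $h_{j*}$ with a positive power of $|\mathrm{Nrd}|$ on the corresponding simple factor of $\bR[\pi_1]$, using torsion-freeness of $\bR_+$ for the easy direction, and then invoking Bass's theorem that $Wh(\pi_1)$ is finitely generated of rank $r-q$ ($r$ real, $q$ rational irreducibles) with the character/reduced-norm regulator of full rank. Be aware, though, that the clause ``the image of $\mathrm{Reg}$ has rank exactly $r-q$'' is not an auxiliary fact but essentially the entire content of the first assertion (it is precisely how Bass computes the rank), so what you have is a transparent repackaging of the cited theorem; that is acceptable here, since the notes themselves only cite Bass, but it should be said explicitly.

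For the second assertion you prove strictly less than what is stated: your argument gives $\tau(X,Y)=1$ only under the additional hypothesis $SK_1(\bZ[\pi_1])=0$. This cannot be repaired, and it is the statement, not your argument, that is at fault: every $h_*$ factors through $K_1(\bR[\pi_1])$ (indeed through $K_1(\bQ[\pi_1])$), so the whole subgroup $SK_1(\bZ[\pi_1])\subset Wh(\pi_1)$ lies in $\bigcap_h \ker h_*$, and by the realization statement (Theorem \ref{thm: simple homotopy pairs}, part (ii)) every such element occurs as $\tau(X,Y)$ for a suitable pair. Since the notes themselves record the Alperin--Dennis--Stein example $SK_1(\bZ[\bZ_2\oplus(\bZ_3)^3])\simeq(\bZ_3)^6$, the ``finite abelian'' clause as literally stated fails for that group; it is correct exactly when $SK_1(\bZ[\pi_1])$ vanishes, e.g.\ for cyclic $\pi_1$, which is the case actually used for lens spaces. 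Your Kronecker--Higman argument for torsion-freeness of $\bZ[\pi_1]^*/\{\pm\pi_1\}$ (including the points that realified complex characters contribute $|\chi(u)|^2$ and that Galois conjugates of $\chi(u)$ are again character values) is correct, so the qualified statement you prove is the right one; just flag clearly that it is a corrected version of the quoted theorem rather than the theorem verbatim.
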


Another relation between Whitehead torsion and $R$-torsion is the following. We denote $\tau_h(X)=\tau_h(X,\varnothing)$.
If $f:X\ra Y$ is a homotopy equivalence, then\footnote{
Indeed, one has $\tau_h(Y,\varnothing)=\tau_h(M_f,\varnothing)=\tau_h(M_f,X)\tau_h(X,\varnothing)=h_*\tau(M_f,X)\tau_h(X)= h_*\tau(f)\tau_h(X)$. We have used the multiplicativity property of $R$-torsions $\tau_h(Z,X)=\tau_h(Z,Y)\tau_h(Y,X)$ where $Y$ is a retract of $Z$ (but not asking that $X$ is a retract of $Y$). It is proved similarly to Lemma \ref{Lm: multiplicativity of Whitehead torsions of pairs}.
}
$$\tau_h(Y)=h_*(f)\tau_h(X)$$

In particular, if
$f:X\ra Y$ is a homotopy equivalence and $\tau_h(X)\neq \tau_h(Y)$ for some $h$, then $\tau(f)\neq 1$ and $f$ is not a simple homotopy equivalence.

{\bf Exercise.}
\begin{enumerate}
\item Fix a pair $(X,Y)$ and  let $h:\pi_1\ra U(1)\subset \bC$ be a (complex) unitary representation of rank 1 and let
$\tilde h: \pi_1\ra O(2)\simeq U(1)$ be the corresponding representation in $2\times 2$ (real) orthogonal matrices. Show that the corresponding complex (Reidemeister) torsion and the $R$-torsion are related by
$$\tau_{\tilde h}(X,Y)=|\tau_h(X,Y)|^2$$
\item Show that the torsion of $M_n(\bR)\otimes_{\bZ[\pi_1]}C_\bt(\widetilde X,\widetilde Y)$ as a complex of free $M_n(\bR)$-modules is equal to the torsion of $\bR^n\otimes_{\bZ[\pi_1]}C_\bt(\widetilde X,\widetilde Y)$ as a complex of real vector spaces (for the basis we take the cellular basis tensored with any orthonormal basis in $\bR^n$).
\end{enumerate}

\lec{Lecture 10, 06.05.2014}

\section{Lens spaces}

\begin{definition} For $n,p\in \bN$ and $q_1,\ldots,q_n\in \bZ$ coprime with $p$, the lens space $L(p;q_1,\ldots,q_n)$ is defined as
$$L(p;q_1,\ldots,q_n)=\frac{\{(z_1,\ldots,z_n)\in \bC^n\,|\, \sum_k |z_k|^2=1\}}{(z_1,\ldots,z_k)\sim (\zeta^{q_1} z_1,\ldots,\zeta^{q_n}z_n)}$$
where $\zeta=e^{\frac{2\pi i}{p}}$ is the $p$-th primitive root of unity.
\end{definition}

The lens space $L(p;q_1,\ldots,q_n)$ is a quotient of the sphere $S^{2n-1}$ by a free\footnote{Note that freeness of the $\bZ_p$-action on $S^{2n-1}$ is equivalent to $q_1,\ldots,q_n$ being coprime with $p$.} action of $\bZ_p$ and thus is a smooth oriented manifold. Moreover, the lens space inherits a Riemannian metric of constant sectional curvature $+1$ from the round metric on $S^{2n-1}$. By construction it also is equipped with a preferred generator of the fundamental group $\pi_1(L)=\bZ_p$ -- the class of any path connecting a fixed point $(z_1^0,\ldots,z_n^0)$ with $(\zeta^{q_1}z_1^0,\ldots \zeta^{q_n}z_n^0)$ in the quotient $L=S^{2n-1}/\bZ_p$. We will denote this generator $\sigma$.

Obviously, $L(p;q_1,\ldots,q_n)$ only depends on the residues $q_1,\ldots,q_n\mod p$.

\subsection{Trivial homeomorphisms and classification up to simple homotopy equivalence}
\label{sec: lens spaces, trivial maps}
\begin{enumerate}[I]
\item For $\pi$ a permutation of $1,\ldots,n$, the map
$$\bC^n\ra \bC^n,\quad (z_1,\ldots,z_n)\mapsto (z_{\pi(1)},\ldots,z_{\pi(n)})$$
induces a map of lens spaces
$$\Phi_\pi^{I}:\quad L(p;q_1,\ldots,q_n)\ra L(p;q_{\pi(1)},\ldots, q_{\pi(n)})$$
It is an orientation preserving diffeomorphism, which also preserves the preferred generator of $\pi_1$.
\item For each $k=1,\ldots,n$, the map $(z_1,\ldots,z_k,\ldots,z_n)\mapsto (z_1,\ldots,\bar z_k,\ldots,z_n)$
(complex conjugation of $k$-th coordinate) induces a map of lens spaces
$$\Phi_k^{II}:\quad L(p;q_1,\ldots,q_k,\ldots,q_n)\ra L(p;q_1,\ldots,-q_k,\ldots,q_n)$$
where we use that $\overline{\zeta^{q_k}z_k}=\zeta^{-q_k}\bar z_k$.
The map $\Phi^{II}$ is an orientation reversing diffeomorphism preserving the preferred generator of $\pi_1$.
\item For $m\in \bZ_p$ coprime with $p$, the identity map on $\bC^n$ induces a map
$$\Phi^{III}_m:\quad \underbrace{L(p;q_1,\ldots,q_n)}_L\ra \underbrace{L(p;m q_1,\ldots, m q_n)}_{L'}$$
It is an orientation preserving diffeomorphism, but maps the generator $\sigma\in \pi_1(L)$ to $(\sigma')^r\in \pi_1(L')$ where $r$ is the inverse residue for $m$ defined by $mr\equiv 1\mod p$.
\end{enumerate}

Note that all the maps of lens spaces above are actually isometries of Riemannian manifolds.

Using maps $\Phi^I$, $\Phi^{II}$ any lens space\footnote{Except the real projective space $L(2;1,\ldots,1)=\bR P^n$ and the sphere $L(1;1,\ldots,1)=S^{2n-1}$.} can be brought to a unique {\it special} lens space $L(p;q_1,\ldots,q_n)$ with $$1\leq q_1\leq q_2\leq \cdots \leq q_n< p/2$$

\textbf{Terminology:}\cite{Turaev01} One calls a simple homotopy equivalence of lens spaces preserving the preferred generator of $\pi_1$ the {\it esh equivalence} (simple homotopy equivalence of {\it enriched} spaces).

\begin{thm} \label{thm: lens space sh classification}
\begin{enumerate}[(i)]
\item \label{thm: lens space sh classification (i)} Two lens spaces $L=L(p;q_1,\ldots,q_n)$ and $L'=L(p;q'_1,\ldots,q'_n)$ are simple homotopy equivalent as spaces with preferred generator of $\pi_1$ iff
$$q'_k\equiv \pm q_{\pi(k)}\mod p,\qquad\qquad k=1,\ldots,n$$
for some permutation $\pi$.
\item \label{thm: lens space sh classification (ii)}
$L$ and $L'$ are simple homotopy equivalent (ignoring the preferred generator of $\pi_1$) iff
$$q'_k\equiv \pm m\cdot q_{\pi(k)}\mod p,\qquad\qquad k=1,\ldots,n$$
for some permutation $\pi$ and some residue $m\in \bZ_p$ coprime with $p$.
\end{enumerate}
\end{thm}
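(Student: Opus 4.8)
The plan is to prove the two ``if'' implications by exhibiting explicit simple homotopy equivalences, and the two ``only if'' implications using the $R$-torsion, which is a simple-homotopy invariant, together with an explicit computation of the $R$-torsion of a lens space.

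\emph{The ``if'' directions.} Suppose $q'_k\equiv\pm q_{\pi(k)}\mod p$ for all $k$. Composing $\Phi^{I}_\pi$ with the maps $\Phi^{II}_k$ for those $k$ carrying a minus sign (and using that the $q_j$ matter only mod $p$) produces a diffeomorphism $L\ra L'$ sending $\sigma$ to $\sigma'$. A diffeomorphism is a homeomorphism, hence a simple homotopy equivalence, by Chapman's theorem on the topological invariance of Whitehead torsion quoted in the introduction (alternatively, being isometries of the round metric these maps preserve the natural PL structures and one checks directly that a PL homeomorphism is a simple homotopy equivalence), and they preserve the preferred generator by the descriptions in Section~\ref{sec: lens spaces, trivial maps}; this proves the ``if'' part of (\ref{thm: lens space sh classification (i)}). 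For (\ref{thm: lens space sh classification (ii)}), if $q'_k\equiv\pm m q_{\pi(k)}$ then $L'$ is esh-equivalent to $L'':=L(p;mq_1,\ldots,mq_n)$ by the case just proved, while $L''=\Phi^{III}_m(L)$ is diffeomorphic to $L$; composing gives a simple homotopy equivalence $L\ra L'$.

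\emph{Computing the $R$-torsion.} The key computational input is the $R$-torsion of a lens space. Equip $\widetilde L=S^{2n-1}$ with the standard $\bZ_p$-equivariant CW structure (one $i$-cell per $\bZ_p$-orbit for $0\le i\le 2n-1$, coming from the join decomposition $S^{2n-1}=S^1*\cdots*S^1$). Then $C_\bt(\widetilde L)$ is a complex of free rank-one $\bZ[\bZ_p]$-modules whose nonzero differentials $C_{2k-1}\ra C_{2k-2}$, $k=1,\ldots,n$, are multiplication by $t^{\bar q_k}-1$, where $t=\sigma$ and $q_k\bar q_k\equiv1\mod p$, the remaining ones being ``norm'' maps $1+t^{\bar q_k}+\cdots+t^{(p-1)\bar q_k}$. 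For a nontrivial character $h(t)=\eta$ (a $p$-th root of unity $\neq1$; the twisted complex is acyclic because $\gcd(\bar q_k,p)=1$ forces $\eta^{\bar q_k}\neq1$) one computes from (\ref{def: torsion}) that $\tau_h(L)=\prod_{k=1}^n(1-\eta^{\bar q_k})^{-1}\in\bC^*/\{\pm\eta^j\}$, generalizing (\ref{T for L(p,q)}), and hence, via the relation $\tau_{\tilde h}=|\tau_h|^2$ for the associated $2$-dimensional real representation $\tilde h$,
$$\tau_{\tilde h}(L)=\prod_{k=1}^n|1-\eta^{\bar q_k}|^{-2}\quad\in\bR_+.$$

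\emph{The ``only if'' directions.} Let $f:L\ra L'$ be a simple homotopy equivalence with $f_*(\sigma)=\sigma'$. By Theorem~\ref{thm: simple homotopy maps} ($\tau(f)=1$ for a simple homotopy equivalence) we get $\tau_{\tilde h}(L')=\tilde h_*(\tau(f))\cdot\tau_{\tilde h}(L)=\tau_{\tilde h}(L)$ for every orthogonal representation $\tilde h$; since $f$ preserves the preferred generator, ``$\tilde h$'' means literally the same representation on both sides. Letting $\tilde h$ run over the rank-two real representations attached to all nontrivial $p$-th roots of unity $\eta$ and comparing with the computation above yields $\prod_{k}|1-\eta^{\bar q_k}|=\prod_{k}|1-\eta^{\bar q'_k}|$ for every $\eta\neq1$, i.e. $\sum_k\log|1-\eta^{\bar q_k}|=\sum_k\log|1-\eta^{\bar q'_k}|$ for all such $\eta$. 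Franz's independence lemma then forces the multisets $\{\bar q_1,\ldots,\bar q_n\}$ and $\{\bar q'_1,\ldots,\bar q'_n\}$ to coincide modulo the per-entry sign; since inversion mod $p$ is a bijection of $(\bZ/p)^*$ commuting with negation, this gives $q'_k\equiv\pm q_{\pi(k)}\mod p$ for some permutation $\pi$, proving the ``only if'' part of (\ref{thm: lens space sh classification (i)}). For (\ref{thm: lens space sh classification (ii)}), given any simple homotopy equivalence $f:L\ra L'$, the induced automorphism of $\bZ_p$ sends $\sigma\mapsto(\sigma')^m$ for some $m$ coprime to $p$; composing with the diffeomorphism $\Phi^{III}_m:L'\ra L(p;mq'_1,\ldots,mq'_n)$ (which sends $\sigma'\mapsto(\sigma'')^{m^{-1}}$) gives a simple homotopy equivalence $L\ra L(p;mq'_1,\ldots,mq'_n)$ that preserves the preferred generator, so part (\ref{thm: lens space sh classification (i)}) applies and yields $mq'_k\equiv\pm q_{\pi(k)}\mod p$, i.e. $q'_k\equiv\pm m^{-1}q_{\pi(k)}\mod p$.

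\emph{Main obstacle.} The topological inputs (Chapman's theorem, $\tau(f)=1$ for simple equivalences, subdivision-invariance of the $R$-torsion) are already at hand, so the substantive work is twofold: the explicit equivariant chain complex and the ensuing $R$-torsion computation, where the bookkeeping of exponents ($\bar q_k$ vs.\ $q_k$) and of the unit ambiguity must be done carefully; and the number-theoretic Franz independence lemma, which is the genuine heart of the ``only if'' direction and the step most likely to require effort to state and prove in the form needed (namely, that a vanishing rational-coefficient linear combination $\sum_a m_a\log|1-\zeta^a|=0$ over all $p$-th roots of unity $\zeta\neq1$, with $m_a=m_{-a}$, forces all $m_a=0$).
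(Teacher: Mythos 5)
Your proposal is correct and follows essentially the same route as the paper: the ``if'' parts via the explicit maps $\Phi^{I},\Phi^{II},\Phi^{III}$, reduction of (ii) to (i) by $\Phi^{III}$, and the ``only if'' part via the equivariant cell structure on $S^{2n-1}$, the torsion $\prod_k(\eta^{r_k}-1)^{-1}$, invariance of the $R$-torsion under (generator-preserving) simple homotopy equivalence, and Franz's independence lemma. The only cosmetic difference is that the paper first normalizes to special lens spaces before applying Franz's lemma, whereas you extract the sign-permutation matching directly from the symmetrized multiset equality $s_j+s_{-j}=s'_j+s'_{-j}$, which is an equivalent bookkeeping.
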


Note that using maps $\Phi^{III}$ one can convert a simple homotopy equivalence to one preserving the generator of $\pi_1$. Thus  (\ref{thm: lens space sh classification (ii)}) immediately follows from (\ref{thm: lens space sh classification (i)}). The ``if'' part of (\ref{thm: lens space sh classification (i)}) is obvious: the simple homotopy equivalence is the composition of maps $\Phi^I$ and $\Phi^{II}$. The ``only if'' part relies on the explicit calculation of Reidemeister torsion for lens spaces. We will finish the proof of Theorem \ref{thm: lens space sh classification} in Section \ref{sec: lens spaces, end of proof of sh classification}.

The following is immediate from Theorem \ref{thm: lens space sh classification}.
\begin{corollary}
 A lens space is esh equivalent to a unique special lens space. In particular, two special lens spaces $L(p;q_1,\ldots,q_n)$ and $L(p;q'_1,\ldots,q'_n)$ are esh equivalent iff $q_k\equiv q'_k\mod p$ for all $k$.
\end{corollary}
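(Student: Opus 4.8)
The plan is to deduce the Corollary directly from Theorem \ref{thm: lens space sh classification}(i) together with the trivial isometries $\Phi^I,\Phi^{II}$ of Section \ref{sec: lens spaces, trivial maps}. Throughout we take $p\geq 3$, so that special lens spaces actually exist; for $p=1,2$ the lens spaces are $S^{2n-1}$ and $\bR P^n$, which have no special representative.

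\emph{Existence.} Given $L=L(p;q_1,\ldots,q_n)$, note first that for each $k$ exactly one of the two residues $\pm q_k \bmod p$ lies in $(0,p/2)$: the nonzero residues mod $p$ split into pairs $\{j,\,p-j\}$, exactly one member of each pair lying in $(0,p/2)$, and the self-paired residue $p/2$ (present only when $p$ is even, in which case $p/2\geq 2$ is not coprime to $p$) cannot occur, since $q_k$ is coprime to $p$. Applying $\Phi^{II}_k$ for those $k$ with $q_k\bmod p\notin(0,p/2)$ — each such map being an esh equivalence, as it is a diffeomorphism preserving the preferred generator of $\pi_1$ — we reach a lens space, esh equivalent to $L$, all of whose parameters lie in $(0,p/2)$; composing further with a permutation map $\Phi^I_\pi$ we sort the parameters into non-decreasing order, obtaining a special lens space esh equivalent to $L$.

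\emph{Uniqueness.} Suppose $L(p;q_1,\ldots,q_n)$ and $L(p;q'_1,\ldots,q'_n)$ are both special and esh equivalent. By Theorem \ref{thm: lens space sh classification}(i) there are a permutation $\pi$ and signs $\varepsilon_k\in\{\pm1\}$ with $q'_k\equiv\varepsilon_k q_{\pi(k)}\pmod p$ for all $k$. Were $\varepsilon_k=-1$ for some $k$, then $q'_k$ would be congruent to $p-q_{\pi(k)}$, whose representative in $(0,p)$ lies in $(p/2,p)$ because $q_{\pi(k)}\in(0,p/2)$; since also $q'_k\in(0,p/2)\subset(0,p)$, this would force $q'_k=p-q_{\pi(k)}\geq p/2$, contradicting $q'_k<p/2$. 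Hence every $\varepsilon_k=+1$, and since both $q'_k$ and $q_{\pi(k)}$ are integers in $(0,p)$ this gives $q'_k=q_{\pi(k)}$. Thus $(q'_1,\ldots,q'_n)$ is a rearrangement of $(q_1,\ldots,q_n)$, and as both tuples are non-decreasing the rearrangement is trivial: $q'_k=q_k$ for all $k$. This establishes the first assertion of the Corollary; the ``in particular'' statement is merely its restatement, using that for integers in $(0,p/2)$ the congruence $q_k\equiv q'_k\pmod p$ is the same as equality $q_k=q'_k$.

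\emph{The main point.} There is no serious obstacle here beyond bookkeeping; the one step requiring care is pinning down the sign $\varepsilon_k$ in Theorem \ref{thm: lens space sh classification}(i) to be $+$, which uses exactly the defining inequality $q_k<p/2$ of a special lens space together with the elementary fact that $p/2$ is never coprime to $p$ for $p\geq 3$. Everything else is an immediate consequence of the already-proved classification theorem (and, implicitly, of the triviality of the Whitehead torsion of the maps $\Phi^I,\Phi^{II}$, which makes them esh equivalences) and of the uniqueness of the non-decreasing ordering of a finite multiset.
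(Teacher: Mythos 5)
Your argument is correct and is exactly the route the paper intends: the paper declares the corollary ``immediate'' from Theorem \ref{thm: lens space sh classification}(\ref{thm: lens space sh classification (i)}) together with the earlier observation that $\Phi^I,\Phi^{II}$ bring any lens space to a special one, and your write-up simply supplies the (routine) bookkeeping — in particular the sign-pinning step using $q_k<p/2$ and the coprimality exclusion of $p/2$ — that the paper leaves implicit. No gaps; the caveat about $p\in\{1,2\}$ matches the paper's own footnote excluding $S^{2n-1}$ and $\bR P^{2n-1}$.
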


\begin{corollary}
All simple homotopy equivalences between lens spaces can be represented by isometries.
\end{corollary}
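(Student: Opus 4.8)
The plan is to reduce the statement, via Theorem~\ref{thm: lens space sh classification}, to the classical homotopy classification of maps between lens spaces. Let $f\colon L\to L'$ be a simple homotopy equivalence, with $L=L(p;q_1,\dots,q_n)$ and $L'=L(p;q'_1,\dots,q'_n)$; we may assume $p\ge 3$, the cases $L=S^{2n-1}$ and $L=\bR P^{2n-1}$ being trivial. The induced map $f_*\colon\pi_1(L)=\bZ_p\to\bZ_p=\pi_1(L')$ sends the preferred generator to its $r$-th power for some $r$ coprime to $p$; composing $f$ with a suitable isometry $\Phi^{III}$ of Section~\ref{sec: lens spaces, trivial maps} (which does not affect the conclusion, being an isometry), we may assume $f$ preserves the preferred generators, i.e.\ that $f$ is an esh equivalence. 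By part~(\ref{thm: lens space sh classification (i)}) of Theorem~\ref{thm: lens space sh classification} this forces $q'_k\equiv\epsilon_k q_{\pi(k)}\mod p$ for some permutation $\pi$ and signs $\epsilon_k\in\{\pm1\}$, and for $p\ge3$ each $\epsilon_k$ is uniquely determined by $q'_k$ and $q_{\pi(k)}$.

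Next I would write down an explicit isometry realizing this data: let $\Psi\colon L\to L'$ be the composition of the permutation isometry $\Phi^{I}_{\pi}$ with the conjugation isometries $\Phi^{II}_k$ over those $k$ with $\epsilon_k=-1$. Then $\Psi$ is a diffeomorphism of lens spaces, it preserves the preferred generators, and $\deg\Psi=\prod_k\epsilon_k$. It therefore suffices to prove that $f\simeq\Psi$: both are maps $L\to L'$ inducing the same homomorphism on $\pi_1$, and once homotopy is established $\Psi$ is automatically a simple homotopy equivalence by Proposition~\ref{prop: tau of maps properties}(\ref{prop: tau of maps properties ii}). Unwinding the earlier normalisation then exhibits the original $f$ as homotopic to a composition of $\Psi$ with $\Phi^{III}$, hence to an isometry.

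To compare $f$ and $\Psi$ I would first check they have equal degree. Comparing the congruence $\prod_k q'_k\equiv\bigl(\prod_k\epsilon_k\bigr)\prod_k q_k\mod p$ with the divisibility condition that the existence of a $\pi_1$-preserving map $L\to L'$ of degree $\deg f$ imposes on $\prod_k q_k$ and $\prod_k q'_k$ (the classical obstruction underlying part~(\ref{lens classification homotopy}) of the Classification Theorem of Section~\ref{sec: intro}), one obtains $\deg f\equiv\prod_k\epsilon_k\mod p$; since both sides lie in $\{\pm1\}$ and $p\ge3$, this gives $\deg f=\deg\Psi$. Then by the homotopy classification of maps between lens spaces (Olum), a map between lens spaces of dimension $\ge3$ is determined up to homotopy by the pair consisting of its induced homomorphism on $\pi_1$ and its degree, so $f\simeq\Psi$, completing the argument.

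The main obstacle is the last step: the homotopy classification of lens-space maps by $(\pi_1\text{-map},\deg)$ is the one genuinely non-elementary input (as is the companion fact that a $\pi_1$-preserving degree-$1$ self-equivalence of a lens space is homotopic to the identity, which is the special case $L=L'$, $\Psi=\mr{id}$). Everything else — the reduction to the esh case, the explicit form of $\Psi$, and the degree bookkeeping — is routine manipulation of the isometries $\Phi^{I},\Phi^{II},\Phi^{III}$ together with the Reidemeister torsion computation already used in the proof of Theorem~\ref{thm: lens space sh classification}.
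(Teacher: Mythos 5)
Your proof is correct, and its skeleton is the one the notes intend: Theorem~\ref{thm: lens space sh classification} forces the arithmetic relation $q'_k\equiv \pm m\,q_{\pi(k)}\bmod p$, and that relation is realized by a composition of the maps $\Phi^{I},\Phi^{II},\Phi^{III}$ of Section~\ref{sec: lens spaces, trivial maps}, which are isometries. The notes stop there and call the corollary immediate, which really only yields the existence of an isometry in the right simple-homotopy class; what you add is the verification that the \emph{given} map $f$ is homotopic to that isometry $\Psi$. You do this by normalizing with $\Phi^{III}$ to match the induced maps on $\pi_1$, using the degree congruence (in the notes this is the ``only if'' half of Lemma~\ref{Lm: lens spaces homotopy Lm2}, itself a consequence of Lemma~\ref{Lm: lens spaces homotopy Lm1}) to conclude $\deg f=\deg\Psi\in\{\pm 1\}$ when $p\geq 3$, and then invoking the classification of lens-space maps up to homotopy by the pair ($\pi_1$-homomorphism, integer degree) --- which is exactly Lemma~\ref{Lm: lens spaces homotopy Lm1}, appearing in the notes only in the lecture following the corollary (you cite it as Olum's theorem). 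So your write-up is more complete than the text's one-line deduction, at the mild cost of quoting material proved afterwards; the in-paper references for your ``classical obstruction'' are Lemmas~\ref{Lm: lens spaces homotopy Lm1} and~\ref{Lm: lens spaces homotopy Lm2}, not the 3-dimensional classification theorem of the introduction. Two small points to tidy: after composing with $\Phi^{III}$ you should remark that the composite is still a \emph{simple} homotopy equivalence (needed to apply part~(\ref{thm: lens space sh classification (i)}) of Theorem~\ref{thm: lens space sh classification}); this holds because $\Phi^{III}$ is one of the trivial homeomorphisms already used as simple equivalences in the ``if'' direction, and torsion is multiplicative under composition (Proposition~\ref{prop: tau of maps properties}). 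And the excluded cases $p\leq 2$ merit one sentence: for $\bR P^{2n-1}$ the degree $-1$ homotopy class is realized by a conjugation isometry $\Phi^{II}$, and for $S^{2n-1}$ by a reflection, so nothing is lost there.
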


\begin{rem} Theorem \ref{thm: lens space sh classification} gives in fact the classification of lens spaces up to homeomorphism. If a pair of lens spaces are simple homotopy equivalent, they are indeed homeomorphic by the discussion above. On the other hand if they are not simple homotopy equivalent, they are distinguished by one of the torsion invariants (cf. Section \ref{sec: lens spaces, end of proof of sh classification}). Chapman's theorem asserts that the Whitehead torsion is homeomorphism invariant, which implies that for $L,L'$ homeomorphic, Reidemester torsions of $L$ and $L'$ coincide. Thus non-simple homotopic lens spaces are not homeomorphic.
\end{rem}

Here is a more general result due to de Rham. For $G\subset SO(n+1)$ a finite group acting on the unit sphere $S^n$ by orthogonal rotations without fixed points, one calls $X=S^n/G$ a {\it spherical Clifford-Klein manifold} \cite{Milnor66}.

\begin{thm}[de Rham]
A spherical Clifford-Klein manifold is determined uniquely up to isometry by its fundamental group $G$ and the collection of torsion invariants $\tau_h(X)$.
\end{thm}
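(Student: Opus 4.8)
The plan is to reduce the statement to a rigidity property of the defining orthogonal representation, and then to recover that representation from the torsion data by restricting to cyclic subgroups and invoking Franz's independence lemma.

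First I would set up the rigidity reduction. For $X=S^n/G$ (with $n\geq 2$, so $S^n$ is simply connected) the round metric makes $S^n$ the Riemannian universal cover, and its deck group is exactly the image $\rho(G)\subset O(n+1)$ of the defining fixed-point-free orthogonal representation $\rho$ (``fixed-point-free'' meaning $\det(\rho(g)-\mr{Id})\neq 0$ for $g\neq 1$). Any isometry between two spherical Clifford--Klein manifolds lifts to an isometry of universal covers, i.e.\ to an element of $O(n+1)$, which conjugates one deck group onto the other and induces an isomorphism of fundamental groups; conversely any such element descends. Hence $X\cong X'$ isometrically iff there is an isomorphism $\phi\colon\pi_1(X)\xra{\sim}\pi_1(X')$ for which $\rho$ and $\rho'\circ\phi$ are conjugate in $O(n+1)$, i.e.\ isomorphic as real orthogonal $G$-representations; and a real representation of a finite group is determined by its character. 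So it suffices to show that the abstract group $G$ together with the collection $\{\tau_h(X)\}_h$ determines the character $\chi_\rho$ of $\rho$.

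Next I would pass to cyclic covers. Write $C_\bullet(S^n)$ for the cellular chain complex of a $G$-invariant CW structure on $S^n$ (one exists since $G$ acts freely by isometries), a complex of free $\bZ[G]$-modules. For a subgroup $H\leq G$ and an orthogonal representation $\psi$ of $H$, Shapiro's lemma at the chain level, $(\mr{Ind}_H^G\psi)\otimes_{\bZ[G]}C_\bullet(S^n)\cong \psi\otimes_{\bZ[H]}C_\bullet(S^n)$, together with the basis-invariance of the $R$-torsion, gives $\tau_{\mr{Ind}_H^G\psi}(X)=\tau_\psi(X_H)$, where $X_H=S^n/\rho(H)$ is the finite cover of $X$ corresponding to $H$; thus the torsions of $X$ determine those of all its covers. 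Taking $H=\langle g\rangle$ cyclic of order $d>1$, freeness forces $X_H$ to be a lens space $S^n/\bZ_d$ on which the generator acts by rotations with a multiset of rotation numbers $R_g\subset(\bZ/d)^\times$; the classical computation of the cellular chain complex of a lens space over $\bZ[\bZ_d]$ — a periodic complex with alternating differentials $\sigma-1$ and $1+\sigma+\cdots+\sigma^{d-1}$ — expresses $\tau_\psi(X_H)$, for a character $\psi$ with $\psi(\sigma)$ a $d$-th root of unity, as a product of cyclotomic factors $|1-\psi(\sigma)^j|^{\pm2}$ with $j$ running over a set of residues mod $d$ determining $R_g$, exactly as in \eqref{T for L(p,q)}.

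Finally, I would feed this into Franz's independence lemma, which is the genuinely hard input: if integers $a_j$ indexed by the nonzero residues mod $d$ satisfy $\prod_j(1-\zeta^j)^{a_j}=1$ for every $d$-th root of unity $\zeta\neq 1$, with the constraints $\sum_j a_j=0$ and $a_j=a_{-j}$ that the lens-space situation imposes, then all $a_j$ vanish (see \cite{Milnor66}). Applying it to the logarithms of the explicit lens-space torsions as $\psi$ ranges over the characters of $\langle g\rangle$ recovers $R_g$, hence $\chi_\rho|_{\langle g\rangle}$, for every $g\in G$; assembling over all $g$ recovers $\chi_\rho$, and symmetrically $\chi_{\rho'}$. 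Since the torsion collections match under $\phi$, this yields $\chi_\rho=\chi_{\rho'}\circ\phi$, hence $\rho\cong\rho'\circ\phi$ and $X\cong X'$. The degenerate cases $G\in\{1,\bZ_2\}$, where $X=S^n$ or $\bR P^n$ and there is nothing to choose, are checked by hand. I expect the main obstacles to be (a) Franz's lemma itself — a multiplicative-independence statement about cyclotomic units, proved by $p$-adic or analytic arguments — and (b) carrying out the Shapiro reduction and the lens-space chain computation carefully enough that the resulting identities hold with precisely the indeterminacies built into the definition of the $R$-torsion.
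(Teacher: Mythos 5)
The paper does not actually prove this statement: de Rham's theorem is quoted as a known result (following \cite{Milnor66}, \S 12, and de Rham's original paper), so there is no in-text argument to compare yours against. Judged on its own, your outline reconstructs the classical proof correctly and in the right order: (1) for $n\geq 2$ an isometry of quotients lifts to an element of $O(n+1)$ conjugating the deck groups, so isometry is equivalent to the defining free orthogonal representations being isomorphic after composing with some isomorphism of fundamental groups, and a real representation of a finite group is determined by its character; (2) the torsions of all covers are contained in the torsion data of $X$ via induced representations (your Shapiro-type identity $\tau_{\mr{Ind}_H^G\psi}(X)=\tau_\psi(X_H)$ is the standard and correct reduction); (3) for a cyclic subgroup the cover is a generalized lens space, whose torsions are the cyclotomic products computed in the paper's lens-space section; (4) Franz's lemma then pins down the rotation data, exactly as in the paper's proof of the ``only if'' part of Theorem \ref{thm: lens space sh classification}.

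Two points deserve more care in a write-up. First, Franz's lemma does \emph{not} recover the multiset of rotation numbers $R_g$ itself, only the multiset $\{\pm q_i\}$ up to permutation (and only after dividing out the ambiguity $\pm\eta^j$ by passing to $|\tau|^2$, as the paper does); this is nevertheless exactly enough, because the real character $\chi_\rho(g^k)=\sum_i 2\cos(2\pi k q_i/d)$ is invariant under $q_i\mapsto -q_i$ and reordering -- you should say this explicitly instead of ``recovers $R_g$''. Second, the hypothesis ``same torsion invariants'' must be interpreted as $\tau_{h\circ\phi}(X)=\tau_h(X')$ for the chosen isomorphism $\phi$ of fundamental groups (you use this implicitly), and the statement genuinely needs $n\geq 2$, which you assumed; with those caveats made explicit, your argument is sound and is essentially de Rham's.
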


\subsection{Cellular structure}

Let us introduce a $\bZ_p$-equivariant cell decomposition of the unit circle $S^1=\{z\in \bC\;|\; |z|=1\}$ with $0$-cells given by $\zeta^k$ and $1$-cells $I_k=[\zeta^k,\zeta^{k+1}]$ for $k=0,\ldots, p-1$.

Next we introduce a $\bZ_p$-equivariant cell decomposition of $S^{2n-1}$ with closed cells defined as follows. For $i=1,\ldots,n$ and $k\in \bZ_p$ we set
\begin{multline*}
E^{2i-2}_k=\{(z_1,\ldots,z_n)\in S^{2n-1}\;|\; z_{i+1}=\cdots=z_n=0,\; z_i\in \zeta^k\cdot [0,1]\subset \bC\}\\
=\{(z_1,\ldots,z_n)\in S^{2n-1}\;|\; \sum_{j=1}^{i-1}|z_j|^2=1-|z_i|^2,\; z_i\in \zeta^k\cdot [0,1]\subset \bC \}
\end{multline*}
and
\begin{multline*}
E^{2i-1}_k=\{(z_1,\ldots,z_n)\in S^{2n-1}\;|\; z_{i+1}=\cdots=z_n=0,\; z_i\in I_k\cdot [0,1]\subset \bC\}\\
=\{(z_1,\ldots,z_n)\in S^{2n-1}\;|\; \sum_{j=1}^{i-1}|z_j|^2=1-|z_i|^2,\; z_i\in I_k\cdot [0,1]\subset \bC \}
\end{multline*}

E.g. for $n=2$, one has
\begin{eqnarray*}
E^0_k &=& (\zeta^k,0) \in S^3 \\
E^1_k &=& (I_k,0) \subset S^3 \\
E^2_k &=& \{(z_1,t\zeta^k)\;|\; 0\leq t\leq 1,\; |z_1|^2=1-t^2\} \\
E^3_k &=& \{(z_1,z_2)\in S^3 \; |\; z_2\in I_j\cdot [0,1]\subset \bC\}
\end{eqnarray*}

Closed balls $(E^{2i-2}_k,E^{2i-1}_k)_{1\leq i\leq n,\, k\in \bZ_p}$ form a $\bZ_p$-equivariant CW-decomposition of $S^{2n-1}$ with $p$ cells in each dimension $0,1,\ldots, 2n-1$ (which in turn induces a cell decomposition of any lens space $L(p;\cdots)$ with a {\it single} cell in every dimension $0,1,\ldots, 2n-1$). Topological boundaries of cells are:
\begin{eqnarray*}\dd E^{2i-2}_k &=& E^{2i-3}_0\cup\cdots\cup E^{2i-3}_{p-1} \\
\dd E^{2i-1}_k &=& E^{2i-2}_{k}\cup E^{2i-2}_{k+1}
\end{eqnarray*}
Odd dimensional skeleta of this cell complex are ``equatorial'' spheres
$$\mr{Sk}_{2i-1}S^{2n-1}=S^{2i-1}=\{(z_1,\ldots,z_i,0,\cdots,0)\in \bC^n\;|\; \sum_{j=1}^i |z_i|^2=1\}$$

Denoting $e^{2i-2}_k$, $e^{2i-1}_k$ the corresponding open cells of $S^{2n-1}$, the boundary map in cellular chains (with the natural orientation of cells) is:
$$\dd e^{2i-2}_k= e^{2i-3}_0+\cdots+e^{2i-3}_{p-1},\quad \dd e^{2i-1}_k= e^{2i-2}_{k+1}-e^{2i-2}_k$$

The distinguished generator $\sigma\in \pi_1$ acts on cells by
$$\sigma:\quad e^{2i-2}_k\mapsto e^{2i-2}_{k+q_i},\quad e^{2i-1}_k\mapsto e^{2i-1}_{k+q_i}$$

\subsubsection{Equivariant cell decomposition of $S^{2n-1}$ via the join of polygons}
\textbf{Reminder: joins.} For $X$, $Y$ topological spaces the {\it join} $X\ast Y$ is defined as
$$X\ast Y=\frac{X\times Y\times [0,1]}{(x,y,0)\sim (x,y',0),\; (x,y,1)\sim (x',y,1)\;\forall x,x'\in X,\; y,y'\in Y}$$
Some properties of joins:
\begin{itemize}
\item $X\ast \mathrm{pt}$ is a cone over $X$.
\item $X\ast S^0$ is the suspension $S X$.
\item For spheres $S^m$, $S^n$, the join $S^m\ast S^n$ is homeomorphic to $S^{m+n+1}$.
\item One has natural inclusions $X\hra X\ast Y\hookleftarrow Y$ where $X$ is included as $\{(x,\bt,0)\;|\; x\in X\}\subset X\ast Y$, and similarly for $Y$.
\end{itemize}

Realizing $S^{2n-1}$ as an $n$-fold join of circles
$$S^{2n-1}=\underbrace{S^1\ast\cdots\ast S^1}_n$$
one can induce from the $\bZ_p$-equivariant cell decomposition of $S^1$ (with cells $\{\zeta^k,I_k\}_{k\in\bZ_p}$) a $\bZ_p$-equivariant cell decomposition of $S^{2n-1}$ with cells
$$E^{2i-2}_k=\underbrace{S^1\ast\cdots\ast S^1}_{i-1}\ast \zeta^k\subset S^{2n-1},\qquad E^{2i-1}_k=\underbrace{S^1\ast\cdots\ast S^1}_{i-1}\ast I_k\subset S^{2n-1}$$

\subsection{Homology and Reidemeister torsion}
For a lens space $L(p;q_1,\ldots,q_n)$, the group ring $\bZ[\pi_1]=\bZ[\bZ_p]$ can be identified with the ring of polynomials in $\sigma$ with integer coefficients, modulo relation $\sigma^p=1$. Denote $\nu=1+\sigma+\sigma^2+\cdots+\sigma^{p-1}\in \bZ[\sigma]/(\sigma^p-1)$.
The chain complex of $S^{2n-1}$ with the $\bZ_p$-equivariant cell decomposition, regarded as a complex of free $\bZ[\pi_1]$-modules, is:
\begin{multline}\label{lens space chains of cover}
0\ra \mr{Span}(e_0^{2n-1})\xra{\cdot (\sigma^{r_n}-1)}\mr{Span}(e_0^{2n-2})\xra{\cdot \nu}
\mr{Span}(e_0^{2n-3})\xra{\cdot (\sigma^{r_{n-1}}-1)}
\cdots \\
\cdots\xra{\cdot\nu}\mr{Span}(e_0^1) \xra{\cdot (\sigma^{r_1}-1)} \mr{Span}(e_0^0) \ra 0
\end{multline}
Here $r_k\in\bZ_p$ is the inverse residue to $q_k$, i.e. $q_k r_k=1\mod p$.

Chains of the lens space with integral coefficients are obtained from (\ref{lens space chains of cover}) by the ring change $\bZ[\pi_1]\ra \bZ$ with $\sigma\mapsto 1$:
$$0\ra \mr{Span}_\bZ(e_0^{2n-1})\xra{0}\mr{Span}_\bZ(e_0^{2n-2})\xra{\cdot p}
\mr{Span}_\bZ(e_0^{2n-3})\xra{0}
\cdots \\
\cdots\xra{\cdot p}\mr{Span}_\bZ(e_0^1) \xra{0} \mr{Span}_\bZ(e_0^0)\ra 0
$$
Here in each degree we have a rank 1 free module over $\bZ$. The homology is:
$$H_0=H_{2n-1}=\bZ,\qquad H_{2i-1}=\bZ_p,\quad H_{2i}=0\quad\mbox{for}\;\; i=1,\ldots,n-1$$

For the Reidemeister torsion, choose a group homomorphism $h:\pi_1\ra \bC^*$ by choosing a root of unity $\eta=h(\sigma)\in\bC$, $\eta^p=1$. For $\eta\neq 1$ the twisted chain complex is acyclic\footnote{In case $\eta=1$ we have the complex $0\ra\bC\xra{0}\bC\xra{\cdot p}\cdots\xra{\cdot p}\bC\xra{0}\bC\ra 0$. Its homology is $H_0=H_{2n-1}=\bC$, $H_{i\not\in\{0,2n-1\}}=0$.} and has the form
$$\bC\otimes_{\bZ[\pi_1]}C_\bt(S^{2n-1},\bZ):\qquad \bC\xra{\cdot (\eta^{r_n}-1)}\bC\xra{0}\cdots\xra{0} \bC\xra{\cdot (\eta^{r_1}-1)}\bC$$
Its torsion is
\be \tau_\eta(L)=\prod_{i=1}^{n}(\eta^{r_i}-1)^{-1}\quad \in \bC^*/\{\pm \eta^j\}_{j\in \bZ_p} \label{lens space torsion}\ee
-- the Reidemeister torsion of the lens space $L(p;q_1,\ldots,q_n)$.

\subsubsection{End of proof of Theorem \ref{thm: lens space sh classification}}\label{sec: lens spaces, end of proof of sh classification}
The fact that Reidemeister torsion distinguishes between lens spaces not related by trivial homeomorphisms of Section \ref{sec: lens spaces, trivial maps}, is based on the following number theoretic result.

\begin{lemma}[Franz's independence lemma]\label{Lm: Franz lemma}
Denote $S\subset \bZ_p$ the group of residues coprime with $p$. Suppose $\{a_j\}_{j\in S}$ is a set of integers satisfying
\begin{enumerate}
\item $\sum_{j\in S} a_j = 0$,
\item $a_j = a_{-j}$,
\item $\prod_{j\in S} (\eta^j-1)^{a_j}=1$ for every $p$-th root of unity $\eta$.
\end{enumerate}
Then $a_j=0$ for all $j\in S$.
\end{lemma}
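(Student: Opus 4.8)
The plan is to interpret the multiplicative conditions as a single identity in the group ring $\bC[\bZ_p]$ (or rather among values of characters) and then use linear algebra over the cyclotomic field together with the symmetry $a_j=a_{-j}$. First I would pass to logarithms in a controlled way: for each primitive-or-not $p$-th root of unity $\eta\neq 1$ the hypothesis $\prod_{j\in S}(\eta^j-1)^{a_j}=1$ is an identity in $\bC^*$; fixing a generator $\zeta=e^{2\pi i/p}$ and writing $\eta=\zeta^t$ for $t\in\bZ_p\setminus\{0\}$, this reads $\prod_{j\in S}(\zeta^{tj}-1)^{a_j}=1$. The idea is to treat the quantities $\ell_k:=\log(\zeta^k-1)$ (for $k\not\equiv 0$) as formal unknowns and show that the linear relations forced by the $a_j$ together with the normalization $\sum a_j=0$ and the evenness $a_j=a_{-j}$ can only be the trivial one. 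A cleaner route avoiding branch-of-log subtleties is to work prime-by-prime or to use the fact that the only multiplicative relations among the cyclotomic units $\{\zeta^k-1\}$ are the known ones; but the self-contained argument is the one via Dirichlet-type characters, which I describe next.

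The main step is the following: apply a Dirichlet character $\chi:S\to\bC^*$ (a character of the multiplicative group of residues coprime to $p$) and take the associated "logarithmic" combination. Concretely, for each such $\chi$ one forms $\sum_{t\in S}\chi(t)\log\prod_{j\in S}(\zeta^{tj}-1)^{a_j}=\sum_{j\in S}a_j\sum_{t\in S}\chi(t)\log(\zeta^{tj}-1)$. Reindexing $t\mapsto t j^{-1}$ inside the inner sum factors this as $\big(\sum_{j\in S}a_j\chi(j)^{-1}\big)\cdot\big(\sum_{t\in S}\chi(t)\log(\zeta^{t}-1)\big)$ — this is exactly the classical manipulation that produces $L(1,\chi)$-type quantities. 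Since by hypothesis the product inside is $1$, each left-hand side is (a branch of $\log 1$, i.e.) an integer multiple of $2\pi i$; the key analytic input is that the "Gauss-sum-weighted log" $\sum_{t\in S}\chi(t)\log(\zeta^t-1)$ is nonzero for $\chi$ nontrivial (this is essentially the non-vanishing of $L(1,\chi)$ for $\chi\neq 1$, which is where the honest content sits), while for $\chi$ trivial one uses $\sum_{j}a_j=0$ directly. Careful bookkeeping of the $2\pi i\bZ$ ambiguity — using that $a_j$ are integers and that the imaginary parts of $\log(\zeta^t-1)$ lie in a bounded range — forces $\sum_{j\in S}a_j\chi(j)^{-1}=0$ for every nontrivial $\chi$, and $\sum_j a_j=0$ handles the trivial one. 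By Fourier inversion on the finite abelian group $S$ this gives $a_j=0$ for all $j\in S$. (The evenness condition $a_j=a_{-j}$ is used to dispose of the characters $\chi$ with $\chi(-1)=-1$, for which the corresponding weighted log vanishes for parity reasons, so those characters give no information and one needs $a_j=a_{-j}$ to still conclude.)

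The hard part, and the only genuinely nontrivial ingredient, is the non-vanishing statement $\sum_{t\in S}\chi(t)\log|\zeta^t-1|\neq 0$ for nontrivial even $\chi$ — i.e. $L(1,\chi)\neq 0$ — together with a clean handling of the multivalued logarithm so that "product $=1$" really does translate into a linear equation over $\bC$ rather than merely over $\bC/2\pi i\bZ$. I expect to organize this by taking absolute values first: from $\prod_{j\in S}(\eta^j-1)^{a_j}=1$ we get $\prod_{j\in S}|\zeta^{tj}-1|^{a_j}=1$, now an honest identity of positive reals whose logarithm is unambiguous, and run the character argument with $\log|\cdot|$; evenness is automatic on the real side since $|\zeta^{tj}-1|=|\zeta^{-tj}-1|$, so the odd characters contribute $0$ identically and the even characters contribute $(\sum_j a_j\chi(j))\cdot(\text{nonzero})$. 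This shows $\sum_j a_j\chi(j)=0$ for all even $\chi\neq 1$; combined with $\sum_j a_j=0$ and $a_j=a_{-j}$ (which says the vector $(a_j)$ is supported on the $+1$-eigenspace of $j\mapsto -j$, spanned by the even characters), Fourier inversion yields $a_j\equiv 0$. I would cite Franz's original paper or \cite{Milnor66} for the $L(1,\chi)\neq 0$ input rather than reprove it, since the lemma's proof is standard from that point.
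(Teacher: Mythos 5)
The paper itself does not prove Franz's lemma --- it is quoted as a classical number-theoretic input --- so your proposal can only be measured against the standard argument, whose skeleton (pass to absolute values, weight by Dirichlet characters, invoke $L(1,\chi)\neq 0$, use $a_j=a_{-j}$ to dismiss odd characters, Fourier-invert on $S$) you have correctly identified. However, as written there is a genuine gap: you use hypothesis (3) only at \emph{primitive} $p$-th roots of unity ($\eta=\zeta^t$, $t\in S$), and your key non-vanishing claim, that $\sum_{t\in S}\chi(t)\log|\zeta^t-1|\neq 0$ for every nontrivial even character $\chi$ of $S$, is false for composite $p$ when $\chi$ is imprimitive. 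Indeed, suppose $p=qf$ with $q$ prime, $q\nmid f$, and $\chi$ is induced from a primitive character $\chi^*$ mod $f$ with $\chi^*(q)=1$. Splitting the sum over $t\in S$ according to whether $q\mid t$ fails or holds, grouping residues mod $f$, and using the norm relation $\prod_{j=0}^{q-1}\bigl(1-\zeta_p^{\,a+jf}\bigr)=1-\zeta_f^{\,a}$, the two pieces cancel and the weighted sum is exactly $0$. A concrete instance is $p=55$, $\chi^*$ the quadratic character mod $5$ (which is even) and $q=11\equiv 1 \bmod 5$. For such $\chi$ your identity gives no condition on $\sum_j a_j\bar\chi(j)$, so Fourier inversion does not force $a_j=0$, and citing $L(1,\chi)\neq0$ cannot close this, since that input only controls primitive characters.

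The repair --- and this is how the classical proof actually runs --- is to use hypothesis (3) at \emph{all} $p$-th roots of unity $\eta\neq 1$, not just the primitive ones. For each divisor $d\mid p$, $d>1$, the roots $\eta$ of exact order $d$ give $\prod_{j\in S}(\zeta_d^{\,tj}-1)^{a_j}=1$ for all $t$ coprime to $d$ (where $\zeta_d=e^{2\pi i/d}$); running your character computation modulo $d$ with a \emph{primitive} even nontrivial character $\chi$ mod $d$ yields $\bigl(\sum_{j\in S}a_j\,\bar\chi(j\bmod d)\bigr)\cdot\bigl(\sum_{(u,d)=1}\chi(u)\log|1-\zeta_d^{\,u}|\bigr)=0$, and for primitive even $\chi$ the second factor is a nonzero multiple of $L(1,\bar\chi)$. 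Since every character of $S$ is induced from a primitive character modulo some divisor $d$ of $p$, this (together with hypothesis (1) for the trivial character and $a_j=a_{-j}$ for the odd ones) kills all Fourier coefficients of $(a_j)$, giving $a_j=0$. This is precisely why the lemma's hypothesis, and its application to lens spaces where $\eta$ runs over all nontrivial characters of $\bZ_p$, includes the non-primitive roots of unity; your outline should be amended to exploit them.
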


\begin{proof}[Proof of the ``only if'' part of Theorem \ref{thm: lens space sh classification}]
It suffices to prove that two different {\it special} lens spaces $L=L(p;q_1,\ldots,q_n)$, $L'=L(p;q'_1,\ldots,q'_n)$ at least for some root of unity $\eta$, the torsions (\ref{lens space torsion}) $\tau_\eta(L)\neq \tau_\eta(L')$ are different. Suppose the converse:
\be\prod_{i=1}^n(\eta^{r_i}-1)^{-1}=\pm\eta^k\prod_{i=1}^n(\eta^{r'_i}-1)\label{thm: lens space sh classification equal torsions}\ee
for any $p$-th root of unity $\eta$ and some $k\in \bZ_p$ depending on $\eta$. Denote
$$s_j=\#\{1\leq i\leq n\;|\; r_i=j\},\quad s'_j=\#\{1\leq i\leq n\;|\; r'_i=j\}$$
for $j\in S$. Taking the square of the absolute value in (\ref{thm: lens space sh classification equal torsions}) and dividing r.h.s. by l.h.s. we obtain
$$\prod_{j\in S}(\eta^j-1)^{s_j+s_{-j}-s'_j-s'_{-j}}=1$$
for all admissible $\eta$.  Thus we have a collection of integers $a_j=s_j+s_{-j}-s'_j-s'_{-j}$ satisfying all the assumptions of Lemma \ref{Lm: Franz lemma}. Hence $a_j=0$ for all $j$. Take $j$ an inverse residue to $1\leq l< p/2$, $l$ coprime with $p$. Then $s_{-j}=s'_{-j}=0$ since $L,L'$ were special, and hence $s_j=s'_j$. Thus $s_j=s'_j$ for all $j$. Therefore $L=L'$ and we came to a contradiction.
\end{proof}

\lec{Lecture 11, 08.05.2014}

\subsection{Maps between lens spaces and the homotopy classification}
\begin{lemma}\label{Lm: lens spaces homotopy Lm1}
Let $f,g: L(p;q_1,\ldots,q_n)\ra L(p;q'_1,\ldots,q'_n)$ be two maps between lens spaces 
which induce the same map of fundamental groups $f_*=g_*:\pi_1(L)\ra \pi_1(L')$.
Then $\deg f\equiv \deg g\mod p$.
If moreover $\deg f=\deg g$ (as integers, not residues $\mod p$), then $f$ is homotopic to $g$.
\end{lemma}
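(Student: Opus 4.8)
The plan is to prove the two assertions separately, building on the cell structure of lens spaces set up in the previous subsection. First I would fix the standard $\bZ_p$-equivariant CW-decomposition of $S^{2n-1}$ (a single cell in each dimension $0,\dots,2n-1$ downstairs), and recall that the universal cover of a lens space $L=L(p;q_1,\dots,q_n)$ is $S^{2n-1}$, so the chain complex $C_\bt(\widetilde L)$ is the complex (\ref{lens space chains of cover}) of free $\bZ[\pi_1]$-modules. For a map $f\colon L\ra L'$, after cellular approximation, $f$ lifts to a $\pi_1$-equivariant map $\widetilde f\colon S^{2n-1}\ra S^{2n-1}$ (using $f_*$ to make ``equivariant'' meaningful), inducing a chain map $\widetilde f_\bt\colon C_\bt(\widetilde L)\ra C_\bt(\widetilde{L'})$ covering $f_\bt$.

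For the first claim, $\deg f \equiv \deg g \bmod p$: the degree of $f$ is read off from the top chain groups. Since $f_*=g_*$ on $\pi_1$, and since $H_{2n-1}$ of a lens space is $\bZ$, the integer $\deg f$ is the effect of $f_\bt$ on $H_{2n-1}(L)\cong \bZ$. The key point is that $H_{2n-1}(L;\bZ)$ fits into the transfer picture: the composite $S^{2n-1}=\widetilde L \ra L$ has degree $p$, and more to the point one computes $H_*(L;\bZ)$ from the complex with $\sigma\mapsto 1$; the class in $H_{2n-1}$ is detected after reduction mod $p$ by $H_{2n-1}(L;\bZ_p)$, and on the latter the map is controlled entirely by the $\bZ_p[\pi_1]$-equivariant chain map, hence by $f_*$ on $\pi_1$. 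So if $f_*=g_*$, then $f$ and $g$ agree on $H_{2n-1}(L;\bZ_p)$, i.e. $\deg f\equiv \deg g\bmod p$. Concretely one notes that the degree-$2n-1$ piece of any equivariant chain map is multiplication by some $\lambda\in\bZ[\pi_1]$ with $\lambda$ sending the fundamental cycle appropriately; compatibility with the boundary forces $\lambda$ modulo $\nu=1+\sigma+\cdots+\sigma^{p-1}$ to be determined, and $\deg f$ is the image of $\lambda$ under $\sigma\mapsto 1$, which mod $p$ only depends on $\lambda \bmod (\sigma-1,\,p)$, i.e. on $f_*$.

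For the second claim — $\deg f=\deg g$ as integers implies $f\simeq g$ — I would construct the homotopy skeleton by skeleton, i.e. obstruction-theoretically. On the $0$- and $1$-skeleta $f$ and $g$ are homotopic because they induce the same $\pi_1$. Proceeding inductively up the skeleta of $L$, at stage $k<2n-1$ the obstruction to extending a homotopy from $\mr{sk}_{k-1}L$ over the (single) $k$-cell lies in $H^k(L;\pi_k(S^{2n-1}))$ with twisted coefficients; but $\pi_k(S^{2n-1})=0$ for $1<k<2n-1$, so there is no obstruction and the homotopy extends over all but the top cell. At the top cell $e^{2n-1}$, the obstruction lies in $\pi_{2n-1}(S^{2n-1})\cong\bZ$ (again with the $\pi_1$-action, which is trivial here since it is the top homotopy group of the sphere acted on by orientation-preserving covering transformations), and this single integer obstruction is exactly $\deg f-\deg g$. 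Hence $\deg f=\deg g$ kills it and the homotopy extends over all of $L$.

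The main obstacle I expect is making the degree bookkeeping in the last step precise: one must verify that the primary obstruction to homotoping $f$ to $g$ over the top cell, an element of $H^{2n-1}(L;\pi_{2n-1}(S^{2n-1}))\cong\bZ$ (note $L$ is a closed oriented $(2n-1)$-manifold so this group is $\bZ$), really equals $\deg f-\deg g$ and not some multiple or twisted variant — this requires checking that the coefficient system is untwisted on $\pi_{2n-1}$ and that the evaluation of the obstruction cocycle against the fundamental class computes the difference of degrees. A clean way to organize this is to lift everything to the universal cover and argue with the equivariant chain map in top degree, where the statement reduces to: two equivariant self-maps of $S^{2n-1}$ inducing the same map on $\pi_1$ and having equal degree are equivariantly homotopic, which is the equivariant Hopf theorem for the free $\bZ_p$-action. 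I would cite or reprove that equivariant Hopf statement and then descend the homotopy to $L$.
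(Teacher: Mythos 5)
Your approach is sound and reaches the lemma, but it is organized differently from the paper's proof. The paper works equivariantly upstairs and extracts both claims from a single construction: it lifts $f,g$ to $\bZ_p$-equivariant maps $\tilde f,\tilde g$ of $S^{2n-1}$, builds an equivariant map on the prism $P=S^{2n-1}\times[0,1]$ skeleton by skeleton (possible below the top dimension because $\pi_j(S^{2n-1})=0$ for $j<2n-1$, extending over each $\bZ_p$-orbit of cells by equivariance), and then restricts to the boundaries of the top prism cells to get maps $\Phi_k:S^{2n-1}\ra S^{2n-1}$ with $\deg\tilde g-\deg\tilde f=\sum_k\deg\Phi_k=p\cdot\deg\Phi_0$; this count gives the congruence mod $p$, and when the integer degrees agree one gets $\deg\Phi_0=0$, so by Hopf's theorem the map extends over the top cells, yielding an equivariant homotopy which descends to $L$. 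You instead prove the congruence by a separate chain-level argument (the top multiplier $\lambda$ of an equivariant chain map, determined modulo $\nu'=1+\sigma'+\cdots+(\sigma')^{p-1}$ by the lower-degree data, with augmentation equal to the degree) and prove the homotopy statement by non-equivariant obstruction theory on $L$ with local coefficients; your fallback, the ``equivariant Hopf theorem'' for the free $\bZ_p$-action, is precisely what the paper proves by hand via the prism. Your route makes part 1 purely algebraic and keeps part 2 within standard obstruction theory; the paper's route needs no obstruction-theoretic machinery or local-coefficient bookkeeping and gets both statements from one construction.

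Two points need tightening. First, in part 1 the phrase ``hence by $f_*$ on $\pi_1$'' is doing real work: the induced map on $H_{2n-1}(L;\bZ_p)$ is not tautologically determined by $f_*$, and you must either carry out the comparison-theorem bookkeeping you sketch (the ambiguity in $\lambda$ is a multiple of $\nu'$, whose augmentation is $p$), or, more cleanly, factor $f$ through the skeletal inclusion $L'\hra B\bZ_p$ and use that $H_{2n-1}(L';\bZ_p)\ra H_{2n-1}(B\bZ_p;\bZ_p)$ is an isomorphism, so the $\bZ_p$-degree is determined by the homotopy class of the composite, i.e.\ by $f_*$. Second, the top obstruction is not $\deg f-\deg g$ but $(\deg f-\deg g)/p$: the coefficient group is $\pi_{2n-1}(L')\cong\bZ$ and its Hurewicz image in $H_{2n-1}(L')\cong\bZ$ is $p\bZ$ (the covering $S^{2n-1}\ra L'$ has degree $p$), which is exactly consistent with part 1. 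The factor of $p$ is harmless for the conclusion (the obstruction vanishes iff $\deg f=\deg g$), but the identification as you stated it is off, and this is the piece of bookkeeping you correctly flagged as the delicate step.
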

\begin{proof}[Sketch of proof] (For more details, see \cite{Cohen}, (29.3), p.92.)
\begin{enumerate}
\item First, lift $f,g$ to a pair of $\bZ_p$-equivariant maps $\tilde f,\tilde g: S^{2n-1}\ra S^{2n-1}$ sending $\sigma$ to $(\sigma')^m$ for some $m$.
\item Define a  $2n$-dimensional CW-complex $P=S^{2n-1}\times [0,1]$ with cell structure coming from the equivariant cell decomposition of the sphere $S^{2n-1}$. Denote $P_j=\mr{sk}_j P\;\cup\; (S^{2n-1}\times \{0,1\})$.
We construct inductively a sequence of $\bZ_p$-equivariant maps $F_j:P_j\ra S^{2n-1} $ such that $F_j|_{S^{2n-1}\times \{0\}}=\tilde f$, $F_j|_{S^{2n-1}\times \{1\}}=\tilde g$. For $j=0$, $F_0$ is given by $\tilde f\cup \tilde g: P_0=S^{2n-1}\times \{0,1\} \ra S^{2n-1}$. Assuming $F_{j}$ is defined, we can define $F_{j+1}: P_{j+1}\ra S^{2n-1}$ by extending $F_j: P_j\ra S^{2n-1}$ to the cell $E^j_0\times [0,1]\subset P_{j+1}$ from its boundary $\underbrace{\dd (E^j_0\times [0,1])}_{\simeq S^j}\subset P_j$. This extension is always possible for $j< 2n-1$ due to vanishing of homotopy groups $\pi_j(S^{2n-1})$. The map is extended to other top cells $E^j_k\times [0,1]$, $k\in \bZ_p$ of $P_{j+1}$ by imposing $\bZ_p$-equivariance. Thus we constructed an equivariant map $F_{2n-1}: P_{2n-1}\ra S^{2n-1}$ restricting to $\tilde f$, $\tilde g$ on $S^{2n-1}\times \{0,1\}\subset P_{2n-1}$.
\item Denote
$$\Phi_k=F_{2n-1}|_{\dd(E_k^{2n-1}\times [0,1])}: \quad \underbrace{\dd(E_k^{2n-1}\times [0,1])}_{\simeq S^{2n-1}} \ra S^{2n-1}$$
With appropriate choice of orientations of top cells we get, using $\bZ_p$-equivariance, a relation between degrees of maps
\be \underbrace{\sum_{k=1}^p \deg \Phi_k}_{p\cdot \deg \Phi_0}=\deg \tilde g-\deg\tilde f \label{deg Phi}\ee
This proves the first part of the lemma, that $\deg f \equiv \deg g\mod p$.
\item Assuming further that $\deg f=\deg g$, we obtain from (\ref{deg Phi}) that $\deg \Phi_k=0$, $k\in \bZ_p$. This implies, by Brouwer theorem, that the obstruction to extending the map $F_{2n-1}: P_{2n-1}\ra S^{2n-1}$ to the whole $P=P_{2n}$ vanishes. Hence we can construct $F_{2n}:P\ra S^{2n-1}$, which is an equivariant homotopy between $\tilde f$ and $\tilde g$ and thus induces a homotopy between $f$ and $g$ as maps of lens spaces.
\end{enumerate}
\end{proof}

\begin{lemma}\label{Lm: lens spaces homotopy Lm2} A map $f: L(p;q_1,\ldots,q_n)\ra L(p;q'_1,\ldots,q'_n)$ of degree $d$ preserving the preferred generator of $\pi_1$ exists iff
\be d\equiv r_1\cdots r_n\,q'_1\cdots q'_n\mod p \label{Lm: lens spaces homotopy Lm2 eq1}\ee
where $r_k$ are the inverse residues for $q_k$.
Some map of degree $d$ exists (ignoring the preferred generator of $\pi_1$) iff
\be d\equiv m^n\,r_1\cdots r_n\,q'_1\cdots q'_n\mod p \label{Lm: lens spaces homotopy Lm2 eq2}\ee
for some $m\in \bZ_p$ coprime with $p$ (and then this map sends $\sigma\in \pi_1(L)$ to $(\sigma')^m\in\pi_1(L')$).
\end{lemma}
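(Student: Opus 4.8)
The plan is to reduce the second assertion (\ref{Lm: lens spaces homotopy Lm2 eq2}) to the first (\ref{Lm: lens spaces homotopy Lm2 eq1}) by composing with the diffeomorphisms $\Phi^{III}$ of Section~\ref{sec: lens spaces, trivial maps}, and to prove (\ref{Lm: lens spaces homotopy Lm2 eq1}) by combining Lemma~\ref{Lm: lens spaces homotopy Lm1} with an explicit join construction. I write $\pi:S^{2n-1}\to L$ and $\pi':S^{2n-1}\to L'$ for the universal coverings (here $n\geq 2$, so $\pi_1(L)=\bZ_p$). Since a $p$-fold covering of closed oriented manifolds has degree $p$, and any lift $\tilde f$ of $f$ satisfies $\pi'\circ\tilde f=f\circ\pi$, comparing top homology gives $p\deg f=p\deg\tilde f$, hence $\deg f=\deg\tilde f$. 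Conversely, a $\bZ_p$-equivariant self-map of $S^{2n-1}$, equivariant for the $\sigma$-action on the source and the $(\sigma')^m$-action on the target, descends to a map $L\to L'$ of the same degree sending $\sigma\mapsto(\sigma')^m$.

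For the ``only if'' part of (\ref{Lm: lens spaces homotopy Lm2 eq1}): any two maps $L\to L'$ preserving the preferred generator induce the same homomorphism $\pi_1(L)=\bZ_p\to\pi_1(L')$ (the one with $\sigma\mapsto\sigma'$), so by Lemma~\ref{Lm: lens spaces homotopy Lm1} their degrees agree mod $p$; it thus suffices to exhibit one such map and compute its degree. Realize $S^{2n-1}=S^1\ast\cdots\ast S^1$ with the $i$-th circle the unit circle of the $i$-th coordinate, so $\sigma$ acts on the $i$-th factor by $z\mapsto\zeta^{q_i}z$ and $\sigma'$ on the target by $z\mapsto\zeta^{q'_i}z$. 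For $g_i:S^1\to S^1$, $g_i(z)=z^{k_i}$, the map $g_i$ intertwines the $\zeta^{q_i}$-action with the $\zeta^{q'_i}$-action exactly when $q_ik_i\equiv q'_i\pmod p$, i.e. $k_i\equiv r_iq'_i\pmod p$; taking $k_i=r_iq'_i$ makes $\tilde f_0=g_1\ast\cdots\ast g_n$ equivariant with $\sigma\mapsto\sigma'$, and it descends to $f_0:L\to L'$ with $\deg f_0=\prod_i\deg g_i=\prod_i(r_iq'_i)\equiv r_1\cdots r_n\,q'_1\cdots q'_n\pmod p$, as claimed.

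For the ``if'' part, given $d\equiv r_1\cdots r_n q'_1\cdots q'_n\pmod p$ write $d=\deg f_0+ps$ and apply a standard local modification: inside a small ball $W$ in the interior of the top cell of $L$, homotope $f_0$ rel $\partial W$ so that it is constant on $\partial W'$ (a sub-ball $W'\subset\mr{int}\,W$) and maps $W\setminus\mr{int}\,W'$ into a ball of $L'$, then replace $f_0|_{W'}$ by a degree-$ps$ map $W'/\partial W'\cong S^{2n-1}\to L'$ (e.g. $\pi'$ precomposed with a degree-$s$ self-map of $S^{2n-1}$). The result has degree $d$ and still sends $\sigma\mapsto\sigma'$, proving (\ref{Lm: lens spaces homotopy Lm2 eq1}). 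For (\ref{Lm: lens spaces homotopy Lm2 eq2}): a map $f:L\to L'$ with $f_*\sigma=(\sigma')^m$, $m$ coprime to $p$, can be composed with $\Phi^{III}_m:L(p;m^{-1}q_1,\dots,m^{-1}q_n)\to L$ (a degree-$1$ diffeomorphism sending the preferred generator to $\sigma^{m^{-1}}$) to yield a generator-preserving map $g$ of the same degree; since the inverse residue of $m^{-1}q_i$ is $mr_i$, (\ref{Lm: lens spaces homotopy Lm2 eq1}) gives $\deg f\equiv(mr_1)\cdots(mr_n)q'_1\cdots q'_n=m^n\,r_1\cdots r_n\,q'_1\cdots q'_n\pmod p$, and conversely every such $d$ is realized by building $g$ and composing with $(\Phi^{III}_m)^{-1}$. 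Taking the union over coprime $m$ gives (\ref{Lm: lens spaces homotopy Lm2 eq2}).

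The essential geometric input is the degree computation for the join map; the rest is bookkeeping. The two places to be careful: (a) keeping the inverse residues $r_i$, the multiplier $m$, and the direction of $\Phi^{III}$ mutually consistent, since an inversion error here changes the answer; and (b) the local-modification step — it must produce exactly the multiples of $p$ and no more, which is forced (and consistent with Lemma~\ref{Lm: lens spaces homotopy Lm1}) because every map $S^{2n-1}\to L'$ has degree divisible by $p$ (its pullback kills the class $xy^{n-1}$ generating $H^{2n-1}(L';\bZ_p)$, so only the covering projection and its composites with sphere self-maps are available).
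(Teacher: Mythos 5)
Your proof is correct and takes essentially the paper's route: an explicit $(\sigma,(\sigma')^m)$-equivariant coordinatewise power map (your join of circle maps $z\mapsto z^{r_iq'_i}$ is that same map) realizes the base degree $\prod_k m\,r_k q'_k$, the degree is then shifted by arbitrary multiples of $p$ by a local modification in a ball, and the ``only if'' direction is exactly Lemma \ref{Lm: lens spaces homotopy Lm1}. The only cosmetic differences are that you perform the degree correction downstairs on $L$ (inserting a map $S^{2n-1}\ra L'$ of degree $ps$, namely the covering precomposed with a degree-$s$ self-map) instead of equivariantly upstairs on $S^{2n-1}$, and you deduce (\ref{Lm: lens spaces homotopy Lm2 eq2}) from (\ref{Lm: lens spaces homotopy Lm2 eq1}) by composing with $\Phi^{III}_m$ rather than building $m$ into the exponents; both variants are equivalent to the paper's steps.
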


\begin{proof} Fix $m\in \bZ$ coprime with $p$. The map
\be \phi:\bC^n\ra \bC^n,\qquad (z_1,\ldots, z_n)\mapsto (z_1^{m r_1 q_1'},\ldots, z_n^{m r_n q'_n}) \label{Lm: lens spaces homotopy Lm2 eq3}\ee
induces\footnote{
For $\iota: S^{2n-1}\hra \bC^n$ the inclusion of the unit sphere into $\bC^n$ and $\pi: \bC^n-0\ra S^{2n-1}$, $(z_1,\ldots,z_n)\mapsto \frac{1}{\sum_{k=1}^n |z_k|^2}(z_1,\ldots,z_n)$ the projection onto the sphere, we define $\tilde F_0:= \pi\circ \phi\circ \iota: S^{2n-1}\ra S^{2n-1}$.
} a $(\sigma,(\sigma')^m)$-equivariant map $\tilde F_0=S^{2n-1}\ra S^{2n-1}$ of degree $$\deg \tilde F_0=\prod_{k=1}^n (m\, r_k\, q'_k)$$
By equivariance it induces a map of lens spaces $F_0: L\ra L'$ of the same degree, sending $\sigma\in \pi_1(L)$ to $(\sigma')^m \in \pi_1(L')$.

Next, given an integer $N\in\bZ$ we construct a new $(\sigma,(\sigma')^m)$-equivariant map $\tilde F_N=S^{2n-1}\ra S^{2n-1}$ as follows. Fix a point in the top cell $x_0\in e^{2n-1}_0\subset S^{2n-1}$, a closed $(2n-1)$-ball $B_\epsilon\subset e^{2n-1}_0$ centered at $x_0$ which fits in the top cell and a closed $(2n-1)$-ball of twice smaller radius $B_{\epsilon/2}\subset B_\epsilon$, also centered at $x_0$. Denote $\phi: B_{\epsilon}-x_0 \ra B_{\epsilon}-B_{\epsilon/2}$ the homothety\footnote{
We mean the following: fix a homeomorphism $\chi$ between $B_\epsilon$ and the standard ball $\mathbf{B}_1$ of radius $1$ in Euclidean $\bR^{2n-1}$, such that $\chi$ maps $x_0$ to the origin and $B_{\epsilon/2}$ to the concentric Euclidean ball $\mathbf{B}_{1/2}\subset \mathbf{B}_1$ of radius $1/2$. Then define $\bar\phi: \mathbf{B}_1-0 \ra \mathbf{B}_1-\mathbf{B}_{1/2}$ by $r\cdot \vec{n}\mapsto \frac{1+r}{2}\cdot \vec{n}$ for $r\in (0,1]$ and $\vec{n}$ any vector of unit length. Then we set $\phi=\chi^{-1}\circ \bar\phi\circ\chi$.
} which fixes the bounding sphere $\dd B_\epsilon$. We set
\be \tilde F_N |_{e^{2n-1}_0}= \left\{\begin{array}{ll}\tilde F_0 & \mbox{ on } e^{2n-1}_0-B_\epsilon \\ \tilde F_0\circ \phi^{-1} & \mbox{ on } B_\epsilon-B_{\epsilon/2} \\
\Phi_N & \mbox{ on } B_{\epsilon/2} \end{array}\right. \label{lens spaces homotopy eq1}\ee
where $\Phi_N: B_{\epsilon/2}\ra S^{2n-1}$ is any degree $N$ map sending the bounding sphere $\dd B_{\epsilon/2}$ to $y_0=\tilde F_0(x_0)\in S^{2n-1}$. Next one extends (\ref{lens classification homotopy}) to the whole $S^{2n-1}$ by $(\sigma,(\sigma')^m)$-equivariance. The resulting map $\tilde F_N$ has degree
$$\deg \tilde F_N=\deg \tilde F_0+p\cdot N=m^n\,r_1\cdots r_n\,q'_1\cdots q'_n+p\cdot N$$

Thus we proved the existence of maps of degrees (\ref{Lm: lens spaces homotopy Lm2 eq1}) (for $m=1$) and (\ref{Lm: lens spaces homotopy Lm2 eq2}) (for $m$ general).

The ``only if'' part of (\ref{Lm: lens spaces homotopy Lm2 eq1},\ref{Lm: lens spaces homotopy Lm2 eq2}) follows immediately from Lemma \ref{Lm: lens spaces homotopy Lm1}.
\end{proof}

\begin{lemma} \label{Lm: lens spaces homotopy Lm3}
A map of lens spaces $f: L(p;q_1,\ldots,q_n)\ra L(p;q'_1,\ldots,q'_n)$ is a homotopy equivalence iff $\deg f=\pm 1$.
\end{lemma}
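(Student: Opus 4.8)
The plan is to prove the two implications separately; the forward direction is essentially formal, and the reverse direction requires passing to universal covers. For the ``only if'' part, I would observe that a homotopy equivalence $f$ induces an isomorphism on $H_{2n-1}(-;\bZ)$; since $L$ and $L'$ are closed connected oriented $(2n-1)$-manifolds this group is infinite cyclic and $f_*$ acts on it as multiplication by $\deg f$, and the only automorphisms of $\bZ$ are $\pm 1$, so $\deg f=\pm 1$.

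For the ``if'' part, assume $\deg f=\pm 1$. First I would record that $f_*\colon\pi_1(L)=\bZ_p\to\pi_1(L')=\bZ_p$ has the form $\sigma\mapsto(\sigma')^m$ for some $m$, and that the degree congruence of Lemma~\ref{Lm: lens spaces homotopy Lm2} gives $\pm 1\equiv m^n r_1\cdots r_n q'_1\cdots q'_n\mod p$; since each $r_i$ and $q'_i$ is a unit mod $p$, this forces $m^n$, hence $m$, to be a unit mod $p$, so $f_*$ is an isomorphism on $\pi_1$. Next, using that this $\pi_1$-map intertwines the covering projections $S^{2n-1}\to L$ and $S^{2n-1}\to L'$ (with the equivariant cell decompositions from the previous subsection), I would lift $f$ to a $\bZ_p$-equivariant self-map $\tilde f$ of $S^{2n-1}$. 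Comparing fundamental classes through the commuting square of covering projections (each of degree $p$) shows $\deg\tilde f=\deg f=\pm 1$, and a degree $\pm 1$ self-map of a sphere of dimension $\ge 3$ is a homotopy equivalence by Whitehead's theorem.

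Finally I would combine these facts: $f$ is an isomorphism on $\pi_1$ by construction, and on $\pi_k$ for $k\ge 2$ it is identified, via the covering projections (which induce isomorphisms on higher homotopy groups), with $\tilde f_*\colon\pi_k(S^{2n-1})\to\pi_k(S^{2n-1})$, hence is an isomorphism; thus $f$ is a weak homotopy equivalence between CW-complexes and Whitehead's theorem yields that $f$ is a homotopy equivalence. I expect the only slightly delicate point to be the lifting/degree bookkeeping in the reverse direction --- verifying that the lift exists with the claimed equivariance and that $\deg\tilde f=\deg f$ (rather than $p\deg f$) once the $p$-fold multiplicities cancel; the rest is formal. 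Throughout one assumes $n\ge 2$, so that $S^{2n-1}$ is genuinely the universal cover; for $n=1$ the assertion is the classical statement about self-maps of $S^1$.
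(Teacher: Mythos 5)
Your proof is correct, but the ``if'' direction takes a genuinely different route from the paper's. The paper stays entirely inside the equivariant machinery of Lemmas \ref{Lm: lens spaces homotopy Lm1} and \ref{Lm: lens spaces homotopy Lm2}: from $\deg f=\pm1$ and the congruence it constructs, via Lemma \ref{Lm: lens spaces homotopy Lm2}, an explicit map $g:L'\ra L$ of degree $\pm 1$ inverting the $\pi_1$-behavior of $f$, and then applies Lemma \ref{Lm: lens spaces homotopy Lm1} to the compositions $g\circ f$ and $f\circ g$ (degree $1$, identity on $\pi_1$) to conclude they are homotopic to the identities; this yields an explicit homotopy inverse. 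You instead use the congruence only to see that $m$ is a unit mod $p$ (so $f_*$ is an isomorphism on $\pi_1$), then lift to the universal covers, compare fundamental classes through the covering square to get $\deg\tilde f=\deg f=\pm1$, conclude $\tilde f$ is a homotopy equivalence of spheres, transport this to isomorphisms on $\pi_k$, $k\geq 2$, and finish with Whitehead's theorem. Both arguments hinge on the same input (the degree congruence for the specific $m$ induced by $f$, which, as in the paper's own proof, comes from Lemma \ref{Lm: lens spaces homotopy Lm1} together with the explicit equivariant map built in the proof of Lemma \ref{Lm: lens spaces homotopy Lm2}); your version is shorter modulo standard theorems (Hopf's degree theorem or the homology form of Whitehead's theorem on $S^{2n-1}$, plus the CW Whitehead theorem), while the paper's version is self-contained in the lemmas already proved and produces a concrete homotopy inverse with controlled degree and $\pi_1$ data, which is convenient for the subsequent torsion computations. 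Your degree bookkeeping $p\cdot\deg\tilde f=p\cdot\deg f$ is the right way to handle the lift, and your caveat about $n=1$ is appropriate.
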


\begin{proof} The ``only if'' part is obvious. Let us prove the ``if'' part. Suppose $f$ sends $\sigma\in\pi_1(L)$ to $(\sigma')^m\in\pi_1(L')$ for some $m$. Then by Lemma \ref{Lm: lens spaces homotopy Lm2}, $q_1\cdots q_n\equiv\underbrace{\pm}_{\deg f} m^n q'_1\cdots  q'_n \mod p$. And thus there exists a map $g:L'\ra L$ of degree $\deg g=\deg f=\pm 1$ sending $(\sigma')^m\mapsto \sigma$. Compositions $g\circ f: L\ra L$ and $f\circ g:L'\ra L'$ are both maps of degree $1$ inducing identity on respective fundamental groups. Hence by Lemma \ref{Lm: lens spaces homotopy Lm1}, $g\circ f$ is homotopic to $\mr{id}_L$ and $f\circ g$ is homotopic to $\mr{id}_{L'}$. Thus $f$ is a homotopy equivalence.
\end{proof}

\begin{thm}[Classification of lens spaces up to homotopy]
There exists a homotopy equivalence of lens spaces $L(p;q_1,\ldots,q_n)$ and $L(p;q'_1,\ldots,q'_n)$ preserving the distinguished generator of $\pi_1$ iff
$$q'_1\cdots q'_n\equiv\pm q_1\cdots q_n\mod p$$
Ignoring the distinguished generator of $\pi_1$, some homotopy equivalence exists iff
$$m^n \, q'_1\cdots q'_n\equiv\pm  q_1\cdots q_n\mod p$$
for some $m\in \bZ_p$ coprime with $p$.
\end{thm}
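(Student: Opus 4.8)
The plan is to deduce the theorem directly from the three preceding lemmas; essentially all the substance is already there, and what remains is a short chain of equivalences together with some bookkeeping with inverse residues modulo $p$.

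First I would record the reduction: by Lemma \ref{Lm: lens spaces homotopy Lm3} a map of lens spaces is a homotopy equivalence precisely when its degree is $\pm 1$. Consequently, a homotopy equivalence $L(p;q_1,\ldots,q_n)\ra L(p;q'_1,\ldots,q'_n)$ preserving the distinguished generator of $\pi_1$ exists if and only if there is a \emph{map} of degree $\pm 1$ between these spaces preserving the generator (the forward implication is Lemma \ref{Lm: lens spaces homotopy Lm3}, the backward implication its converse). Now invoke Lemma \ref{Lm: lens spaces homotopy Lm2}, equation (\ref{Lm: lens spaces homotopy Lm2 eq1}): a generator-preserving map of degree $d$ exists iff $d\equiv r_1\cdots r_n\, q'_1\cdots q'_n\mod p$, where $r_k q_k\equiv 1\mod p$. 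Setting $d=\pm 1$ and multiplying both sides by $q_1\cdots q_n$, and using $r_k q_k\equiv 1\mod p$ (so that $r_1\cdots r_n$ is the inverse of $q_1\cdots q_n$), this condition becomes $q'_1\cdots q'_n\equiv \pm q_1\cdots q_n\mod p$, which is the first assertion.

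For the second assertion I would run exactly the same argument but with equation (\ref{Lm: lens spaces homotopy Lm2 eq2}) of Lemma \ref{Lm: lens spaces homotopy Lm2} in place of (\ref{Lm: lens spaces homotopy Lm2 eq1}): ignoring the generator, a map of degree $d$ exists iff $d\equiv m^n\, r_1\cdots r_n\, q'_1\cdots q'_n\mod p$ for some $m\in\bZ_p$ coprime with $p$. Again Lemma \ref{Lm: lens spaces homotopy Lm3} reduces the existence of a homotopy equivalence to the existence of a degree $\pm 1$ map, and taking $d=\pm 1$, multiplying by $q_1\cdots q_n$, gives $m^n\, q'_1\cdots q'_n\equiv \pm q_1\cdots q_n\mod p$ for some such $m$.

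I do not expect any real obstacle: the deep content is entirely in Lemmas \ref{Lm: lens spaces homotopy Lm1}--\ref{Lm: lens spaces homotopy Lm3}. The only points needing a line of care are (i) that ``homotopy equivalence (preserving the generator)'' is genuinely equivalent to ``degree $\pm 1$ map (preserving the generator)'', which is just Lemma \ref{Lm: lens spaces homotopy Lm3} and its converse, and (ii) the elementary arithmetic identity $(r_1\cdots r_n)(q_1\cdots q_n)\equiv 1\mod p$ used to pass from the inverse residues appearing in Lemma \ref{Lm: lens spaces homotopy Lm2} to the products $q_1\cdots q_n$ in the statement; one should also note that the sign $\pm$ (and, in the second part, the residue $m$) are quantified existentially on both sides, consistent with the phrasing of the theorem.
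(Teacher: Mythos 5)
Your proposal is correct and follows exactly the route of the paper, which deduces the theorem "immediately from Lemmas \ref{Lm: lens spaces homotopy Lm3} and \ref{Lm: lens spaces homotopy Lm2}"; you have merely spelled out the arithmetic step of multiplying by $q_1\cdots q_n$ to convert the inverse residues $r_k$ into the stated condition. Nothing is missing.
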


\begin{proof} Follows immediately from Lemmas \ref{Lm: lens spaces homotopy Lm3} and \ref{Lm: lens spaces homotopy Lm2}.
\end{proof}

\begin{example} $L(5,1)=L(5,1,1)$ and $L(5,2)=L(5,1,2)$ are not homotopy equivalent, since $m^2\,1\cdot 2\not\equiv 1\cdot 1\mod 5$ for any $m\in \bZ_5$, since the only quadratic residues modulo $5$ are $\pm 1$.\end{example}

\begin{example}\label{example: L(7,1) vs L(7,2) homotopy}
$L(7,1)=L(7,1,1)$ and $L(7,2)=L(7,1,2)$ are different special lens spaces and are not simple homotopy equivalent by Theorem \ref{thm: lens space sh classification}.\footnote{
Here are the numeric values of the $O(2)$-torsion $|\tau_\eta|^2$: for $L(7,1)$, $|\tau_\eta|^2=1.763,\; 0.167,\; 0.069$ for $\eta=e^{\pm \frac{1}{7} 2\pi i}, e^{\pm \frac{2}{7} 2\pi i}, e^{\pm \frac{3}{7} 2\pi i}$ respectively. For the lens space $L(7,2)$, the values are $|\tau_\eta|^2=0.349,\;0.543,\;0.108$.
}
However $2^2\cdot 1\cdot 2\equiv 1\cdot 1\mod 7$, hence there exists a homotopy equivalence $f:L(7,1,1)\ra L(7,1,2)$ which is by Lemma \ref{Lm: lens spaces homotopy Lm2} a map of degree $1$ sending $\sigma$ to $(\sigma')^2$, since we have $d\equiv m^2\cdot 1\cdot 1\cdot 1\cdot 2\mod 7$ for $d=1$, $m=2$. To construct the map $f$, we first construct the $(\sigma,(\sigma')^2)$-equivariant map (\ref{Lm: lens spaces homotopy Lm2 eq3}) between spheres:
$$\tilde F_0: (z_1,z_2)\mapsto (z_1^2,z_2^4)$$
which has degree $8$. Next we correct it, as in (\ref{lens spaces homotopy eq1}), by a degree $-1$ map $\Phi_{-1}$ from a small 3-ball relative to its boundary to $S^3$ relative to a point, to an equivariant map $\tilde F_{-1}: S^3\ra S^3$ of degree $8-7\cdot 1=1$. It induces the degree map $f$ on the lens spaces, which is by Lemma \ref{Lm: lens spaces homotopy Lm3} a homotopy equivalence.
\end{example}

\begin{rem}
Note that for $p$ fixed there is a unique special lens space $L(p;\cdots)$ iff $p\in\{1,2,3,4,6\}$. Cf. the fact that $Wh(\bZ_p)$ is trivial exactly for those values of $p$.
\end{rem}

\begin{rem} Returning to the Example \ref{example: L(7,1) vs L(7,2) homotopy}, consider the Whitehead torsion of the homotopy equivalence between $L=L(7,1)$ and $L'=L(7,2)$, $\tau(f)\in Wh(\bZ_7)$. For $h:\pi_1(L)\ra \bC^*$ sending $\sigma$ to $\eta$, a $7$-th root of unity, for the diagram
$$
\begin{CD} \pi_1(L) @>\sigma\mapsto \eta>h> \bC^* \\
@V\sigma\mapsto (\sigma')^2Vf_*V @| \\
\pi_1(L') @>h'>\sigma'\mapsto \eta'> \bC^*
\end{CD}
$$
to commute, we need $(\eta')^2=\eta$. Hence, we set $\eta'=\eta^4$. Then we have
$$\tau_{\eta'}(L')=h_* \tau(f)\cdot \tau_{\eta}(L)$$
where $h_*: Wh(\bZ_7)\ra \bar K_1(\bC)/\mr{im}(\pi_1)=\bC^*/\{\pm \eta^j\}_{j\in\bZ_7}$.
Hence, by (\ref{lens space torsion}),
\be h_*\tau(f)=\frac{(\eta-1)^2}{(\eta'-1)((\eta')^4-1)}= \frac{(\eta-1)^2}{(\eta^4-1)(\eta^2-1)}\qquad \in\bC^*/\{\pm \eta^j\}_{j\in\bZ_7} \label{tosion of the map L(7,1) to L(7,2)}\ee
For $\eta=e^{\pm \frac{1}{7} 2\pi i}, e^{\pm \frac27 2\pi i}, e^{\pm \frac37 2\pi i}$, the respective numeric values of $|\tau_h(f)|^2$ are:
$$0.061,\quad 2.088,\quad 7.851$$
\end{rem}

\begin{rem}
Whitehead group $Wh(\bZ_7)$ is the quotient of the ring of units of $\bZ[\bZ_7]=\bZ[x]/(x^7-1)$ by trivial units $\{\pm x^j\}_{j\in \bZ_7}$.\footnote{We are using the nontrivial fact that $SK_1(\bZ[\bZ_p])=\{1\}$ for any $p$.} It is a free abelian group generated by two nontrivial units $u$ and $v$:\footnote{For the general explicit construction producing nontrivial units of $\bZ[\bZ_p]$, see \cite{Cohen} (11.4) p.44.}
\begin{multline*}
u=-1+x+x^{-1},\quad u^{-1}=1+x+x^{-1}-x^3-x^{-3},\\
v=-1+x^2+x^{-2},\quad v^{-1}=1-x-x^{-1}+x^2+x^{-2}
\end{multline*}
In particular, Whitehead torsion of the map $f: L(7,1)\ra L(7,2)$ of Example \ref{example: L(7,1) vs L(7,2) homotopy} has to be of the form $\tau(f)=u^k\cdot v^l$ for some integers $k,l$.
\end{rem}

{\bf Exercise.} Find the exponents $k,l\in \bZ$ from (\ref{tosion of the map L(7,1) to L(7,2)}).

\begin{rem}
For $p$ prime, $p\equiv 1\mod 4$, 3-dimensional lens spaces $L(p,q)$ fall into two classes up to homotopy, depending on whether $q$ is a quadratic residue or not (since $-1$ is a quadratic residue for $p\equiv 1\mod 4$). If $p$ is prime and $p\equiv -1\mod 4$, then all lens spaces $L(p,q)$ are homotopy equivalent. For $p$ non prime there may be more homotopy classes of lens spaces $L(p,q)$ (e.g. for $p=105=3\cdot 5\cdot 7$, there are $4$ classes).
\end{rem}

\section{Some applications and properties of torsions}

\subsection{Milnor's counterexample to Hauptvermutung}
Fix $n\geq 3$. For $q\in \bZ_7$, construct an $(n+3)$-dimensional CW-complex $X_q$ from $L(7,q)\times B^n$ (with $B^n$ an $n$-dimensional ball) by adjoining a cone over the boundary $L(7,q)\times \dd B^n$, with cell decomposition induced from the standard cell decomposition of lens spaces. Milnor \cite{MilnorHaupt} proves the following.

\begin{thm}[Milnor, 1962]\label{thm: Milnor Hauptvermutung}
\begin{enumerate}[(i)]
\item \label{thm: Milnor Hauptvermutung (i)} For $n+3\geq 6$ the complex $X_1$ is homeomorphic to $X_2$.
\item \label{thm: Milnor Hauptvermutung (ii)} No finite cell subdivision of $X_1$ is isomorphic to a cell subdivision of $X_2$.
\end{enumerate}
\end{thm}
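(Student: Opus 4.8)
The plan is to prove the two parts separately, with part (ii) being the essential content and the place where torsion enters. For part (i), I would construct an explicit homeomorphism. Note that $X_q$ is obtained from $L(7,q)\times B^n$ by coning off the boundary $L(7,q)\times S^{n-1}$; equivalently, $X_q$ is the join-type object $L(7,q)\times B^n \cup_{L(7,q)\times S^{n-1}} C(L(7,q)\times S^{n-1})$, which is homeomorphic to the (unreduced) suspension-like space one gets by collapsing $L(7,q)\times S^{n-1}\subset L(7,q)\times B^n$ to a point, i.e.\ $X_q \cong \bigl(L(7,q)\times B^n\bigr)/\bigl(L(7,q)\times S^{n-1}\bigr)$. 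The key input is that, although $L(7,1)$ and $L(7,2)$ are not homeomorphic, they are homotopy equivalent (Example~\ref{example: L(7,1) vs L(7,2) homotopy}), and for a homotopy equivalence $f:L(7,1)\to L(7,2)$ the mapping cylinder $M_f$ is an $h$-cobordism-like object; more directly, $L(7,1)\times B^n$ and $L(7,2)\times B^n$ become homeomorphic once $n$ is large enough because the extra ball factor ``absorbs'' the Whitehead torsion obstruction --- this is the standard trick that a homotopy equivalence times a disk of large dimension is a simple homotopy equivalence on the level of the relative boundary, hence (by the $s$-cobordism theorem / Mazur's swindle with the cone) the quotients $X_1$ and $X_2$ are homeomorphic. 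I would cite this as the geometric heart of (i) following Milnor~\cite{MilnorHaupt}, without reproving the $s$-cobordism theorem.

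For part (ii), the strategy is a torsion obstruction. Suppose for contradiction that $X_1$ and $X_2$ had a common subdivision, i.e.\ there are subdivisions $X_1'$ of $X_1$ and $X_2'$ of $X_2$ with $X_1'$ isomorphic to $X_2'$ as CW-complexes. By Theorem~\ref{thm: invariance of Whitehead torsion wrt subdivision} (invariance of Whitehead torsion under subdivision), and more relevantly by invariance of $R$-torsion under subdivision, any torsion-type invariant of $X_q$ computed from its cell structure would have to agree for $q=1$ and $q=2$. So it suffices to produce a representation $h:\pi_1(X_q)\to O(m)$ for which $\tau_h(X_1)\neq \tau_h(X_2)$. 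The point is that $\pi_1(X_q)=\pi_1(L(7,q)\times B^n)=\bZ_7$ (coning off a subset that carries all of $\pi_1$ of the ``top'' does not kill it, since the cone is attached along $L(7,q)\times S^{n-1}$ and $n\geq 3$ keeps $\pi_1$ of that subset equal to $\bZ_7$; one checks via van Kampen that $\pi_1(X_q)\cong\bZ_7$). I would then compute the $R$-torsion (equivalently the Reidemeister torsion) of $X_q$ using the Mayer--Vietoris-type gluing formula (property~(i) of Section~\ref{sec: properties of T}): $X_q$ is the union of $L(7,q)\times B^n$ and the cone $C(L(7,q)\times S^{n-1})$ along $L(7,q)\times S^{n-1}$. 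The cone is contractible so (for $h$ nontrivial on $\bZ_7$) its twisted chain complex may fail to be acyclic; the cleaner route is the product/collapse description: $\tau_h(X_q)$ reduces, via the product property (property~(\ref{R-torsion product property})) and the even-dimensional vanishing, to $\tau_\eta(L(7,q))^{\pm 1}$ up to a factor independent of $q$, where $\eta=h(\sigma)$ is a $7$th root of unity. Concretely $\tau_\eta(L(7,q))=(\eta^{r}-1)^{-1}\in\bC^*/\{\pm\eta^j\}$ by formula~(\ref{lens space torsion}) with $qr\equiv 1\bmod 7$, and one already knows from Franz's independence lemma (Lemma~\ref{Lm: Franz lemma}) and the computation in Section~\ref{sec: lens spaces, end of proof of sh classification} that these differ for $q=1$ versus $q=2$ (indeed the numeric values $|\tau_\eta|^2$ are tabulated in the footnote to Example~\ref{example: L(7,1) vs L(7,2) homotopy}).

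The main obstacle is pinning down exactly how the torsion of $X_q$ relates to that of $L(7,q)$: one must verify that the cone-off and the $B^n$ factor contribute only $q$-independent factors (and do not spoil acyclicity of the relevant twisted complex). I would handle this by choosing $h:\bZ_7\to U(1)$ sending $\sigma$ to a nontrivial root of unity $\eta$ and checking directly that the twisted cellular chain complex $\bC\otimes_{\bZ[\bZ_7]}C_\bt(\widetilde{X_q})$ is acyclic: the ball factor $B^n$ is equivariantly collapsible, so $C_\bt(\widetilde{L\times B^n})$ is chain-equivalent over $\bZ[\bZ_7]$ to $C_\bt(\widetilde L)$, and coning off an acyclic (after twisting) subcomplex contributes a trivial torsion factor; by the multiplicativity Lemma~\ref{Lm: multiplicativity of torsions, non-acyclic case} the net effect is $\tau_h(X_q)=\pm\eta^j\cdot\tau_\eta(L(7,q))^{\varepsilon}$ for a fixed sign $\varepsilon=\pm1$ and some $j$, all independent of $q$. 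Since $\tau_\eta(L(7,1))\neq\tau_\eta(L(7,2))$ in $\bC^*/\{\pm\eta^j\}$ for a suitable choice of $\eta$, we get $\tau_h(X_1)\neq\tau_h(X_2)$, contradicting subdivision-invariance of $R$-torsion. This contradiction proves (ii), and hence (with (i)) exhibits $X_1,X_2$ as Milnor's counterexample to the Hauptvermutung.
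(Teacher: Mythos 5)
Your strategy for part (ii) founders at its very first step: $\pi_1(X_q)$ is not $\bZ_7$ --- it is trivial. Applying van Kampen to $X_q=(L(7,q)\times B^n)\cup C(L(7,q)\times S^{n-1})$ with intersection $L(7,q)\times S^{n-1}$ (whose $\pi_1$ is $\bZ_7$ for $n\geq 3$), the inclusion of the intersection into $L(7,q)\times B^n$ is an isomorphism on $\pi_1$, while its inclusion into the contractible cone kills $\pi_1$; the amalgamated product is therefore trivial. Coning off a subspace that carries the generator of $\pi_1$ does kill it, contrary to what you assert. Consequently there is no nontrivial representation $h:\pi_1(X_q)\to\bC^*$ (or $O(m)$) to twist with, the invariant $\tau_h(X_q)$ you propose to compare does not exist, and no absolute torsion of $X_q$ can distinguish $X_1$ from $X_2$. (Relatedly, even if one could twist, the cone piece would not be acyclic, since any local system trivializes over the simply connected cone, so its twisted homology is that of a point.) The repair --- which is Milnor's actual argument, followed in the paper --- is to work with the pair $(X_q,x_0)$, where $x_0$ is the cone vertex: $X_q-x_0\cong L(7,q)\times\bR^n$ has fundamental group $\Pi\cong\bZ_7$; let $K_q$ be the one-point compactification of its universal cover, on which $\Pi$ acts with single fixed point $k_0$. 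Then $C_\bt(K_q,k_0;\bZ)$ is a complex of free $\bZ[\Pi]$-modules (it is $C_\bt(\widetilde{L(7,q)};\bZ)$ with a degree shift by $n$), so the relative torsion $\tau_h(X_q,x_0)=\tau_h(L(7,q))^{(-1)^n}$ is defined for every nontrivial character $h$ of $\Pi$ and differs for $q=1,2$ by (\ref{lens space torsion}) and Franz's lemma; subdivision invariance of this relative torsion, together with the fact that a cellular isomorphism of subdivisions is a homeomorphism and hence matches the unique non-manifold points $x_0$, gives (ii).

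Your sketch of part (i) also misstates the mechanism. Since $\chi(B^n)=1$, the product formula gives $\tau(f\times\mathrm{id}_{B^n})=\tau(f)\neq 1$, so the closed ball factor does not ``absorb'' the Whitehead torsion; indeed the paper deduces from exactly this that $L(7,1)\times B^n$ and $L(7,2)\times B^n$ are \emph{not} homeomorphic as manifolds with boundary. What is true, and what the paper invokes, is Mazur's stable-equivalence theorem (Theorem \ref{thm: Mazur}): the open products $L(7,1)\times\bR^n$ and $L(7,2)\times\bR^n$ are diffeomorphic for $n\geq 4$ because lens spaces are parallelizable, closed, and homotopy equivalent; since $X_q$ is precisely the one-point compactification of $L(7,q)\times\bR^n$, it follows that $X_1\cong X_2$. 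Deferring to Milnor for (i) is acceptable, but the justification you give would not survive as written.
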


Let us prove (\ref{thm: Milnor Hauptvermutung (ii)}). Denote by $x_0\in X_q$ the vertex of the cone in $X_q$.  $X_q$ is a manifold except at the exceptional point $x_0$; $X_q-x_0$ is homeomorphic to $L(7,q)\times \bR^n$ and has fundamental group $\Pi\simeq \bZ_7$. Let $K_q$ be the single point compactification of the universal cover of $X_q-x_0$. Group $\Pi$ acts on $K_q$ with a single fixed point $k_0$. The quotient $K_q/\Pi$ is $X_q$ and any cell structure on $X_q$ induces a $\Pi$-equivariant cell structure on $K_q$. If cells of $X_q-x_0$ are taken to be the cells of $L(7,q)$ times $B^n$, then the cellular chain complex $C_\bt(K_q,k_0;\bZ)$ is $C_\bt(\widetilde{L(7,q)};\bZ)$ with degrees shifted by $n$, and is a complex of free modules over $\bZ[\Pi]$. Hence, for $h:\Pi\ra \bC^*$ a homomorphism, the Reidemeister torsion $\tau_h(X_q,x_0)$ is defined (as the torsion of the complex $\bC\otimes_{\bZ[\Pi]}C_\bt(K_q,k_0;\bZ)$ with cellular basis) and equal to $\tau_h(L(7,q))^{(-1)^n}$. Thus, $\tau_h(X_1,x_0)\neq \tau_h(X_2,x_0)$ and, by invariance of torsions with respect to cellular subdivision, $X_1$ and $X_2$ do not possess isomorphic cellular structures.

\lec{Lecture 12, 15.05.2014}

Statement (\ref{thm: Milnor Hauptvermutung (i)}) of Theorem \ref{thm: Milnor Hauptvermutung} follows, in the case $n+3\geq 7$, from the following general theorem of Mazur \cite{Mazur}.

\begin{thm}[Mazur, 1961]\label{thm: Mazur} Let $M_1$ and $M_2$ be two closed differentiable $k$-manifolds which are parallelizable and have the same homotopy type. Then for $n> k$, $M_1\times \bR^n$ is diffeomorphic to $M_2\times \bR^n$.
\end{thm}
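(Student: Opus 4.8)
The plan is to prove Mazur's theorem by the "infinite swindle" (Eilenberg swindle for manifolds), which is exactly the technique Mazur introduced. The key input is that $M_1$ and $M_2$ are $h$-cobordant-like at infinity once crossed with enough Euclidean factors, and the crucial observation is that crossing with $\bR$ repeatedly lets one "absorb" handles. I would first reduce to the following statement: if $M$ is a closed parallelizable $k$-manifold and $n>k$, then $M\times \bR^n$ depends, up to diffeomorphism, only on the homotopy type of $M$ (together with some stable tangential data, which is trivial by parallelizability).

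First I would set up the open handlebody picture. Since $M_1$ and $M_2$ are homotopy equivalent, build a compact manifold $W$ with boundary obtained as a mapping-cylinder-type construction realizing a homotopy equivalence; thicken it into the manifold category using the fact that both are parallelizable, so all relevant normal bundles are trivial and obstructions to embeddings/immersions vanish in the stable range $n>k$. Concretely, one embeds $M_i\times \bR^n$ as an open subset of a common manifold and shows each is a deformation retract neighborhood. Second, I would invoke the engulfing/infinite-repetition argument: writing $\bR^n = \bR^{n-1}\times \bR$ and using the extra $\bR$ to slide a collar off to infinity, one gets a diffeomorphism $M_1\times\bR^n \cong (M_1\times\bR^n) \sqcup_\partial (\text{cobordism}) \cong \cdots$ telescoping so that the "difference" between $M_1\times\bR^n$ and $M_2\times\bR^n$ is pushed to infinity and disappears. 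This is the standard Mazur-swindle bookkeeping: $A \cong A \# (B \# A) \# (B \# A)\# \cdots$ type manipulations, carried out with open manifolds and boundary connect sums along the Euclidean directions.

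The third ingredient is controlling the smooth structure: the swindle a priori produces a homeomorphism or a diffeomorphism only after stabilization, and one must check that the number $n$ of Euclidean factors needed is $\le$ what is assumed, i.e. $n>k$ suffices. Here parallelizability is used decisively: it kills the normal-bundle and framing obstructions so that the handle trades can be performed smoothly rather than merely topologically, and the stable range $n>k$ guarantees Whitney-trick-type general position for the isotopies sliding handles to infinity. I would assemble these pieces into: (a) construct the common ambient open manifold $V$ containing both $M_i\times\bR^n$ as open regular neighborhoods; (b) show $V\cong M_i\times\bR^n$ for each $i$ by the telescoping swindle; (c) conclude $M_1\times\bR^n\cong V\cong M_2\times\bR^n$.

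The main obstacle I expect is step (a)–(b): making the "common ambient manifold" genuinely well-defined and verifying that the infinite boundary-connect-sum telescope converges to a manifold diffeomorphic to a single $M_i\times\bR^n$ — this requires a careful choice of collars and a proof that the countably many handle attachments can be done with locally finite support so the union is again a smooth manifold. By contrast, the homotopy-theoretic input (that a homotopy equivalence $M_1\simeq M_2$ thickens to the required cobordism data) is routine in the stable parallelizable range, and the degree/parallelizability hypotheses are precisely what remove the obstructions. Since Mazur's theorem is quoted here as a black box to establish Theorem \ref{thm: Milnor Hauptvermutung}(\ref{thm: Milnor Hauptvermutung (i)}), in these notes I would most likely just cite \cite{Mazur} rather than reproduce the swindle in detail; but the above is the shape a self-contained proof would take.
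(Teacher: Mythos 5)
The paper does not prove this statement at all: Theorem \ref{thm: Mazur} is quoted as a black box from \cite{Mazur} and used only to deduce part (i) of Theorem \ref{thm: Milnor Hauptvermutung}, so there is no in-text argument to compare yours against; your closing remark that one would simply cite Mazur is exactly what the notes do.

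As a sketch of a self-contained proof, your plan has the right overall shape (Mazur's swindle: realize each $M_i\times\bR^n$ inside a common open manifold and telescope), but the mechanism is described in somewhat the wrong vocabulary, and the one genuinely delicate point is missing. The actual argument is not about handle trades, Whitney tricks, or boundary connect sums. It runs: (1) compose the homotopy equivalence $M_1\to M_2$ with the zero section and perturb to a smooth embedding $M_1\hra M_2\times\bR^n$, which exists because $\dim(M_2\times\bR^n)=k+n>2k$; (2) use parallelizability of both manifolds to see that the normal bundle of this embedding is stably trivial, hence trivial since its rank $n$ exceeds $k=\dim M_1$, so a tubular neighborhood is diffeomorphic to $M_1\times\bR^n$; (3) do the same with the homotopy inverse to get nested open embeddings $M_1\times\bR^n\supset M_2\times\bR^n\supset M_1\times\bR^n\supset\cdots$, and identify the increasing union with both $M_1\times\bR^n$ and $M_2\times\bR^n$. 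The crux of step (3) -- which your sketch does not address -- is showing that the composite re-embedding $M_1\times\bR^n\hra M_1\times\bR^n$ is isotopic to a standard shrinking of the $\bR^n$ factor; this is where the relation $g\circ f\simeq \mr{id}_{M_1}$ and uniqueness of tubular neighborhoods enter, and it is what makes the direct limit computable. Without that ingredient the "push the difference to infinity" bookkeeping does not close up, so as written your proposal is a correct strategy but not yet a proof; for the purposes of these notes, citing \cite{Mazur} is the intended resolution.
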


In particular, this implies that for $n\geq 4$, manifolds $L(7,1)\times \bR^n$ and $L(7,2)\times \bR^n$ are diffeomorphic. Hence, their one-point compactifications $X_0$ and $X_1$ are homeomorphic.

\subsection{Torsion of the product}
\begin{thm} Let $X$, $Y$ be two finite CW-complexes with $Y$ simply connected, and let $h:\pi_1(X)\ra \bC$ be a group homomorphism such that $\tau_h(X)$ is defined. Then $\tau_h(X\times Y)$ is defined and equal to
$$\tau_h(X\times Y)=\tau_h(X)^{\chi(Y)}$$
where $\chi(Y)$ is the Euler characteristic of $Y$.
\end{thm}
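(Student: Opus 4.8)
The plan is to compute the $h$-twisted chain complex of $X \times Y$ from those of $X$ and $Y$, and then to break the product complex into a sum of pieces each of which is a copy of $C_\bt(X, h)$. First I would fix a CW structure on $X \times Y$ whose cells are products $e^{(i)}_\alpha \times f^{(j)}_\beta$ of cells of $X$ and cells of $Y$; the associated cellular chain complex of the universal cover satisfies $C_\bt(\widetilde{X \times Y}) \cong C_\bt(\widetilde X) \otimes_\bZ C_\bt(\widetilde Y)$ as a complex of $\bZ[\pi_1(X)]$-modules, using that $Y$ is simply connected so $\pi_1(X \times Y) \cong \pi_1(X)$ acts only through the first factor and $\widetilde Y = Y$. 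Applying the ring change $h: \bZ[\pi_1(X)] \to \bC$ and writing $C_\bt = \bC \otimes_{\bZ[\pi_1(X)]} C_\bt(\widetilde X)$ for the $h$-twisted complex of $X$ and $D_\bt = C_\bt(Y; \bC)$ for the ordinary cellular chains of $Y$ over $\bC$, I get an isomorphism of complexes of $\bC$-vector spaces
$$
C_\bt(X \times Y, h) \;\cong\; C_\bt \otimes_\bC D_\bt .
$$
Since $Y$ is a finite complex, $D_\bt$ is a bounded complex of finite-dimensional vector spaces; its homology is $H_\bt(Y; \bC)$, which need not vanish, but $C_\bt$ is acyclic by hypothesis. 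By the Künneth theorem over a field, $H_\bt(C_\bt \otimes D_\bt) \cong H_\bt(C_\bt) \otimes H_\bt(D_\bt) = 0$, so $\tau_h(X \times Y)$ is indeed defined.

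For the actual value, the cleanest route is to replace $D_\bt$ by a chain-homotopy-equivalent complex with zero differential. Choose a splitting $D_j \cong B_j(D) \oplus \mc H_j \oplus B_{j-1}(D)$ where $\mc H_j \cong H_j(Y;\bC)$, i.e. a direct-sum decomposition of $D_\bt$ as an acyclic complex plus the complex $H_\bt(Y;\bC)$ with zero differential; concretely pick a chain contraction on the acyclic part. Tensoring with $C_\bt$ respects direct sums of complexes, and tensoring $C_\bt$ with an acyclic complex of vector spaces (over which $C_\bt$, being a complex of free modules, is flat) yields again an acyclic complex. By multiplicativity of torsion with respect to direct sums (the acyclic case of Lemma~\ref{Lm: multiplicativity of torsions}, since a direct sum is a split short exact sequence) I reduce to
$$
\tau_h(X \times Y) \;=\; \tau\!\left(C_\bt \otimes \big(\text{acyclic part of } D_\bt\big)\right) \cdot \prod_j \tau\!\left(C_\bt \otimes \mc H_j[\,j\,]\right)^{?},
$$
and I still owe a treatment of the first factor and the bookkeeping in the second. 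For the second factor: $C_\bt \otimes \mc H_j[j]$ is just $\dim H_j(Y;\bC)$ copies of the complex $C_\bt$ shifted by $j$ in degree, and a degree shift by $j$ inverts the torsion $j$ times, so each such factor contributes $\tau_h(X)^{(-1)^j \dim H_j(Y;\bC)}$; multiplying over $j$ gives exactly $\tau_h(X)^{\chi(Y)}$ by the definition of the Euler characteristic, using $\chi(Y) = \sum_j (-1)^j \dim H_j(Y;\bC)$.

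The remaining point — and the one I expect to be the main obstacle, though it is more bookkeeping than substance — is to show that the "acyclic part" factor $\tau(C_\bt \otimes A_\bt)$ is trivial whenever $A_\bt$ is an acyclic complex of finite-dimensional $\bC$-vector spaces equipped with its preferred basis. This follows by choosing a chain contraction $\kappa$ on $A_\bt$ and using it together with the product cellular basis: in the basis of $C_\bt \otimes A_\bt$ adapted to the decomposition $A_\bt = \bigoplus (\text{id} \oplus \kappa)\text{-pairs}$, the operator $\mr{id}_{C} \otimes (\dd_A + \kappa)$ permutes basis blocks, so by Lemma~\ref{Lm: torsion via kappa} the torsion is the class of a (signed) permutation matrix, hence trivial in $\bar K_1(\bC) = \bC^*/\{\pm 1\}$. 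Alternatively one can argue more invariantly via determinant lines (Section~\ref{sec: det lines}): $\Det(C_\bt \otimes A_\bt) \cong \Det(C_\bt)^{\otimes \chi(A)} \otimes (\text{correction from } H_\bt(A))$, and with $A_\bt$ acyclic and based the correction and the Euler factor conspire to give $1$. Either way, assembling the three observations — Künneth for well-definedness, multiplicativity across the acyclic-plus-homology splitting, and the degree-shift computation — yields $\tau_h(X \times Y) = \tau_h(X)^{\chi(Y)}$.
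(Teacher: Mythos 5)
Your route is genuinely different from the paper's. The paper never invokes K\"unneth or the homology of $Y$: it orders the cells of $Y$ by nondecreasing dimension, sets $Y_i=e_1\cup\cdots\cup e_i$, and filters $X\times Y$ by the subcomplexes $X\times Y_i$; each relative complex $C_\bt(X\times Y_i,X\times Y_{i-1};h)$ is a copy of $C_\bt(X,h)$ shifted by $\dim e_i$, hence acyclic with torsion $\tau_h(X)^{(-1)^{\dim e_i}}$ in the cellular basis, and repeated multiplicativity (Lemma \ref{Lm: multiplicativity of torsions}) gives both well-definedness and $\tau_h(X\times Y)=\tau_h(X)^{\sum_i(-1)^{\dim e_i}}=\tau_h(X)^{\chi(Y)}$, with $\chi(Y)$ counted by cells; the differential of $C_\bt(Y)$ never enters. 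Your tensor-product formulation with the splitting of $D_\bt=C_\bt(Y;\bC)$ into an acyclic summand $A_\bt$ and $H_\bt(Y;\bC)$ is a clean purely algebraic statement, at the price of more bookkeeping.

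That bookkeeping is where your write-up has two real soft spots. First, torsion is basis-dependent, and the multiplicativity lemma requires the basis of the middle term to be the product of the chosen bases of sub and quotient; but your splitting $D_\bt\cong A_\bt\oplus\mc H_\bt$ is not compatible with the cellular basis of $D_\bt$. Changing the basis of $D_j$ by $T_j$ changes the basis of $C_i\otimes D_j$ by $\mathrm{id}\otimes T_j$, and the net effect on the torsion is $\prod_j|\det T_j|^{(-1)^j\chi(C)}$; this is $1$ only because the acyclic finite complex $C_\bt=C_\bt(X,h)$ has $\chi(C)=0$ --- a fact you never invoke, and without which the step fails (in general $\tau(C\otimes A)=\tau(C)^{\chi(A)}\tau(A)^{\chi(C)}$, so the basis of $A$ genuinely matters). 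Second, in your triviality argument for the acyclic summand the operator in Lemma \ref{Lm: torsion via kappa} is the full differential plus a contraction of $C\otimes A$, namely $\dd_{C\otimes A}+\mathrm{id}\otimes\kappa=\dd_C\otimes 1\pm\mathrm{id}\otimes(\dd_A+\kappa)$, not $\mathrm{id}\otimes(\dd_A+\kappa)$ alone; only the second term permutes the adapted product basis. The extra term is harmless --- factor out the invertible $\mathrm{id}\otimes(\dd_A+\kappa)$ and what remains is the identity plus an operator proportional to $\dd_C\otimes(\dd_A+\kappa)$, which squares to zero, so the determinant is still $\pm1$ --- but this must be said. With these two repairs (and your correct degree-shift and K\"unneth observations) the argument is complete.
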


\begin{proof} Let us order the cells of $Y$ so that the dimensions are nondecreasing: $e_1,\ldots,e_N$. Denote $Y_i=\cup_{j=1}^i e_i \subset Y$ is a CW-subcomplex (we also set $Y_0=\varnothing$). Chain complex
$C_\bt(X\times Y_i,X\times Y_{i-1})$ is isomorphic to $C_\bt(X)$ except for a degree shift by $\dim e_i$. Thus $\tau_h(X\times Y_i,X\times Y_{i-1})=\tau_h(X)^{(-1)^{\dim e_i}}$.
Thus
$$\tau_h(X\times Y)=\prod_{i=1}^N \tau_h(X\times Y_i,X\times Y_{i-1})=\tau_h(X)^{\sum_{i=1}^N \dim e_i}=\tau_h(X)^{\chi(Y)}$$
\end{proof}

A curious application of this, together with Mazur's Theorem \ref{thm: Mazur} is the following \cite{MilnorHaupt}.
\begin{thm} As manifolds with boundary, $M_1=L(7,1)\times B^n$ and $M_2=L(7,2)\times B^n$ are not diffeomorphic, however their interiors are diffeomorphic for $n\geq 4$.
\end{thm}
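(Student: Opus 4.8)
The statement has two halves: (a) the interiors of $M_1 = L(7,1)\times B^n$ and $M_2 = L(7,2)\times B^n$ are diffeomorphic for $n\geq 4$; (b) $M_1$ and $M_2$ are \emph{not} diffeomorphic as manifolds with boundary.

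For (a): the interior of $L(7,q)\times B^n$ is diffeomorphic to $L(7,q)\times \mathrm{int}(B^n)\cong L(7,q)\times \bR^n$. Now I would invoke Mazur's Theorem \ref{thm: Mazur} with $M_1 = L(7,1)$, $M_2 = L(7,2)$, $k=3$: these are closed $3$-manifolds, they are parallelizable (every orientable $3$-manifold is), and by Example \ref{example: L(7,1) vs L(7,2) homotopy} they are homotopy equivalent (since $2^2\cdot 1\cdot 2\equiv 1\cdot 1 \bmod 7$). Hence for $n>3$, i.e.\ $n\geq 4$, we get $L(7,1)\times\bR^n \cong L(7,2)\times\bR^n$, which is exactly the statement that the interiors are diffeomorphic.

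For (b): suppose toward a contradiction that $\Psi: M_1 \xrightarrow{\sim} M_2$ is a diffeomorphism of manifolds with boundary. Then $\Psi$ is in particular a homotopy equivalence, and I want to derive a contradiction from the Reidemeister torsion. The point is that $M_q = L(7,q)\times B^n$ deformation retracts onto $L(7,q)\times\{0\}\cong L(7,q)$, so $\pi_1(M_q)\cong \bZ_7$, and for a homomorphism $h:\bZ_7\to\bC^*$ the $R$-torsion $\tau_h(M_q)$ is defined and, by the product theorem just proved (Torsion of the product), equals $\tau_h(L(7,q))^{\chi(B^n)} = \tau_h(L(7,q))$ since $\chi(B^n)=1$. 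A diffeomorphism $\Psi$ is a simple homotopy equivalence — more concretely, it is a homeomorphism, so by Chapman's theorem (topological invariance of Whitehead torsion, cited in the introduction) $\tau(\Psi)=1$, hence $h_*\tau(\Psi)=1$ for every $h$; therefore $\tau_h(M_1)=\tau_h(M_2)$ for all $h:\pi_1\to\bC^*$ (after matching up the two copies of $\pi_1$ via $\Psi_*$). But $\tau_h(M_q)=\tau_h(L(7,q))$, and by the explicit formula (\ref{lens space torsion}) together with Franz's independence lemma (Lemma \ref{Lm: Franz lemma}), as used in the end of the proof of Theorem \ref{thm: lens space sh classification}, the torsions of $L(7,1)$ and $L(7,2)$ differ for some $7$-th root of unity $\eta$ (numerically, the values of $|\tau_\eta|^2$ listed in Example \ref{example: L(7,1) vs L(7,2) homotopy} are $1.763, 0.167, 0.069$ versus $0.349, 0.543, 0.108$). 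This is a contradiction.

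The main subtlety — and the step I would be most careful about — is the argument that a diffeomorphism $\Psi$ forces equality of the $R$-torsions, because $M_1$ and $M_2$ are homotopy equivalent but \emph{not} simple homotopy equivalent as bare spaces (indeed that is the whole content). The resolution is that we need $\Psi_*:\pi_1(M_1)\to\pi_1(M_2)$ to be the identification under which the torsions are compared: a homeomorphism is a simple homotopy equivalence by Chapman, so $\tau(\Psi)=1\in Wh(\pi_1)$, and then $\tau_h(M_2) = h_*\tau(\Psi)\cdot\tau_{h\circ\Psi_*}(M_1) = \tau_{h\circ\Psi_*}(M_1)$ for every $h$, by the relation $\tau_h(Y)=h_*(f)\,\tau_h(X)$ stated after the Bass theorem. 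Since $h\mapsto h\circ\Psi_*$ is a bijection on the set of characters of $\bZ_7$, equality for all $h$ would force the \emph{multiset} of torsion values of $L(7,1)$ to coincide with that of $L(7,2)$, which Franz's lemma rules out. Alternatively, if one prefers to avoid Chapman's theorem, one can observe directly that a diffeomorphism of compact manifolds with boundary is a simple homotopy equivalence by a handle-decomposition argument; but quoting Chapman is cleanest given what is available in these notes.
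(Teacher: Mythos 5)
Your proof is correct and follows essentially the same route as the notes: the product formula $\tau_h(L(7,q)\times B^n)=\tau_h(L(7,q))^{\chi(B^n)}=\tau_h(L(7,q))$ together with Chapman's theorem (a homeomorphism, in particular a diffeomorphism, is a simple homotopy equivalence) rules out $M_1\cong M_2$, while Mazur's theorem applied to the parallelizable, homotopy equivalent closed $3$-manifolds $L(7,1)$, $L(7,2)$ gives the diffeomorphism of the interiors $L(7,q)\times \bR^n$ for $n\geq 4$. Your additional care about composing characters with $\Psi_*$ and comparing the resulting multisets of torsion values is a point the notes leave implicit, and you resolve it correctly (the two multisets of values of $|\tau_\eta|^2$ are disjoint, so no bijection of characters can match them).
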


\begin{proof}
Indeed, torsions $\tau_h(M_q)=\tau_h(L(7,q))^{1=\chi(B^n)}$ distinguish between $M_1$ and $M_2$. (By Chapman's theorem this implies that $M_1$ and $M_2$ are actually not homeomorphic.) The interiors are diffeomorphic to $L(7,q)\times\bR^n$ and hence $int(M_1)$ and $int(M_2)$ are diffeomorphic by Theorem \ref{thm: Mazur}.
\end{proof}

Note also that for $n$ odd, $n\geq 5$, boundaries $\dd M_q= L(7,q)\times S^{n-1}$ are not homeomorphic by the torsion argument (since $\tau_h(\dd M_q)=\tau_h(L(7,q))^2$).

\subsection{Chapman's theorem}
Denote $I_j=[-1,1]$, $j=1,2,3,\ldots$ and  let $Q=\prod_{j=1}^\infty I_j$ be the Hilbert cube. Chapman proved the following \cite{Chapman74}.\footnote{We are citing after the appendix of \cite{Cohen} (p.102).}

\begin{thm}[Chapman, 1974]
If $X$ and $Y$ are finite CW-complexes then $f: X\ra Y$ is a simple homotopy equivalence if and only if $f\times\mr{id}_Q: X\times Q\ra Y\times Q$ is homotopic to a homeomorphism of $X\times Q$ onto $Y\times Q$.
\end{thm}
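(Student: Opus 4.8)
The plan is to deduce the statement from the theory of Hilbert cube manifolds ($Q$-manifolds); as indicated in the preface, only a sketch is feasible, since the proof rests on several substantial theorems of infinite-dimensional topology. I will take as black boxes the following facts, due to Chapman and West: (1) for any finite CW-complex $X$ the product $X\times Q$ is a compact $Q$-manifold; (2) every compact $Q$-manifold $N$ is \emph{triangulable}, i.e. $N\cong K\times Q$ for a finite polyhedron $K$, and $K\times Q\cong L\times Q$ precisely when $K$ and $L$ are simple homotopy equivalent; (3) the \emph{$\alpha$-approximation theorem}: a homeomorphism between $Q$-manifolds can be approximated arbitrarily closely by homeomorphisms that are ``clean'' with respect to a prescribed handle decomposition. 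Throughout I also use that $Q\cong Q\times[0,1]\cong Q\times Q$, that $Q$ is contractible so $\pi_1(X\times Q)\cong\pi_1(X)$ and hence $Wh(\pi_1(X\times Q))\cong Wh(\pi_1(X))$, and the homotopy invariance and composition formulas for $\tau$ from Proposition~\ref{prop: tau of maps properties}.

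For the ``only if'' direction, suppose $f\colon X\to Y$ is a simple homotopy equivalence. By Whitehead's characterization (Theorem~\ref{thm: simple homotopy maps}), $f$ is homotopic to a finite composition of inclusions of elementary expansions and their retractions; by the composition formula it therefore suffices to treat a single elementary expansion $\iota\colon X\hra X'=X\cup e^{(n)}\cup e^{(n-1)}$ and show that $\iota\times\mr{id}_Q$ is homotopic to a homeomorphism. Passing to a subdivision, the expansion is a polyhedral elementary collapse $X'\searrow X$ across a cell of the shape $\Delta\times[0,1]\searrow\Delta\times\{0\}$; multiplying by $Q$ and absorbing the collapsing direction via $[0,1]\times Q\cong Q$ yields a homeomorphism $X'\times Q\cong X\times Q$ which is the identity on $X\times Q$. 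Since both this homeomorphism and $\iota\times\mr{id}_Q$ are inclusions of a deformation retract, they are homotopic, so $f\times\mr{id}_Q$ is homotopic to a homeomorphism. (Retractions associated to elementary collapses are handled by taking homotopy inverses.)

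For the ``if'' direction, which is topological invariance of Whitehead torsion, suppose $h\colon X\times Q\to Y\times Q$ is a homeomorphism homotopic to $f\times\mr{id}_Q$. Since $Q$ is contractible, the relative cellular chain complex over $\bZ[\pi_1]$ computing $\tau(f\times\mr{id}_Q)$ (using a CW-type structure on $X\times Q$ supplied by fact (2)) is a based chain complex equivalent to $C_\bullet(\widetilde{M}_f,\widetilde X)$, so $\tau(f\times\mr{id}_Q)=\tau(f)$ under the identification $Wh(\pi_1(X\times Q))\cong Wh(\pi_1(X))$; by homotopy invariance $\tau(f)=\tau(f\times\mr{id}_Q)=\tau(h)$. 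It remains to show that a homeomorphism $h$ of $Q$-manifolds has $\tau(h)=1$. Triangulate $X\times Q\cong K\times Q$ and $Y\times Q\cong L\times Q$ by fact (2); choosing a handle decomposition of $L\times Q$ and invoking the $\alpha$-approximation theorem, approximate $h$ by a homeomorphism carrying handles to handles up to controlled homotopy, then peel handles off one at a time. Each step is an elementary expansion/collapse together with a homeomorphism of the attached pieces whose torsion one computes (using the cell-absorption phenomenon above) to be trivial; the telescoping product of these contributions gives $\tau(h)=1$. Hence $\tau(f)=1$ and $f$ is a simple homotopy equivalence.

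The main obstacle is precisely the last claim, that a homeomorphism of $Q$-manifolds has trivial Whitehead torsion. Proving it requires the full apparatus of infinite-dimensional topology --- the triangulation and classification theorems for $Q$-manifolds, handle decompositions, and especially the $\alpha$-approximation theorem --- together with a careful bookkeeping of torsions along a handle-by-handle straightening of the homeomorphism. Each of these ingredients is itself a deep theorem, which is why this result is quoted here rather than proved in detail.
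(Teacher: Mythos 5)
The paper does not actually prove this theorem: the preface states explicitly that Chapman's theorem is quoted without proof, and the statement is cited after the appendix of \cite{Cohen}. So there is no proof in the text to compare with, and your sketch must be judged on its own terms; as it stands it has a genuine gap, indeed a circularity, exactly at the hard point. Your black box (2) -- ``$K\times Q\cong L\times Q$ precisely when $K$ and $L$ are simple homotopy equivalent'' -- is Chapman's classification theorem for compact $Q$-manifolds, which is not an independent input but is proved together with (and contains the content of) the statement being discussed; assuming it is essentially assuming topological invariance of Whitehead torsion. Moreover, even granted that black box, it only yields that \emph{some} simple homotopy equivalence $X\ra Y$ exists, not that the given $f$ is one, so the burden falls entirely on your remaining argument that a homeomorphism $h$ of compact $Q$-manifolds has $\tau(h)=1$. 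That argument -- ``choose a handle decomposition, invoke the $\alpha$-approximation theorem, peel handles off one at a time with telescoping torsion contributions'' -- is not a proof but a gesture at one: the Chapman--Ferry $\alpha$-approximation theorem is a later and deeper result whose standard proofs presuppose rather than deliver topological invariance, and the ``careful bookkeeping'' you defer is precisely where all the difficulty of Chapman's argument lives. There is also a definitional gap: $\tau(f\times\mr{id}_Q)$ and $\tau(h)$ are not defined by the machinery of this course, since $X\times Q$ is not a finite CW-complex; defining them through a triangulation $X\times Q\cong K\times Q$ and then claiming independence of the triangulation is again the theorem itself.

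Two smaller points. In the ``only if'' direction, ``passing to a subdivision'' to make an elementary CW expansion polyhedral is not available: CW attaching maps are arbitrary continuous maps, and handling general expansions requires the $Q$-manifold machinery (mapping cylinders of maps between compacta crossed with $Q$, Z-set unknotting, $[0,1]\times Q\cong Q$ absorption supported near the new cells) rather than a subdivision argument; this direction is genuinely the easier half, but even it is a theorem of infinite-dimensional topology, not a one-line absorption. Finally, the reduction of the ``only if'' direction to single expansions/collapses via Theorem~\ref{thm: simple homotopy maps} and Proposition~\ref{prop: tau of maps properties} is fine and is the standard first step; if you wish to present a faithful sketch, the honest structure is: (easy half) expansions become homeomorphisms after stabilizing by $Q$; (hard half) Chapman's proof that homeomorphisms of compact $Q$-manifolds are simple, which cannot be reduced to the quoted black boxes without circularity.
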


\begin{corollary}[Topological invariance of Whitehead torsion]
If $f:X\ra Y$ is a homeomorphism then $f$ is a simple homotopy equivalence.
\end{corollary}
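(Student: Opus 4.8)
The plan is to read this off directly from Chapman's theorem, which we take as given; the corollary is essentially a one-line deduction carrying no extra content of its own.

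First I would note that a homeomorphism $f : X \ra Y$ between finite CW-complexes is in particular a homotopy equivalence, so the notion of ``simple homotopy equivalence'' applies to $f$ and we are entitled to invoke Chapman's theorem.

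Next I would form the product map $f \times \mr{id}_Q : X \times Q \ra Y \times Q$, where $Q$ is the Hilbert cube. Since $f$ is a homeomorphism and $\mr{id}_Q$ is a homeomorphism, their product is a homeomorphism of $X \times Q$ onto $Y \times Q$. In particular $f \times \mr{id}_Q$ is homotopic to a homeomorphism --- namely to itself, via the constant homotopy --- so the condition appearing on the right-hand side of Chapman's theorem is satisfied.

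Finally, I would apply Chapman's theorem in the direction ``$f \times \mr{id}_Q$ homotopic to a homeomorphism $\Rightarrow$ $f$ a simple homotopy equivalence'' to conclude that $f$ itself is a simple homotopy equivalence. The only genuine obstacle in this circle of ideas is the proof of Chapman's theorem, which is deep and is omitted here; granting it, the corollary needs nothing further.
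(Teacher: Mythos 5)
Your argument is correct and is exactly the paper's own proof: pass to $f\times\mr{id}_Q$, observe it is a homeomorphism (hence trivially homotopic to one), and apply the ``if'' direction of Chapman's theorem. Nothing further is needed.
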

\begin{proof} Indeed, $f\times\mr{id}_Q: X\times Q\ra Y\times Q$ is a homeomorphism, hence by Theorem $f$ is a simple homotopy equivalence.  \end{proof}

\begin{corollary} Finite CW-complexes $X$ and $Y$ have same simple homotopy type iff $X\times Q$ is homeomorphic to $Y\times Q$.
\end{corollary}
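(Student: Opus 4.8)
The plan is to obtain this as a direct corollary of Chapman's theorem: the forward implication is immediate, and the reverse implication needs only a short formal homotopy, using nothing about $Q$ beyond the fact that it is contractible.

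First, suppose $X$ and $Y$ have the same simple homotopy type. Then the associated deformation $f:X\ra Y$ (the composite of the elementary expansions and collapses, realized by inclusions and cellular retractions) is by definition a simple homotopy equivalence. By Chapman's theorem, $f\times\mr{id}_Q$ is homotopic to a homeomorphism of $X\times Q$ onto $Y\times Q$; in particular $X\times Q$ and $Y\times Q$ are homeomorphic.

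Conversely, let $g:X\times Q\ra Y\times Q$ be a homeomorphism. Fix $q_0\in Q$, write $\iota_X:X\ra X\times Q$, $x\mapsto (x,q_0)$, and let $p_Y:Y\times Q\ra Y$, $p_Q:Y\times Q\ra Q$ be the projections; set $f:=p_Y\circ g\circ \iota_X:X\ra Y$. The key claim is that $f\times\mr{id}_Q$ is homotopic to $g$. Granting this, $f\times\mr{id}_Q$ is homotopic to a homeomorphism, so Chapman's theorem yields that $f$ is a simple homotopy equivalence, and hence $X$ and $Y$ have the same simple homotopy type. To prove the claim I would deform $g$ to $f\times\mr{id}_Q$ in two steps. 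First, since $Q$ is contractible, the $Q$-coordinate $p_Q\circ g:X\times Q\ra Q$ is homotopic to the standard projection $X\times Q\ra Q$; homotoping only this coordinate (leaving $p_Y\circ g$ fixed) carries $g$ to the map $(x,q)\mapsto (p_Y g(x,q),\,q)$. Second, picking a contraction $c:Q\times[0,1]\ra Q$ with $c(\cdot,0)=\mr{id}_Q$ and $c(\cdot,1)\equiv q_0$, the assignment $(x,q,t)\mapsto (p_Y g(x,c(q,t)),\,q)$ is a homotopy carrying this to $(x,q)\mapsto (p_Y g(x,q_0),\,q)=(f(x),q)=(f\times\mr{id}_Q)(x,q)$. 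Concatenating the two homotopies proves the claim.

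The argument is entirely formal once Chapman's theorem is in hand, and that theorem is the black box doing all the work; the only minor points needing verification are that one may homotope a single factor of a map into a product while keeping the other fixed, and that the homotopies above are continuous --- both clear, since each homotopy is assembled from $g$, the coordinate projections, and a fixed contraction of $Q$. Thus there is no genuine obstacle in this corollary; the entire difficulty is concentrated in the theorem of Chapman, which we invoke without proof.
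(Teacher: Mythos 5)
Your proof is correct and follows essentially the same route as the paper: both directions are immediate applications of Chapman's theorem, and in the converse you take $f$ to be the composite $X\xra{\times q_0}X\times Q\xra{g}Y\times Q\xra{p_Y}Y$ and observe $f\times\mr{id}_Q\simeq g$, exactly as in the text. The only difference is cosmetic: you verify the homotopy $f\times\mr{id}_Q\simeq g$ explicitly via the contractibility of $Q$ (which the paper asserts without proof), and in the forward direction you apply Chapman directly to the deformation rather than reducing to a single elementary expansion.
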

\begin{proof}
``If'' part: assume $F:X\times Q\xra{\simeq} Y\times Q$ is a homeomorphism. Denote by $f$ the composition
$$X\xra{\times 0} X\times Q\xra{F} Y\times Q \xra{\pi} Y$$
Then $f\times \mr{id}_Q$ is homotopic to $F$. Hence, by Theorem, $f$ is a simple homotopy equivalence.

For the ``only if'' part, it is sufficient to consider the case when $Y$ is an elementary expansion of $X$. By Theorem, for $\iota:X\hra Y$ the inclusion, the map $\iota\times \mr{id}_Q:X\times Q\ra Y\times Q$ is homotopic to a homeomorphism. In particular, homeomorphism $X\times Q\simeq Y\times Q$ exists.
\end{proof}

\subsection{$h$-cobordisms}
\begin{definition}
Let $M$ be a smooth compact connected manifold with boundary $\dd M=N\sqcup N'$ such that both $N$ and $N'$ are deformation retracts of $M$. Then the triple $(M;N,N')$ is called an {\it $h$-cobordism}.
\end{definition}
Note that for an $h$-cobordism $\pi_1(M)\simeq \pi_1(N)\simeq \pi_1(N')$.

\begin{thm}[Smale, 1962]
If $(M;N,N')$ is an $h$-cobordism, with $\pi_1$ trivial and $\dim M\geq 6$, then $M$ is diffeomorphic to the product $M\simeq N\times [0,1]$.
\end{thm}

\begin{thm}[Barden, Mazur, Stallings]
\begin{enumerate}[(i)]
\item (``$s$-cobordism theorem''.) Let $(M;N,N')$ be an $h$-cobordism, $\dim M \geq 6$. Then the torsion $\tau(M,N) \in Wh(\pi_1)$ is trivial iff $M$ is diffeomorphic to $N\times [0,1]$.
\item If $N$ is a closed smooth manifold of dimension $\geq 5$ and $\tau_0\in Wh(\pi_1(N))$ any element, then there exists an $h$-cobordism $(M;N,N')$ with $\tau(M,N)=\tau_0$.
\item If $(M;N,N')$ and $(M_1;N,N'_1)$ are two $h$-cobordisms of dimension $\geq 6$ with coinciding torsion $\tau(M,N)=\tau(M_1,N)\in Wh(\pi_1)$, then $M_1$ is diffeomorphic to $M$ relative to $N$.
\end{enumerate}
\end{thm}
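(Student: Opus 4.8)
The plan is to prove the three parts by handle theory, following Smale's argument for the simply connected case adapted to arbitrary $\pi_1$ (this is the Barden--Mazur--Stallings proof; cf.\ Milnor's lectures on the $h$-cobordism theorem and Kervaire's account of the $s$-cobordism theorem). First I would put a Morse function $f\colon M\ra[0,1]$ with $f^{-1}(0)=N$, $f^{-1}(1)=N'$ and no critical points on $\dd M$, obtaining a handle decomposition of $M$ relative to $N$. Using that $N\hra M$ induces an isomorphism on $\pi_1$ and on homology with $\bZ[\pi_1]$-coefficients, I would perform handle trading and rearrangement (cancelling $0$- and $1$-handles by connectivity, trading low- and high-index handles for middle ones, commuting handles past each other) to reach a decomposition with handles in only two consecutive middle indices $k$ and $k+1$; it is in this normalization that the hypothesis $\dim M\ge 6$ first enters. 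Passing to the universal cover, the relative cellular chain complex $C_\bt(\widetilde M,\widetilde N)$ is then a complex of free $\bZ[\pi_1]$-modules concentrated in degrees $k+1$ and $k$, with preferred bases the (lifted) handle cores; the $h$-cobordism condition makes it acyclic, so $\dd\colon C_{k+1}\ra C_k$ is an isomorphism, and by Milnor's definition of torsion its class in $Wh(\pi_1)$ equals $\tau(M,N)^{\pm1}$. Geometrically $\dd$ is the matrix of $\bZ[\pi_1]$-valued intersection numbers of the belt spheres of the $k$-handles with the attaching spheres of the $(k+1)$-handles in the middle level $f^{-1}(k+1/2)$.

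The core of the argument is the dictionary between algebra on the matrix $[\dd]\in GL(\bZ[\pi_1])$ and modifications of the handle decomposition that do not change $M$ rel $N$: sliding one $(k+1)$-handle over another realizes a left elementary row operation; sliding one $k$-handle over another realizes a right elementary column operation; creating or cancelling a trivial complementary $(k,k+1)$-handle pair realizes stabilization $A\mapsto\mr{diag}(A,1)$; reorienting a handle or changing its connecting path to the basepoint multiplies a row or column by a trivial unit $\pm g$, $g\in\pi_1$. Crucially, if an attaching sphere meets a belt sphere with algebraic intersection $\pm g$ but several geometric intersection points, the \emph{Whitney trick} isotopes the attaching sphere to remove the spurious intersections in cancelling pairs, after which the two handles cancel. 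This last step is the one I expect to be the main obstacle, and it is exactly why $\dim M\ge 6$ is needed: the Whitney disk is embedded and its boundary arc can be slid off the spheres only because the middle level has dimension $\ge5$ and the relevant spheres have dimension $\ge2$ in sufficiently high codimension. Granting the dictionary, part (i) follows: if $\tau(M,N)=1$ then $[\dd]$ reduces to the identity modulo elementary operations, stabilizations and trivial units, so after the corresponding handle moves every $(k+1)$-handle cancels a $k$-handle by the Whitney trick, leaving a handle-free cobordism, i.e.\ $M\cong N\times[0,1]$. Conversely, if $M\cong N\times[0,1]$, then $\tau(M,N)=\tau(N\times[0,1],\,N\times\{0\})=1$, since a cylinder elementarily collapses onto its base, so the inclusion is a simple homotopy equivalence.

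For the realization statement (ii) I would start from the trivial cobordism $N\times[0,1]$ and repeat, in the smooth category, the construction used for the realization of Whitehead torsion of pairs (the argument proving part \ref{thm: simple homotopy pairs ii} of Theorem \ref{thm: simple homotopy pairs} above): given $\tau_0$ represented by $a\in GL(m,\bZ[\pi_1(N)])$, attach to $N\times\{1\}$ first $m$ trivial handles of a middle index $r$ (possible since $\dim N\ge5$), then $m$ handles of index $r+1$ whose attaching spheres represent in the $\pi_1$-module $\pi_r$ of the intermediate level the classes $\sum_j a_{ij}[S^r_j]$; general position makes these spheres embedded and framed. Invertibility of $a$ makes the resulting relative chain complex acyclic, so the new upper boundary $N'$ is a deformation retract of the new manifold $M$, which together with $N\hra M$ being a deformation retract makes $(M;N,N')$ an $h$-cobordism; by construction its relative chain complex has torsion $[a]^{\pm1}$, so choosing the parity of $r$ appropriately gives $\tau(M,N)=\tau_0$.

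Finally, for (iii) I would normalize the handle decompositions of both $M$ and $M_1$ rel $N$ as in the first step, obtaining boundary matrices $A$ and $A_1$ over $\bZ[\pi_1]$ that represent the equal elements $\tau(M,N)=\tau(M_1,N)$ of $Wh(\pi_1)$. After stabilizing both (adding trivial handle pairs, which changes nothing), $A_1$ is obtained from $A$ by a sequence of elementary row and column operations and multiplications by trivial units $\pm g$; realizing each such operation by the corresponding handle slide or reorientation transforms the (stabilized) handle decomposition of $M$ into that of $M_1$, yielding a diffeomorphism $M_1\cong M$ rel $N$. Equivalently one can phrase (iii) via the torsor structure: stacking $h$-cobordisms adds torsions by the multiplicativity Lemma \ref{Lm: multiplicativity of Whitehead torsions of pairs}, every $h$-cobordism has an inverse (its reverse, up to a product, by part (i) together with the conjugation/duality formula for $\tau$), and cancelling $M_1$ against the inverse of $M$ produces an $h$-cobordism of vanishing torsion, hence a product by (i), from which the diffeomorphism $M_1\cong M$ rel $N$ follows.
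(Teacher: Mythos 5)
The paper does not prove this theorem at all: it is quoted as a known result of Barden--Mazur--Stallings (the notes explicitly omit proofs of the hard theorems), so there is no internal argument to compare yours against. Your outline is the standard handle-theoretic proof (Smale's $h$-cobordism argument adapted to nontrivial $\pi_1$, as in Kervaire's account), and as an outline it is essentially correct: normalization of the handle decomposition into two adjacent middle indices, identification of the boundary matrix of $C_\bt(\widetilde M,\widetilde N)$ with a representative of $\tau(M,N)$, the dictionary between handle slides/stabilizations/reorientations and elementary operations, stabilization and multiplication by trivial units $\pm g$, geometric cancellation via the Whitney trick for (i), the handle-attachment realization of a given $a\in GL(m,\bZ[\pi_1])$ for (ii) (parallel to the CW realization argument for Theorem \ref{thm: simple homotopy pairs}), and the torsor/stacking argument with inverse cobordisms for (iii).

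Two cautions about where the sketch hides the real work. First, the Whitney-trick step needs more than ``dimension $\geq 5$ and spheres of dimension $\geq 2$'': one must choose the middle index $k$ so that both the attaching spheres (dimension $k$) and the belt spheres (dimension $n-k-2$ in the middle level of dimension $n-1$) have codimension $\geq 3$, so that removing them does not change $\pi_1$ of the level and the Whitney disks can be embedded and their interiors pushed off; this is exactly where $\dim M\geq 6$ is used, and also where the preliminary handle trading (killing index $0,1$ and $n,n-1$ handles using the $\pi_1$-isomorphisms) requires a genuine argument rather than a remark. Second, your first version of (iii) -- ``realize the algebraic equivalence of the two boundary matrices by handle moves and conclude the decompositions agree'' -- is too quick as stated, because a handle decomposition is determined by isotopy classes of framed attaching maps, not by the intersection matrix alone; making this precise again requires the Whitney/unknotting machinery. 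Your second formulation of (iii), via inverse $h$-cobordisms, the composition formula for torsions (Lemma \ref{Lm: multiplicativity of Whitehead torsions of pairs}) and part (i), is the cleaner and standard route and closes this gap.
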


{\bf Example.}\cite{MilnorHaupt} 7-manifolds $L(7,1)\times S^4$ and $L(7,2)\times S^4$ are $h$-cobordant but not diffeomorphic.

\subsection{Poincar\'e duality for torsions}
\subsubsection{Poincar\'e duality for cellular decompositions of manifolds}
\begin{definition} A CW-complex $X$ for which characteristic maps for cells $\phi_\alpha: B^k \ra X$ are homeomorphisms onto the image of $\chi$ on the full closed ball $B^k$ (as opposed to its interior), is called a {\it ball complex}.
\end{definition}
In particular, for a ball complex, matrix elements of the boundary map in cellular chains 
$\in\{0,\pm 1\}$ (in cellular basis).

Let $M$ be an oriented smooth $n$-manifold without boundary, and $X$ a cellular decomposition of $M$ which is a ball complex. Then there exists another cellular decomposition $X^\vee$ of $M$ such that for every $k$-cell $e^k$ of $X$ there is a {\it dual} $(n-k)$-cell $\check{e}^{n-k}=\varkappa(e^k)$ of $X^\vee$ intersecting $e^k$ transversally in a single point and 
contained in the open star of $e^k$ in $X$ (union of top cells containing $e^k$). Map $\varkappa$ is assumed to be a bijection between cells of $X$ and cells of $X^\vee$.
Thus we have an isomorphism of free abelian groups
\be\varkappa:\;C_k(X;\bZ)\xra{\sim} C_{n-k}(X;\bZ) \label{varkappa}\ee
We orient cells of $X$ somehow and induce an orientation on cells of $X^\vee$ by requiring that the intersection pairing is $e^k\cdot \check{e}^{n-k}=1$. Then we have the non-degenerate intersection pairing between chains
$$\cdot:\quad C_k(X;\bZ)\otimes C_{n-k}(X^\vee;\bZ)\ra \bZ$$
It induces an isomorphism between $k$-chains and $(n-k)$-cochains
$$C_k(X)\xra{\sim} C^{n-k}(X^\vee)=\mr{Hom}(C_{n-k}(X^\vee),\bZ)  $$
which gives a {\it chain} isomorphism $C_\bt(X)\xra{\sim} C^{n-\bt}(X^\vee)$ and in turn an isomorphism bewteen homology and cohomology
$$ H_k(M)\cong H_k(X)\xra{\sim} H^{n-k}(X^\vee)\cong H^{n-k}(M)$$
Note that $\varkappa$ in (\ref{varkappa}) is not a chain map.
For $e^k_i$ and $e^{k-1}_j$ a pair of cells of $X$, let $\dd_{ji}^X$ be the corresponding matrix element of $\dd^X: C_k(X)\ra C_{k-1}(X)$. Then the matrix element of $\dd^{X^\vee}: C^{n-k+1}(X^\vee)\ra C^{n-k}(X^\vee)$ between $\varkappa(e^{k-1}_j)$ and $\varkappa(e^k_i)$ is $\dd^{X^\vee}_{ij}=\dd_{ji}^X$.\footnote{
In fact, there is a sign: $\dd^{X^\vee}_{ij}=(-1)^k\dd_{ji}^X$. We ignore this sign, since it depends only on the degree $k$ and not on particular cells, and therefore is irrelevant for the torsion.
} Thus, matrices of $\dd^X$ and $\dd^{X^\vee}$ are mutually transpose.

The dual cell decomposition $X^\vee$ can be explicitly constructed from $X$ by passing to the barycentric subdivision $\beta(X)$ (which is a simplicial complex which combinatorially is the nerve of the partially ordered set of cells of $X$ by adjacency, with simplices of $\beta(X)$ corresponding to strictly increasing sequences of cells of $X$). Top cells of $X^\vee$ are constructed as stars of vertices of $X$ in $\beta(X)$. Cell $\varkappa(e^k)$ is constructed as the intersection of stars of vertices of the closed cell $E^k$ of $X$ in $\beta(X)$, intersected with the star of $e^k$ in $X$:
$$\varkappa(e^k)=\bigcap_{v\in \sk_0(E^k)} \mr{star}_{\beta(X)}(v)\quad \cap \mr{star}_X (e^k)$$

\subsubsection{Case of manifolds with boundary}\label{sec: Poincare duality with boundary}
For $M$ an oriented $n$-manifold with boundary $\dd M$ and $X$ a cellular decomposition of $M$, a dual cellular decomposition $X^\vee$ consists of an $(n-k)$-cell $\varkappa(e^k)\subset M-\dd M$ for every $k$-cell of $M$ and, additionally, for $e^k\subset \dd M$, an $(n-k-1)$-cell $\varkappa_\dd(e^k)$ of $\dd M$. Intersection pairing is a non-degenerate pairing
$$\cdot:\quad C_k(X)\otimes C_{n-k}(X^\vee,\dd M)\ra \bZ$$

More generally, if $\dd M=\dd_1 M\sqcup \dd_2 M$, one
introduces a new CW-complex $Y$ as $X^\vee$ minus images of cells of $\dd_1 M$ in $X$ under $\varkappa,\varkappa_\dd$. The underlying topological space of $Y$ is homeomorphic to $M$.
There is a non-degenerate pairing
$$\cdot:\quad C_k(X,\dd_1 M)\otimes C_{n-k}(Y,\dd_2 M)\ra \bZ$$
which induces an chain isomorphism between chains and cochains
$$C_\bt (X,\dd_1 M) \xra{\sim} C^{n-\bt}(Y,\dd_2 M)$$
and in turn on the level of (co-)homology:
$$\underbrace{H_\bt (X,\dd_1 M)}_{H_\bt(M,\dd_1 M)} \xra{\sim} \underbrace{H^{n-\bt}(Y,\dd_2 M)}_{H^{n-\bt}(M,\dd_2 M)}$$
-- the usual Poincar\'e-Lefschetz duality.

\subsubsection{Algebraic duality lemma}
\begin{definition}\label{def: dual module}
Let $a\mapsto \bar a$ be an anti-involution of a ring $A$. For $F$ a left 
$A$-module, we endow the $F^\vee=\mr{Hom}(F,A)$ with a left $A$-module structure by setting
$$(a\phi)(x)= \phi(x)\bar a$$
We call $F^\vee$ a {\it dual module} to $F$.
\end{definition}

Note that $a$ induces an anti-involution $m\mapsto m^\dagger$, $(m^\dagger)_{ij}=\bar m_{ji}$ on matrices over $A$ and, in turn an involution on $K_1(A)$.

\begin{lemma}\label{Lm: alg duality lemma}
Let $C_n\ra C_{n-1}\ra \cdots \ra C_1\ra C_0$ be an acyclic based chain complex of free left $A$-modules. Then the dual chain complex $C^\vee$ with $(C^\vee)_{n-k}=(C_k)^\vee$ and the boundary maps $\dd^\vee_{C^\vee_{n-k+1}\ra C^\vee_{n-k}}=(\dd_{C_k\ra C_{k-1}})^\dagger$ is also an acyclic based chain complex of free left $A$-modules and the torsions are related by
$$\tau(C^\vee)=\overline{\tau(C)}^{(-1)^{n+1}}\qquad\in \bar K_1(A)$$
\end{lemma}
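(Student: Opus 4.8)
The plan is to express both torsions through a single chain contraction and then read off the relation from the way dualization reindexes degrees. Since $C$ is acyclic and the $C_i$ are free, hence projective, there is a chain contraction $\kappa\colon C_\bt\ra C_{\bt+1}$ with $\dd\kappa+\kappa\dd=\mr{id}$ and $\kappa^2=0$, as in the discussion preceding (\ref{def: torsion via chain contraction}). Applying the contravariant functor $\mr{Hom}(-,A)$ with the twisted left-module structures of Definition \ref{def: dual module} sends $\dd$ to $\dd^\vee=\dd^\dagger$ and $\kappa$ to a degree $+1$ operator $\kappa^\dagger$ on $C^\vee$. Because this functor reverses composition, taking $\dagger$ of $\dd\kappa+\kappa\dd=\mr{id}$ and of $\kappa^2=0$ gives $\dd^\vee\kappa^\dagger+\kappa^\dagger\dd^\vee=\mr{id}$ and $(\kappa^\dagger)^2=0$; thus $\kappa^\dagger$ is a chain contraction of $C^\vee$, which is therefore acyclic, and $C^\vee$ is a based complex of free left $A$-modules via the dual bases $c_k^*$. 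Hence formula (\ref{def: torsion via chain contraction}) computes both $\tau(C)$ and $\tau(C^\vee)$.

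Next I would do the bookkeeping. Let $P$ be the matrix of $(\dd+\kappa)_{even\ra odd}\colon C_{even}\ra C_{odd}$ in the bases $c_{even}$, $c_{odd}$, so $\tau(C)=[P]\in\bar K_1(A)$; the inverse isomorphism $(\dd+\kappa)_{odd\ra even}$ has matrix $P^{-1}$. Now track parities: since $(C_k)^\vee=C^\vee_{n-k}$, the summand $(C_{even})^\vee$ is the even part of $C^\vee$ when $n$ is even and the odd part when $n$ is odd, and symmetrically for $(C_{odd})^\vee$. Consequently $(\dd^\vee+\kappa^\dagger)_{even\ra odd}$ on $C^\vee$ is, in the dual bases, the $\dagger$-transpose either of $(\dd+\kappa)_{even\ra odd}$ (when $n$ is odd), with matrix $P^\dagger$, or of $(\dd+\kappa)_{odd\ra even}$ (when $n$ is even), with matrix $(P^{-1})^\dagger=(P^\dagger)^{-1}$.

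Finally I would invoke the fact recorded in the Conjugation subsection that the involution $\bar{\ }$ on $K_1(A)$ induced by the anti-involution of $A$ is precisely $[M]\mapsto[M^\dagger]$ and is a group automorphism. Then $\tau(C^\vee)=[P^\dagger]=\overline{\tau(C)}$ for $n$ odd, while $\tau(C^\vee)=[(P^\dagger)^{-1}]=\overline{[P]}^{-1}=\overline{\tau(C)}^{-1}$ for $n$ even; in both cases this equals $\overline{\tau(C)}^{(-1)^{n+1}}$, as claimed.

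The step I expect to be the real work is the middle one: pinning down the reindexing $C^\vee_j=(C_{n-j})^\vee$ together with the induced even$\leftrightarrow$odd swap, and verifying carefully that passing to the dual bases produces the $\dagger$-transpose (conjugate transpose) of the transition matrix rather than the plain transpose — it is exactly this twist by the anti-involution of $A$ that makes $\tau(C^\vee)$ the conjugate of $\tau(C)$. One should also note that independence of (\ref{def: torsion via chain contraction}) from the choice of $\kappa$ (the Remark after Lemma \ref{Lm: torsion via kappa}) legitimizes using one and the same $\kappa$ for both complexes, so no compatibility issue between a contraction of $C$ and one of $C^\vee$ arises.
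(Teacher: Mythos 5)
Your argument is correct, but it takes a genuinely different route from the paper's. The paper proves the lemma directly from Milnor's definition (\ref{def: torsion}): if $a_k$ is the transition matrix from the combined basis $b_k b_{k-1}$ to $c_k$ in $C_k$, then in the dual module $C^\vee_{n-k}$ the corresponding dual bases are related by the inverse adjoint, $(a^\vee)_{n-k}=(a_k^{-1})^\dagger$, hence $[(a^\vee)_{n-k}]=\overline{[a_k]}^{\,-1}$, and the alternating product reindexed by $k\mapsto n-k$ telescopes at once to $\tau(C^\vee)=\overline{\tau(C)}^{(-1)^{n+1}}$. You instead dualize a chain contraction and use the single-matrix formula (\ref{def: torsion via chain contraction}), extracting the exponent $(-1)^{n+1}$ from whether $k\mapsto n-k$ preserves or swaps parity. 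Both proofs hinge on the same two facts -- passing to dual bases over a noncommutative $A$ turns a matrix into its $\dagger$-adjoint (not the plain transpose), and the degree reversal is responsible for the sign -- but your version buys an argument that never chooses bases in the boundary modules $B_i$, so you avoid checking that the dual of a combined basis $b_k b_{k-1}$ is again a combined basis adapted to the boundaries of $C^\vee$ (a point the paper's proof glosses over), at the price of a case split on the parity of $n$ and of verifying the block bookkeeping for $(\dd^\vee+\kappa^\dagger)_{even\ra odd}$, which you do correctly. One small remark: you do not actually need the Remark on independence of $\kappa$; since $\kappa^2=0$ gives $(\kappa^\dagger)^2=0$, part (\ref{Lm: torsion via kappa, 1}) of Lemma \ref{Lm: torsion via kappa} applies verbatim to $C^\vee$ with the contraction $\kappa^\dagger$ and the dual preferred bases, which is all your computation uses.
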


\begin{proof} If $a_k\in GL(A)$ is the transition matrix from basis $b_k b_{k-1}$ to basis $c_k$ in the chain module $C_k$ (in the notations of (\ref{def: torsion})), then the transition matrix between dual bases $(b_k)^\vee (b_{k-1})^\vee$ and $(c_k)^\vee$ in the dual module $C^\vee_{n-k}$ is the inverse adjoint of $a_k$, i.e. $(a^\vee)_{n-k}=a_k^{-1\dagger}$. Projecting to $\bar K_1(A)$, we have $[(a^\vee)_{n-k}]=\overline{[a_k]}^{-1}$. Therefore, for the torsion we have
$$\tau(C^\vee)=\prod_k [(a^\vee)_k]^{(-1)^k}= \prod_k [(a^\vee)_{n-k}]^{(-1)^{n-k}}=\prod_k \overline{[a_k]}^{(-1)^{n-k+1}}=\overline{\tau(C)}^{(-1)^{n+1}} $$
\end{proof}

\subsubsection{Duality for the R-torsion of a closed manifold}
For $M$ a smooth closed oriented $n$-manifold, let $X$ be a cell decomposition of $M$ (which is assumed to be a ball complex); it lifts to a $\pi_1(M)$-equivariant cell decomposition $\tilde X$ of the universal cover $\tilde M$. Denote $\tilde X^\vee$ the dual cell decomposition of $\tilde M$ -- the equivariant lift of the $X^\vee$, which is  dual to $X$.

Chain complexes $C_\bt(\tilde X;\bZ)$ and $C_\bt(\tilde X^\vee;\bZ)$ can be viewed as complexes of free left modules over $\bZ[\pi_1(M)]$. Moreover, $C_{n-k}(\tilde X^\vee)$ is a dual $\bZ[\pi_1]$-module to $C_k (\tilde X)$ with respect to the anti-involution on $\bZ[\pi_1]$ induced from the map $\sigma\mapsto \sigma^{-1}$ on $\pi_1$.\footnote{
For $\tilde e$ a $k$-cell of $\tilde X$ and $\tilde e^\vee$ a $(n-k)$-cell of $\tilde X^\vee$, we set
$\langle \tilde e^\vee, \tilde e\rangle=\pm \sigma$ iff $\tilde e^\vee\cdot \sigma^{-1}(e^\vee)=\pm 1$ for an element $\sigma\in \pi_1(M)$ (equivalently, for general chains, $\langle x^\vee, y\rangle=\sum_{\sigma\in\pi_1}(x^\vee\cdot \sigma^{-1}(y))\, \sigma$). This definition implies that $\langle \tilde e^\vee ,\sigma' e^\vee  \rangle = \pm \sigma'\cdot \sigma = \sigma' \langle \tilde e^\vee ,e^\vee \rangle$. Thus, $\langle \tilde e^\vee , \bt \rangle : C_k(\tilde X)\ra \bZ[\pi_1] $ is indeed a morphism of $\bZ[\pi_1]$-modules. Note also that we have $\langle \sigma' \tilde e^\vee, \tilde e \rangle= \langle \tilde e^\vee, \tilde e\rangle \cdot (\sigma')^{-1}$. Hence, $C_{n-k}(\tilde X^\vee)$ is indeed a dual $\bZ[\pi_1]$-module to $C_k(\tilde X)$, as in Definition \ref{def: dual module}.
}
Also, if $a_{ij}$ is the matrix of $\dd: C_k(\tilde X)\ra C_{k-1}(\tilde X)$ (with entries in $\bZ[\pi_1]$), i.e. $\dd \tilde e^k_i=\sum_j a_{ij} \tilde e^{k-1}_j$, then $\dd^\vee \varkappa(\tilde  e^{k-1}_{j})=\sum_i \bar a_{ij} \varkappa(\tilde e^k_i)$,\footnote{
Conjugation of matrix elements here stems from the fact that if $\sigma(\tilde e^{k-1})$ is a part of topological boundary of $\tilde e^k$, then $\sigma^{-1}(\varkappa(\tilde e^k))\subset \dd^\vee \varkappa(\tilde e^{k-1})$. (Note that the map $\varkappa$ is $\pi_1$-equivariant.)
} i.e. $\dd^\vee_{C^\vee_{n-k+1}\ra C^\vee_{n-k}}=(\dd_{C_k\ra C_{k-1}})^\dagger$.


Let $h: \pi_1(M)\ra O(m)$ be an orthogonal representation, and let
$$C'_k=\bR^m\otimes_{\bZ[\pi_1]}C_k(\tilde X;\bZ),\qquad C^{'\vee }_k=\bR^m\otimes_{\bZ[\pi_1]}C_k(\tilde X^\vee;\bZ)$$
Then the real vector space $C'_{n-k}$ is canonically dual to $C'_k$.\footnote{
The explicit pairing between $C'_{n-k}$ and $C'_k$ is:
$\langle v\otimes x^\vee , w\otimes y\rangle= \sum_{\sigma\in \pi_1} \langle v, h(\sigma)w\rangle_{\bR^m}\, (x^\vee\cdot \sigma(y))$.
}
Therefore, by Lemma \ref{Lm: alg duality lemma},
\be\tau_h(M)=\tau_h(M)^{(-1)^{n+1}}\label{duality for R-torsion}\ee
(if defined).
Hence we obtained the following.
\begin{thm} For an even dimensional smooth closed oriented manifold $M$, every $R$-torsion which is defined (the corresponding complex is acyclic), is trivial $\tau_h(M)=1$.
\end{thm}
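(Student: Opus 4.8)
The plan is to deduce the statement directly from the duality identity \eqref{duality for R-torsion} established immediately above, using only that the $R$-torsion is a positive real number. Let me recall how that identity is obtained and then specialize the exponent. Fix a cellular decomposition $X$ of $M$ that is a ball complex, lift it to a $\pi_1(M)$-equivariant ball complex $\tilde X$ of the universal cover $\tilde M$, and let $\tilde X^\vee$ be the dual decomposition. As explained in the paragraphs preceding the theorem, after the reindexing $(C^\vee)_{n-k}=C_k(\tilde X)^\vee$ the complex $C_\bt(\tilde X^\vee;\bZ)$ is the dual of $C_\bt(\tilde X;\bZ)$ in the sense of Definition \ref{def: dual module}, with respect to the anti-involution $\sigma\mapsto\sigma^{-1}$ of $\bZ[\pi_1(M)]$, the boundary operators being mutually adjoint (the sign noted there depends only on the degree, hence is irrelevant for torsion), and the cellular basis of $C_{n-k}(\tilde X^\vee)$ being, up to sign and reordering, the basis dual to the cellular basis of $C_k(\tilde X)$ under the intersection pairing. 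Tensoring over $\bZ[\pi_1(M)]$ with $\bR^m$ through the orthogonal representation $h$ turns this into a duality of based acyclic complexes of real vector spaces, and Lemma \ref{Lm: alg duality lemma} then gives
\[
\tau_h(X^\vee)=\overline{\tau_h(X)}^{\,(-1)^{n+1}}\qquad\in\ \bar K_1(\bR)\cong\bR_+ .
\]

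Next I would dispose of the two cosmetic operations on the right-hand side. First, the conjugation bar is induced by the anti-involution $m\mapsto m^\dagger$ on matrices; under the identification \eqref{bar K_1(R)} of $\bar K_1(\bR)$ with $\bR_+$ via $|\det|$ it acts trivially, since for a real matrix $A^\dagger$ is simply the transpose and $|\det A^\dagger|=|\det A|$. Second, $X$ and $X^\vee$ are two cellular decompositions of the same space $M$ with a common subdivision (the barycentric subdivision $\beta(X)$ refines both), so invariance of the $R$-torsion under subdivision gives $\tau_h(X^\vee)=\tau_h(X)=\tau_h(M)$, this being independent of the cell structure once it is defined. Substituting both facts into the displayed identity reproduces $\tau_h(M)=\tau_h(M)^{(-1)^{n+1}}$, i.e. \eqref{duality for R-torsion}.

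Finally I would specialize to $n=\dim M$ even: then $(-1)^{n+1}=-1$, so $\tau_h(M)=\tau_h(M)^{-1}$, that is $\tau_h(M)^2=1$ in the multiplicative group $\bR_+$. Since $1$ has a unique positive square root, $\tau_h(M)=1$, as claimed. There is no real obstacle here; the two steps needing care—identifying $C_\bt(\tilde X^\vee)$ (with its cellular basis) with the algebraic dual of $C_\bt(\tilde X)$, and comparing $\tau_h(X)$ with $\tau_h(X^\vee)$ via subdivision invariance—are exactly what was set up in the discussion leading to \eqref{duality for R-torsion}. The evenness hypothesis enters at precisely one point: to force the exponent $(-1)^{n+1}$ to equal $-1$.
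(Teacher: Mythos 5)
Your proposal is correct and follows the paper's own route: apply the algebraic duality lemma (Lemma \ref{Lm: alg duality lemma}) to the dual cell decomposition to get the identity \eqref{duality for R-torsion}, then use that $\tau_h(M)$ is a positive real, so $\tau_h(M)=\tau_h(M)^{-1}$ forces $\tau_h(M)=1$ when $n$ is even. The two points you spell out explicitly — that conjugation acts trivially under the identification $\bar K_1(\bR)\cong\bR_+$ via $|\det|$, and that $\tau_h(X^\vee)=\tau_h(X)$ because $\beta(X)$ is a common subdivision — are exactly the steps the paper leaves implicit, so this is the same argument, just with the bookkeeping made visible.
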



\subsubsection{Duality for Whitehead torsion of an h-cobordism}
\begin{thm}
Let $N\hra M \hla N'$ be an oriented $n$-dimensional h-cobordism. Then for the Whitehead torsion we have
$$\tau(M,N')=\overline{\tau(M,N)}^{(-1)^{n+1}}\qquad \in Wh(\pi_1(M))$$
\end{thm}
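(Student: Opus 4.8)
The plan is to reproduce, in the relative setting, the argument that gave the duality (\ref{duality for R-torsion}) for the $R$-torsion of a closed manifold: the geometric input is the Poincar\'e--Lefschetz duality of Section~\ref{sec: Poincare duality with boundary} and the algebraic input is Lemma~\ref{Lm: alg duality lemma}. First I would fix a cell decomposition $X$ of $M$ which is a ball complex; restricting $X$ to $\dd M=N\sqcup N'$ makes both $N$ and $N'$ subcomplexes of $X$. Applying the construction of Section~\ref{sec: Poincare duality with boundary} with $\dd_1 M=N$ and $\dd_2 M=N'$ produces a cell decomposition $Y$ of $M$ (namely the dual decomposition $X^\vee$ with the images of the cells of $N$ deleted) such that $|Y|\cong M$, the subcomplex $N'$ of $\dd M$ sits inside $Y$, and there is a non-degenerate intersection pairing $C_k(X,N)\otimes C_{n-k}(Y,N')\ra\bZ$.

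Next I would lift everything to the universal cover. With $\pi_1:=\pi_1(M)\cong\pi_1(N)\cong\pi_1(N')$, the relative chain complexes $C_\bt(\widetilde X,\widetilde N)$ and $C_\bt(\widetilde Y,\widetilde{N'})$ are complexes of free $\bZ[\pi_1]$-modules based by lifted cells, concentrated in degrees $0,\dots,n$. Promoting the intersection pairing to a $\pi_1$-equivariant one exactly as in the closed case (a pair of dual cells being sent to $\pm\sigma$ precisely when $\widetilde e^\vee\cdot\sigma^{-1}(\widetilde e)=\pm1$), one checks that $C_{n-k}(\widetilde Y,\widetilde{N'})$ is the dual $\bZ[\pi_1]$-module to $C_k(\widetilde X,\widetilde N)$ in the sense of Definition~\ref{def: dual module}, for the anti-involution $\sigma\mapsto\sigma^{-1}$, and that the boundary maps are mutually adjoint, $\dd^{Y\vee}_{C^\vee_{n-k+1}\ra C^\vee_{n-k}}=(\dd^{X}_{C_k\ra C_{k-1}})^\dagger$. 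The lifts of cells can be chosen so that the lifted-cell bases are literally dual, the residual ambiguity dying in $Wh(\pi_1)$. Since $N$ is a deformation retract of $M$, the complex $C_\bt(\widetilde X,\widetilde N)$ is acyclic.

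Then Lemma~\ref{Lm: alg duality lemma}, applied to $C=C_\bt(\widetilde X,\widetilde N)$ and $C^\vee=C_\bt(\widetilde Y,\widetilde{N'})$, gives $\tau(C_\bt(\widetilde Y,\widetilde{N'}))=\overline{\tau(C_\bt(\widetilde X,\widetilde N))}^{\,(-1)^{n+1}}$ in $\bar K_1(\bZ[\pi_1])$. Projecting to $Wh(\pi_1)$, the right-hand side becomes $\overline{\tau(M,N)}^{(-1)^{n+1}}$ by definition. To identify the left-hand side with $\tau(M,N')$, I would use that $Y$ is a cell decomposition of $M$ having $N'$ as a subcomplex and that $Y$ and $X$ admit the common subdivision $\beta(X)$, so by Theorem~\ref{thm: invariance of Whitehead torsion wrt subdivision} the torsion $\tau(C_\bt(\widetilde Y,\widetilde{N'}))$ coincides with the Whitehead torsion $\tau(M,N')$. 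Combining the two identifications yields the asserted $\tau(M,N')=\overline{\tau(M,N)}^{(-1)^{n+1}}$.

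The hard part is the geometric bookkeeping rather than the algebra: one must construct the dual decomposition $Y$ compatibly with the decomposition $N'$ of a boundary component and verify that the equivariant intersection pairing is a perfect pairing of free $\bZ[\pi_1]$-modules intertwining $\dd$ with $\dd^\dagger$. The orientation choices defining $\varkappa$ and $\varkappa_\dd$ contribute signs that depend only on the degree $k$, hence are irrelevant for the torsion, and once these points are settled the algebraic content is entirely supplied by Lemma~\ref{Lm: alg duality lemma}.
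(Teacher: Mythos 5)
Your proposal is correct and follows essentially the same route as the paper: a ball-complex decomposition $X$ of $M$, the dual decomposition $Y$ of Section~\ref{sec: Poincare duality with boundary} with $\dd_1 M=N$, $\dd_2 M=N'$, the $\pi_1$-equivariant intersection pairing exhibiting $C_{n-k}(\widetilde Y,\widetilde N')$ as the dual $\bZ[\pi_1]$-module of $C_k(\widetilde X,\widetilde N)$, and then Lemma~\ref{Lm: alg duality lemma} projected to $Wh(\pi_1)$. The extra justifications you supply (acyclicity from the retraction, identification of $\tau(C_\bt(\widetilde Y,\widetilde N'))$ with $\tau(M,N')$ via subdivision invariance) are details the paper leaves implicit, not a different argument.
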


\begin{proof}
Indeed, one considers a cellular decomposition $X$ of $M$ and constructs a new one $Y$ as in Section \ref{sec: Poincare duality with boundary}, so that the intersection pairing $C_k(X,N)\otimes C_{n-k}(Y,N')\ra \bZ$ is non-degenerate. Passing to the universal cover, $C_{n-k}(\tilde Y,\tilde N')$ is a dual $\bZ[\pi_1]$-module to $C_k(\tilde X,\tilde N)$ and, by Lemma \ref{Lm: alg duality lemma}, the Theorem follows.
\end{proof}

For the  $R$-torsion
for an oriented $n$-manifold with boundary $\dd M=\dd_1 M\sqcup \dd_2 M$, we have the following generalization of (\ref{duality for R-torsion}):
$$\tau_h(M,\dd_1 M)=\tau_h(M,\dd_2 M)^{(-1)^{n+1}}$$


\lec{Lecture 13, 22.05.2014}

\section{Analytic torsion}

Let $M$ be a compact oriented smooth $n$-manifold without boundary with Riemannian metric $g$. Let $E$ be a rank $m$ real vector bundle over $M$ with positive definite inner product in fibers $(,)_E:E_x\otimes E_x\ra \bR$ for any $x\in M$, endowed with a flat connection $\nabla$ compatible with the fiber inner product (i.e. parallel transport by $\nabla$ preserves norms of vectors). Consider differential forms on $M$ with coefficients in $E$, $\Omega^k(M,E)=\Gamma(M,(\wedge^k T^*M)\otimes E)$. Connection $\nabla$ induces a differential operator (twisted de Rham operator)
$$d_E: \Omega^k(M,E)\ra \Omega^{k+1}(M,E)$$
Flatness of $\nabla$ is equivalent to $d_E^2=0$. Riemannian metric on $M$ together with fiberwise inner product on $E$ induce the positive definite Hodge inner product on $E$-valued forms $(,): \Omega^k(M,E)\otimes \Omega^k(M,E)\ra \bR$ given by
$$(\alpha,\beta)= \int_M (\alpha\stackrel{\wedge}{,} *\beta)_E,\qquad \alpha,\beta\in \Omega^k(M,E) $$
where $*: \Omega^k(M,E)\ra \Omega^{n-k}(M,E)$ is the Hodge star associated to the metric $g$, acting trivially on the $E$-coefficients. Pairing
$$(\bt \stackrel{\wedge}{,}\bt)_E: \Omega^k(M,E)\otimes_{C^\infty(M)} \Omega^l(M,E)\ra \Omega^{k+l}(M)$$
is the wedge product in forms accompanied by the inner product $(,)_E$ in $E$-coefficients. Denote
$$d^*_E:\Omega^k(M,E)\ra \Omega^{k-1}(M,E),\qquad \alpha \mapsto -(-1)^{(k+1)n}*d_E * \alpha$$
the adjoint of $d_E$ with respect to the Hodge inner product, i.e. $(\alpha,d_E\beta)=(d_E^*\alpha,\beta)$.
The Laplacians $\Delta^{(k)}
$ for $0\leq k\leq n$ are defined as
$$\Delta^{(k)}=d_E d^*_E+d^*_E d_E:\quad \Omega^k(M,E)\ra \Omega^k(M,E)$$
By a straightforward extension of Hodge decomposition theorem to coefficients in a flat bundle $E$, one has a decomposition of $k$-forms into sum of orthogonal subspaces
\be \Omega^k(M,E)=\underbrace{\Omega^k_\mr{Harm}(M,E)}_{\ker \Delta^{(k)}}\oplus \underbrace{\Omega^k_\mr{ex}(M,E)}_{d_E(\Omega^{k-1}(M,E))}\oplus \underbrace{\Omega^k_\mr{coex}(M,E)}_{d^*_E(\Omega^{k+1}(M,E))} \label{Hodge decomp}\ee
and the first term on the right is isomorphic to the $k$-th cohomology space of $d_E$.

{\bf Assumption.} We will assume that the twisted de Rham complex $\Omega^\bt(M,E), d_E$ is acyclic. Thus there is no first term in the Hodge decomposition (\ref{Hodge decomp}).


\subsection{Zeta regularized determinants}
For each $k=0,1,\ldots,n$ the operator $\Delta^{(k)}$ has positive discrete eigenvalue spectrum $\{\lambda^{(k)}_j\}$ with the only accumulation point at infinity. Spectral density of $\Delta^{(k)}$ at $\lambda\ra+\infty$ behaves as $\rho(\lambda)\sim C\cdot \mr{vol}(M)\cdot\lambda^{n/2-1} $

For the time being we will assume that some $k$ is fixed and we will suppress the superscripts in $\Delta^{(k)}$, $\lambda_j^{(k)}$.

Zeta function of $\Delta$ is defined as
$$\zeta_{\Delta}(s)=\sum_{j} (\lambda_j)^{-s}$$
The sum converges for $\mr{Re}(s)>n/2$ and also can be written as $$\zeta_{\Delta}(s)=\mr{tr}_{\Omega^k(M,E)}\Delta^{-s}$$

\subsubsection{Heat equation}
Consider the heat equation, i.e. the following parabolic PDE
\be\left(\frac{\dd}{\dd t}+\Delta\right)\phi(x,t)=0\label{heat equation}\ee
on a $t$-dependent $E$-valued $k$-form on $M$, $\phi\in \Omega^k(M,E)\hat\otimes C^\infty([t_0,+\infty))$
For any $\phi_0\in \Omega^k(M,E)$ there exists a unique solution $\phi$ of the heat equation (\ref{heat equation}) on $M\times [0,+\infty)$ satisfying Cauchy boundary condition $\phi|_{M\times \{0\}}=\phi_0$. Denote the linear operator taking $\phi_0$ to $\phi(\bt,t)$ as
$$e^{-t\Delta}:\; \Omega^k(M,E)\ra \Omega^k(M,E)$$
For $t>0$ it is an integral operator
\be e^{-t\Delta}:\quad \phi_0(x)\mapsto \phi_t(x)=\int_{M\ni y}K(x,y;t)\wedge\phi_0(y)\label{heat kernel}\ee
where $K(x,y;t)\in \wedge^k T^*_x M\otimes \wedge^{n-k}T^*_y M \otimes E\otimes E^*$ are the values of the integral kernel (=parametrix = fundamental solution of the heat equation= heat kernel). Wedge product sign in (\ref{heat equation}) implicitly contains canonical pairing between $E^*$ and $E$.
In other words, we have a $t$-dependent differential form $K_t\in \Omega^n(M\times M, \pi_1^* E\otimes \pi_2^* E^*)$ with $\pi_{1,2}:M\times M\ra M$ projections to the first and second factor respectively. In terms of $K_t$ we have
$$e^{-t\Delta}:\quad \phi_0\mapsto \phi_t=(\pi_1)_*(K_t\wedge \pi_2^* \phi_0)
$$
where $(\pi_1)_*:\Omega^{n+k}(M\times M,\pi_1^*E)\ra \Omega^k(M,E)$ stands for the fiber integral over fibers of $\pi_1$.

At $t\ra +\infty$, $K(x,y;t)\sim e^{-t\lambda_1}$ where $\lambda_1>0$ is the smallest eigenvalue of $\Delta$. At $t\ra 0$ the heat kernel behaves singularly on the diagonal $M_\mr{diag}\subset M\times M$. More precisely, the following result is due to Seeley \cite{Seeley}:
\begin{thm}[Seeley, 1966]\label{thm: Seeley}
There exists a sequence of forms $a_j\in \Omega^n(T,\pi_1^* E\otimes \pi_2^* E^*)$ on a tubular neighborhood $T$ of $M_\mr{diag}$ in $M\times M$, for $j=0,1,2,\ldots$, such that
for any $N\in\bN_0$ we have
$$K(x,y;t)=\frac{e^{-\frac{d(x,y)^2}{4t}}}{(4\pi t)^{n/2}}\;\;\sum_{j=0}^N  a_j(x,y)\cdot t^j\quad +r_N(x,y;t)$$
 for $(x,y)\in T$ with the remainder $r_N(x,y;t)$ behaving as $O(t^{N+1-n/2})$ at $t\ra 0$. Here $d(x,y)$ denotes  the geodesic distance between $x$ and $y$.
\end{thm}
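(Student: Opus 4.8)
The plan is to build an explicit approximate heat kernel (a \emph{parametrix}) near the diagonal by the Minakshisundaram--Pleijel method, to read off the coefficients $a_j$ from transport equations, and then to correct the parametrix to the true heat kernel by Duhamel's principle while controlling the error.

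First I would fix a point of $M$, pass to geodesic normal coordinates centered there, and trivialize $E$ by $\nabla$-parallel transport along radial geodesics. In this gauge one makes the ansatz
$$p_N(x,y;t) = \frac{e^{-d(x,y)^2/(4t)}}{(4\pi t)^{n/2}} \sum_{j=0}^{N} a_j(x,y)\, t^j$$
and substitutes it into the heat equation $(\dd_t + \Delta^{(k)}_x)\,p_N = 0$. Since $\Delta^{(k)}$ acting on the Gaussian factor $e^{-d^2/4t}$ produces only terms with definite powers of $t^{-1}$, collecting powers of $t$ gives a hierarchy of first-order ODEs (transport equations) along radial geodesics for the $a_j$: the $j=0$ equation forces $a_0$ to be $\Theta(x,y)^{-1/2}$ times parallel transport, where $\Theta$ is the Jacobian (Van Vleck--Morette determinant) of $\exp_y$, normalized by $a_0(x,x)=\mathrm{id}_{E_x}$ so that $p_N$ tends to the delta distribution on the diagonal as $t\to 0$; the $j\geq 1$ equations are inhomogeneous versions of the same ODE with right-hand side built from $\Delta^{(k)} a_{j-1}$, solved uniquely by radial integration. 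The resulting $a_j$ are smooth on a tubular neighborhood $T$ of the diagonal. After truncation at order $N$ the leftover in the heat equation is
$$(\dd_t + \Delta^{(k)}_x)\,p_N = \frac{e^{-d^2/4t}}{(4\pi t)^{n/2}}\,(\Delta^{(k)} a_N)\, t^N =: e_N(x,y;t),$$
which is $O(t^{N-n/2})$ uniformly on $T$; multiplying $p_N$ by a cutoff supported in $T$ changes $e_N$ only by a term supported away from the diagonal, which is harmless.

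Next I would invoke Duhamel's principle: the true heat kernel is recovered as a Volterra (Neumann) series $K = p_N - p_N * e_N + p_N * e_N * e_N - \cdots$, where $*$ denotes the time-convolution $(F*G)(x,y;t) = \int_0^t\!\!\int_M F(x,z;t-s)\,G(z,y;s)$. Because $e_N(\cdot;s) = O(s^{N-n/2})$, each convolution raises the power of $t$ by one (the gain being a Beta-function factor), so for $N$ large enough relative to $n$ the series converges in $C^0$ (and, taking $N$ larger, in any fixed $C^\ell$), with the sum of all terms past $p_N$ being $O(t^{N+1-n/2})$. Hence $r_N$, defined as the difference between $K$ and the truncated Gaussian sum, satisfies the claimed bound, and the coefficients $a_0,\dots,a_N$ are exactly the universal ones produced by the transport equations; this proves the theorem. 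An alternative, closer to Seeley's own argument, is to construct a symbolic parametrix for the resolvent $(\Delta^{(k)}-\lambda)^{-1}$ and recover $e^{-t\Delta^{(k)}}$ from the contour integral $\frac{1}{2\pi i}\oint e^{-t\lambda}(\Delta^{(k)}-\lambda)^{-1}\, d\lambda$, the homogeneity bookkeeping of the symbol expansion yielding the same series.

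\textbf{Main obstacle.} The delicate point is the error analysis: showing that the Duhamel/Volterra iteration converges and that its tail is genuinely $O(t^{N+1-n/2})$ requires uniform estimates on iterated convolutions of a kernel with integrable-but-singular time behavior near $t=0$, together with the off-diagonal Gaussian decay $e^{-d(x,y)^2/(4t)}$ that keeps the spatial integrals bounded uniformly in $(x,y)$. The transport-equation step, by contrast, is essentially a computation once the normal-coordinate radial gauge is in place. One also needs the standard auxiliary fact that away from the diagonal $K(x,y;t) = O(t^\infty)$, which follows from a maximum-principle (or finite-propagation) estimate and justifies localizing the whole discussion to $T$.
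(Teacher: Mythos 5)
The paper does not prove this statement at all: it is quoted as a known result of Seeley (with the reference \cite{Seeley}), so there is no in-paper argument to compare against. Your proposal is the standard and correct route to it — the Minakshisundaram--Pleijel parametrix in normal coordinates with radial parallel transport, the transport equations determining $a_0=\Theta^{-1/2}\cdot(\text{parallel transport})$ and the higher $a_j$ by radial integration, and then Duhamel/Volterra iteration $K=p_N-p_N\ast e_N+p_N\ast e_N\ast e_N-\cdots$ to pass from the parametrix to the true kernel; your ``main obstacle'' paragraph correctly identifies where the real work lies (uniform estimates on the iterated time-convolutions with the Gaussian off-diagonal decay, plus the $O(t^\infty)$ off-diagonal bound that justifies the cutoff). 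One point worth making explicit: the direct Volterra estimate needs the time-convolution of $e_N=O(s^{N-n/2})$ to be integrable at $s=0$, i.e.\ $N>n/2-1$, so for small $N$ (the theorem claims all $N\in\bN_0$) you should run the construction at a large order $N'$ and then observe that, since the $a_j$ are independent of the truncation order, the discarded terms $a_j t^j$ with $N<j\le N'$ are themselves $O(t^{N+1-n/2})$ after the $(4\pi t)^{-n/2}$ prefactor — your phrase ``for $N$ large enough relative to $n$'' gestures at this but the downward truncation step should be stated. Your alternative sketch via a symbolic parametrix for the resolvent and the contour integral $\frac{1}{2\pi i}\oint e^{-t\lambda}(\Delta-\lambda)^{-1}d\lambda$ is indeed closer to what Seeley actually does, and has the advantage of generalizing to complex powers $\Delta^{-s}$ directly, which is what the zeta-function discussion in this section ultimately needs; the heat-parametrix route is more elementary and self-contained for the expansion as stated.
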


Using the fact that for $\lambda>0$ one can write $\lambda^{-s}=\frac{1}{\Gamma(s)}\int_0^{\infty}dt\; t^{s-1}\; e^{-\lambda t}$ with $\Gamma(s)$ the Euler's gamma function, we rewrite the zeta function of the Laplacian as
\be \zeta_\Delta(s)=\frac{1}{\Gamma(s)}\int_0^\infty dt\; t^{s-1}\;\mr{tr}_{\Omega^k(M,E)}e^{-t \Delta}=\frac{1}{\Gamma(s)}\int_0^\infty dt\; t^{s-1}\;\int_M \mr{tr}_E K(x,x;t) \label{zeta via heat kernel}\ee
where $K(x,x;t)$ denotes the pullback of the $n$-form $K_t$ to the diagonal $M_\mr{diag}\subset M\times M$. The integrand exponentially decays at $t\ra 0$ and behaves as $t^{s-1-n/2}$ at $t\ra 0$. Thus the integral over $t$ converges absolutely iff $\mr{Re}(s)>n/2$. Using Theorem \ref{thm: Seeley}, we can write
\begin{multline}\label{zeta analytic continuation}
\zeta_\Delta(s)=
\frac{1}{\Gamma(s)}\left(\int_\epsilon^\infty dt\; t^{s-1}\;\int_M \mr{tr}_E K(x,x;t)+\right. \\
\left. +
(4\pi)^{-n/2}\sum_{j=0}^N\frac{\epsilon^{s-n/2+j}}{s-n/2+j}\cdot \int_M\mr{tr}_E a_j(x,x)
+\int_0^\epsilon dt\; t^{s-1}\;\underbrace{\int_M \mr{tr}_E r_N(x,x;t)}_{O\left(t^{N+1-n/2}\right)}\right)
\end{multline}
for any $\epsilon>0$. This expression coincides with (\ref{zeta via heat kernel}) for $\mr{Re}(s)>n/2$. But note that (\ref{zeta analytic continuation}) actually makes sense for $\mr{Re}(s)> n/2-N$, provided that $s\neq n/2-j$ for $j=0,1,\ldots,N$.

Thus, taking the direct limit $N\ra\infty$, we obtain an analytic continuation of $\zeta_\Delta(s)$ as meromorphic function on $s\in \bC$ with simple poles at $s=n/2-j$, $j=0,1,\ldots$. Note that for $n$ even, for $s=0,-1,-2,\ldots$ a non-posititive integer, the pole is cancelled by a zero of $1/\Gamma(s)$. Thus we have two cases:
\begin{itemize}
\item $n$ even, $\zeta_\Delta(s)$ has $n/2$ simple  poles at points $s=n/2,n/2-1,\ldots,1$ and is analytic everywhere else.
\item $n$ odd, $\zeta_\Delta(s)$ has infinitely many simple poles at $s=n/2,n/2-1,\ldots$ and is analytic everywhere else.
\end{itemize}
Observe that $s=0$ is always a regular point.

\begin{rem}
Note that for $n$ odd, 
the analytic continuation of $\int_0^\infty dt\;t^{s-1}\;K(x,x;t)$ is regular at $s=0$ for any point $x\in M$. (We will need this later for the proof of metric independence of the analytic torsion.)
\end{rem}

\begin{definition}
Zeta regularized determinant of $\Delta$ is defined as
$${\det}_{\Omega^k(M,E)}\Delta:=e^{-\zeta'_\Delta(s)}$$
where $\zeta'_\Delta(s)=\frac{\dd}{\dd s}\,\zeta_\Delta(s)$.
\end{definition}

The motivation for this definition is that for $L$ a strictly positive self-adjoint linear operator on a finite dimensional Euclidean vector $V$, one can introduce its zeta function
$\zeta_L(s)=\sum_{\lambda} \lambda^{-s}=\mr{tr}_V L^{-s}$ and then $\zeta'_L(s)=\sum_\lambda -\log\lambda\cdot \lambda^{-s}$. Therefore, $\zeta'(0)=-\sum_\lambda \log\lambda=-\log\det L$. So, $\det L=e^{-\zeta'_L(0)}$.

\subsection{Ray-Singer torsion}
\begin{definition}[Ray-Singer \cite{Ray-Singer}] Analytic torsion of a closed smooth oriented Riemannian manifold $(M,g)$ with an acyclic flat Euclidean bundle $(E,(,)_E,\nabla)$ over it is defined as
$$\tau_{RS}(M,E)=\prod_{k=0}^n \left({\det}_{\Omega^k(M,E)}\Delta^{(k)}\right)^{-\frac{(-1)^k\; k}{2}}\qquad \in \bR_+$$
\end{definition}

Denote
\begin{eqnarray*}
\Delta_\mr{coex}^{(k)}=d^*_E d_E:&& \Omega^k_\mr{coex}(M,E)\ra \Omega^k_\mr{coex}(M,E),\\
\Delta_\mr{ex}^{(k)}=d_E d^*_E:&& \Omega^k_\mr{ex}(M,E)\ra \Omega^k_\mr{ex}(M,E)
\end{eqnarray*}
Then $\Delta^{(k)}=\Delta^{(k)}_{coex}\oplus \Delta^{(k)}_{ex}$ (a ``block diagonal'' operator on $\Omega^k(M,E)=\Omega^k_\mr{coex}\oplus \Omega^k_\mr{ex}$).
Eigenvalue spectrum of $\Delta^{(k)}$ is the union of spectra of $\Delta^{(k)}_\mr{coex}$ and $\Delta^{(k)}_\mr{ex}$ (as sets with multiplicites): $\{\lambda^{(k)}_j\}=\{\lambda^{(k),\mr{coex}}_j\}\cup \{\lambda^{(k),\mr{ex}}_j\}$. Also, by virtue of $d_E:\Omega^k_\mr{coex}\xra{\sim} \Omega^k_\mr{ex}$ being in isomorphism commuting with Laplacians, we have $\{\lambda^{(k),\mr{coex}}_j\}=\{\lambda^{(k+1),\mr{ex}}_j\}$.

We can define the zeta functions
\be\zeta_{\Delta^{(k)}_\mr{coex}}(s)=\sum_{j}\left(\lambda^{(k),\mr{coex}}_j\right)^{-s},\qquad \zeta_{\Delta^{(k)}_\mr{ex}}(s)=\sum_{j}\left(\lambda^{(k),\mr{ex}}_j\right)^{-s}\label{zeta coex and ex}\ee
Due to the relations between spectra above, we have $$\zeta_{\Delta^{(k)}}(s)=\zeta_{\Delta^{(k)}_\mr{coex}}(s)+\zeta_{\Delta^{(k)}_\mr{ex}}(s)$$ and
\be\zeta_{\Delta^{(k)}_\mr{coex}}(s)=\zeta_{\Delta^{(k+1)}_\mr{ex}}(s)\label{zeta rel coex vs ex}\ee
These relations together imply \be\zeta_{\Delta^{(k)}}(s)=\zeta_{\Delta^{(k)}_\mr{coex}}(s)+\zeta_{\Delta^{(k-1)}_\mr{coex}}(s)\label{zeta relation coex}\ee
and the inverse relation
$$\zeta_{\Delta^{(k)}_\mr{coex}}(s)=\sum_{j=0}^k (-1)^j\zeta_{\Delta^{(k-j)}}(s)$$
which is true for $\mr{Re}(s)>n/2$ when series (\ref{zeta coex and ex}) converge, and can serve as a {\it definition} of $\zeta_{\Delta^{(k)}_\mr{coex}}(s)$ for general $s\in\bC$, using the analytic continuation of $\zeta_{\Delta^{(k)}}(s)$ constructed via the heat kernel.

Relation (\ref{zeta relation coex}) implies in particular
\be\sum_{k=0}^n \frac{(-1)^k}{2}\;\zeta_{\Delta^{(k)}_\mr{coex}}(s)=\sum_{k=0}^n -\frac{(-1)^k\cdot k}{2}\;\zeta_{\Delta^{(k)}}(s)\label{zeta rel telescopic}\ee
since $-\frac{(-1)^k\cdot k}{2}+-\frac{(-1)^{k+1}\cdot (k+1)}{2}=\frac{(-1)^k}{2}$.

Setting ${\det}_{\Omega^k_\mr{coex}(M,E)}\Delta^{(k)}_\mr{coex}:=e^{-\zeta'_{\Delta^{(k)}_\mr{coex}}(0)}$ and applying $-\left.\frac{\dd}{\dd s}\right|_{s=0}$ to (\ref{zeta rel telescopic}), we can rewrite the definition of Ray-Singer torsion as
$$\tau_{RS}(M,E)=\prod_{k=0}^{n} \left({\det}_{\Omega^k_\mr{coex}(M,E)}\Delta^{(k)}_\mr{coex}\right)^{(-1)^k/2}$$

\subsection{Poincar\'e duality, torsion of an even dimensional manifold}
Hodge star $*: \Omega^k_\mr{coex}(M)\leftrightarrow \Omega^{n-k}_\mr{ex}(M)$ commutes with Laplacians. In particular, for the eigenvalue spectra we have $\{\lambda^{(k)}_\mr{coex}\}=\{\lambda^{(n-k)}_\mr{ex}\}$. Together with (\ref{zeta rel coex vs ex}) this implies the following relation for zeta functions:
$$\zeta_{\Delta^{(k)}_\mr{coex}}(s)=\zeta_{\Delta^{(n-k-1)}_\mr{coex}}(s)$$
Hence for the Ray-Singer torsion we have
$$\tau_{RS}(M,E)=\tau_{RS}(M,E)^{(-1)^{n+1}}$$
Since the torsion is a positive real number, this implies the following.
\begin{thm}\label{thm: RS torsion of even dim mfd}
Let $M$ be an even dimensional closed oriented Riemannian manifold with a flat acyclic Euclidean vector bundle $E$. Then
$$\tau_{RS}(M,E)=1$$
\end{thm}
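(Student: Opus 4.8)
The plan is to derive the identity $\tau_{RS}(M,E) = \tau_{RS}(M,E)^{(-1)^{n+1}}$ and observe that, since $n$ is even, this reads $\tau_{RS} = \tau_{RS}^{-1}$, which for a positive real number forces $\tau_{RS} = 1$. The entire content is therefore the Poincar\'e-duality symmetry of the coexact zeta functions, which I would establish as follows.

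First I would recall the Hodge decomposition $\Omega^k(M,E) = \Omega^k_{\mr{ex}} \oplus \Omega^k_{\mr{coex}}$ (no harmonic part, by the acyclicity assumption) and the two facts already in the excerpt: the twisted de Rham operator $d_E \colon \Omega^k_{\mr{coex}} \xra{\sim} \Omega^{k+1}_{\mr{ex}}$ intertwines the Laplacians, giving $\{\lambda^{(k),\mr{coex}}_j\} = \{\lambda^{(k+1),\mr{ex}}_j\}$ and hence $\zeta_{\Delta^{(k)}_{\mr{coex}}}(s) = \zeta_{\Delta^{(k+1)}_{\mr{ex}}}(s)$. Next, the Hodge star $\ast \colon \Omega^k(M,E) \ra \Omega^{n-k}(M,E)$ (acting trivially on the $E$-coefficients, which is legitimate since the connection is compatible with the fiber metric) is an isometry commuting with $\Delta$; moreover it exchanges exact and coexact forms, $\ast \colon \Omega^k_{\mr{coex}} \leftrightarrow \Omega^{n-k}_{\mr{ex}}$, because $d_E$ and $d_E^*$ are swapped (up to sign) by conjugation with $\ast$. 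Consequently the spectra match: $\{\lambda^{(k)}_{\mr{coex}}\} = \{\lambda^{(n-k)}_{\mr{ex}}\}$, so $\zeta_{\Delta^{(k)}_{\mr{coex}}}(s) = \zeta_{\Delta^{(n-k)}_{\mr{ex}}}(s)$. Combining this with the de Rham relation $\zeta_{\Delta^{(n-k)}_{\mr{ex}}}(s) = \zeta_{\Delta^{(n-k-1)}_{\mr{coex}}}(s)$ gives the key symmetry
$$\zeta_{\Delta^{(k)}_{\mr{coex}}}(s) = \zeta_{\Delta^{(n-k-1)}_{\mr{coex}}}(s).$$

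Now I would use the rewritten formula for the torsion from the previous subsection,
$$\tau_{RS}(M,E) = \prod_{k=0}^{n} \left({\det}_{\Omega^k_{\mr{coex}}(M,E)}\Delta^{(k)}_{\mr{coex}}\right)^{(-1)^k/2},$$
and substitute $k \mapsto n-k-1$ in the product. The symmetry just established shows that $\det_{\Omega^k_{\mr{coex}}}\Delta^{(k)}_{\mr{coex}} = \det_{\Omega^{n-k-1}_{\mr{coex}}}\Delta^{(n-k-1)}_{\mr{coex}}$ (since equality of zeta functions as meromorphic functions gives equality of their derivatives at $0$, hence of the zeta-regularized determinants), while the exponent transforms as $(-1)^k/2 \mapsto (-1)^{n-k-1}/2 = (-1)^{n+1}\cdot(-1)^k/2$. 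Therefore $\tau_{RS}(M,E) = \tau_{RS}(M,E)^{(-1)^{n+1}}$, and for $n$ even this is $\tau_{RS} = \tau_{RS}^{-1}$ on $\bR_+$, forcing $\tau_{RS}(M,E) = 1$.

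The only genuinely delicate point — the place where I would spend care rather than routine symbol-pushing — is verifying that $\ast$ maps coexact forms to exact forms and commutes with the Laplacian in the \emph{twisted} setting; this rests on the compatibility of $\nabla$ with the fiber metric (so that $\ast$ acting trivially on $E$-coefficients is still an isometry and $d_E^* = \pm \ast d_E \ast$), and on the signs in the definition of $d_E^*$ recorded in the excerpt. Everything else (convergence of the series for $\mr{Re}(s)$ large, legitimacy of defining $\zeta_{\Delta^{(k)}_{\mr{coex}}}$ by analytic continuation, regularity at $s=0$) has already been set up in the preceding subsections and can be invoked directly.
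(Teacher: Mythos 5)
Your proposal is correct and follows essentially the same route as the paper: the Hodge star identification $\Omega^k_{\mr{coex}}\leftrightarrow\Omega^{n-k}_{\mr{ex}}$ combined with $\zeta_{\Delta^{(k)}_{\mr{coex}}}=\zeta_{\Delta^{(k+1)}_{\mr{ex}}}$ gives $\zeta_{\Delta^{(k)}_{\mr{coex}}}=\zeta_{\Delta^{(n-k-1)}_{\mr{coex}}}$, hence $\tau_{RS}=\tau_{RS}^{(-1)^{n+1}}$, which for $n$ even forces $\tau_{RS}=1$ since it is a positive real number. Your extra care about $\ast$ acting trivially on $E$-coefficients being legitimate by metric compatibility of $\nabla$ is a sensible remark but does not change the argument.
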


\subsection{Metric independence of Ray-Singer torsion}
\begin{thm}[Ray-Singer, \cite{Ray-Singer}]\label{thm: metric independence of RS torsion}
Torsion $\tau_{RS}(M,E)$ does not depend on the choice of Riemannian metric $g$ on $M$.
\end{thm}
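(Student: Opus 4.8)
The plan is to compute the derivative of $\log\tau_{RS}(M,E)$ along a one-parameter family of metrics $g_u$, $u\in[0,1]$, and show it vanishes. Write $*_u$ for the Hodge star of $g_u$ and let $\alpha = *_u^{-1}\dot{*}_u$ be the (degree $0$) endomorphism of forms measuring the variation of the star operator. Since all the structure (the differentials $d_E$, the cohomology, hence the Betti numbers, which are zero by acyclicity) is metric-independent, only the $d^*_E$, the Laplacians $\Delta^{(k)}_u$ and their spectra move. The first step is the standard variational formula for zeta-regularized determinants: for a smoothly varying family of positive Laplacians,
\be
\frac{d}{du}\log{\det}\Delta^{(k)}_u = -\frac{d}{du}\zeta'_{\Delta^{(k)}_u}(0) = -\left.\frac{d}{ds}\right|_{s=0}\left( s\cdot \mr{tr}\left( \dot\Delta^{(k)}_u (\Delta^{(k)}_u)^{-s-1}\right)\right),
\ee
which in heat-kernel language reads $\frac{d}{du}\log{\det}\Delta^{(k)}_u = -\mathrm{FP}_{t\to 0}\,\mr{tr}\left(\dot\Delta^{(k)}_u\, e^{-t\Delta^{(k)}_u}\right) + (\text{contribution of }\ker\Delta = 0)$, where $\mathrm{FP}$ denotes the constant term in the small-$t$ asymptotic expansion (equivalently, the finite part of the analytic continuation evaluated at $s=0$); here the zero-mode term is absent because $E$ is acyclic. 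One must be slightly careful that $\mathrm{tr}(\dot\Delta e^{-t\Delta})$ has the same type of small-$t$ expansion in powers of $t^{j-n/2}$ as $\mathrm{tr}(e^{-t\Delta})$, which it does by Seeley's theorem (Theorem \ref{thm: Seeley}) since $\dot\Delta$ is a differential operator.

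Next I would assemble these into $\frac{d}{du}\log\tau_{RS} = -\tfrac12\sum_k (-1)^k k \,\frac{d}{du}\log{\det}\Delta^{(k)}_u$ and rewrite $\dot\Delta^{(k)}$ in terms of $\dot d^*_E$, using $d^*_E = \pm *d_E*$ and hence $\dot d^*_E = [\,d^*_E,\,\text{something built from }\alpha\,]$ plus $\alpha$-sandwiched terms; concretely $\dot\Delta^{(k)} = d_E\,\dot d^*_E + \dot d^*_E\, d_E$. The key algebraic manipulation is to push the $\mathrm{tr}$ around using $d_E e^{-t\Delta} = e^{-t\Delta} d_E$ and cyclicity, so that each term becomes $\mathrm{tr}$ of $\alpha$ (times a projector-like combination of $d_E, d^*_E, \Delta^{-1}$) against $e^{-t\Delta}$ on various form degrees. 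The combinatorial factor $\sum_k(-1)^k k$ is designed exactly so that, after using the isomorphism $d_E:\Omega^k_{\mr{coex}}\xrightarrow{\sim}\Omega^{k+1}_{\mr{ex}}$ to identify spectra (as in the derivation of (\ref{zeta relation coex})), the whole variation collapses to a single spectral quantity — morally $\frac{d}{du}\log\tau_{RS}$ equals, up to sign, the finite part as $t\to 0$ of $\mathrm{tr}\big(\alpha\, (-1)^{N}\, e^{-t\Delta}\big)$ summed over forms with the number-operator weighting, i.e. the variation of a regularized version of the super-quantity $\sum_k(-1)^k k\,\dim\Omega^k$, which is metric-independent.

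The main obstacle — and the place where the argument genuinely uses the hypotheses — is showing that this leftover finite part vanishes. For $n$ even this is automatic from Poincaré duality (Theorem \ref{thm: RS torsion of even dim mfd}), so the real content is $n$ odd. There one invokes the Remark after (\ref{zeta analytic continuation}): for $n$ odd, the relevant heat-trace has no $t^0$ term in its small-$t$ expansion — the local heat-kernel coefficients $a_j(x,x)$ only produce half-integer powers of $t$ shifted by $-n/2$, so the constant term, and in fact the whole pointwise analytic continuation $\int_0^\infty t^{s-1}K(x,x;t)\,dt$ at $s=0$, vanishes. Applying the same reasoning to $\mathrm{tr}(\alpha\,\dot\Delta\text{-type operator}\,e^{-t\Delta})$: the integrand is the fiber trace over the diagonal of a locally-constructed heat-kernel-type form wedged with a differential operator, so its small-$t$ expansion again lives in half-integer-shifted powers with no genuine $t^0$ term for $n$ odd, hence $\mathrm{FP}_{t\to 0}$ of it is zero. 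Thus $\frac{d}{du}\log\tau_{RS}(M,E) = 0$ for all $u$, and since the family of metrics connecting any two metrics can be taken smooth, $\tau_{RS}(M,E)$ is metric-independent. I would organize the write-up as: (1) variational formula for $\log\det$; (2) reduction to a single heat-trace using the combinatorial identity and the coexact/exact spectral correspondence; (3) vanishing of the finite part, splitting into $n$ even (duality) and $n$ odd (parity of heat-coefficient powers, via Seeley). Step (3), $n$ odd, is the crux.
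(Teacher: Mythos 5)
Your proposal follows essentially the same route as the paper's proof: vary the metric through a family $g_u$, express $\frac{\dd}{\dd u}$ of the weighted sum of zeta functions via heat traces of $\dot\Delta$ written in terms of $\alpha=*\dot *$, use cyclicity and the exact/coexact spectral correspondence to collapse the $k$-weighted sum to (a total $t$-derivative of) the alternating heat trace of $\alpha$, and then kill its finite part at $s=0$ — by Poincar\'e duality for $n$ even and by the absence of a $t^0$ term in the Seeley expansion (equivalently regularity of the pointwise continuation at $s=0$) for $n$ odd, finishing by connectedness of the space of metrics. Apart from an immaterial sign slip in the variational formula for $\log\det$, this matches the paper's argument step for step.
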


\begin{proof}
Case of
$n=\dim M$ even is trivial by Theorem \ref{thm: RS torsion of even dim mfd}. Consider the case of $n$ odd. Let $g_u$ be a smooth family of Riemannian metrics on $M$ parameterized by $u\in [0,1]$. Laplacians $\Delta^{(k)}$ depend on the metric via the Hodge star. Denote
$$\alpha=*\dot{*}=*_u\frac{\dd}{\dd u}*_u:\Omega^k(M,E)\ra \Omega^k(M,E)$$
-- the algebraic (ultralocal) operator on forms. Note that $*^2=\mr{id}$, hence $0=\frac{\dd}{\dd u}(*^2)=\dot{*}*+*\dot{*}$. Therefore, $\dot{*}*=-\alpha$.

We have
$$\dot{\Delta}^{(k)}=d \dot{d}^*+\dot{d}^* d = -d \dot{*}d*- d*d\dot{*}-\dot{*}d*d-*d\dot{*}d=-d\alpha d^*+dd^*\alpha-\alpha d^*d+d^*\alpha d$$
(we suppress subscripts in $d_E$, $d^*_E$ for brevity). Thus for the trace of the heat kernel, we have
\begin{multline*}
\frac{\dd}{\dd u}\mr{tr}_{\Omega^k}e^{-t\Delta^{(k)}}=-t\;\mr{tr}_{\Omega^k}\left(e^{-t\Delta^{(k)}}\dot{\Delta}^{(k)}\right)= \\
= -t\left( -\mr{tr}_{\Omega^{k-1}}\left(e^{-t\Delta^{(k-1)}} d^*d \alpha\right) + \mr{tr}_{\Omega^{k}}\left(e^{-t\Delta^{(k)}} dd^* \alpha\right)- \mr{tr}_{\Omega^{k}}\left(e^{-t\Delta^{(k)}} d^*d\alpha\right) + \mr{tr}_{\Omega^{k+1}}\left(e^{-t\Delta^{(k+1)}} dd^* \alpha\right)\right)
\end{multline*}
using cyclic property of the trace and the fact that $d$ and $d^*$ commute with Laplacians.
This implies the following identity:
\begin{multline*}
\frac{\dd}{\dd u}\left(\sum_{k=0}^n -\frac{(-1)^k\cdot k}{2}\;\mr{tr}_{\Omega^k}\left(e^{-t\Delta^{(k)}}\right)\right)=\frac{t}{2}\sum_{k=0}^n (-1)^k\;\mr{tr}_{\Omega^k}\left(e^{-t\Delta^{(k)}} \underbrace{(d d^*+d^* d)}_{\Delta^{(k)}} \alpha \right)=\\
=-\frac{1}{2}t\frac{\dd}{\dd t}\sum_{k=0}^n (-1)^k\;\mr{tr}_{\Omega^k} \left(e^{-t\Delta^{(k)}}\alpha\right)
\end{multline*}
Therefore, for the metric variation of the appropriate linear combination of zeta functions, we have
\begin{multline}
\frac{\dd}{\dd u} \left(\sum_{k=0}^n -\frac{(-1)^k\cdot k}{2}\;\zeta_{\Delta^{(k)}}(s)\right)= -\frac{1}{2}\;\frac{1}{\Gamma(s)} \int_0^\infty dt\; t^s\;\frac{\dd}{\dd t}\left(\sum_{k=0}^n (-1)^k \;\mr{tr}_{\Omega^k} \left(e^{-t\Delta^{(k)}}\alpha\right)\right)=\\
=\frac{1}{2}\sum_{k=0}^n(-1)^k \frac{s}{\Gamma(s)}\int_0^\infty dt\; t^{s-1}\;\int_M \mr{tr}_E K^{(k)}(x,x;t)\cdot \alpha(x,x) \label{RS torsion metric independence eq}
\end{multline}
where we integrated by parts in $t$.
For a fixed point $x\in M$, the analytic continuation of $\int_0^\infty dt\;t^{s-1}\; K^{(k)}(x,x;t)$ has no pole at $s=0$ (since we took $n$ to be odd). Hence, expression (\ref{RS torsion metric independence eq}) has a zero of 2nd order at $s=0$, which implies that $\left. \frac{\dd}{\dd s}\right|_{s=0}$ of (\ref{RS torsion metric independence eq}) vanishes. Therefore, $\frac{\dd}{\dd u}\tau_{RS}(M,E)=0$. We have proven that $\tau_{RS}$ stays constant along the chosen path in the space of Riemannian metrics. Since we may choose any path and the space of metrics is contractible (and in particular connected), $\tau_{RS}$ is constant on the space of metrics.
\end{proof}

\begin{rem} For $\gamma: M\ra O(m)$ a fiberwise rotation of $E$ (a {\it gauge transformation}), we have a corresponding transformed flat connection $\nabla^\gamma$ such that the corresponding twisted de Rham operator $d_E^\gamma$ makes the following square commutative.
$$
\begin{CD}
\Omega^k(M,E) @>d_E>> \Omega^{k+1}(M,E) \\
@V\gamma_* VV  @VV\gamma_* V \\
\Omega^k(M,E) @>d^\gamma_E>> \Omega^{k+1}(M,E)
\end{CD}
$$
Then the Ray-Singer torsion for the flat bundle $(E,\nabla)$ is equal to the torsion for the flat bundle $(E,\nabla^\gamma)$. Thus the Ray-Singer torsion descends to a function on the moduli space of acyclic flat $O(m)$-connections in $E$.
\end{rem}

\subsection{Example: torsion of circle}
Consider the case $M=S^1=\bR/\bZ$ with coordinate $x$ defined $\mod 1$, with Riemannian metric $g=(dx)^2$. Let $E$ be the trivial rank $2$ Euclidean vector bundle $S^1\times \bR^2\ra S^1$ with flat connection determined by the $\mathfrak{so}(2)$-valued $1$-form
\be \label{RS torsion for circle connection}a\, dx=\left(\begin{array}{cc}0 & \psi \\ -\psi & 0 \end{array}\right)\cdot dx\quad \in \mathfrak{so}(2)\otimes\Omega^1(S^1) \ee
The corresponding Laplacians are:
\begin{eqnarray*}
\Delta^{(0)}=d^*_E d_E:\quad &f\mapsto &-(f''+(af)'+af'+a^2 f) \\
\Delta^{(1)}=d_E d^*_E:\quad &g\, dx\mapsto &-(g''+(ag)'+ag'+a^2 g)\, dx
\end{eqnarray*}
Assume that in (\ref{RS torsion for circle connection}), $\psi\in \bR$ is a constant.
Eigenvalue spectra of $\Delta^{(0)}$, $\Delta^{(1)}$ coincide and are $\{(2\pi k+\psi)^2\}_{k\in \bZ}$ where each eigenvalue has multiplicity $2$ (corresponding eigenforms are proportional to $e^{2\pi ikx}$). Note that the twisted de Rham complex is acyclic iff $\psi\not\equiv 0\mod2\pi$. Corresponding zeta function is
$$\zeta_\Delta(s)=2\sum_{k\in\bZ}(2\pi k+\psi)^{-2s}=2\cdot (2\pi)^{-2s}(\zeta(2s,\frac{\psi}{2\pi})+\zeta(2s,1-\frac{\psi}{2\pi}))$$
with $\zeta(s,p)=\sum_{k=0}^\infty (k+p)^{-s}$ the Hurwitz zeta function. One calculates
$$\det \Delta^{(0)}=\det \Delta^{(1)}=e^{-\zeta'_\Delta(0)}(0)=\left(2\sin\frac\alpha 2\right)^4$$
which implies the following answer for the Ray-Singer torsion
$$\tau_{RS}(S^1,E)=\left(\det \Delta^{(1)}\right)^{1/2}=\left(2\sin\frac\alpha 2\right)^2$$
Note that l.h.s. here can be written as 
$\det(e^a-\mr{id})=|e^{i\psi}-1|^2$.
Thus the Ray-Singer torsion for the circle coincides (up to taking an inverse) with the R-torsion for representation $h: \pi_1(S^1)\ra O(2) $ given by the holonomy of connection $\nabla$ in $E$.

Solution to the heat equation in this case can also be written explicitly in terms of Jacobi $\theta$-function.

\lec{Lecture 14, 29.05.2014}

\subsection{Non-acyclic case}
In case when the flat Euclidean bundle $E$ over $M$ is not acyclic, the Ray-Singer torsion is defined as follows.
Hodge inner product $\int_M (\bt\stackrel{\wedge}{,} *\bt)_E$ induces an inner product on cohomology
$$(,)_H:\quad H^k(M,E)\otimes H^k(M,E)\ra \bR $$
via representation of cohomology classes by harmonic forms. Denote $\mu_H$ the element in the determinant line of cohomology $\mu_H\in\Det\; H^\bt(M,E)/\{\pm 1\}$ corresponding to an arbitrary orthonormal basis in cohomology w.r.t. $(,)_H$. Ray-Singer torsion is defined then as
\be\tau_{RS}(M,E)=
\prod_{k=0}^n \left({\det}'_{\Omega^k(M,E)}\Delta^{(k)}\right)^{-\frac{(-1)^k\; k}{2}}
\cdot\mu_H\qquad \in  \Det\; H^\bt(M,E)/\{\pm 1\} \label{RS torsion non-acyclic}\ee
where $\det'$ is the zeta regularized determinant, where in the corresponding zeta functions we only sum over non-vanishing (positive) eigenvalues of respective Laplacians.

Theorem \ref{thm: metric independence of RS torsion} extends to the non-acyclic case. In general, the product of determinants in the r.h.s. of (\ref{RS torsion non-acyclic}) and $\mu_H$ are separately dependent on Riemannian metric $g$, but the whole torsion is not.

Poincar\'e duality argument (Theorem \ref{thm: RS torsion of even dim mfd}) yields in the non-acyclic case that, for $\dim M$ even, the product of determinants in (\ref{RS torsion non-acyclic}) is equal to $1$.

\subsection{Cheeger-M\"uller theorem}
Let $(M,g)$ be a smooth oriented closed Riemannian manifold endowed with a Euclidean flat bundle $(E,(,)_E,\nabla)$. Let also $X$ be a cellular decomposition of $M$. Choose a vertex $x_0$ of $X$ as a base point. For $\gamma$ a closed loop in $M$ from $x_0$ to $x_0$, the holonomy $\mr{hol}_\nabla(\gamma) \in \mr{Aut}(E_{x_0})\simeq O(m)$ is a norm-preserving endomorphism of the fiber of $E$ over $x_0$. Due to flatness of $\nabla$, the holonomy only depends on the class of $\gamma$ up to homotopy. Thus, we have the holonomy representation $$h=\mr{hol}_\nabla:\quad  \pi_1(M)=\pi_1(X)\ra O(m)$$

\begin{thm}[Cheeger-M\"uller]\label{thm: Cheeger-Muller}
Ray-Singer torsion of $(M,E)$ and $R$-torsion of $X$ with holonomy representation $h$ are equal:
\be \tau_{RS}(M,E)=\tau_h(X)\qquad \in \Det\; H^\bt(M,E)/\{\pm 1\} \label{Cheeger-Muller}\ee
\end{thm}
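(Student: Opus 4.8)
The plan is to show that the ratio of the two sides of \eqref{Cheeger-Muller}, viewed in $\Det\, H^\bt(M,E)/\{\pm1\}$, equals $1$. By Theorem \ref{thm: metric independence of RS torsion} (extended to the non-acyclic case) the analytic side does not depend on $g$, and by subdivision invariance the combinatorial side does not depend on $X$; the normalizing element $\mu_H$ is built from the \emph{same} Hodge inner product on $H^\bt(M,E)$ on both sides. Hence the quotient $\tau_{RS}(M,E)/\tau_h(X)$ is a positive real number depending only on the flat Euclidean bundle $(E,\nabla)$. It therefore suffices to prove it is $1$, and one is free to pick the most convenient $g$ and $X$. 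When $n=\dim M$ is even the product of regularized determinants is $1$ on the analytic side by Theorem \ref{thm: RS torsion of even dim mfd} (Poincaré duality) and $\tau_h(X)=1$ on the combinatorial side by the dual statement \eqref{duality for R-torsion}; so the real content is in odd dimensions.

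\textbf{Whitney forms and the algebraic Ray--Singer formula.} Fix a smooth triangulation $X$ of $M$. The Whitney embedding $W\colon C^\bt(X,E)\to\Omega^\bt(M,E)$ is a cochain map and a quasi-isomorphism; pulling the Hodge inner product back along $W$ puts an inner product on cellular cochains, with its own adjoint $\partial^*$ and combinatorial Laplacian $\Delta^{\mathrm{cw}}_k=\partial\partial^*+\partial^*\partial$. Since combinatorial torsion is inner-product independent on chains and is normalized by the same $\mu_H$, one may compute it with this metric, and then the algebraic Ray--Singer identity \eqref{Lm: alg RS formula eq} gives
$\tau_h(X)=\prod_k\big(\det\nolimits'\Delta^{\mathrm{cw}}_k\big)^{-(-1)^k k/2}\cdot\mu_H$.
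Passing to the $r$-th barycentric subdivision $X^{(r)}$ and rescaling by the mesh, the Dodziuk--Patodi comparison shows $W$ becomes an asymptotic isometry and the nonzero eigenvalues of the rescaled $\Delta^{\mathrm{cw}}_k$ converge, uniformly on compacts, to those of $\Delta^{(k)}$ as $r\to\infty$. So everything is reduced to
$\prod_k(\det'\Delta^{\mathrm{cw},(r)}_k)^{-(-1)^k k/2}\to\prod_k({\det}'\Delta^{(k)})^{-(-1)^k k/2}$.

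\textbf{The spectral comparison — the main obstacle.} Convergence of the eigenvalues is far from enough, because both determinants are defined by $\zeta$-regularization, which is sensitive to the whole spectrum. Two separate issues must be controlled. At the low end, the combinatorial Laplacian has a cloud of ``almost-zero'' eigenvalues accumulating at $0$ as $r\to\infty$ (these are what reproduce $\mu_H$), and they must be cleanly separated from the genuinely small positive ones. At the high end, the finite combinatorial spectrum truncates the true Weyl growth $\rho(\lambda)\sim\lambda^{n/2-1}$, so the short-time behaviour of $\sum_k(-1)^k k\,\mathrm{tr}\,e^{-t\Delta^{\mathrm{cw},(r)}_k}$ cannot match $\sum_k(-1)^k k\,\mathrm{tr}\,e^{-t\Delta^{(k)}}$ term by term; one must show, using the Seeley expansion of Theorem \ref{thm: Seeley}, that the divergent small-$t$ contributions on the analytic side cancel in the alternating, $k$-weighted combination precisely because $n$ is odd, and then bound the discrepancy for $t$ bounded away from $0$ uniformly in $r$. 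Estimating these combinatorial heat-kernel tails is the heart of Müller's argument; Cheeger's alternative route instead establishes a surgery/gluing law for $\tau_{RS}$ on manifolds with boundary (which must be developed from scratch, with care about boundary conditions), proves the analogue of \eqref{Mayer-Vietoris for torsions} holds for it, and reduces to checking the identity on building blocks — disks, spheres, and lens-type quotients — via the explicit computations of Section \ref{sec: intro, RS torsion} and the circle example.

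\textbf{What I expect to be genuinely hard.} No matter which route one takes, the difficulty is the same analytic inequality in disguise: relating a zeta-regularized product of infinitely many eigenvalues to a finite-dimensional determinant (Müller), or controlling how $\det'\Delta^{(k)}$ degenerates under a non-compact surgery (Cheeger). I would expect to spend essentially the whole proof on this step; the modern streamlining (Bismut--Zhang) replaces it by a Witten deformation $d_t=e^{-tf}de^{tf}$ for a Morse function $f$, showing the small eigenvalues of $\Delta_t$ localize at the critical points so that the ``small complex'' becomes isometric (after rescaling) to the Thom--Smale cochain complex, and then evaluating the $t$-anomaly of $\log\tau_{RS}$ by a Mathai--Quillen transgression — but the transgression computation is itself the technical crux. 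For the orthogonal bundles considered here there is no extra correction term; that is what makes the clean equality \eqref{Cheeger-Muller} possible.
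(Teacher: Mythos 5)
There is a genuine gap: what you have written is a plan, not a proof, and in fact the paper itself does not prove this theorem either --- it states explicitly (preface and the discussion after Theorem \ref{thm: Cheeger-Muller}) that the proof is omitted and only surveys the known strategies (M\"uller via Dodziuk--Patodi, Cheeger via surgery, Burghelea--Friedlander--Kappeler and Braverman via the Witten deformation). Your proposal reproduces that survey at essentially the same level of detail and then stops exactly where every actual proof begins: you yourself flag that the uniform low-/high-energy spectral comparison (M\"uller), or the construction of analytic torsion on manifolds with boundary together with its surgery/gluing law (Cheeger), or the Mathai--Quillen transgression controlling the $t$-anomaly in the Witten deformation, is ``the heart'' of the argument, and none of these is carried out. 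Since the whole content of Cheeger--M\"uller lives in precisely that step, the proposal cannot be accepted as a proof of \eqref{Cheeger-Muller}.

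Beyond the overall incompleteness, two specific steps are wrong or misleading as stated. First, ``combinatorial torsion is inner-product independent on chains'' is false in the sense you use it: $\tau_h(X)\in\Det H^\bt(M,E)/\{\pm1\}$ is defined using the distinguished cellular basis (equivalently, the cellular inner product), and replacing that inner product by the pullback of the Hodge metric along the Whitney map changes the torsion by the alternating product of Gram determinants of the Whitney forms; moreover the algebraic formula \eqref{Lm: alg RS formula eq} normalizes by the \emph{cellular}-harmonic element $\nu$, not by the Hodge-harmonic $\mu_H$. Controlling both of these discrepancies as the mesh goes to zero is itself part of M\"uller's analysis, so you cannot simply ``compute with this metric'' for free. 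Second, the even-dimensional reduction is too quick: \eqref{duality for R-torsion} is proved in the paper only for the acyclic case (where the torsion is a number), while in the non-acyclic case both sides of \eqref{Cheeger-Muller} are elements of $\Det H^\bt(M,E)/\{\pm1\}$ normalized by two a priori different harmonic metrics (de Rham--Hodge versus cellular), and their agreement is still nontrivial content of the theorem, not a consequence of Theorem \ref{thm: RS torsion of even dim mfd} alone. If you want a complete argument you must commit to one of the three routes and supply its analytic core (e.g.\ the Dodziuk--Patodi approximation theorem together with the uniform heat-trace estimates, or the boundary-value/gluing theory for $\tau_{RS}$), which is far beyond what is sketched here.
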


Implicit in the theorem is the quasi-isomorphism of cochain complexes
\be \Omega^\bt(M,E) \quad \stackrel{\mr{quasi-iso}}{\sim}\quad C^\bt(X,h)=\bR^m\otimes_{\bZ[\pi_1]}C^\bt(\widetilde X,\bZ) \label{Cheeger-Muller: forms vs cochains}\ee

\begin{rem} 
Torsion in the r.h.s. of (\ref{Cheeger-Muller}) is the {\it cochain} version of the $R$-torsion, based on twisted cochains $C^\bt(X,h)=\bR^m\otimes_{\bZ[\pi_1]}C^\bt(\widetilde X,\bZ)$, rather than twisted chains $C_\bt(X,h)=\bR^m\otimes_{\bZ[\pi_1]}C_\bt(\widetilde X,\bZ)$. Respective torsions are related by $\tau_\mr{cochain}=\tau_\mr{chain}^{-1}$ (Corresponding transition matrices between bases in chains/cochains in the definition of torsion (\ref{def: torsion}) are inverse transpose of each other).
Note that for the corresponding determinant lines, one has $\Det\; H^\bt = (\Det\; H_\bt)^{-1} $.
\end{rem}

Complex on the right side of (\ref{Cheeger-Muller: forms vs cochains}) can be realized in the following way: trivialize $E$ over the barycenter $\dot e$ (chosen point) of every cell $e$ of $X$. For cells $e'\subset e$, we have a group element $E_X(e',e)=\mr{hol}_{\gamma: \dot e'\ra \dot e}\in \mr{Hom}(E_{\dot e'},E_{\dot e})\simeq  O(m)$ -- the transition function for the {\it cellular local system} $E_X$ over $X$, corresponding to the flat bundle $E$. Here $\gamma$ is any path from $\dot e'$ to $\dot e$, staying inside the cell $e$. Then we construct the cochain complex of $X$ twisted by $E_X$ as $C^k(X,E_X)=\bR^m\otimes C^k(X)$ with differential
$$d_{E_X}:\quad v\otimes e^*\mapsto \sum_{e_i\subset X} a_i\cdot E_X(e_i,e) (v) \otimes e_i^*$$
for $v\in \bR^m$ any vector and $a_i\in\bZ$ the coefficients of the cochain $de^*=\sum_{e_i\subset X} a_i e_i^*$; $e^*$ denotes the basis cochain in $C^k(X,\bZ)$ associated to the cell $e$.

Then $C^\bt(X,E_X)$ is chain isomorphic to $C^\bt(X,h)$. Integration map
$$\alpha\mapsto \sum_{e\subset X} \left(\int_{e\ni x} \mr{hol}_\nabla(\dot e\ra x)_*\alpha\right)\cdot e^*$$
is an explicit quasi-isomorphism $\Omega^\bt(M,E)\ra C^\bt(X,E_X)$.

Torsion in the r.h.s. of (\ref{Cheeger-Muller}) can be written, using the inner product on cochain spaces $C^\bt(X,h)$ induced by declaring the cellular basis an orthonormal basis, as a product of powers of determinants of discrete Laplacians (\ref{Lm: alg RS formula eq}):
$$\tau(X,h)=\prod_{k=0}^n \left({\det}'_{C^k(X,h)}\Delta^{(k)}_{X,h}\right)^{-\frac{(-1)^k k}{2}}\cdot \mu_{H^\bt(X,h)}$$
with $\Delta^{(k)}_{X,h}=dd^*+d^*d: C^k(X,h)\ra C^k(X,h)$, with $d^*$ being the transpose of $d$ with respect to cellular inner product on cochains; $\mu_{H^\bt(X,h)}$ is the element in the determinant line of cohomology associated to the inner product on cohomology induced from cellular metric via harmonic representatives.

There exist several conceptually different proofs of Theorem \ref{thm: Cheeger-Muller}, all of them containing a fair bit of nontrivial analysis.
\begin{itemize}
\item Proof by Werner M\"uller \cite{Muller78}. The discussion above, exhibiting $R$-torsion as a combinatorial approximation for the analytic torsion is the starting point for this proof: knowing that the $R$-torsion is invariant under cellular subdivision, one analyzes the limit of dense subdivision for the $R$-torsion and how the resulting determinant of cellular Laplacians approach asymptotically the zeta-regularized determinants of Laplacians on differential forms. A crucial technical result in this treatment is the Dodziuk-Patodi theorem \cite{Dodziuk-Patodi} establishing convergence of eigenvalues of combinatorial Laplacian on simplicial cochains of a triangulation to eigenvalues of Laplacian on forms, in the limit of the mesh of the triangulation approaching zero.
\item Proof by Jeff Cheeger \cite{Cheeger79}. Following the idea of Hirzebruch's proof of signature theorem, Cheeger observes that the double of a manifold $2M=M\sqcup \bar M$ is always cobordant to a sphere $S^n$ and hence can be constructed out of $S^n$ by a sequence of surgeries along embedded spheres $S^k$. The proof of (\ref{Cheeger-Muller}) is then reduced to the check for $S^n$ with trivial local system and a (non-straightforward) proof that the equality of torsions is preserved by elementary surgeries.
\item Proof by Burghelea-Friedlander-Kappeler \cite{BFK} and proof by by Maxim Braverman \cite{Braverman} based on Witten's deformation $d\mapsto e^{-f}d e^f$ of the de Rham complex by a Morse-Smale function. Proof of \cite{BFK} also extensively relies on the Mayer-Vietoris-type gluing formula for zeta-regularized determinants by the same authors.
\end{itemize}

\subsection{Inclusion/exclusion property}
\begin{lemma}\label{Lm: inclusion-exclusion}
Let $Z$ be a CW-complex endowed with an $O(m)$-local system, and let  $X,Y$ be two subcomplexes such that $Z=X\cup Y$. Then for the $R$-torsions we have
\be \tau(Z,E)=\tau(X,E|_X)\cdot\tau(Y,E|_Y)\cdot \tau(X\cap Y,E|_{X\cap Y})^{-1} \label{Lm: inclusion-exclusion, eq}\ee
\end{lemma}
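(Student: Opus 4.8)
The plan is to deduce the formula from the Mayer--Vietoris short exact sequence of twisted cellular chain complexes combined with the multiplicativity of torsion. I assume, as is implicit in the statement, that the twisted complexes attached to $X$, $Y$, $X\cap Y$ and $Z$ are all acyclic, so that the four $R$-torsions are defined as positive reals; the general case is handled by the same argument, replacing Lemma \ref{Lm: multiplicativity of torsions} by Lemma \ref{Lm: multiplicativity of torsions, non-acyclic case} and keeping track of the torsion of the Mayer--Vietoris long exact sequence in homology.

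First I would set up the complexes using the cellular local system picture: trivialise $E$ over the barycenter of each cell, so that for a subcomplex $W\subset Z$ one has $C_k(W,E|_W)=\bigoplus_{\dim e=k,\ e\subset W}E_{\dot e}$, a sum of copies of $\bR^m$ equipped with the cellular orthonormal basis, and this complex computes $\tau(W,E|_W)$. Since the cells of $Z=X\cup Y$ partition into the cells of $X\setminus Y$, of $X\cap Y$, and of $Y\setminus X$, the maps $c\mapsto(c,-c)$ and $(a,b)\mapsto a+b$ form a short exact sequence of complexes of $\bR$-vector spaces
\be 0\ra C_\bt(X\cap Y,E)\xra{(\iota,-\iota')} C_\bt(X,E)\oplus C_\bt(Y,E)\xra{j+j'} C_\bt(Z,E)\ra 0. \ee
(At the level of free $\bZ[\pi_1(Z)]$-modules this is the usual Mayer--Vietoris sequence of chain complexes of the universal cover of $Z$; applying $\bR^m\otimes_{\bZ[\pi_1(Z)]}-$ preserves exactness because free modules are flat.)

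Next I would apply Lemma \ref{Lm: multiplicativity of torsions}. Equip $C_\bt(X,E)\oplus C_\bt(Y,E)$ with the concatenation of the cellular bases of $X$ and $Y$; its torsion is $\tau(X,E|_X)\cdot\tau(Y,E|_Y)$, which is the special case of Lemma \ref{Lm: multiplicativity of torsions} for the split sequence $0\ra C_\bt(X,E)\ra C_\bt(X,E)\oplus C_\bt(Y,E)\ra C_\bt(Y,E)\ra 0$. The product basis on the middle term built from the cellular bases of $X\cap Y$ and of $Z$ (lifting an $X\setminus Y$-cell into $C_\bt(X,E)$, a $Y\setminus X$-cell into $C_\bt(Y,E)$, and an $X\cap Y$-cell into $C_\bt(X,E)$) differs from the concatenated cellular basis only by a signed permutation and a unipotent block; such a transition matrix has $|\det|=1$, hence represents $1$ in $\bar K_1(M_m(\bR))\cong\bR_+$ (see (\ref{bar K_1(R)})), so the two bases are equivalent. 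Lemma \ref{Lm: multiplicativity of torsions} therefore gives
\be \tau(X,E|_X)\cdot\tau(Y,E|_Y)=\tau(X\cap Y,E|_{X\cap Y})\cdot\tau(Z,E), \ee
and dividing by $\tau(X\cap Y,E|_{X\cap Y})$ yields (\ref{Lm: inclusion-exclusion, eq}).

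There is no genuine obstacle here; the only point needing care is the bookkeeping of bases, namely verifying that the product basis produced by the abstract multiplicativity lemma agrees, in $\bar K_1(M_m(\bR))=\bR_+$, with the concatenated cellular basis — and it is precisely the passage to the \emph{reduced} Whitehead group that makes these $|\det|=1$ discrepancies invisible. In the non-acyclic case the same short exact sequence feeds into Lemma \ref{Lm: multiplicativity of torsions, non-acyclic case}, and one must additionally check that the contribution of the torsion of the Mayer--Vietoris long exact sequence $\cdots\ra H_\bt(X\cap Y,E)\ra H_\bt(X,E)\oplus H_\bt(Y,E)\ra H_\bt(Z,E)\ra\cdots$ is absorbed by a consistent (e.g. harmonic/orthonormal) choice of bases on homology.
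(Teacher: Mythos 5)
Your proof follows essentially the same route as the paper's: apply the multiplicativity lemma to the Mayer--Vietoris short exact sequence of twisted cellular (co)chain complexes — the paper works with cochains and invokes Lemma \ref{Lm: multiplicativity of torsions, non-acyclic case} directly, while your chain-level version and basis bookkeeping (transition matrices of $|\det|=1$) are equivalent. The one refinement worth noting is that in the non-acyclic case the identity is an equality in determinant lines under the canonical isomorphism furnished by the Mayer--Vietoris long exact sequence, i.e.\ the factor $\tau(\chi)$ \emph{is} that identification (cf.\ Section \ref{sec: multiplicativity via det lines}), rather than a term to be ``absorbed'' by a choice of orthonormal bases on homology.
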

Left hand side lives in $\Det \; H^\bt(Z,E)/\{\pm1\}$ and right hand side lives in $\Det \; H^\bt(X,E)\otimes \Det \; H^\bt(Y,E)\otimes (\Det \; H^\bt(X\cap Y,E))^{-1}/\{\pm1\}$ (we are omitting appropriate restrictions of $E$ in the notation). These two objects are canonically isomorphic, since we have the Mayer-Vietoris long exact sequence (twisted by $E$):
$$ H^\bt(Z,E)\ra H^\bt(X,E)\oplus H^\bt(Y,E)\ra H^\bt(X\cap Y,E) \ra H^{\bt+1}(Z,E) $$
Applying Lemma \ref{Lm: Det(C)=Det(H)} (``determinant of a complex equals determinant of its homology''), we obtain, since Mayer-Vietoris sequence is {\it exact},
\begin{multline*}
\Det (\mbox{Mayer-Vietoris})=\\
=\Det\;H^\bt(Z,E)\otimes (\Det\;(H^\bt(X,E)\oplus H^\bt(Y,E)))^{-1}\otimes \Det\; H^\bt(X\cap Y,E)\cong \bR
\end{multline*}
This proves that l.h.s. and r.h.s. of (\ref{Lm: inclusion-exclusion, eq}) do indeed live in the same place. Equality (\ref{Lm: inclusion-exclusion, eq}) itself follows directly from applying multiplicativity lemma (Lemma \ref{Lm: multiplicativity of torsions, non-acyclic case}, cf. also Section \ref{sec: multiplicativity via det lines}) to the Mayer-Vietoris short exact sequence in cochains:
$$ 0\ra C^\bt(Z,E)\ra C^\bt(X,E)\oplus C^\bt(Y,E)\ra C^\bt(X\cap Y,E)\ra 0  $$

A related property is the following.
\begin{lemma}
Let $X$ be a CW-complex with an $O(m)$-local system $E$, an let $Y\subset X$ be a subcomplex. Then
\be \tau(X;E)=\tau(X,Y;E)\cdot\tau(Y;E) \label{tau(X) = tau(X,Y) tau(Y)}\ee
\end{lemma}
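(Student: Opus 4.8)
The plan is to deduce (\ref{tau(X) = tau(X,Y) tau(Y)}) from the multiplicativity property already proved for short exact sequences of cochain complexes. First I would pass to the universal cover $\widetilde X$ and the preimage $\widetilde Y$ of $Y$, and tensor the standard short exact sequence of relative cochain complexes of $\bZ[\pi_1]$-modules
$$0\ra C^\bt(\widetilde X,\widetilde Y;\bZ)\ra C^\bt(\widetilde X;\bZ)\ra C^\bt(\widetilde Y;\bZ)\ra 0$$
with $M_m(\bR)$ (or $\bR^m$) over $\bZ[\pi_1]$ via the holonomy representation $h$, obtaining a short exact sequence of $\bR$-cochain complexes
$$0\ra C^\bt(X,Y;E)\ra C^\bt(X;E)\ra C^\bt(Y;E)\ra 0.$$
Here one uses that the cellular cochains of $\widetilde X$ restricted to cells of $\widetilde X-\widetilde Y$ form a free complement, so the sequence splits compatibly with the cellular bases: the preferred basis of $C^\bt(X;E)$ (cells of $X$, lifted, tensored with an orthonormal basis of $\bR^m$) is the product basis of the preferred bases of $C^\bt(X,Y;E)$ and $C^\bt(Y;E)$, which is exactly the hypothesis on bases needed in Lemma \ref{Lm: multiplicativity of torsions, non-acyclic case}.

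Next I would invoke Lemma \ref{Lm: multiplicativity of torsions, non-acyclic case}, which gives
$$\tau(C^\bt(X;E))=\tau(C^\bt(X,Y;E))\cdot\tau(C^\bt(Y;E))\cdot\tau(\chi),$$
where $\chi$ is the long exact sequence in cohomology of the pair $(X,Y)$ twisted by $E$. The point is that the connecting isomorphism identifying $\Det\,H^\bt(X;E)$ with $\Det\,H^\bt(X,Y;E)\otimes\Det\,H^\bt(Y;E)$ is precisely the one built from $\Det$ of this long exact sequence (via Lemma \ref{Lm: Det(C)=Det(H)}, applied to $\chi$ viewed as an acyclic complex), so the factor $\tau(\chi)$ is absorbed into the statement that both sides of (\ref{tau(X) = tau(X,Y) tau(Y)}) live in the same determinant line and the equality holds under that canonical identification — exactly as in the treatment of the Mayer-Vietoris case in Lemma \ref{Lm: inclusion-exclusion}. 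In the acyclic case the determinant lines are canonically $\bR_+$ and $\tau(\chi)=1$, so the formula reads literally as a product of positive reals.

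I do not expect a serious obstacle here; the only points requiring care are bookkeeping ones. The main thing to check is that the cochain (as opposed to chain) conventions are consistent: the preferred basis on relative cochains is dual to the basis of cells of $X-Y$, and one must verify that the product-basis relation $c^\bt_{X}=c^\bt_{X,Y}\cdot c^\bt_{Y}$ holds up to equivalence in $\bar K_1$, which it does because the relevant transition matrix is block triangular with identity blocks. The second point is matching the determinant-line identification in the lemma with the one used implicitly in the statement of (\ref{tau(X) = tau(X,Y) tau(Y)}); this is the same reconciliation carried out for Mayer-Vietoris just above, so one can simply refer to Section \ref{sec: multiplicativity via det lines}. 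Hence the proof is a one-line application of Lemma \ref{Lm: multiplicativity of torsions, non-acyclic case} to the short exact sequence of twisted cochain complexes of the pair $(X,Y)$, together with the observation that the homology correction term is exactly the canonical isomorphism of determinant lines appearing in the statement.
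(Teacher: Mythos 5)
Your proposal is correct and follows essentially the same route as the paper: the paper's proof is precisely a one-line application of Lemma \ref{Lm: multiplicativity of torsions, non-acyclic case} to the short exact sequence $0\ra C^\bt(X,Y;E)\ra C^\bt(X;E)\ra C^\bt(Y;E)\ra 0$ together with the associated long exact sequence in cohomology, with the factor $\tau(\chi)$ absorbed into the canonical determinant-line identification exactly as you describe. Your additional remarks on the compatibility of cellular bases and on matching the identification used in the Mayer--Vietoris case are correct bookkeeping details that the paper leaves implicit.
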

This follows from multiplicativity lemma applied to the short exact sequence of the pair $(X,Y)$:
$$0\ra C^\bt(X,Y;E)\ra C^\bt(X;E) \ra C^\bt(Y;E)\ra 0$$
and the corresponding long exact sequence in cohomology.

Even more generally, one can start with a triple $X\supset Y\supset Z$ and obtain the relation between torsions of the three pairs:
$$\tau(X,Z;E)=\tau(X,Y;E)\cdot\tau(Y,Z;E)$$
-- a version of Lemma \ref{Lm: multiplicativity of Whitehead torsions of pairs} adapted for the $R$-torsion (without requiring that pairs be deformation retracts).

\begin{corollary}
For $M$ a manifold with boundary $\dd M$,  $\dim M$ even, with $E$ a Euclidean flat bundle, one has
\be \tau(\dd M,E)=\tau(M,E)^2 \label{tau(bdry) = tau(bulk)^2}\ee
\end{corollary}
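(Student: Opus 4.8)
The plan is to obtain \eqref{tau(bdry) = tau(bulk)^2} as a purely formal consequence of two results already in hand: the multiplicativity of the $R$-torsion with respect to a pair, equation \eqref{tau(X) = tau(X,Y) tau(Y)}, and the Poincar\'e--Lefschetz duality for $R$-torsions recorded at the end of the previous subsection. Throughout, $\tau(\bt,E)$ and $\tau(\bt,\bt;E)$ denote the $R$-torsion attached to the holonomy representation $h$ of the Euclidean flat bundle $E$; when $(M,E)$ is not acyclic these symbols are to be read as elements of the relevant determinant lines of cohomology, exactly as in the discussion following Lemma \ref{Lm: inclusion-exclusion}.

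First I would fix a cellular decomposition $X$ of $M$ restricting to one of $\dd M$ and apply \eqref{tau(X) = tau(X,Y) tau(Y)} to the subcomplex $\dd M\subset M$, which gives
\[
\tau(M,E)=\tau(M,\dd M;E)\cdot\tau(\dd M,E);
\]
this is Lemma \ref{Lm: multiplicativity of torsions, non-acyclic case} applied to the short exact sequence $0\ra C^\bt(X,\dd M;E)\ra C^\bt(X;E)\ra C^\bt(\dd M;E)\ra 0$ of the pair together with its long exact cohomology sequence (whose determinant line is canonically $\bR$ by Lemma \ref{Lm: Det(C)=Det(H)}). Next I would invoke the duality identity $\tau_h(M,\dd_1 M)=\tau_h(M,\dd_2 M)^{(-1)^{n+1}}$ in the special case $\dd_1 M=\dd M$, $\dd_2 M=\varnothing$: here it reduces to $\tau(M,\dd M;E)=\tau(M,E)^{(-1)^{n+1}}$, and for $n=\dim M$ even this is $\tau(M,\dd M;E)=\tau(M,E)^{-1}$. (This duality is in turn Lemma \ref{Lm: alg duality lemma} applied to the chain isomorphism $C_\bt(\widetilde X,\widetilde{\dd M})\xra{\sim}C^{n-\bt}(\widetilde Y)$ coming from the intersection pairing of Section \ref{sec: Poincare duality with boundary}, where $Y$ is the dual complex with all cells dual to $\dd M$ deleted; orientability of $M$ is used precisely here.) Substituting $\tau(M,\dd M;E)=\tau(M,E)^{-1}$ into the displayed identity and solving for $\tau(\dd M,E)$ yields $\tau(\dd M,E)=\tau(M,E)^2$.

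There is no genuine obstacle; the only thing requiring care is the bookkeeping of determinant lines in the non-acyclic case, where the identity must be interpreted in $\Det\,H^\bt(\dd M,E)/\{\pm1\}$ after using the long exact sequence of the pair $(M,\dd M)$ to identify this space canonically with $(\Det\,H^\bt(M,\dd M;E))^{-1}\otimes(\Det\,H^\bt(M,E))^{-1}$, and of confirming that the orthogonal structure on $E$ and the orientation of $M$ are invoked exactly where Lemma \ref{Lm: alg duality lemma} and Poincar\'e--Lefschetz duality need them. When $(M,E)$ is acyclic all three torsions are positive real numbers and \eqref{tau(bdry) = tau(bulk)^2} holds literally.
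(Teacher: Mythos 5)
Your argument is correct and is essentially the paper's own proof: combine the pair formula $\tau(M,E)=\tau(M,\dd M;E)\cdot\tau(\dd M,E)$ from (\ref{tau(X) = tau(X,Y) tau(Y)}) with the Poincar\'e--Lefschetz duality $\tau(M,\dd M;E)=\tau(M,E)^{(-1)^{n+1}}=\tau(M,E)^{-1}$ for $n$ even, and solve. The only cosmetic difference is that the paper states the duality as $\tau(M)=\tau(M,\dd M)^{-1}$ rather than its inverse, which changes nothing.
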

Indeed, by Poincar\'e duality for torsions,
$\tau(M)=\tau(M,\dd M)^{(-1)^{\dim M+1}}=\tau(M,\dd M)^{-1}$. On the other hand, by (\ref{tau(X) = tau(X,Y) tau(Y)}),
$\tau(M)=\tau(M,\dd M)\cdot \tau(\dd M)$, which implies (\ref{tau(bdry) = tau(bulk)^2}).

\begin{example}[Tosion for a pair of pants]
Take $M$ to be a pair of pants (2-sphere with $3$ disks removed) and $E$ an $O(2)$-local system. Since $\pi_1(M)=\langle \gamma_1,\gamma_2,\gamma_3\rangle/\gamma_1\cdot\gamma_2\cdot\gamma_3=1$ with $\gamma_i$ the generator corresponding to a based loop contractible to $i$-th boundary circle, $E$ is defined up to equivalence by $3$ holonomies, $g_1,g_2,g_3\in O(2)$, satisfying $g_1g_2g_3=1$, up to conjugation by a common element $h\in O(2)$. Assuming that $E$ is acyclic (which, by Poincar\'e-Lefschetz duality for homology happens iff $E|_\dd$ is acyclic, which we know to occur iff all $g_i$ are different from $1$), we have
$$\tau(\dd M=S^1\sqcup S^1\sqcup S^1,E|_\dd)=\det((1-g_1)(1-g_2)(1-g_3))$$
and, by (\ref{tau(bdry) = tau(bulk)^2}),
\be \tau(M,E)=\det((1-g_1)(1-g_2)(1-g_3))^{1/2} \label{tau(pants)}\ee
\end{example}
Interpretation of this number is as follows. Note that although we can choose the local system $E$ to be acyclic on the boundary $\dd M$, it cannot be acyclic over $M$, since the Euler characteristic $\chi(M)=-1$ is nonzero. More precisely, $H^2(M;E)$ is always zero and $H^0(M;E)$ can be made zero by a choice of $E$.
Vanishing of $H^\bt(\dd M;E)$ implies that the long exact sequence of the pair $(M,\dd M)$ gives an isomorphism $H^\bt(M,\dd M;E)\xra{\sim}H^\bt(M;E) $. Together with Poincar\'e-Lefschetz non-degenerate pairing $H^\bt(M,\dd M;E)\otimes H^{2-\bt}(M;E)\ra \bR$, this gives a non-degenerate inner product $H^1(M;E)\otimes H^1(M;E)\ra\bR$. Choosing the element $\mu\in \Det\; H^\bt(M;E)$ corresponding to an orthonormal basis in $H^1$, we can compare the $R$ torsion with it to get a number. (\ref{tau(pants)}) is this number.

\subsection{Torsion and the moduli space of flat bundles}
Fix a Lie group $G$ and an orthogonal representation $G\ra O(m)$. For $M$ a manifold, consider the $R$ torsion of $M$ as a function of a flat $G$-bundle on $P$ (via the associated Euclidean flat bundle $E_P=P\times_G \bR^m$). $R$-torsion $\tau(M,P)\in \Det\; H^\bt(M,E_P)/\{\pm 1\}$ becomes a section of a real line bundle $\mc{L}$ over the moduli space of flat $G$-bundles over $M$,
$$\mc{M}(M,G)= \mr{Hom}(\pi_1(M),G)/G$$
with fiber $\Det\; H^\bt(M,E_P)$ over a point corresponding to flat $G$-bundle $P$. (More precisely, since torsion is defined up to sign, $\tau^2$ is a section of $\mc{L}^{\otimes 2}\ra\mc{M}$).

Consider the special case when $G\xra{\mr{Ad}} O(m=\dim G) $ is the adjoint representation, so that $E_P=\mr{ad}(P)$.
Then the tangent space (where it exists) to the moduli space $T_P \mc{M}(M,G)=H^1(M,\mr{ad}(P))$. Let $G$ be semisimple. Semisimplicity implies that {\it generic} flat $G$-bundle is {\it irreducible}, i.e. $H^0(M,\mr{ad}(P))=0$ (i.e. 
holonomy representation $\pi_1(M)\ra G\xra{\mr{Ad}} O(m)$ does not have proper invariant subspaces in $\bR^m$).

\subsubsection{2-dimensional case}
In case of $M$ a closed oriented 2-dimensional surface, using Poincar\'e duality, we have $H^0=H^2=0$ for a generic flat bundle, hence
$$\Det\; H^\bt(M,\mr{ad}(P))= (\Det\; H^1(M,\mr{ad}(P)))^{-1}=\wedge^\mr{top}T^*\mc{M}_P(M,G)$$
Thus $\mc{L}$ is the bundle of volume forms on $\mc{M}(M,G)$. There is a natural section of this line bundle -- the Liouville volume form $\frac{\omega^{\wedge\dim \mc{M}}}{(\dim \mc{M})!}$ associated to the Atiyah-Bott symplectic structure $\omega\in \Omega^2(\mc{M}(M,G))$. This symplectic structure is in fact just the Poincar\'e pairing
$$\underbrace{H^1(M,\mr{ad}(P))\otimes H^1(M,\mr{ad}(P))}_{T_P\mc{M}(M,G)}\ra \bR$$
Torsion $\tau(M,P)$ is also a section of $\mc{L}\ra \mc{M}$ and, again by Poincar\'e duality, the same section:
\be \tau(M,P)=\frac{\omega^{\wedge\dim \mc{M}}}{(\dim \mc{M})!}\quad \in \Omega^{\dim \mc{M}}(\mc{M}) \label{tau(surface) = symp vol element}\ee
cf. \cite{Witten91}.

\subsubsection{3-dimensional case}
For a closed oriented $3$-manifold $M$, we have, by Poincar\'e duality, $H^2(M,E)\simeq (H^1(M,E))^*$. Thus, for an irreducible flat bundle $P$, $\Det\; H^\bt(M,\mr{ad}(P))= (\Det\;H^1(M,\mr{ad}(P)))^{-2}=\left(\wedge^{\mr{top}}T^*_P\mc{M}(M,G)\right)^{\otimes 2}$. Thus, the {\it square root} of torsion is a volume form on the moduli space of flat bundles, and can be integrated, cf. the appearance of the square root of torsion in the 1-loop partition function of Chern-Simons theory \cite{Witten89}.

\subsubsection{Symplectic volume of the moduli space of flat bundles on a surface}
Using (\ref{tau(surface) = symp vol element}) and the gluing (inclusion/exclusion) property of $R$-torsion, Witten \cite{Witten91} calculates the symplectic volume of the moduli space of flat $G$-bundles (with $G$ compact semisimple) on a closed oriented surface $\Sigma$:
$$\mr{Vol}(\mc{M}(\Sigma,G))=\int_{\mc{M}(\Sigma,G)}\frac{\omega^{\wedge\dim \mc{M}}}{(\dim \mc{M})!}=\quad \frac{\# Z(G)\cdot (\mr{Vol}(G))^{2g-2}}{(2\pi)^{\dim\mc{M}}}\cdot\sum_R\frac{1}{(\dim R)^{2g-2}}$$
where $g\geq 2$ is the genus of $\Sigma$ and $R$ runs over irreducible representations of $G$ modulo isomorphism; $Z(G)$ is the center of $G$.
E.g. in case $G=SU(2)$, one has
$$\mr{Vol}(\mc{M}(\Sigma,SU(2)))=\frac{2}{(2\pi^2)^{g-1}}\cdot\sum_{n=1}^\infty \frac{1}{n^{2g-2}}$$
The basic technique behind this formula is cutting the surface into pairs of pants and using the Mayer-Vietoris gluing formula for torsions to assemble the result for the surface from torsions for individual pairs of pants. Sum over irreducible representations originates from Peter-Weyl theorem (cf. Schur's orthogonality relations for characters of finite groups).

One can construct a unitary complex line bundle $L$ over $\mc{M}(\Sigma,G)$ with a connection $\nabla_L$ of curvature $2\pi i\omega$. Moduli space $\mc{M}(\Sigma,G)$ inherits a complex structure from any choice of complex structure on $\Sigma$. 
Together with the Atiyah-Bott symplectic structure, this makes $\mc{M}(\Sigma,G)$ a K\"ahler manifold. The space of {\it holomorphic} sections of $L^{\otimes k}$ arises as the space of states associated to a boundary surface for Chern-Simons theory at {\it level} $k$ on a 3-manifold with boundary, cf. \cite{Witten89}. By Hirzebruch-Riemann-Roch theorem,\footnote{
Integral in the r.h.s. actually calculates the Euler characteristic of the Dolbeault complex twisted by $L^{\otimes k}$. However, higher (degree $\geq 1$) cohomology vanishes by Kodaira vanishing theorem.
}
\be \dim H^0_{\bar\dd}(\mc{M}(\Sigma,G),L^{\otimes k})=\int_{\mc{M}(\Sigma,G)} Td(\mc{M})\cdot e^{k\omega} \label{Verlinde space, RRH}\ee
with $Td$ the Todd class of the tangent bundle of the moduli space. Since $Td=1+(\mbox{higher forms})$, the right hand side is a polynomial in $k$ of degree $\dim\mc{M}/2$, with the top coefficient being the symplectic volume of $\mc{M}$. The object on the right hand side is related to the representation theory of the affine Lie algebra $\hat{\mathfrak{g}}$, due to Verlinde. E.g. for $G=SU(2)$, Verlinde formula is:
\begin{multline*} \dim H^0_{\bar\dd}(\mc{M}(\Sigma,SU(2)),L^{\otimes k})= \left(\frac{k+2}{2}\right)^{g-1}\sum_{j=0}^{k-1}\frac{1}{\left(\sin\frac{\pi (j+1)}{k+2}\right)^{2g-2}} =
\\\underbrace{=}_{g\geq 2}
\mr{Vol}(\mc{M}(\Sigma,SU(2)))\cdot k^{3g-3}+ \mbox{polynomial of degree }3g-4\mbox{ in }k
\end{multline*}

\thebibliography{9}
\bibitem{Braverman} M. Braverman, \textit{New Proof of the Cheeger–Mu\"ller Theorem.} Ann. Glob. Anal. and Geom. 23, 1 (2003) 77--92
\bibitem{BFK} D. Burghelea, L. Friedlander, T. Kappeler, \textit{Asymptotic expansion of the Witten deformation of the analytic torsion,} J. Func. Anal. 137, 2 (1996) 320--363
\bibitem{Chapman74} T. A. Chapman, \textit{Topological invariance of Whitehead torsion,} Amer. J. Math. 96 (1974) 488--497
\bibitem{Cheeger79} J. Cheeger, \textit{Analytic torsion and the heat equation,} Ann. of Math. 109, 2 (1979) 259--322
\bibitem{Cohen} M. M. Cohen, \textit{A course in simple-homotopy theory,} New York: Springer-Verlag vol. 19 (1973)
\bibitem{Dodziuk-Patodi}  J. Dodziuk, V. K. Patodi, \textit{Riemannian structures and triangulations of manifolds.} (1975).
\bibitem{Franz} W. Franz, \textit{\"Uber die Torsion einer \"Uberdeckung}, J. Reine Angew. Math. 173 (1935) 245--254
\bibitem{Fried87} D. Fried, \textit{Lefschetz formulas for flows,} Contemp. Math. 58 (1987) 19--69
\bibitem{Hatcher} A. Hatcher, \textit{Algebraic topology. 2002}, Cambridge UP 606;     www.math.cornell.edu/~hatcher/AT/ATpage.html
\bibitem{Mazur} B. Mazur, \textit{Stable equivalence of differentiable manifolds,} Bull. AMS 67, 4 (1961) 377--384
\bibitem{MilnorHaupt} J. Milnor, \textit{Two complexes which are homeomorphic but combinatorially distinct,} Ann. of Math. (1961) 575--590
\bibitem{Milnor62} J. Milnor, \textit{A duality theorem for Reidemeister torsion,} Ann. of Math. 76 (1962) 137--147
\bibitem{Milnor66} J. Milnor, \textit{Whitehead torsion,} Bull. AMS 72, 3 (1966) 358--426
\bibitem{Muller78} W. M\"uller, \textit{Analytic torsion and R-torsion of Riemannian manifolds,} Adv. Math. 28, 3 (1978) 233--305
\bibitem{Nicolaescu} L. I. Nicolaescu, \textit{The Reidemeister torsion of 3-manifolds,} Walter de Gruyter vol. 30 (2003)
\bibitem{Ray-Singer} D. B. Ray, I. M. Singer, \textit{R-torsion and the Laplacian on Riemannian manifolds,} Adv. Math. 7 (1971) 145--210
\bibitem{Reidemeister} K. Reidemeister, \textit{Homotopieringe und Linsenr\"aume,} Abh. Math. Sem. Univ. Hamburg 11 (1935) 102--109
\bibitem{de Rham} G. de Rham, \textit{Sur les nouveaux invariants de M. Reidemeister,} Math. Sb. 1, 5 (1936) 737--743
\bibitem{Schwarz79} A. Schwarz, \textit{The partition function of a degenerate functional,} Commun. Math. Phys. 67, 1 (1979) 1--16
\bibitem{Seeley} R. Seeley, \textit{Complex powers of an elliptic operator,} ``Singular integrals (Proc. Symp. Pure Math., Chicago, 1966)'', 288--307, AMS, Providence, R.I., 1967
\bibitem{Severa} P. \v{S}evera, \textit{On the origin of the BV operator on odd symplectic supermanifolds,} 
    Lett. Math. Phys. 78 1 (2006) 55-59

\bibitem{Smale60} S. Smale, \textit{Generalized Poincar\'e's conjecture in dimensions greater than four,} Ann. of Math. 74, 2 (1961) 391--406
\bibitem{Turaev01} V. Turaev, \textit{Introduction to combinatorial torsions,} Springer (2001)
\bibitem{Witten89} E. Witten, \textit{Quantum field theory and the Jones polynomial,} Commun. Math. Phys. 121, 3 (1989) 351-399
\bibitem{Witten91} E. Witten, \textit{On quantum gauge theories in two dimensions,} Comm. Math. Phys. 141 (1991) 153--209

\end{document}